\pgfplotsset{compat=1.13}
\title{Curvature Sets Over Persistence Diagrams}
\author[1]{Mario G\'omez}
\author[2]{Facundo M\'emoli}
\affil[1]{Department of Mathematics,
		The Ohio State  University.\\
		\texttt{gomezflores.1@osu.edu}}
\affil[2]{Department of Mathematics and Department of Computer Science and Engineering, 
		The Ohio State University.\\
        \texttt{memoli@math.osu.edu}}
\date{\today}
\begin{document}
\maketitle

\begin{abstract}
	We study a family of invariants of compact metric spaces that combines the Curvature Sets defined by Gromov in the 1980s with Vietoris-Rips Persistent Homology. For given integers $k\geq 0$ and $n\geq 1$ we consider the dimension $k$ Vietoris-Rips persistence diagrams of \emph{all} subsets of a given metric space with cardinality at most $n$. We call these invariants \emph{persistence sets} and denote them as $\Dvr{n,k}$. We establish that (1) computing these invariants is often significantly more efficient than computing the usual Vietoris-Rips persistence diagrams, (2) these invariants have very good discriminating power and, in many cases, capture information that is imperceptible through standard Vietoris-Rips persistence diagrams, and (3)  they enjoy stability properties. We  precisely characterize some of them in the case of spheres and surfaces with constant curvature using a generalization of Ptolemy's inequality. We also identify a rich family of metric graphs for which $\Dvr{4,1}$ fully recovers their homotopy type by studying split-metric decompositions. Along the way we prove some useful properties of Vietoris-Rips persistence diagrams using Mayer-Vietoris sequences. These yield a geometric algorithm for computing the Vietoris-Rips persistence diagram of a space $X$ with cardinality $2k+2$ with quadratic time complexity as opposed to the much higher cost incurred by the usual algebraic algorithms relying on matrix reduction.
\end{abstract}

\tableofcontents
\setcounter{section}{0}

\section{Introduction}
The Gromov-Hausdorff (GH) distance, a notion of distance between compact metric spaces, was introduced by Gromov in the 1980s and was eventually adapted into data/shape analysis by the second author \cite{memoli-thesis,ms-sgp,ms}  as a tool for measuring the dissimilarity between shapes/datasets.

Despite its usefulness in providing a mathematical model for shape matching procedures, \cite{ms-sgp,ms,bbk-book}, the Gromov-Hausdorff distance leads to NP-hard problems: \cite{mem12} relates it to the well known Quadratic Assignment Problem, which is NP-hard, and Schmiedl in his PhD thesis \cite{gh_not_approx_polytime} (see also \cite{agarwal2018computing}) directly proves the NP-hardness of the computation of the Gromov-Hausdorff distance even for ultrametric spaces. Recent work has also identified certain Fixed Parameter Tractable algorithms for the GH distance between ultrametric spaces \cite{dgh-ult}.

These hardness results have motivated research in other directions:
\begin{itemize}
	\item[(I)] finding suitable \emph{relaxations} of the Gromov-Hausdorff distance which are more amenable to computations and \item[(II)] finding lower bounds for the Gromov-Hausdorff distance which are easier to compute, yet retain good discriminative power.
\end{itemize}

Related to the first thread, and based on ideas from optimal transport, the notion of Gromov-Wasserstein distance was proposed in \cite{memoli-dghlp,memoli-dghlp-long}. This notion of distance leads to continuous quadratic optimization problems (as oposed to the combinatorial nature of the problems induced by the Gromov-Hausdorff distance) and, as such, it has benefited from the wealth of continuous optimization computational techniques that are available in the literature \cite{peyre2016gromov,peyre2019computational} and has seen a number of applications in data analysis and machine learning \cite{vayer2020fused,demetci2020gromov,alvarez2018gromov,kawano2021classification,blumberg2020mrec}  in recent years.

The second thread mentioned above is that of obtaining  computationally tractable lower bounds for the usual Gromov-Hausdorff distance. Several such lower bounds were identified in \cite{mem12} by the second author, and then in \cite{clust-um-old,clust-um} and \cite{ccsg09a} it was proved that hierarchical clustering dendrograms and  \emph{persistence diagrams} or \emph{barcodes}, metric invariants which arose in the Applied Algebraic Topology community, provide a lower bound for the GH distance. These persistence diagrams will  eventually become central to the present paper, but before reviewing them, we will describe the notion of \emph{curvature sets} introduced by Gromov.

\paragraph{Gromov's curvature sets and curvature measures.}

Given  a compact metric space $(X,d_X)$, in the book \cite{gro99} Gromov identified a class of invariants of metric spaces indexed by the natural numbers that classifies compact metric spaces up to isometry. In more detail, Gromov defines for each $n\in \N$, the $n$-th \emph{curvature set} of $X$, denoted by $\Kn_n(X)$, as the collection of all $n\times n$ matrices that arise from restricting $d_X$ to all possible $n$-tuples of points chosen from $X$, possibly with repetitions. The terminology curvature sets is justified by the observation that these sets contain, in particular, metric information about configurations of closely clustered points in a given metric space. This information is enough to recover the curvature of a manifold; see Figure \ref{fig:cset-curve}.

\begin{figure}
	\centering
	\includegraphics[width=0.3\linewidth]{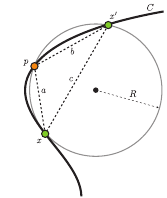}
	\caption{The curvature of a smooth curve $C$ can be estimated as the inverse of the radius $R$ of the circle passing through the points $x,x'$ and $p$. By  plane geometry results \cite[Theorem 2.3]{calabi1998differential}, this radius can be computed from the 3 interpoint distances $a$, $b$, and $c$,
		and hence from $\Kn_3(C)$, as $R=R(a,b,c) = \frac{a\,b\,c}{\left((a+b+c)(a+b-c)(a-b+c)(-a+b+c)\right)^{1/2}}$. In fact, \cite{calabi1998differential} proves that $R^{-1} = \kappa + \frac{1}{3}(b-a) \kappa_s + \cdots$ where $\kappa$ and $\kappa_s$ are the curvature and its arc length derivative at the point $p$.}
	\label{fig:cset-curve}
\end{figure}

These curvature sets have the property that $\Kn_n(X)=\Kn_n(Y)$ for all $n\in\N$ is equivalent to the statement that the compact metric spaces $X$ and $Y$ are isometric. Constructions similar to the curvature sets of Gromov were also identified by Peter Olver in \cite{olver-joint} in his study of invariants for curves and surfaces under different group actions (including the group of Euclidean isometries).

\cite{mem12} points out that the GH distance admits lower bounds based on these curvature sets: 
\begin{equation} \label{eq:cset} \dGH(X,Y)\geq \dgroM{X}{Y}:=\frac{1}{2}\sup_{n\in\N} \dH(\Kn_n(X),\Kn_n(Y))
\end{equation}
for all $X,Y$ compact metric spaces. Here, $d_{\mathcal{H}}$ denotes the Hausdorff distance on $\R^{n\times n}$ with $\ell^\infty$ distance. As we mentioned above, the computation of the Gromov-Hausdorff distance leads in general to NP-hard problems, whereas the lower bound in the equation above can be computed in polynomial time when restricted to definite values of $n$.  In \cite{mem12} it is argued that work of Peter Olver \cite{olver-joint} and Boutin and Kemper \cite{boutin} leads to identifying rich classes of shapes where these lower bounds permit full discrimination.

In the category of compact mm-spaces, that is triples $(X,d_X,\mu_X)$ where $(X,d_X)$ is a compact metric space and $\mu_X$  is a fully supported probability measure on $X$ (see Definition \ref{def:mm_space}), Gromov also discusses the following parallel construction: for an mm-space $(X,d_X,\mu_X)$ let $\Psi_X^{(n)}:X^{\times n}\longrightarrow \R^{n\times n}$ be the map that sends the $n$-tuple $(x_1,x_2,\ldots,x_n)$ to the matrix $M$ with elements $M_{ij}=d_X(x_i,x_j)$.   Then, the $n$-th \textbf{curvature measure} of $X$ is defined as 
\begin{equation}
	\label{eq:mun}
	\mu_{n}(X):=\Big(\Psi_X^{(n)}\Big)_{\#}\mu_X^{\otimes n},
\end{equation}
where $\mu_X^{\otimes n}$ is the product measure on $X^{\times n}$ and $(\Psi_X^{(n)})_\# \mu_X^{\otimes n}$ is the pushforward to $\R^{n \times n}$. Clearly, curvature measures and curvature sets are related by $\supp(\mu_n(X))=\Kn_n(X)$ for all $n\in\N$.  Gromov then proves in his mm-reconstruction theorem that the collection of all curvature measures  permit reconstructing any given mm-space up to isomorphism.
See Theorem \ref{thm:stability_dGW_hat} for a relationship, analogous to (\ref{eq:cset}), between the curvature measures and the Gromov-Wasserstein distance.

\paragraph{Persistent Homology.}
Ideas related to what is nowadays know as persistent homology appeared already in the late 1980s and early 1990s in the work of Patrizio Frosini \cite{frosini-thesis,frosini-metric-homotopies,frosini}, then in the work of Vanessa Robins \cite{robins1999towards}, in the work of Edelsbrunner and collaborators \cite{orig-topopers}, and then in the work of Carlsson and Zomorodian \cite{zomo-carlsson}. Some excellent  references for this topic are \cite{eh10,ghrist-barcodes,carlsson_2014,weinberger2011persistent}.

In a nutshell, persistent homology (PH) assigns to a given compact metric space $X$ and an integer $k\geq 0$, a multiset of points $\dgm_k^\vr(X)$ in the plane, known as the $k$-th (Vietoris-Rips) \emph{persistence diagram} of $X$. The standard PH pipeline is shown in Figure \ref{fig:PH}.

\begin{figure}
	\noindent
	\makebox[\textwidth]{
		\includegraphics[width=1.15\linewidth]{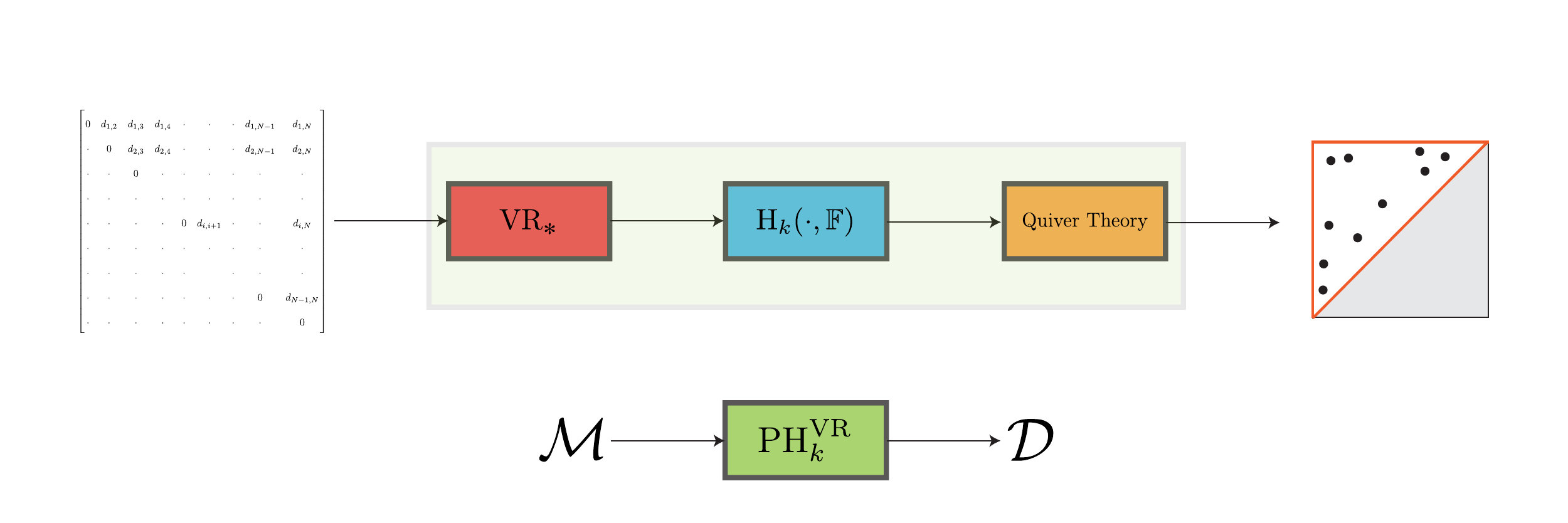}
	}
	\caption{The pipeline to compute a persistence diagram. Starting with a distance matrix, we compute the Vietoris-Rips complex and its reduced homology, and produce an interval decomposition. Together, we call these three steps $\PH_k^\vr$.}
	\label{fig:PH}
\end{figure}

These diagrams indicate the presence of $k$-dimensional multi-scale topological features in the space $X$, and can be compared via the  \emph{bottleneck distance} (which is closely related to but is stronger than the Hausdorff distance in $(\R^2,\ell^\infty)$).

Following work by Cohen-Steiner et al. \cite{cseh07}, in \cite{ccsg09a} it is proved that the maps $X\mapsto \dgm_k^\vr(X)$ sending a given compact metric space to its $k$-th Vietoris-Rips persistence diagrams is $2$-Lipschitz under the GH and bottleneck distances. 

Algorithmic work by Edelsbrunner and collaborators \cite{orig-topopers} and more recent developments \cite{ripser} guarantee that not only can $\dgm_k^\vr(X)$ be computed in polynomial time (in the cardinality of $X$) but also it is well known that the bottleneck distance can also be computed in polynomial time \cite{eh10}. This means that persistence diagrams provide another source of stable invariants which would permit estimating (lower bounding) the Gromov-Hausorff distance.

It is known that persistence diagrams are not full invariants of metric spaces. For instance, any two \emph{tree metric spaces}, that is metric graphs that are $\delta$-hyperbolic with $\delta=0$ \cite{gromov-hyperbolic}, have trivial persistence diagrams in all degrees $k\geq 1$. It is also not difficult to find two finite tree metric spaces with the same degree zero persistence diagrams. See \cite{osman-memoli} for more examples and \cite{memoli2019persistent} for results about stronger invariants (i.e. \emph{persistent homotopy groups}).

Despite the fact that persistence diagrams for a fixed degree $k$ can be computed with effort which depends polynomially on the size of the input metric space \cite{eh10,omega_matrix_mult}, the computations are actually quite onerous and, as of today, it is not realistic to compute the degree 1 Vietoris-Rips persistence diagram of a finite metric space with more than a few thousand points even with state of the art implementations such as Ripser \cite{ripser} and Ripser++ \cite{ripser-pp}.

\paragraph{Curvature sets over persistence diagrams.} In this paper, we consider a version  of the curvature set ideas which arises when combining their construction with Vietoris-Rips persistent homology. For a compact metric space $X$ and integers $n\geq 1$ and $k\geq 0$, the $(n,k)$-Vietoris-Rips persistence set of $X$ is (cf. Definition \ref{def:pset}) the collection $\Dvr{n,k}(X)$ of all persistence diagrams in degree $k$ of subsets of $X$ with cardinality at most $n$. In a manner similar to how the $n$-th curvature measure $\mu_n(X)$ arose above, we also study the probability measure $\Unk{n,k}{\mathrm{VR}}(X)$ defined as the pushforward of $\mu_n(X)$ under the degree $k$ Vietoris-Rips persistence diagram map (cf. Definition \ref{def:pmeas}). We  also study a more general version wherein  for any simplicial filtration functor $\mathfrak{F}$ (cf. Definition \ref{def:filtrations}), we consider both the persistence sets $\Dnk{n,k}{\mathfrak{F}}(X)$ and the the persistence measures $\Unk{n,k}{\mathfrak{F}}(X)$. 
Furthermore, as we discuss below, for certain choices of the parameters $k$ and $n$ curvature sets are not only more efficient to compute (in terms of memory requirements and/or in terms overall computational cost)  than standard persistence diagrams, but they also often  capture  information which is not directly visible through the lens of standard persistence diagrams.

	\begin{figure}[ht]
		\centering
		\includegraphics[width=\linewidth]{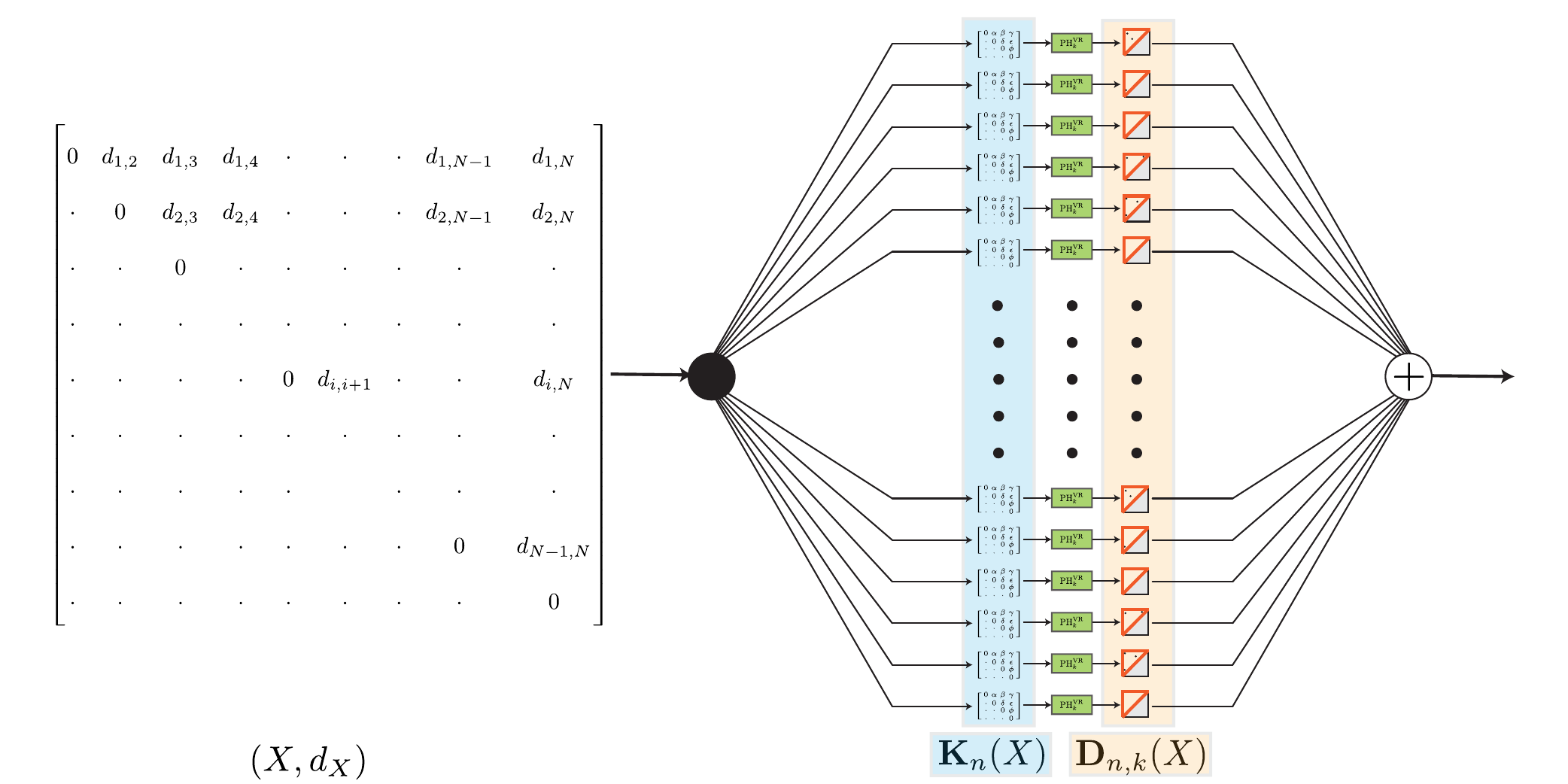}
		\caption{The pipeline to compute $\Dvr{n,k}$. Starting with a metric space $(X,d_X)$, we take samples of the distance matrix as elements of $\Kn_{n}(X)$, apply $\PH_k$ to each, and aggregate the resulting persistence diagrams. For example, Theorem \ref{thm:n=2k+2} guarantees that the VR-persistence diagram in dimension $k$ of a metric space with $n=2k+2$ points only has one point. The aggregation in this case means plotting the set $\Dvr{n,k}(X)$ by plotting all diagrams simultaneously in one set of axes. In general, the diagrams in $\Dvr{n,k}(X)$ have more than 1 point, so one possibility for aggregation is constructing a one-point summary or an average of a persistence diagram (for instance, a Chebyshev center or an $\ell_\infty$ mean) and then plotting all such points simultaneously. The figure aims to convey the eminently parallelizable nature of $\Dvr{n,k}(X)$.}
		\label{fig:multiplex}
	\end{figure}
\begin{figure}[ht]
	\centering
	
	\includegraphics[scale=0.65]{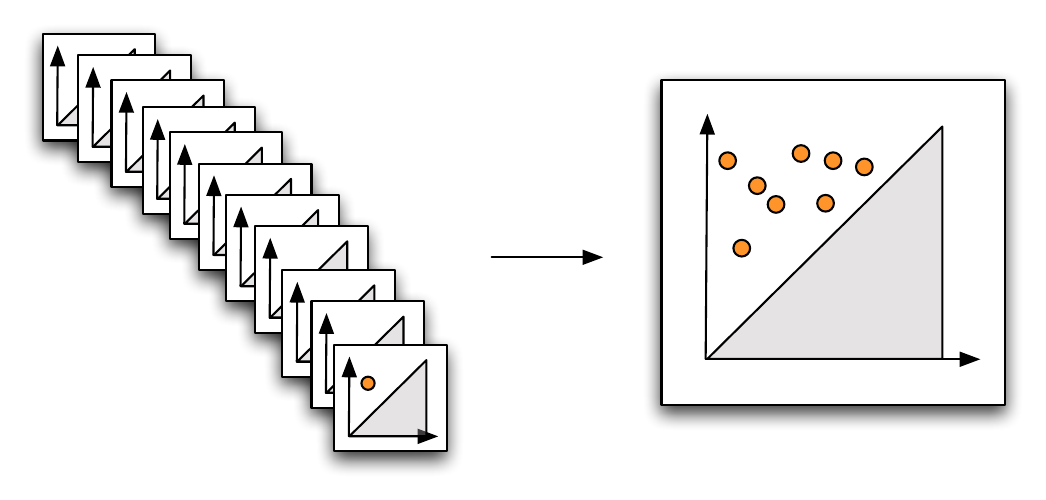}
	
	\caption{A graphical representation of how the \textbf{principal persistent set $\Dvr{2k+2,k}(X)$} is obtained by overlaying the persistence diagrams of all samples $Y \subset X$ (with $|Y| \leq 2k+2$) into a \emph{single} set of axes. This is made possible since by Theorem \ref{thm:n=2k+2} these diagrams have at most one off diagonal point.}
	\label{fig:stack-D-to1}
\end{figure}

\subsection{Contributions}
We believe that persistence sets are useful as an alternative  paradigm for the efficient computation of invariants/features based on persistent homology. We believe that persistence sets are useful as a general paradigm for the efficient computation of invariants/features based on persistent homology. Persistence sets are designed to generalize and complement --- not substitute the usual persistence diagrams. We provide a thorough study of persistence sets and, in particular, analyze the following points.

\paragraph{Persistence sets and measures generalize $\dgm_*^\vr$.}
The family $\{\Dvr{n,k}(X)\}_{n\geq 1,k\geq 0}$ of all persistence sets of $X$ generalizes the family $\{\dgm^\vr_k(X)\}_{k\geq 0}$ of all Vietoris-Rips persistence diagrams of $X$ in the sense that, when $n = |X| < \infty$, $\dgm^\vr_k(X)$ is an element of $\Dvr{n,k}(X)$ for each $k\geq 0$.

\paragraph{Some persistence sets and measures can discriminate spaces that $\dgm_*^\vr$ cannot.}
There are many cases in which Vietoris-Rips barcodes are unable to discriminate spaces, see discussion in Section 9.4 of \cite{osman-memoli}. For instance, the existence of a \emph{crushing} $X \to Y$ (in the sense of Hausmann) between metric spaces  such that $Y\subseteq X$  gives for each $r>0$ homotopy equivalences $\vr_r(X) \simeq \vr_r(Y)$ through Proposition 2.2 of \cite{hausmann-cohomology}. Furthermore, the VR-persistence diagrams of $X$ and $Y$ are equal; see  Figure \ref{fig:spiky-circle} for an example.

In contrast, it is interesting  that in many such scenarios \emph{some elements of the family of persistence sets can capture strictly more information than  VR persistent diagrams}.  In Example \ref{ex:D_n0} we show that the sets $\Dvr{n,0}(X)$ contain information about the distances in $X$, whereas $\dgm_0^\vr(X)$ is empty whenever $X$ is connected (recall that we use reduced homology). Additionally, in Example \ref{ex:circle_with_flares} we show a graph $G$ that consists of a cycle $C$ with 4 edges attached for which $\Dvr{4,1}(G)$ is different (more precisely, larger than) $\Dvr{4,1}(C)$; cf. Figure \ref{fig:circle_with_flares}. This observation generalizes to the $k$-sphere with $2k+2$ edges attached; see Proposition \ref{prop:sphere_with_flares} and Figure \ref{fig:spiky-sphere}.

\begin{figure}
	\centering
	\includegraphics{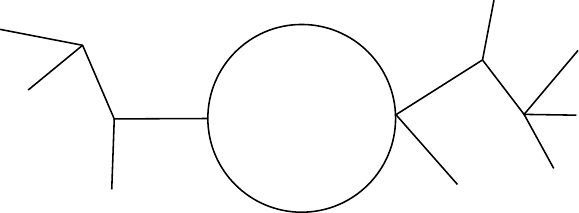}
	\caption{A graph $G$ formed by a circle $C$ with two trees attached. Since there is a crushing of $G$ to $C$ (in the sense of Hausmann \cite{hausmann-cohomology}), $\dgm_k^\vr(G)=\dgm_k^\vr(C)$ for all $k$.}
	\label{fig:spiky-circle}
\end{figure}

\begin{figure}[ht]
	\centering
	\includegraphics[height=150pt]{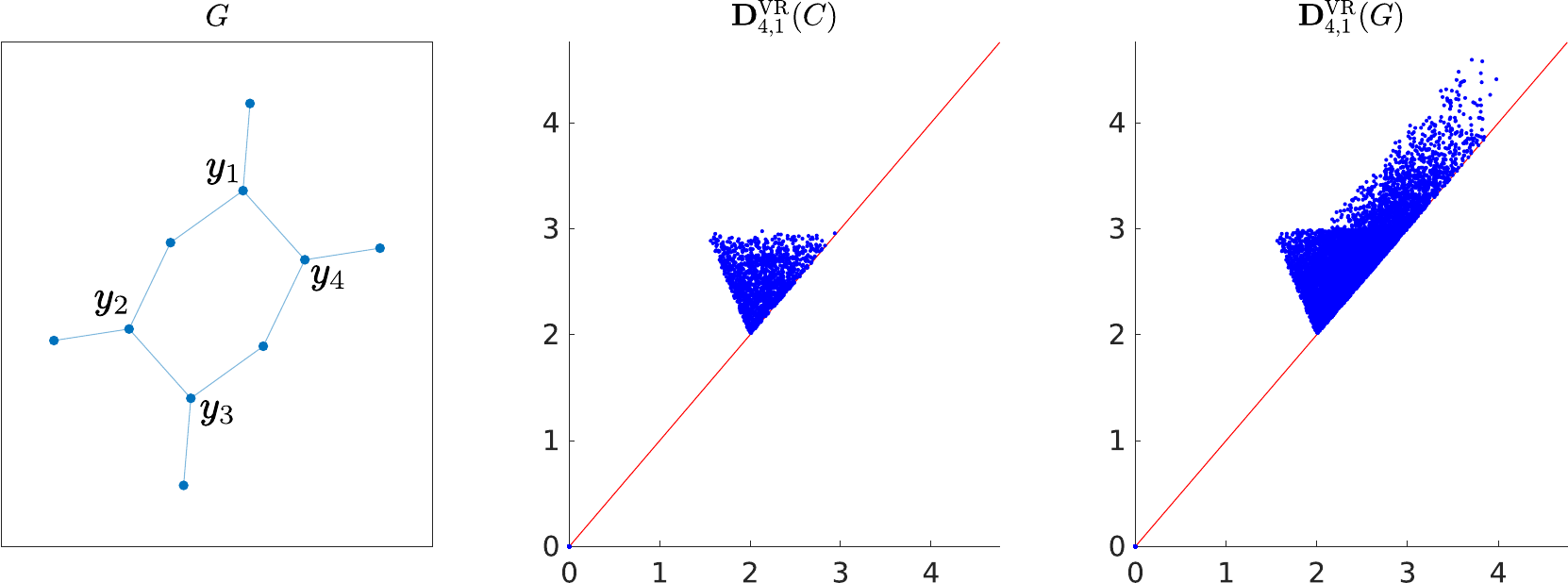}
	\caption{\textbf{Left:} A metric graph $G$ formed by a cycle $C$ with four edges attached. All edges have length 1. In the notation of Example \ref{ex:circle_with_flares}, the edges are attached at $y_1, y_2, y_3$, and $y_4$. \textbf{Middle:} The persistence set $\Dvr{4,1}(C)$. \textbf{Right:} Even though $\vr_*(G) \simeq \vr_*(C)$, and as a consequence the persistence diagrams are identical, the set $\Dvr{4,1}(G) \setminus \Dvr{4,1}(C)$ is non-empty (see Remark \ref{rmk:D41_vs_VR1}). The middle and right figures were obtained by sampling 100,000 configurations of 4 points uniformly from $G$. Of those, about 12.98\% were contained in $C$. The fraction of configurations in $G$ (resp. $C$) that produced a non-diagonal point in $\Dvr{4,1}(G)$ (resp. $\Dvr{4,1}(C)$) is 7.59\% (resp. 10.97\%).}
	\label{fig:circle_with_flares}
\end{figure}

\paragraph{Discriminating power on a classification task.}
In Section \ref{sec:classification}, we describe results on a shape classification experiment which indicate that persistent sets can be useful invariants for practical data classification applications. In order to carry out this test, we computed approximations of the persistence sets $\Dvr{2k+2,k}$ and the persistence measures $\Uvr{2k+2,k}$, for $k=0,1,2$, of 62 three-dimensional shapes in 6 different classes from the publicly available database \cite{sumner-paper}. We classified these shapes using the 1-nearest neighbor classifier induced by the Hausdorff and 1-Wasserstein distances between persistence sets and measures, respectively.

\paragraph{Computational cost, memory requirements,  paralellizability, and approximation.}
Besides its ability to often detect useful information that is not captured by standard VR persistence diagrams, another motivation for considering persistence sets $\Dvr{n,k}$ for small $n$ as features that can help in shape/data classification is that the cost incurred in their computation/approximation compares favourably against the  cost and memory requirements of computing $\dgm_{k}^\vr(X)$ as the size of $X$ increases. Furthermore, not only are the associated computational tasks  eminently parallelizable (cf. Figure \ref{fig:multiplex}) but also, when $n$ is small, the amount of memory needed for computing  persistent sets is also notably smaller than for computing persistence diagrams over the same data set. See Sections \ref{sec:computational_cost_Dnk_matrixtime} and \ref{sec:memory_comparison} for a detailed discussion.

\paragraph{Principal persistence sets, their characterization and an algorithm.}
Persistence sets are defined to be sets of persistence diagrams and, although a single persistence diagram is easy to visualize, large collections of them might not be so. However, our main result (Theorem \ref{thm:n=2k+2}) says that the degree $k$ persistence diagram of $X$ contains no points if $|X| < 2k+2$ and at most one point if $|X|=2k+2$. For that reason, we \emph{aggregate} all persistence diagrams in the \textbf{principal persistence set} $\Dvr{2k+2,k}(X)$ on the same axis; cf. Figure \ref{fig:stack-D-to1}.

Furthermore, Theorem \ref{thm:n=2k+2} gives a precise representation of the unique point in the degree $k$ persistence diagram of a metric space with at most $n_k:=2k+2$ points via a formula which induces an algorithm for computing the principal persistence sets. This algorithm is purely geometric in the sense that it does not rely on analyzing boundary matrices as the standard persistent homology algorithms but, in contrast, directly operates at the level of distance matrices. For any $k$, this geometric algorithm has cost $O(n_k^2)\approx O(k^2)$ as opposed to the much larger cost incurred by the algebraic algorithms; see Proposition \ref{prop:bd_times_algorithm}. This makes the practical approximation of principal persistence sets to be very efficient; see Corollary \ref{coro:comp-cost-pps}.

\begin{figure}[ht]
	\centering
	\includegraphics[scale=0.8]{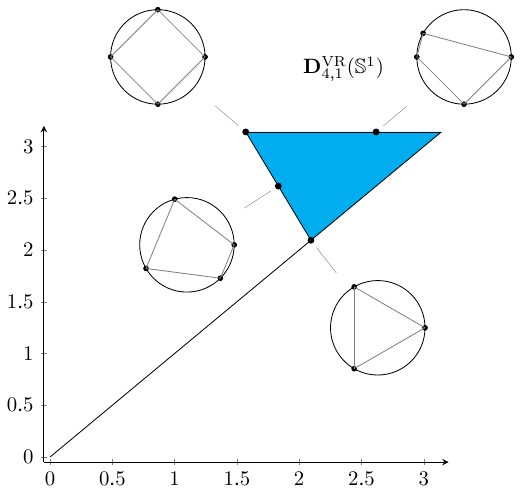}
	
	\caption{\textbf{Characterization of $\Dvr{4,1}(\Sphere{1})$:} The $(4,1)$-persistence set of $\Sphere{1}$ (with geodesic distance) is the shaded triangular area where the top left and top right points have coordinates $(\frac{\pi}{2},\pi)$ and $(\pi,\pi)$, respectively,  whereas the lowest diagonal point has coordinates $(\frac{2\pi}{3},\frac{2\pi}{3})$. This is the $k=1$ case of Theorem \ref{thm:Dnk_S1_k_odd}. The figure also shows exemplary  configurations $X \subset \Sphere{1}$ with $|X| \leq 4$ together with  their respective persistence diagrams inside of $\Dvr{4,1}(\Sphere{1})$.}
	\label{fig:example_S1_configurations}
\end{figure}

\paragraph{Characterization results.}
We fully characterize the principal persistence sets $\Dvr{{2k+2},k}(\Sphere{1})$ (Theorems \ref{thm:critical_tb_even} and \ref{thm:Dnk_S1_k_odd}). In particular,  these results prove that $\Dvr{4,1}(\Sphere{1})$ coincides with the triangle in $\R^2$ with vertices $(\frac{2\pi}{3},\frac{2\pi}{3})$, $(\frac{\pi}{2},\pi)$, and $(\pi,\pi)$; see Figure \ref{fig:example_S1_configurations}. We also characterize the persistence measure $\Unk{4,1}{\vr}(\Sphere{1})$, which is supported on $\Dvr{4,1}(\Sphere{1})$, in Proposition \ref{prop:U41_S1}. Furthermore, if $\Sphere{1}$ has the uniform probability measure, we show that $\Unk{4,1}{\vr}(\Sphere{1})$ has probability density function $f(t_b,t_d)=\frac{12}{\pi^3}(\pi-t_d)$, for any $(t_b,t_d)$ in the triangular region specified in Figure \ref{fig:example_S1_configurations}. Propositions \ref{prop:D41_R2_S2} and \ref{prop:Dk_Sn_stabilizes}, and Corollary \ref{cor:D41_Sm_stabilizes} provide additional information about higher dimensional spheres. In particular, we discuss the use of a MCMC random walk to effectively sample from $\Dvr{6,2}(\Sphere{2})$; see Conjecture \ref{conj:D62_S2_E}. Example \ref{ex:D41_spheres} has computational approximations of the persistence measure $\Uvr{4,1}$ of the 2-sphere and the torus. These characterization results are in the same spirit as those pioneered by Adamaszek and Adams on the Vietoris-Rips persistence diagrams of circles and spheres \cite{aa17}; see also \cite{osman-memoli}.

We also compute $\Dvr{4,1}(\R^n)$ using Ptolemy's inequality (Proposition \ref{prop:D41_R2_S2}). In fact, Ptolemy's inequality generalizes to non-Euclidean geometries, so we can also characterize the (4,1)-persistence sets of surfaces with constant curvature $M_\kappa$. For clarity, $M_0 = \R^2$, $M_\kappa$ is the sphere of radius $1/\sqrt{\kappa}$ if $\kappa>0$ or a rescaling of the hyperbolic plane if $\kappa<0$.
\begin{restatable*}{theorem}{DiagramMKappa}\label{thm:D41_Mk}
	Let $M_\kappa$ be the 2-dimensional model space with constant sectional curvature $\kappa$. Then:
	\begin{itemize}
		\item If $\kappa>0$, $\Dvr{4,1}(M_\kappa) = \left\{ (t_b,t_d)|\ \sin \left( \frac{\sqrt{ \kappa}}{2} t_d \right) \leq \sqrt{2}\sin \left( \frac{\sqrt{ \kappa}}{2} t_b \right) \text{ and } 0 < t_b < t_d \leq \frac{\pi}{\sqrt{\kappa}} \right\}$.
		\item If $\kappa=0$, $\Dvr{4,1}(M_0) = \left\{ (t_b,t_d)|\ 0 \leq t_b < t_d \leq \sqrt{2}t_b \right\}$.
		\item If $\kappa<0$, $\Dvr{4,1}(M_\kappa) = \left\{ (t_b,t_d)|\ \sinh\left( \frac{\sqrt{-\kappa}}{2} t_d \right) \leq \sqrt{2}\sinh\left( \frac{\sqrt{ -\kappa}}{2} t_b \right) \text{ and } 0 < t_b < t_d \right\}$.
	\end{itemize}
\end{restatable*}
\indent A similar result appears in \cite{ph-detects-curvature}, where the authors studied the \v{C}ech complex of triangles in the model spaces of constant curvature. Using the logarithmic persistence (that is, the fraction $t_d/t_b$), they detected the curvature of the ambient space both analytically and experimentally. See their paper for more details.

\paragraph{An application of $\Dvr{4,1}$ to detecting homotopy type of graphs.} In Section \ref{sec:special-class}, as an application of the characterization of $\Dvr{4,1}(\Sphere{1})$, we study a class of metric graphs for which $\Dvr{4,1}$, a rather coarse invariant which is fairly easy to estimate and compute in practice, is able to characterize the homotopy type of graphs in this class. %

\paragraph{Stability.}
In Theorems \ref{thm:stability_Dnk} and \ref{thm:stability_Unk} we establish the stability of persistence sets and measures under the modified Gromov-Hausdorff and Gromov-Wasserstein distances. Such results give lower bounds for these distances which are computable in polynomial time. In particular, see Section \ref{sec:dGH_spheres}.

\paragraph{Coordinates.}
As the objects $\Unk{n,k}{\vr}$ can be considerably complex, a system of coordinates $\{\zeta_\alpha:\D \to \R\}_{\alpha \in A}$ that exhausts the information contained in the persistence measures is desirable. Although we do not propose such a family (but \cite{acc13, coordinates-tropical, kernel-hiraoka} do), we do prove the stability of the 1-norm of the cumulative distribution associated to a Lipschitz coordinate. Specifically, let $\zeta:\D \to \R$. For an mm-space $\mm{X}$, consider the probability measure $\zeta_\#\Unk{n,k}{\FiltFunc}(X)$ on $\R$. Let $H_X(t;n,k,\FiltFunc,\zeta) := \Unk{n,k}{\FiltFunc}(X)\big( \zeta\inv(-\infty,t] \big)$ be its cumulative distribution function. We prove:

\begin{restatable*}{theorem}{StabilityCoordinates}\label{thm:stability-coordinates}
	Let $\zeta:\D \to \R$ be an $L(\zeta)$-Lipschitz coordinate function, and suppose $\FiltFunc$ is a stable filtration functor. Write $H_X(t) = H_X(t;n,k,\FiltFunc,\zeta)$ to simplify the notation. Then, for any two mm-spaces $X$ and $Y$,
	\begin{equation*}
		\int_\R |H_X(t) - H_Y(t)|dt \leq L(\zeta) L(\FiltFunc) \cdot \dGW{1}(X,Y).
	\end{equation*}
\end{restatable*}

\paragraph{Concentration results for $\Unk{n,k}{\mathrm{VR}}$.}
Another consequence of the stability of persistence measures is the concentration of $\Unk{n,k}{\FiltFunc}(X)$ as $n \to \infty$. Denote the expected value of a random variable $X$ distributed according to the probability measure $\mu$ with $E_\mu[X]$. Then:
\begin{restatable*}{theorem}{UnkConcentrates}\label{thm:Unk_concentrates}
	Let $\mm{X}$ be an mm-space and $\FiltFunc$ a stable filtration functor. For any $n,k \in \N$, consider the random variable $\mathbb{D}$ valued in $\Dnk{n,k}{\FiltFunc}(X)$ distributed according to $\Unk{n,k}{\FiltFunc}(X)$. Then:
	\begin{itemize}
		\item For any $\varepsilon>0$, $E_{\Unk{n,k}{\FiltFunc}(X)}\left[\dB\left(\mathbb{D}, \dgm_k^\FiltFunc(X) \right) \right] < \diam(X) \cdot C_X(n,\varepsilon) + \varepsilon$.
		\item As a consequence, the mm-space $\Dw{n,k}{\FiltFunc}(X) = \left(\Dnk{n,k}{\FiltFunc}(X), \dB, \Unk{n,k}{\FiltFunc}(X) \right)$ concentrates to a one-point mm-space as $n \to \infty$.
	\end{itemize}
\end{restatable*}
Similar results appear in \cite{blumberg2012robust} \cite{chazal2015subsampling}. The approach in \cite{chazal2015subsampling} is studying the expected value $E[\lambda_Z]$ of the persistence landscape $\lambda_Z$ of a sample $Z = \{x_1,\dots,x_m\} \subset X$. They show that this procedure is stable under the Gromov-Wasserstein distance, and provide a bound on the expected $\ell_\infty$ distance between the persistence landscape of $X$ and $\overline{\lambda_n^m}$. These two results are analogous, respectively, to our Theorem \ref{thm:stability_Unk} and to item 1 of \ref{thm:Unk_concentrates} above. As for \cite{blumberg2012robust}, the authors study the statistical robustness of persistent homology invariants. They have two results similar to ours. One is the stability of the measures $\Unk{n,k}{\FiltFunc}(X)$ (they write $\Phi_k^n(X)$ instead) under the Gromov-Prokhorov distance (instead of the Gromov-Wasserstein distance). The second is a central limit theorem, where the measures $\Unk{n,k}{\FiltFunc}(S_i)$ corresponding to an increasing sequence of finite samples $S_1 \subset S_2 \subset \cdots \subset X$ converge in probability to $\Unk{n,k}{\FiltFunc}(X)$.

\subsection{Related work}
The measures $\Unk{n,k}{\mathrm{VR}}$ first appeared in a preprint by Blumberg et al. \cite{blumberg2012robust} in 2012 and then in print in \cite{blumberg2014robust}. These measures were also exploited a few years later by Chazal et al. in the articles \cite{chazal-arxiv-2014,chazal2015subsampling} in order to devise bootstrapping methods for the estimation of persistence diagrams.

The connection to Gromov's curvature sets and measures was not recognized in either of these two papers. \cite{mem12} studied curvature sets and their role in shape comparison and, as a natural follow up, some results regarding the persistence sets $\Dvr{n,k}$ and the measures $\Unk{n,k}{\vr}$ (as well as the more general objects $\Dnk{n,k}{\mathfrak{F}}$ and $\Unk{n,k}{\mathfrak{F}}$) were first described in Banff in 2012 during a conference \cite{banff12} by the second author\footnote{Subsequent develoments were described in 2013 at ACAT 2013 in Bremen \cite{acat13} and  Bedlewo \cite{bedlewo13}, and then at IMA \cite{ima14} and at  SAMSI in 2014 \cite{samsi14}.}  as stable and computationally easier alternatives to the usual Vietoris-Rips persistence diagrams of metric spaces \cite{samsi14-slides}. 

In \cite{solomon2021geometry}  Bendich et al. discuss ideas related to our construction of $\Dnk{n,k}{\mathfrak{F}}.$  The authors pose questions about the discriminative power of a certain labeled version of the persistent sets $\Dvr{n,k}$ (even though they do not call them that).  \cite{memoli2018gromov} has recently explored the classificatory power of $\mu_2$ (see equation  (\ref{eq:mun})) as well as that of certain \emph{localizations} of $\mu_2$. In \cite{filtration-families} the authors identify novel classes of simplicial filtrations arising from curvature sets together with suitable notions of locality.

In terms of data centric applications, the neuroscience paper \cite{singh2008topological} made use of ideas related to $\Unk{n,k}{\vr}$ and $\Dvr{n,k}$ in the context of analysis of neuroscientific data.

\subsection{Acknowledgements} We thank Henry Adams for bringing his paper \cite{aa17} to our attention. The ideas contained therein were helpful in proving some of the results of Section \ref{sec:Dvr_S1_k_odd}. We also thank the anonymous reviewers for their transformative feedback.

\indent We acknowledge funding from these sources: NSF AF 1526513, NSF DMS 1723003, NSF CCF 1740761,  NSF CCF 1839358, and BSF 2020124.

\paragraph{Data availability statement}
The data analysed in this paper comes from the article \cite{sumner-paper} and is available in \url{https://people.csail.mit.edu/sumner/research/deftransfer/}.

\section{Background}
For us, $\M$ and $\M^\text{fin}$ will denote, respectively, the category of compact and finite metric spaces. The morphisms in both categories will be 1-Lipschitz maps, that is, functions $\varphi:X \to Y$ such that $d_Y(\varphi(x), \varphi(x')) \leq d_X(x,x')$ for all $(X,d_X),(Y,d_Y)$ in $\M$ or $\M^\text{fin}$. We say that two metric spaces are isometric if there exists a surjective isometry $\varphi:X \to Y$, \textit{i.e.} a surjective map such that $d_Y(\varphi(x), \varphi(x')) = d_X(x,x')$ for all $x,x' \in X$. We also say that a space is \define{geodesic} if for any $x,x' \in X$, there exists an isometry $\gamma:[0,d] \to X$ such that $d = d_X(x,x')$, $\gamma(0)=x$ and $\gamma(d)=x'$.

\subsection{Metric geometry}
In this section, we define the tools that we'll use to quantitatively compare metric spaces \cite{bbi01}.
\begin{defn}\label{def:rad-diam}
	For any subset $A$ of a metric space $X$, its \define{diameter} is $\diam_X(A) := \sup_{a,a' \in A} d_X(a,a')$, and its \define{radius} is $\rad_X(A) := \inf_{p \in X} \sup_{a \in A} d_X(p,a)$. Note that $\rad_X(A) \leq \diam_X(A)$. The \define{separation} of $X$ is $\sep(X) := \inf_{x \neq x'} d_X(x,x')$.
\end{defn}

\begin{defn}[Hausdorff distance]\label{def:hausdorff_distance}
	Let $A,B$ be subsets of a compact metric space $(X,d_X)$. The \define{Hausdorff distance} between $A$ and $B$ is defined as
	\begin{equation*}
		\dH^X(A,B) := \inf \left\{ \varepsilon>0 \ | \ A \subset B^\varepsilon \text{ and } B \subset A^\varepsilon \right\},
	\end{equation*}
	where $A^\varepsilon := \left\{ x \in X \ | \ \inf_{a \in A} d_X(x,a) < \varepsilon \right\}$ is the $\varepsilon$-thickening of $A$. It is known that $\dH^X(A,B)=0$ if, and only if their closures are equal: $\bar{A} = \bar{B}$.
\end{defn}

We will use an alternative definition that is useful for calculations, but is not standard in the literature.
\begin{defn}
	A \define{correspondence} between two sets $X$ and $Y$ is a set $R \subset X \times Y$ such that $\pi_1(R) = X$ and $\pi_2(R)=Y$, where $\pi_i$ is the projection to the $i$-th coordinate. We will denote the set of all correspondences between $X$ and $Y$ as $\mathcal{R}(X,Y)$.
\end{defn}

\begin{prop}[Proposition 2.1 of \cite{memoli-dghlp-long}]
	\label{prop:Hausdorff_distance}
	For any compact metric space $(X,d_X)$ and any $A,B \subset X$ closed,
	\begin{equation*}
		\dH^X(A,B) = \inf_{R \in {\mathcal R}(A,B)} \sup_{(a,b) \in R} d_X(a,b).
	\end{equation*}
\end{prop}

The standard method for comparing two metric spaces is a generalization of the Hausdorff distance.
\begin{defn}
	For any correspondence $R$ between $(X, d_X), (Y, d_Y) \in \M$, we define its \define{distortion} as
	\begin{equation*}
		\dis(R) := \max\left\{ |d_X(x,x') - d_Y(y,y')| : (x,y),(x',y') \in R \right\}.
	\end{equation*}
	Then the \define{Gromov-Hausdorff distance} between $X$ and $Y$ is defined as
	\begin{equation*}
		\dGH(X,Y) := \dfrac{1}{2} \inf_{R \in {\mathcal R}(X,Y)} \dis(R).
	\end{equation*}
\end{defn}

\subsection{Metric measure spaces}
To model the situation in which points are endowed with a notion of weight (signaling their trustworthiness), we will also consider finite metric spaces enriched with probability measures \cite{memoli-dghlp-long}. Recall that the \define{support} $\supp(\nu)$ of a Borel measure $\nu$ defined on a topological space $Z$ is defined as the minimal closed set $Z_0$ such that $\nu(Z \setminus Z_0)=0$. If $\varphi:Z \to X$ is a measurable map from a measure space $(Z,\Sigma_Z,\nu)$ into the measurable space $(X,\Sigma_X)$, then the \define{pushforward measure} of $\nu$ induced by $\varphi$ is the measure $\varphi_\#\nu$ on $X$ defined by $\varphi_\#\nu(A) := \nu(\varphi\inv(A))$ for all $A \in \Sigma_X$. %

\begin{defn}
	\label{def:mm_space}
	A \define{metric measure space} is a triple $\mm{X}$ where $(X,d_X)$ is a compact metric space and $\mu_X$ is a Borel probability measure on $X$ with full support, \textit{i.e.} $\supp(\mu)=X$. Two mm-spaces $\mm{X}$ and $\mm{Y}$ are isomorphic if there exists an isometry $\varphi:X \to Y$ such that $\varphi_\#\mu_X = \mu_Y$. We define the category of mm-spaces $\Mw$, where the objects are mm-spaces and the morphisms are 1-Lipschitz maps $\varphi:X \to Y$ such that $\varphi_\# \mu_X = \mu_Y$.
\end{defn}

The following definitions are used to compare mm-spaces.
\begin{defn}\label{def:couplings}
	Given two measure spaces $(X, \Sigma_X, \mu_X)$ and $(Y, \Sigma_Y, \mu_Y)$, a \define{coupling} between $\mu_X$ and $\mu_Y$ is a measure $\mu$ on $X \times Y$ such that $\mu(A \times Y) = \mu_X(A)$ and $\mu(X \times B) = \mu_Y(B)$ for all measurable $A \in \Sigma_X$ and $B \in \Sigma_Y$ (in other words, $(\pi_1)_\# \mu = \mu_X$ and $(\pi_2)_\# \mu = \mu_Y$). We denote the set of couplings between $\mu_X$ and $\mu_Y$ as $\coup(\mu_X, \mu_Y)$.
\end{defn}

\begin{remark}[The support of a coupling is a correspondence]
	Notice that, since $\mu_X$ is fully supported and $X$ is finite, then $\mu(\pi_1\inv(x)) = \mu_X(\{x\}) \neq 0$ for any fixed coupling $\mu \in \coup(\mu_X, \mu_Y)$. Thus, the set $\pi_1\inv(x) \cap \supp(\mu)$ is non-empty for every $x \in X$. The same argument on $Y$ shows that $\supp(\mu)$ is a correspondence between $X$ and $Y$.
\end{remark}

\begin{defn}
	Given a metric space $(Z, d_Z)$, let $\P(Z)$ be the set of Borel probability measures on $Z$. Given $\alpha,\beta \in \P(Z)$ and $p \geq 1$, the \define{Wasserstein distance} of order $p$ is defined as \cite{villani}:
	\begin{equation*}
	\dW{p}^Z(\alpha, \beta) :=
	\inf_{\mu \in \coup(\alpha, \beta)} \left( \iint_{Z \times Z} (d_Z(z,z'))^p \mu(dz \times dz') \right)^{1/p}.
	\end{equation*}
\end{defn}

\indent To compare two mm-spaces, we have the following distance.
\begin{defn}
	Given two mm-spaces $\mm{X}$ and $\mm{Y}$, $p \geq 1$, and $\mu \in \coup(\mu_X, \mu_Y)$, we define the \define{$p$-distortion} of $\mu$ as:
	\begin{equation*}
	\dis_p(\mu) := \left( \iint |d_X(x,x')-d_Y(y,y')|^p \mu(dx \times dy) \mu(dx' \times dy') \right)^{1/p}.
	\end{equation*}
	For $p=\infty$ we set $\dis_\infty(\mu) := \dis(\supp(\mu))$.\\
	\indent  The \define{Gromov-Wasserstein distance} of order $p \in [1,\infty]$ between $X$ and $Y$ is defined as \cite{memoli-dghlp-long}:
	\begin{equation*}
	\dGW{p}(X,Y) := \dfrac{1}{2} \inf_{\mu \in \coup(\mu_X, \mu_Y)} \dis_p(\mu).
	\end{equation*}
\end{defn}
\begin{remark}
	For each $p \in [1,\infty]$, $\dGW{p}$ defines a legitimate metric on the collection of isomorphism classes of mm-spaces in $\Mw$ \cite{memoli-dghlp-long}.
\end{remark}

\subsection{Simplicial complexes}
\begin{defn}
	Let $V$ be a set. An \define{abstract simplicial complex} $K$ with vertex set $V$ is a collection of finite subsets of $V$ such that if $\sigma \in K$, then every $\tau \subset \sigma$ is also in $K$. We also use $K$ to denote its geometric realization. A set $\sigma \in K$ is called a $k$-face if $|\sigma|=k+1$. A \define{simplicial map} $f:K_1 \to K_2$ is a set map $f:V_1 \to V_2$ between the vertex sets of $K_1$ and $K_2$ such that if $\sigma \in K_1$, then $f(\sigma) \in K_2$.
\end{defn}

We will focus on two particular complexes.
\begin{defn}
	Let $(X, d_X) \in \M$ and $r \geq 0$. The \define{Vietoris-Rips complex} of $X$ at scale $r$ is the simplicial complex
	\begin{equation*}
		\vrcomp_{r}(X) := \left\{ \sigma \subset X \text{ finite}: \diam_X(\sigma) \leq r \right\}.
	\end{equation*}
\end{defn}

\begin{defn}\label{def:cross_poly}
	Fix $n \geq 1$. Let $e_i := (0,\dots,1,\dots,0)$ be the $i$-th standard basis vector in $\R^n$ and $V:=\{\pm e_1, \dots, \pm e_n\}$. Let $\crosspoly{n}$ be the collection of subsets $\sigma \subset V$ that don't contain both $e_i$ and $-e_i$. This simplicial complex is called the \define{$n$-th cross-polytope}.
\end{defn}

\begin{figure}
	\centering
\begin{tabular}{ccc}

\begin{tikzpicture}[scale=1.25]
\tikzmath{
	\pt=0.025;
	\pos1 = 3.5;
	\pax1 = -1;	\pay1 = 0;
	\pax2 =  1;	\pay2 = 0;
	\pbx1 = \pos1-1;	\pby1 = 0;
	\pbx2 = \pos1;		\pby2 = 1;
	\pbx3 = \pos1+1;	\pby3 = 0;
	\pbx4 = \pos1;		\pby4 = -1;
}

\draw[fill] (\pax1,\pay1) circle [radius=\pt] node[below]{$-e_1$};
\draw[fill] (\pax2,\pay2) circle [radius=\pt] node[below]{$ e_1$};
\draw[fill] (\pbx1,\pby1) circle [radius=\pt] node[left ]{$-e_1$};
\draw[fill] (\pbx2,\pby2) circle [radius=\pt] node[above]{$ e_2$};
\draw[fill] (\pbx3,\pby3) circle [radius=\pt] node[right]{$ e_1$};
\draw[fill] (\pbx4,\pby4) circle [radius=\pt] node[below]{$-e_2$};

\draw[dotted] (\pax1,\pay1) -- (\pax2,\pay2);

\draw (\pbx1,\pby1) -- (\pbx2,\pby2);
\draw (\pbx2,\pby2) -- (\pbx3,\pby3);
\draw (\pbx3,\pby3) -- (\pbx4,\pby4);
\draw (\pbx4,\pby4) -- (\pbx1,\pby1);
\end{tikzpicture}

& ~ &
\begin{tikzpicture}[scale = 1.25,
	z={(-.3cm,-.2cm)}, %
	line join=round, line cap=round %
]
\draw (0,1,0) -- (-1,0,0) -- (0,-1,0) -- (1,0,0) -- (0,1,0) -- (0,0,1) -- (0,-1,0) (1,0,0) -- (0,0,1) -- (-1,0,0);
\draw (0,1,0) -- (0,0,-1) -- (0,-1,0) (1,0,0) -- (0,0,-1) -- (-1,0,0);

\draw[fill] ( 1, 0, 0) circle[radius=0.025] node[right]{$ e_2$};
\draw[fill] (-1, 0, 0) circle[radius=0.025] node[left ]{$-e_2$};
\draw[fill] ( 0, 1, 0) circle[radius=0.025] node[above]{$ e_3$};
\draw[fill] ( 0,-1, 0) circle[radius=0.025] node[below]{$-e_3$};
\draw[fill] ( 0, 0, 1) circle[radius=0.025];
\draw[dashed] (0,0, 1) -- (-0.75,-0.55,0) node[below left]{$e_1$};
\draw[fill] ( 0, 0,-1) circle[radius=0.025];
\draw[dashed] (0,0,-1) -- ( 0.75, 0.75,0) node[above]{$-e_1$};

\end{tikzpicture} 

\end{tabular} 	\caption{From left to right: $\crosspoly{1}, \crosspoly{2}, \crosspoly{3}$ (there is no edge between the vertices of $\crosspoly{1}$). See Definition \ref{def:cross_poly}.} \label{fig:crosspoly}
\end{figure}

\subsection{Persistent homology}
We adopt definitions from \cite{mem17,filtration-families}.

\begin{defn}\label{def:filtrations}
	A \define{filtration} on a finite set $X$ is a function $F_X:\pow(X) \to \R$ such that $F_X(\sigma) \leq F_X(\tau)$ whenever $\sigma \subset \tau$, and we call the pair $(X,F_X)$ a \define{filtered set}. $\Filt$ will denote the category of finite filtered sets, where objects are pairs $(X,F_X)$ and the morphisms $\varphi:(X,F_X) \to (Y,F_Y)$ are set maps $\varphi:X \to Y$ such that $F_Y(\varphi(\sigma)) \leq F_X(\sigma)$. A \define{filtration functor} is any functor $\FiltFunc:\M^\mathrm{fin} \to \Filt$ where $(X, F_X) = \FiltFunc(X)$ and $F_X: \pow(X) \to \R$. Observe that filtration functors are equivariant under isometries.
\end{defn}

\begin{defn}
	Given $(X, d_X) \in \M^\text{fin}$, define the \define{Vietoris-Rips filtration} $F_X^\vr$ by setting $F_X^\vr(\sigma) := \diam(\sigma)$ for $\sigma \subset X$. It is straightforward to check that this construction is functorial, so we define the \define{Vietoris-Rips filtration functor} $\FiltFunc^\vr:\M^\text{fin} \to \Filt$ by $(X,d_X) \mapsto (X,F_X^\vr)$.
\end{defn}
More examples of filtration functors, such as the \v{C}ech filtration, can be found in \cite{filtration-families}.\\
\indent Given a filtration functor $\FiltFunc$, we assign a persistence diagram to $(X,d_X)$ as follows. Let $(X,F_X^\FiltFunc)=\FiltFunc(X,d_X)$. For every $r > 0$, we construct the simplicial complex $L_r := \left\{ \sigma \subset X: F_X^\FiltFunc(\sigma) \leq r \right\}$\footnote{Notice that if $\FiltFunc = \FiltFunc^\vr$, then $L_r = \vrcomp_{r}(X)$.}, giving a nested sequence of simplicial complexes $L_{r_0} \subset L_{r_1} \subset L_{r_2} \subset \cdots \subset L_{r_m}$. We apply reduced homology $\widetilde{H}_k(\cdot, \Fld)$ with field coefficients at each step, and we get a \emph{persistent vector space} $\PH_k^\FiltFunc(X)$ which decomposes as a sum of interval modules $\PH_k^\FiltFunc(X) \cong \bigoplus_{\alpha \in A} \Int{b_\alpha}{d_\alpha}$ where $A$ is a finite indexing set \cite{cds10}. We can also represent a persistent vector space by the multiset $\dgm_k^\FiltFunc(X) = \{(b_\alpha, d_\alpha) | \ 0 \leq b_\alpha < d_\alpha, \alpha \in A\}$, called a \emph{persistence diagram}. We denote the empty persistence diagram, which corresponds to the persistence module $\PH_k^\FiltFunc(X) = 0$, as $\emptyset$. Notice that using reduced homology implies that $\widetilde{H}_k(L_r)=0$ for $r \geq F_X^\FiltFunc(X)$, and so $d_\alpha < \infty$ for all $\alpha \in A$, regardless of the dimension $k$. In dimension 0, this removes the infinite interval. We denote by $\D$ the collection of all finite persistence diagrams. We say that a point $P = (b_\alpha, d_\alpha)$ in a persistence diagram $D \in \D$ has persistence $\pers(P) := d_\alpha-b_\alpha$ and define $\pers(D) := \max\{\pers(P) | \ P \in D\}$. Let $\dB$ be the \define{bottleneck distance}.

\begin{defn}\label{def:stable_filtration_functors}
	We say that a filtration functor $\FiltFunc:\M^\mathrm{fin} \to \Filt$ is \define{stable} if there exists a constant $L>0$ such that
	\begin{equation*}
		\dB(\dgm_k^\FiltFunc(X), \dgm_k^\FiltFunc(Y)) \leq L \cdot \dGH(X,Y)
	\end{equation*}
	for all $X,Y \in \M^\text{fin}$ and $k \in \N$. The infimal $L$ that satisfies the above is called the \define{Lipschitz constant} of $\FiltFunc$ and denoted by $L(\FiltFunc)$.
\end{defn}
The Vietoris-Rips and \v{C}ech filtrations are stable and, in fact, $L(\FiltFunc^\vr)=2$.
\section{Curvature sets, persistence diagrams and persistent sets}
Given a compact metric space $(X,d_X)$, Gromov identified a class of full invariants called \define{curvature sets} (see Section $1.19_+$ of \cite{gro99} for the definition, and Section $3\frac{1}{2}.4$ for the terminology ``curvature sets''). Intuitively, the $n$-th curvature set contains the metric information of all possible samples of $n$ points from $X$. In this section, we define persistence sets as an invariant that captures the persistent homology of all $n$-point samples of $X$. We start by recalling Gromov's definition, and defining an analogue of the Gromov-Hausdorff distance in terms of curvature sets. We then define persistence sets and study their stability with respect to this modified Gromov-Hausdorff distance. We also extend these constructions to mm-spaces.

\begin{defn}
	Let $(X,d_X)$ be a metric space. Given a positive integer $n$, let $\Psi_X^{(n)}:X^n \to \R^{n \times n}$ be the map that sends an $n$-tuple $(x_1,\dots,x_n)$ to the distance matrix $M$, where $M_{ij} = d_X(x_i,x_j)$. The \define{$n$-th curvature set} of $X$ is $\Kn_n(X) := \operatorname{im}(\Psi_X^{(n)})$, the collection of all distance matrices of $n$ points from $X$.
\end{defn}

\begin{remark}[Functoriality of curvature sets]\label{rmk:functorial_curvature_sets}
	Curvature sets are functorial in the sense that if $X$ is isometrically embedded in $Y$, then $\Kn_n(X) \subset \Kn_n(Y)$.
\end{remark}

\begin{example}
	$\Kn_2(X)$ is the set of distances of $X$. If $X$ is geodesic, $\Kn_2(X) = [0, \diam(X)]$.
\end{example}

\begin{example}\label{ex:Kn_two_points}
	Let $X = \{p,q\}$ be a two point metric space with $d_X(p,q) = \delta$. Then
	\begin{align*}
		\Kn_3(X) &= \left\{ \Psi_X^{(3)}(p,p,p), \Psi_X^{(3)}(p,p,q), \Psi_X^{(3)}(p,q,p), \Psi_X^{(3)}(q,p,p),\right. \\
		& \left.\phantom{= \{} \Psi_X^{(3)}(q,q,q), \Psi_X^{(3)}(q,q,p), \Psi_X^{(3)}(q,p,q), \Psi_X^{(3)}(p,q,q) \right\} \\
		&= \left\{\left(
		\begin{smallmatrix}
			0 & 0 & 0\\
			0 & 0 & 0\\
			0 & 0 & 0
		\end{smallmatrix} \right),
		\left(
		\begin{smallmatrix}
			0 & 0 & \delta\\
			0 & 0 & \delta\\
			\delta & \delta & 0
		\end{smallmatrix} \right),
		\left(\begin{smallmatrix}
			0 & \delta & 0\\
			\delta & 0 & \delta\\
			0 & \delta & 0
		\end{smallmatrix} \right),
		\left(\begin{smallmatrix}
			0 & \delta & \delta\\
			\delta & 0 & 0\\
			\delta & 0 & 0
		\end{smallmatrix} \right) \right\}.
	\end{align*}
	
	For $n \geq 2$ and $0 < k < n$, let $x_1 = \cdots = x_k=p$ and $x_{k+1} = \cdots = x_n = q$. Define
	\begin{equation*}
		M_k(\delta) := \Psi_X^{(n)}(x_1, \dots, x_n) = 
		\left(\begin{array}{c|c}
			\mathbf{0}_{k \times k}		& \delta \cdot \mathbf{1}_{k \times (n-k)} \\
			\hline
			\delta \cdot \mathbf{1}_{(n-k) \times k}		& \mathbf{0}_{(n-k) \times (n-k)}
		\end{array} \right),
	\end{equation*}
	where $\mathbf{0}_{r \times s}$ and $\mathbf{1}_{r \times s}$ are the $r \times s$ matrices with all entries equal to 0 and 1, respectively. If we make another choice of $x_1,\dots,x_n$, the resulting distance matrix will change only by a permutation of its rows and columns. Thus, if we define $M_k^\Pi(\delta) := \Pi^T \cdot M_k(\delta) \cdot \Pi$, for some permutation matrix $\Pi \in S_n$, then
	\begin{equation*}
		\Kn_n(X) = \left\{ \mathbf{0}_{n \times n} \right\} \cup \left\{ M_k^\Pi(\delta): 0 < k < n \text{ and } \Pi \in S_n \right\}.
	\end{equation*}
\end{example}

\noindent
\begin{minipage}{.7\textwidth}
	\begin{example}\label{ex:K_3(S1)}
		In this example we describe $\Kn_3(\Sphere{1})$, where $\Sphere{1} = [0,2\pi] / (0 \sim 2\pi)$ is equipped with the geodesic metric. Depending on the position of $x_1,x_2,x_3$, we need two cases. If the three points are not contained in the same semicircle, then $d_{12}+d_{23}+d_{31}=2\pi$. If they are, then there exists a point, say $x_2$, that lies on the shortest path joining the other two so that $d_{13} = d_{12}+d_{23} \leq \pi$. The other possibilities are $d_{12}=d_{13}+d_{32}$ and $d_{23}=d_{21}+d_{13}$.\\
		Let $M := \Psi_{\Sphere{1}}^{(3)}(x_1,x_2,x_3)$. Since $M$ is symmetric and its diagonal entries are 0, we only need 3 entries to characterize it. If we label $x=d_{12}, y=d_{23}$ and $z=d_{31}$, then $\Kn_3(\Sphere{1})$ is the boundary of the 3-simplex with vertices $(0,0,0)$, $(\pi, \pi, 0)$, $(\pi, 0, \pi)$, and $(0, \pi, \pi)$ in $\R^3$ (see Figure \ref{fig:K3_S1}). Each of the cases in the previous paragraph corresponds to a face of this simplex. See also Appendix A and Theorem 4.33 of \cite{curvature-sets-S1} for a more thorough calculation.
	\end{example}
\end{minipage}
\hfill
\begin{minipage}{0.28\textwidth}
	\centering
	\begin{figure}[H]
		\begin{center}
\begin{tikzpicture}[scale=1.7,
	z={(-0.3cm,-0.3cm)}, %
	line join=round, line cap=round %
]
\draw[dashed] (0,0,0) -- (1,1,0);

\draw (0,0,0) -- (0,1,1);
\draw (0,0,0) -- (1,0,1);

\draw (0,1,1) -- (1,1,0) -- (1,0,1) -- cycle;

\draw[->] (0,0,0) -- (0,0,1.5) node[below left] {\small $y$};
\draw[dotted, ->] (0,0,0) -- (0,1.3,0) node[above] {\small $z$};
\draw[dotted, ->] (0,0,0) -- (1.3,0,0) node[right] {\small $x$};

\end{tikzpicture}  		\end{center}
		\caption{The curvature set $\Kn_3(\Sphere{1})$; cf. Example \ref{ex:K_3(S1)}}
		\label{fig:K3_S1}
	\end{figure}
\end{minipage}

\medskip
\indent Gromov proved that curvature sets are a full invariant of compact metric spaces, which means that the compact spaces $X$ and $Y$ are isometric if and only if $\Kn_n(X)=\Kn_n(Y)$ for all $n \geq 1$ \cite[Section 3.27]{gro99}. For this reason, the following definition from \cite{mem12} defines a bona-fide metric on compact metric spaces.

\begin{defn}[\cite{mem12}]\label{def:modifiedGH}
	The \define{modified Gromov-Hausdorff} distance between $X,Y \in \M$ is
	\begin{equation}\label{eq:modifiedGH}
		\widehat{d}_\mathcal{GH}(X,Y) := \dfrac{1}{2} \sup_{n \in \N} \dH(\Kn_n(X), \Kn_n(Y)).
	\end{equation}
	Here $\dH$ denotes the Hausdorff distance on $\R^{n \times n}$ with $\ell^\infty$ distance.
\end{defn}

\cite{mem12} proved that:
\begin{equation}\label{ineq:dGH_hat_vs_dGH}
    \widehat{d}_\mathcal{GH}(X,Y) \leq \dGH(X,Y).
\end{equation}

A benefit of $\widehat{d}_\mathcal{GH}$ when compared to the standard Gromov-Hausdorff distance is that the computation of the latter leads in general to NP-hard problems \cite{gh_not_approx_polytime}, whereas computing the lower bound in the equation above on certain values of $n$ leads to polynomial time problems. In \cite{mem12} it is argued that work of Peter Olver \cite{olver-joint} and Boutin and Kemper \cite{boutin} leads to identifying rich classes of shapes where these lower bounds permit full discrimination.\\

\indent The analogous definitions for mm-spaces are the following.
\begin{defn}\label{def:curvature_measures}
    Let $\mm{X}$ be an mm-space. The \define{$n$-th curvature measure} of $X$ is defined as
	\begin{equation*}
		\mu_n(X) := \left( \Psi_X^{(n)} \right)_\# \mu_X^{\otimes n},
	\end{equation*}
    where $\mu_X^{\otimes n}$ is the product measure on $X^n$. Observe that $\supp(\mu_n(X))=\Kn_n(X)$ for all $n \in \N$.\\
    We also define the \define{modified Gromov-Wasserstein distance} between $X,Y \in \Mw$ as
    \begin{equation*}
        \widehat{d}_{\mathcal{GW},p}(X,Y) := \dfrac{1}{2} \sup_{n \in \N} \dW{p}(\mu_n(X), \mu_n(Y)),
    \end{equation*}
    where $\dW{p}$ is the $p$-Wasserstein distance \cite{villani} on $\P(\R^{n \times n})$, and $\R^{n \times n}$ is equipped with the $\ell^\infty$ distance.
\end{defn}

\indent The modified $p$-Gromov-Wasserstein distance satisfies an inequality similar to (\ref{ineq:dGH_hat_vs_dGH}).
\begin{restatable}{theorem}{StabilitydGWHat}
    \label{thm:stability_dGW_hat}
    For any $X, Y \in \Mw$,
    \begin{equation*}
        \dW{p}(\mu_n(X),\mu_n(Y)) \leq 2 \binom{n}{2}^{\frac{1}{p}} \dGW{p}(X,Y)
    \end{equation*}
    for $1 \leq p < \infty$. If $p=\infty$,
    \begin{equation}
        \widehat{d}_{\mathcal{GW},\infty}(X,Y) \leq \dGW{\infty}(X,Y).
    \end{equation}
\end{restatable}
See Appendix \ref{sec:appendix} for the proof.

\begin{remark}[Interpretation as ``\textit{motifs}'']
	In network science \cite{mv20}, it is of interest to identify substructures of a dataset (network) $X$ which appear with high frequency. The interpretation of the definitions above is that the curvature sets $\Kn_n(X)$ for different $n \in \N$ capture the information of those substructures whose cardinality is at most $n$, whereas the curvature measures $\mu_n(X)$ capture their frequency of occurrence.
\end{remark}

\subsection{Persistence sets}
The idea behind curvature sets is to study a metric space by taking the distance matrix of a sample of $n$ points. This is the inspiration for the next definition: we want to study the persistence of a compact metric space $X$ by looking at the persistence diagrams of samples with $n$ points induced by a given filtration functor $\FiltFunc$.

\begin{defn}\label{def:pset}
	Fix $n \geq 1$ and $k \geq 0$. Let $(X,d_X) \in \M$ and $\FiltFunc:\M^\text{fin} \to \Filt$ be any filtration functor. The \define{(n,k)-$\FiltFunc$ persistence set} of $X$ is
	\begin{equation*}
		\Dnk{n,k}{\FiltFunc}(X) := \left\{\dgm_k^\FiltFunc(X'): X'\subset X \text{ such that } |X'| \leq n \right\}.
	\end{equation*}
	Even though the empty persistence diagram $\emptyset$ always belongs to the set $\Dnk{n,k}{\FiltFunc}(X)$, we establish the convention to omit writing it explicitly whenever convenient.
\end{defn}

\begin{remark}[\textbf{Persistence sets are functorial and isometry invariant}]
	\label{rmk:functorial_persistence_sets}
	Notice that, similarly to curvature sets (\textit{cf}. Remark \ref{rmk:functorial_curvature_sets}), persistence sets are functorial and isometry invariant. If $X \hookrightarrow Y$ isometrically, then $\Kn_n(X) \subset \Kn_n(Y)$, and consequently, $\Dnk{n,k}{\FiltFunc}(X) \subset \Dnk{n,k}{\FiltFunc}(Y)$ for all $n,k \in \N$. As such, they can be regarded, in principle, as \textit{signatures} that can be used to gain insight into datasets or to discriminate between different shapes.
\end{remark}

\begin{remark}\label{rmk:dgm_of_K_n}
	Recall from Definition \ref{def:filtrations} that filtration functors are equivariant under isometry. This implies that we can define the $\FiltFunc$-persistence diagram of a distance matrix as the diagram of the underlying pseudometric space. More explicitly, if a finite pseudometric space $X=\{x_1,\dots,x_n\}$ has distance matrix $\Psi_X^{(n)}(x_1,\dots,x_n) = M$, we define $\dgm_k^\FiltFunc(M) := \dgm_k^\FiltFunc(X)$. For that reason, we can view the persistence set $\Dnk{n,k}{\FiltFunc}(X)$ as the image of the map $\dgm_k^\FiltFunc:\Kn_n(X) \to \D$.
\end{remark}

Persistence sets inherit the stability of the filtration functor.
\begin{theorem}\label{thm:stability_Dnk}
	Let $\FiltFunc$ be a stable filtration functor with Lipschitz constant $L(\FiltFunc)$. Then for all $X,Y \in \M$ and integers $n \geq 1$ and $k \geq 0$, one has
	\begin{equation*}
		\dH^\D(\Dnk{n,k}{\FiltFunc}(X), \Dnk{n,k}{\FiltFunc}(Y)) \leq \frac{1}{2} L(\FiltFunc) \cdot \dH(\Kn_n(X), \Kn_n(Y)),
	\end{equation*}
	and thus
	\begin{equation*}
		\dH^\D(\Dnk{n,k}{\FiltFunc}(X), \Dnk{n,k}{\FiltFunc}(Y)) \leq L(\FiltFunc) \cdot \widehat{d}_\mathcal{GH}(X,Y),
	\end{equation*}
	where $\dH^\D$ denotes the Hausdorff distance between subsets of $\D$.
\end{theorem}

\begin{proof}%
	We will show that $\dH^\D(\Dnk{n,k}{\FiltFunc}(X), \Dnk{n,k}{\FiltFunc}(Y)) \leq \frac{1}{2} L(\FiltFunc) \cdot \dH(\Kn_n(X), \Kn_n(Y))$. Since $L(\FiltFunc) \cdot \widehat{d}_\mathcal{GH}(X,Y)$ is an upper bound for the right-hand side, the theorem will follow.\\
	\indent Assume $\dH(\Kn_n(X), \Kn_n(Y)) < \eta$. Pick any $D_1 \in \Dnk{n,k}{\FiltFunc}(X)$. Let $\mathbb{X} = (x_1, \dots, x_n) \in X^n$ such that $\Psi_X^{(n)}(\mathbb{X})=M_1$ and $D_1=\dgm_k^\FiltFunc(M_1)$. From the assumption on $\dH(\Kn_n(X), \Kn_n(Y))$, there exists $M_2 \in \Kn_n(Y)$ such that $\|M_1-M_2\|_{\infty} < \eta$. As before, let $\mathbb{Y} = (y_1,\dots,y_n)$  be such that $M_2=\Psi_Y^{(n)}(\mathbb{Y})$ and $D_2 = \dgm_k^\FiltFunc(M_2)$. By abuse of notation, consider $\mathbb{X}$ and $\mathbb{Y}$ as pseudometric spaces and observe that $D_1 = \dgm_k^\FiltFunc(\mathbb{X})$ and $D_2 = \dgm_k^\FiltFunc(\mathbb{Y})$ (see Remark \ref{rmk:dgm_of_K_n}). Then, by Definition \ref{def:stable_filtration_functors},
	\begin{equation*}
		\dB(D_1,D_2) \leq L(\FiltFunc) \cdot \dGH(\mathbb{X}, \mathbb{Y}).
	\end{equation*}
	With the correspondence $R = \left\{ (x_i,y_i) \in \mathbb{X} \times \mathbb{Y}: i=1, \dots, n\right\}$, we can bound the $\dGH(\mathbb{X}, \mathbb{Y})$ term by
	\begin{equation*}
		\dGH(\mathbb{X}, \mathbb{Y}) \leq \dfrac{1}{2} \dis(R) = \dfrac{1}{2} \max_{i,j=1,\dots,n} |d_X(x_i,x_j)-d_Y(y_i,y_j)| = \dfrac{1}{2} \|M_1-M_2\|_{\infty} < \dfrac{\eta}{2}.
	\end{equation*}
	In summary, for every $D_1 \in \Dnk{n,k}{\FiltFunc}(X)$, we can find $D_2 \in \Dnk{n,k}{\FiltFunc}(Y)$ such that $\dB(D_1,D_2) \leq L(\FiltFunc) \cdot \dGH(\mathbb{X}, \mathbb{Y}) < L(\FiltFunc) \cdot \eta/2$. Changing the roles of $X$ and $Y$ gives the same bound on the Hausdorff distance so, when we let $\eta \to \dH(\Kn_n(X), \Kn_n(Y))$, we obtain 
	\begin{equation*}
		\dH^\D(\Dnk{n,k}{\FiltFunc}(X), \Dnk{n,k}{\FiltFunc}(Y)) \leq \frac{1}{2} L(\FiltFunc) \cdot \dH(\Kn_n(X), \Kn_n(Y)).
	\end{equation*}
	as desired.
\end{proof}

\begin{remark}[Tightness of the bound]
	Recall that $L(\FiltFunc^\vr)=2$. Let $\delta_1 \neq \delta_2$ be positive real numbers. For $i=1,2$, let $X_i = \{x_1^{(i)},x_2^{(i)}\}$ be a two-point metric space with $d_{X_i}(x_1^{(i)}, x_2^{(i)}) = \delta_i > 0$. Observe that $\dgm_0^\vr(X_i) = \{(0,\delta_i) \}$, so
	\begin{equation*}
		\dH^\D(\Dvr{2,0}(X_1), \Dvr{2,0}(X_2)) = \dB(\dgm_{0}^\vr(X_1), \dgm_{0}^\vr(X_2)) = \|(0,\delta_1)-(0,\delta_2)\|_\infty = |\delta_1-\delta_2|.
	\end{equation*}
	On the other hand, $\widehat{d}_\mathcal{GH}(X_1, X_2) = \frac{1}{2}|\delta_1-\delta_2|$. To wit, since $\Kn_2(X_i) = \left\{ (\begin{smallmatrix}0 & 0\\ 0 & 0\end{smallmatrix}), (\begin{smallmatrix}0 & \delta_i\\ \delta_i & 0\end{smallmatrix}) \right\}$, we have the lower bound $\widehat{d}_\mathcal{GH}(X_1, X_2) \geq \frac{1}{2}\dH(\Kn_2(X_1), \Kn_2(X_2)) = \frac{1}{2}|\delta_1-\delta_2|$. The upper bound is given by $\dGH(X_1,X_2)=\frac{1}{2}|\delta_1-\delta_2|$. Thus, $\dH^\D(\Dvr{2,0}(X_1),\Dvr{2,0}(X_2)) = L(\FiltFunc^\vr) \cdot \widehat{d}_\mathcal{GH}(X_1,X_2)$.
\end{remark}

\subsubsection{VR-persistence sets of ultrametric spaces}
We now show that $\Dvr{n,0}$, the simplest of all persistence sets, can sometimes capture information that persistence diagrams cannot see.
\begin{defn}
	An \emph{ultrametric space} $(U, d_U)$ is a metric space such that every triple $x_1, x_2, x_3 \in U$ satisfies the \emph{ultrametric inequality}:
	\begin{equation*}
		d_{U}(x_1,x_3) \leq \max(d_{U}(x_1,x_2), d_{U}(x_2,x_3)).
	\end{equation*}
	Observe that applying the ultrametric inequality to $d_{U}(x_1,x_2)$ and $d_{U}(x_2,x_3)$ implies that the two largest distances among $d_{U}(x_1,x_3)$, $d_{U}(x_1,x_2)$, and $d_{U}(x_2,x_3)$ are equal. Ultrametric spaces are usually represented as dendrograms \cite{clust-um}, where $d_U(x_1, x_2)$ is the first value of $t$ such that $x_1$ and $x_2$ belong to the same cluster.
\end{defn}

\begin{example}[$\{\Dvr{n,0}\}_{n\geq 1}$ can distinguish spaces that $\dgm_0^\vr$ cannot.]\label{ex:D_n0}
	Let $X$ be a metric space with $N$ points. The collection of persistence sets $\{\Dvr{n,0}(X)\}_{n\geq 1}$ generally contains more information than $\dgm_0^\vr(X)$. Indeed, as we pointed out before, $\dgm_0^\vr(X)\in\Dvr{N,0}.$ The diagram $\dgm_0^\vr(X)$ contains $N-1$ (non-infinite) points (recall that we are using reduced homology) corresponding to the distances in a minimum spanning tree for $X$, while $\Dvr{2,0}(X)$ contains one point for every distinct distance in $X$, and $\Unk{2,0}{\vr}(X)$ counts the number of times each distance appears. Therefore, if all pairwise distances in $X$ are different, $\Dvr{2,0}$ will capture all ${N}\choose{2}$ pairwise distances whereas $\dgm_0^\vr(X)$ will be able to recover only $N-1$ of them. Now, if $X$ is instead assumed to be compact and connected, $\dgm_0^\vr(X)$ will be empty whereas $\Dvr{2,0}(X)$ will recover the set $\mathrm{im}(d_X) = [0,\diam(X)]$ of  all possible distances attained by pairs of points in $X$.\\
	\indent The difference between the invariants $\dgm_0^\vr(X)$ and $\Dvr{n,0}(X)$ becomes more apparent in the case of ultrametric spaces. Any ultrametric space $U$ is tree-like (see Definition \ref{def:tree-like-metric}), so by Lemma \ref{lemma:Dnk_tree}, both $\dgm_k^\vr(U) = \emptyset$ and $\Dvr{n,k}(U) = \{ \emptyset \}$ for $k \geq 1$. Thus, all the persistence information of ultrametric spaces is concentrated in dimension 0. With that in mind, Figure \ref{fig:Dn0_dendrograms} shows two ultrametric spaces $U_1$ and $U_2$ such that $\dgm_0^\vr(U_1) = \dgm_0^\vr(U_2) = \{ (0,1), (0,1), (0,2) \}$. Notice that $\Dvr{3,0}(U_1)$ consists of only  the diagram $D_1 := \{(0,1), (0,1)\}$, whereas $\Dvr{3,0}(U_2)$ consists of $D_1$ and $D_2 := \{(0,1), (0,2)\}$. Thus, $\Dvr{3,0}$ differentiates two (ultra)metric spaces that $\dgm_*^\vr(X)$ cannot tell apart.
	\begin{figure}[h]
		\centering
	\includegraphics[scale=0.65]{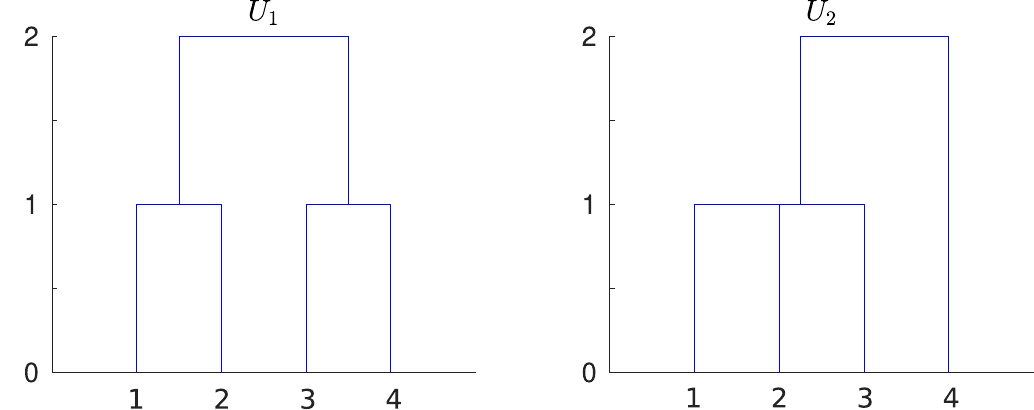}
		\caption{Two ultrametric spaces $U_1$, $U_2$ for which $\dgm_k^\vr(U_1)=\dgm_k^\vr(U_2)$ for all $k \geq 0$ but, in contrast, $\Dvr{n,0}(U_1) \neq \Dvr{n,0}(U_2)$ for $n=3$.}
\label{fig:Dn0_dendrograms}
	\end{figure}
\end{example}

\subsubsection{Computational cost and memory requirements}
\label{sec:computational_cost_Dnk_matrixtime}
One thing to keep in mind is that computing the single diagram $\dgm_{1}^\vr(X)$ when $n_X := |X| = 1000$ points is likely to be much more computationally expensive than computing 10,000 VR one-dimensional persistence diagrams obtained by randomly sampling points from $X$, i.e. approximating $\Dvr{n,1}(X)$ with small $n$. Let $c(n_X, k)$ denote the worst case time that it takes to compute $\dgm_k^\vr(X)$. Earlier algorithms, like the one in \cite{matrixtime}, are based on Gaussian elimination and their complexity is bounded in terms of the number of simplices in the filtration. In the worst case, computing $\dgm_k^\vr(X)$ requires knowledge of the $(k+1)$-simplices of $\vr_r(X)$, each of which is a subset of size $k+2$, so \cite{matrixtime} gives a worst-case bound of $c(n_X,k) \approx O\big(\nTuples{n_X}{k+2}^\omega\big)$. Here, we are assuming that multiplication of $m \times m$ matrices has cost\footnote{Currently, the best known constant is $\omega \approx 2.37286$ \cite{omega_matrix_mult}.} $O(m^\omega)$. In contrast, since there are $\nTuples{n_X}{n}$ possible $n$-tuples of points of $X$ (up to permutation), the complexity of computing $\Dvr{n,k}(X)$ is bounded by $O\big(c(n,k) \cdot \nTuples{n_X}{n}\big)$. For example, let $n=4$ and $k=1$. Since $k$ is a small constant, we approximate $\binom{n}{k} \approx n^k$. Then the worst-case bound for $\dgm_1^\vr(X)$ is $c(n_X,1) \approx O(n_X^{3\omega}) \approx O(n_X^{7.11})$, while $\Dvr{4,1}(X)$ only takes $O\left(c(4,1) \cdot \nTuples{n_X}{4}\right) \approx O(n_X^4)$. In general, $O\left(c(n,k)\cdot \nTuples{n_X}{n} \right)$ will be smaller than $c(n_X,k)$ as long as $n < \omega(k+2)$.\\
\indent Modern implementations of VR persistent homology \cite{ripser,ripser-pp} are much more efficient in practice, and their performance is linear in the number of simplices, that is, they have cost $c'(n_X,k) \approx O\big(\nTuples{n_X}{k+2}\big)$. Several sources give evidence for this claim. For example, the authors of \cite{ph-complexity-revised} argue that the practical linear bound is due to the sparsity of the boundary matrix. Similarly, the paper \cite{av-complexity-clique-filtrations} presents a rich family of examples where the expected runtime of the standard algorithm is better than the worst-case, at least for boundary matrices in degree 1. They also construct an example that realizes the worst-case runtime, although they argue that such an example is not typical in practice. In contrast, we will show that $\Dvr{n,k}(X)$ is non-empty only when $n \geq 2k+2$ in Theorem \ref{thm:n=2k+2}, so the cost of computing the full persistence set $\Dvr{n,k}(X)$ with modern algorithms is at least $O\big(c'(n,k) \cdot \nTuples{n_X}{2k+2}\big)$, which is larger than $c'(n_X,k)$.

\paragraph{Approximation.}
\label{pg:approx}
Another point which lends flexibility to the approximate computation of persistence sets is that one can actually easily cap the number of $n$-tuples to be considered by a parameter $N$, and this case the complexity associated to estimating $\Dvr{n,k}$ will be $O\big(\nTuples{n}{k+2} N\big)$. This is the pragmatic approach we have followed in the experiments reported in this paper and in the code provided in our Github repository \cite{github-repo}. In Section \ref{sec:approximation} we provide probabilistic convergence results as well as approximation bounds that provide a justification for this approach.

\paragraph{Parallelizability and memory requirements.}
Furthermore, these calculations are of course eminently pararelizable and, if $n\ll N$,  the memory requirements for computing an estimate to $\Dvr{n,k}(X)$ are substantially more modest than what computing $\dgm_k^\mathrm{VR}(X)$ would require since the boundary matrices that one needs to store in memory are several orders of magnitude smaller. We continue this discussion in Section \ref{sec:memory_comparison}, where we show datasets with increasing cardinality $n$ where the memory used to approximate principal persistence sets remains almost constant, whereas the memory required during the computation of persistence diagrams grows up to the point that the calculation cannot finish after a certain value of $n$.

\begin{prop}
	If $|X|=N$, the (worst case) computational cost of computing $\Dvr{n,k}(X)$ is $O\big(\binom{n}{k+2}^\omega \, N^n\big)$, where $\omega$ is the matrix multiplication exponent.
\end{prop}
	
Finally, if one is only interested in the principal persistence set, a much faster geometric algorithm is available, cf. Section \ref{sec:bd_times_algorithm}. See our Github repository \cite{github-repo} for a \texttt{parfor} based Matlab implementation.

\subsection{Persistence measures}
We now extend the constructions in the previous section to mm-spaces.

\begin{defn}\label{def:pmeas}
For each filtration functor $\FiltFunc$, integers $n \geq 1, k \geq 0$, and $X \in \Mw$, define the $(n,k)$-persistence measure of $X$ as (see Definition \ref{def:curvature_measures} and Remark \ref{rmk:dgm_of_K_n})
\begin{equation*}
	\Unk{n,k}{\FiltFunc}(X) := \left(\dgm_{k}^\FiltFunc \right)_\# \mu_n(X).
\end{equation*}
\end{defn}

We also have a stability result for these measures in terms of the Gromov-Wasserstein distance.
\begin{restatable}{theorem}{StabilityUnk}
    \label{thm:stability_Unk}
	Let $\FiltFunc$ be a stable filtration functor with Lipschitz constant $L(\FiltFunc)$. For all $X,Y \in \Mw$ and integers $n \geq 1$ and $k \geq 0$,
	\begin{equation*}
		\dW{p}^\D(\Unk{n,k}{\FiltFunc}(X), \Unk{n,k}{\FiltFunc}(Y)) \leq \frac{L(\FiltFunc)}{2} \cdot \dW{p}(\mu_n(X), \mu_n(Y))
	\end{equation*}
	and, as a consequence,
	\begin{equation*}
		\dW{p}^\D(\Unk{n,k}{\FiltFunc}(X), \Unk{n,k}{\FiltFunc}(Y)) \leq L(\FiltFunc) \cdot \widehat{d}_{\mathcal{GW},p}(X,Y).
	\end{equation*}
\end{restatable}
We prove this theorem in Appendix \ref{sec:appendix}.

\subsubsection{Probabilistic approximation of persistence sets}
\label{sec:approximation}

Regarding the idea of approximating $\Dvr{n,k}(X)$ using $N$ samples of $n$ points, consider the persistence sets shown in Figure \ref{fig:D41_spheres_torus}. These figures were obtained by sampling $N=10^6$ configurations of $n=4$ points uniformly at random from $\Sphere{1}$, $\Sphere{2}$, and from the torus $\mathbb{T}^2 = \Sphere{1} \times \Sphere{1}$. Observe that the analytical graph of $\Dvr{4,1}(\Sphere{1})$ in Figure \ref{fig:example_S1_configurations} and the approximation in the leftmost panel of Figure \ref{fig:D41_spheres_torus} are very similar. Their similarity indicates that using $N=10^6$ was more than enough to get a good approximation of $\Dvr{4,1}(\Sphere{1})$.\\
\indent More generally, consider an mm-space $\mm{X}$. Let $\mathbf{x}_1, \dots, \mathbf{x}_N \in X^n$ be i.i.d. random variables distributed according to the product measure $\mu_X^{\otimes n}$. Using the stochastic covering theorem from \cite[Theorem 34]{clust-um}, we find a lower bound for $N$ so that an approximation to $\Dvr{n,k}(X)$ via $\{ \dgm_k^\vr(\mathbf{x}_i) \}_{i=1}^N$ is $\epsilon$-close (with respect to the Hausdorff distance) to $\Dvr{n,k}(X)$ with probability at least $p$.

\smallskip
Now, define the function $f_X(\epsilon) := \displaystyle \min_{x \in X} \mu_X \left(B_\epsilon(x) \right)$. Recall that an mm-space space $X$ is (lower) Ahlfors regular (see Definition 3.18, page 252 of \cite{ahlfors-david}) if there exist constants $c,d>0$ such that $f_X(\epsilon)\geq \min(1,c\,\epsilon^d)$ for all $\epsilon>0.$ In the next theorem we assume that $X$ is Ahlfors regular.

\begin{restatable}[Approximation of $\Kn_n(X)$ and $\Dvr{n,k}(X)$]{theorem}{CoverageKn}
    \label{thm:coverage_Kn} 
     Let $n \geq 2$. Fix a confidence level $p \in [0,1]$ and $\epsilon > 0$. Let $\displaystyle N_0 =N_0(X;n,p,\epsilon):= \left\lceil \dfrac{-\ln\left[(1-p)f_X^n(\epsilon/2)\right]}{f_X^n(\epsilon/2)} \right\rceil$. Then, for all $N \geq N_0$,
    \begin{itemize}
        \item $\dH^{\Kn_n(X)}\left(\{ \Psi_X^{(n)}(\mathbf{x}_i) \}_{i=1}^N, \Kn_n(X) \right) \leq \epsilon$ with probability $\geq p$.
        \item $\dH^\D\left(\{ \dgm_k^\vr(\mathbf{x}_i) \}_{i=1}^N, \Dvr{n,k}(X) \right) \leq \epsilon$ with probability $\geq p$.
    \end{itemize}
    Furthermore, the estimators $\{ \Psi_X^{(n)}(\mathbf{x}_i) \}_{i=1}^N$ and $\{ \dgm_k^\vr(\mathbf{x}_i) \}_{i=1}^N$ converge to $\Kn_n(X)$ and $\Dvr{n,k}(X)$, respectively, almost surely as $N \to \infty$.
\end{restatable}
See Appendix \ref{sec:appendix_probability} for the proof. %

\subsection{Coordinates}
The objects $\Unk{n,k}{\FiltFunc}(X)$ can be complex, so it is important to find simple representations. Since these objects are probability measures on the space of persistence diagrams $\D$, we follow the statistical mechanics intuition and probe them via functions. In order to accomplish this, one should concentrate on families of functions $\zeta_\alpha: \D \to \R$, for $\alpha$ in some index set $A$. One example is the \textit{maximal persistence} of a persistence diagram: $\zeta(D) = \max_{(t_b,t_d) \in D}(t_d-t_b)$. In general, one desires to obtain a class of \textit{coordinates} \cite{acc13,coordinates-tropical} that is able to more or less canonically exhaust all the information contained in a given persistence diagram. A further desire is to design the class $\{ \zeta_\alpha \}_{\alpha \in A}$ in such a manner that it provides \textit{stable information} about a given measure $U \in \P(\D)$.\\
\indent In this section, we present a first result in that direction. To set up notation, let $\FiltFunc$ be a filtration functor. Let $n \geq 1$, $k \geq 0$ be integers, and take an mm-space $\mm{X}$. Consider a coordinate function $\zeta:\D \to \R$. The pushforward $\zeta_\#\Unk{n,k}{\FiltFunc}(X)$ is a probability measure on $\R$. We denote its distribution function by $H_X(t;n,k,\FiltFunc,\zeta) := \Unk{n,k}{\FiltFunc}(X)\big( \zeta\inv(-\infty,t] \big)$ defined for $t \in \R$.

\StabilityCoordinates
\begin{proof}
	According to \cite[Lemma 6.1]{memoli-dghlp-long},
	\begin{equation*}
		\int_\R \left| H_X(t)-H_Y(t) \right| dt \leq \inf_{\mu \in \coup_U} \int_{\Dnk{n,k}{\FiltFunc}(X) \times \Dnk{n,k}{\FiltFunc}(Y)} |\zeta(D)-\zeta(D')| \ \mu(dD \times dD'),
	\end{equation*}
	where $\coup_U$ is the set of couplings between $\Unk{n,k}{\FiltFunc}(X)$ and $\Unk{n,k}{\FiltFunc}(Y)$. Since $\zeta$ is Lipschitz, the right side is bounded above by
	\begin{align*}
		L(\zeta) &\cdot \inf_{\mu \in \coup_U} \int_{\Dnk{n,k}{\FiltFunc}(X) \times \Dnk{n,k}{\FiltFunc}(Y)} \dB(D,D') \ \mu(dD \times dD')\\
		&= L(\zeta) \cdot \inf_{\mu \in \coup_U} \diam_{\D,1}(\Dnk{n,k}{\FiltFunc}(X) \times \Dnk{n,k}{\FiltFunc}(Y)) \\
		&= L(\zeta) \cdot \dW{1}^{\D}(\Unk{n,k}{\FiltFunc}(X), \Unk{n,k}{\FiltFunc}(Y)) \\
		&\leq L(\zeta) L(\FiltFunc) \cdot \dGW{1}(X,Y).
	\end{align*}
	Theorem \ref{thm:stability_Unk} gives the last bound.
\end{proof}

Examples of the usefulness of a good system of coordinates in applications is given in \cite{kernel-hiraoka}. The authors codify a persistence diagram $D$ as a weighted sum of Dirac measures $\mu_D^w := \sum_{p \in D} w(p) \delta_p$ and embed it in a function space via
\begin{equation*}
	\mu_D^w \mapsto \sum_{p \in D} w(x)k(\cdot, x).
\end{equation*}
Here, $k:\R^2_{ad} \times \R^2_{ad} \to \R$ is a positive definite kernel, that is, a symmetric function such that $(k(x_i,x_j))_{i,j=1,\dots,n}$ is positive semi-definite for all $x_1,\dots,x_n \in \R^2_{ad} := \{ (b,d) \in \R^2 \ | \ b<d \}$. The weight function $w:\R^2_{ad} \to \R$ can be tuned to give more importance to points away or close to the diagonal, depending on which is more desirable. This kernel embedding has good theoretical properties, such as injectivity and stability with respect to the bottleneck distance, and good practical performance. Indeed, the authors considered several classification tasks with synthetic and real-world data. Their kernel methods performed well in classifications and sometimes outperformed other vectorization techniques, such as persistence landscapes and persistence images. What's more, the flexibility in the choice of kernel $k$ allowed detecting properties of points close to the diagonal when they were relevant to the experiment. For more details about the embeddings, other choices of kernels and embeddings, and parameter tuning, see their paper \cite{kernel-hiraoka}.

\section{Vietoris-Rips principal persistence sets}
From this point on, we focus on the Vietoris-Rips persistence sets $\Dvr{n,k}$ with $n=2k+2$. The reason to do so is Theorem \ref{thm:n=2k+2}, which states that the $k$-dimensional persistence diagram of $\vrcomp_{*}(X)$ is empty if $|X| < 2k+2$ and has at most one point if $|X|=2k+2$. What this means for persistence sets $\Dvr{n,k}(X)$ is that given a fixed $k$, the first interesting choice of $n$ is $n=2k+2$. We prove this fact in Section \ref{sec:properties_of_VR} and then use it to construct a graphical representation of $\Dvr{2k+2,k}(X)$ in Section \ref{sec:pps}. We also present the results of a classification experiment in Section \ref{sec:pps_experiments} and a comparison of the computational resources consumed by persistence sets and persistence diagrams in Section \ref{sec:memory_comparison}.

\subsection{Some properties of VR-filtrations and their persistence diagrams}
\label{sec:properties_of_VR}
Let $X$ be a finite metric space with $n$ points. The highest dimensional simplex of $\vrcomp_*(X)$ has dimension $n-1$, but even if $\vrcomp_*(X)$ contains $k$-dimensional simplices, it won't necessarily produce persistent homology in dimension $k$. The first definition of this section is inspired by the structure of the cross-polytope $\crosspoly{m}$; see Figure \ref{fig:crosspoly}. Recall that a set $\sigma \subset V  = \{ \pm e_1, \dots, \pm e_m \}$ is a face if it doesn't contain both $e_i$ and $-e_i$. In particular, there is an edge between $e_i$ and every other vertex except $-e_i$. The next definition tries to emulate this phenomenon in $\vrcomp_{*}(X)$.

\begin{defn}\label{def:tb_td}
	Let $(X, d_X)$ be a finite metric space, $A \subset X$, and fix $x_0 \in X$. Find two distinct points $x_1, x_2 \in A$ such that $d_X(x_0,x_1) \geq d_X(x_0,x_2) \geq d_X(x_0,a)$ for all $a \in A \setminus \{x_1,x_2\}$. Define 
	\begin{equation*}
		\tdA{x_0}{A} := d_X(x_0,x_1), \text{ and } \tbA{x_0}{A} := d_X(x_0,x_2).
	\end{equation*}
	\indent We set $\vdeathA{x_0}{A} := x_1$. When $A=X$ and there is no risk of confusion, we will denote $\tbA{x_0}{X}$, $\tdA{x_0}{X}$, and $\vdeathA{x_0}{X}$ simply as $\tb{x_0}, \td{x_0}$, and $\vdeath(x_0)$, respectively. Also define
	\begin{equation*}
		\tb{X} := \max_{x \in X} \tbA{x}{X} \,\,\,\mbox{and}\,\,\,
		\td{X} := \min_{x \in X} \tdA{x}{X}.
	\end{equation*}
\end{defn}

\indent In a few words, $\td{x} \geq \tb{x}$ are the two largest distances between $x$ and any other point of $X$. The motivation behind these choices is that if $r$ satisfies $\tb{x} \leq r < \td{x}$, then $\vr_r(X)$ contains all edges between $x$ and all other points of $X$, except for $\vdeath(x)$. If this holds for all $x \in X$, then $\vr_r(X)$ is isomorphic to a cross-polytope. Also, note that $\td{X}$ is the \emph{radius} $\mathbf{rad}(X)$ of $X$, cf. Definition \ref{def:rad-diam}. Also note that according to \cite[Proposition 9.6]{osman-memoli}, the death time of \emph{any} interval in $\dgm_*(X)$ is bounded by $\mathbf{rad}(X)$.\\
\indent Of course, $\vdeath(x)$ as defined above is not well defined. However, it is in the case that interests us.
\begin{lemma}\label{lemma:vd_is_unique}
	Let $(X,d_X)$ be a finite metric space and suppose that $\tb{X} < \td{X}$. Then $\vdeath:X\rightarrow X$ is well defined and $\vdeath\circ \vdeath = \mathrm{id}$.
\end{lemma}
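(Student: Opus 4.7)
The plan is to verify the two claims separately, using in both cases the elementary chain of inequalities $\tb{x} \leq \tb{X} < \td{X} \leq \td{x}$ valid for every $x \in X$ under the hypothesis.

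\textbf{Well-definedness.} I will argue by contradiction. Suppose $\vdeath(x_0)$ is not well defined for some $x_0 \in X$, that is, suppose there exist distinct $x_1, x_1' \in X$ with $d_X(x_0, x_1) = d_X(x_0, x_1') = \max_{a \in X\setminus\{x_0\}} d_X(x_0, a)$. By Definition \ref{def:tb_td}, this maximum equals $\td{x_0}$, and moreover the presence of a second point at that distance forces $\tb{x_0} = \td{x_0}$. But then $\tb{X} \geq \tb{x_0} = \td{x_0} \geq \td{X}$, contradicting $\tb{X} < \td{X}$. Hence $\vdeath(x_0)$ is well defined for every $x_0 \in X$.

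\textbf{Involution.} Fix $x_0 \in X$ and let $x_1 := \vdeath(x_0)$, so $d_X(x_0, x_1) = \td{x_0}$. I want to identify $x_0$ as the unique point of $X$ attaining the maximum distance from $x_1$. Applying the running inequality to both $x_0$ and $x_1$,
\begin{equation*}
    d_X(x_0, x_1) \;=\; \td{x_0} \;\geq\; \td{X} \;>\; \tb{X} \;\geq\; \tb{x_1}.
\end{equation*}
On the other hand, by definition $d_X(x_0, x_1) \leq \td{x_1}$. Thus $x_0$ realizes a distance from $x_1$ that is strictly greater than the second largest distance $\tb{x_1}$, which by Definition \ref{def:tb_td} means $x_0$ is the unique point achieving $\td{x_1}$. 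Therefore $\vdeath(x_1) = x_0$, i.e.\ $\vdeath(\vdeath(x_0)) = x_0$.

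The step I expect to require a little care is keeping the two roles of $\tb{(\cdot)}$ straight in the uniqueness part: the strict inequality $d_X(x_0,x_1) > \tb{x_1}$ is exactly what upgrades ``$x_0$ attains a maximum'' to ``$x_0$ is the unique maximizer''. Everything else is a direct unpacking of Definition \ref{def:tb_td}.
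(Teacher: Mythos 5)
Your proof is correct and follows essentially the same route as the paper's: the well-definedness argument is identical, and your involution step is just the direct (contrapositive) form of the paper's contradiction argument, using the same chain $\td{x_0}\geq \td{X} > \tb{X} \geq \tb{\vdeath(x_0)}$.
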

\begin{proof}
	Given a point $x \in X$, suppose there exist $x_1 \neq x_2 \in X$ such that $d_X(x,x_1) = d_X(x,x_2) \geq d_X(x,x')$ for all $x' \in X$. Since $\tb{x}$ and $\td{x}$ are the two largest distances between $x$ and any $x' \in X$, we have $\tb{x} = \td{x}$. However, this implies $\td{X} \leq \td{x} = \tb{x} \leq \tb{X}$, which contradicts the hypothesis $\tb{X} < \td{X}$. Thus, we have a unique choice of $\vdeath(x)$ for every $x \in X$.\\
	\indent For the second claim, suppose that $\vdeath^2(x):=\vdeath(\vdeath(x)) \neq x$. Then $\td{\vdeath(x)} = d_X(\vdeath(x), \vdeath^2(x)) \geq d_X(\vdeath(x), x)$. Hence, the second largest distance $\tb{\vdeath(x)}$ is at least $d_X(\vdeath(x), x)$. However,
	\begin{equation*}
		\td{X} \leq \td{x} = d_X(x, \vdeath(x)) \leq \tb{\vdeath(x)} \leq \tb{X},
	\end{equation*}
	which is, again, a contradiction. Thus, $\vdeath^2(x) = x$.
\end{proof}

\indent Under these conditions, we can produce the claimed isomorphism between $\vr_r(X)$ and a cross-polytope.

\begin{prop}
    \label{prop:cross_polytopes}
    Let $(X,d_X)$ be a metric space with $|X|=n$, where $n \geq 2$ is even, and suppose that $\tb{X} < \td{X}$. Let $k=\frac{n}{2}-1$. Then $\vrcomp_{r}(X)$ is isomorphic, as a simplicial complex, to the cross-polytope $\crosspoly{k+1}$ for all $r \in [\tb{X},\td{X})$.
\end{prop}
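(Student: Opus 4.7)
The plan is to exploit Lemma \ref{lemma:vd_is_unique}, which under the hypothesis $\tb{X} < \td{X}$ guarantees that $\vdeath$ is a well-defined involution on $X$. Since $\td{X} > \tb{X} \geq 0$, the map $\vdeath$ has no fixed points (otherwise some $x \in X$ would satisfy $\td{x} = 0$, forcing $\td{X} = 0$). Combined with $\vdeath \circ \vdeath = \mathrm{id}$, this partitions $X$ into $k+1$ disjoint pairs $\{x_1, \vdeath(x_1)\}, \ldots, \{x_{k+1}, \vdeath(x_{k+1})\}$, exactly matching the $k+1$ antipodal pairs of vertices in $\crosspoly{k+1}$.

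The key step is to identify the 1-skeleton. For any $r \in [\tb{X}, \td{X})$ and any two distinct points $x, x' \in X$, I would argue that $\{x,x'\} \in \vrcomp_r(X)$ if and only if $x' \neq \vdeath(x)$. Indeed, if $x' = \vdeath(x)$ then by definition $d_X(x,x') = \td{x} \geq \td{X} > r$, so the edge is excluded. Conversely, if $x' \neq \vdeath(x)$ and $x' \neq x$, then $d_X(x,x')$ is at most the second-largest distance from $x$, i.e.\ $d_X(x,x') \leq \tb{x} \leq \tb{X} \leq r$, so $\{x,x'\}$ is an edge of $\vrcomp_r(X)$. Thus the 1-skeleton of $\vrcomp_r(X)$ is precisely the 1-skeleton of the cross-polytope on the pairs $\{x_i,\vdeath(x_i)\}$.

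To extend the isomorphism from edges to all simplices, I would use the fact that $\vrcomp_r(X)$ is a flag (clique) complex: a subset $\sigma \subset X$ is a simplex iff $\diam_X(\sigma) \leq r$, which happens iff every edge of $\sigma$ lies in $\vrcomp_r(X)$. Therefore $\sigma \in \vrcomp_r(X)$ if and only if $\sigma$ contains no antipodal pair $\{x_i, \vdeath(x_i)\}$. Comparing with Definition \ref{def:cross_poly}, the bijection $\varphi:X \to \{\pm e_1,\ldots,\pm e_{k+1}\}$ defined by $\varphi(x_i) = e_i$ and $\varphi(\vdeath(x_i)) = -e_i$ takes simplices of $\vrcomp_r(X)$ to simplices of $\crosspoly{k+1}$ and vice versa, so $\varphi$ extends to the desired simplicial isomorphism.

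There is no serious obstacle: once Lemma \ref{lemma:vd_is_unique} has supplied the involution, the argument is essentially a bookkeeping check that the edges not present in $\vrcomp_r(X)$ are exactly the $k+1$ pairs $\{x,\vdeath(x)\}$. The one place that requires a touch of care is verifying that the fixed-point-free property of $\vdeath$ does follow from $\tb{X} < \td{X}$ alone, but this is immediate because any fixed point would force $\td{x} = 0$ and hence $\td{X} = 0 \leq \tb{X}$, contradicting the hypothesis.
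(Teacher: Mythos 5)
Your proof is correct and follows essentially the same route as the paper's: use Lemma \ref{lemma:vd_is_unique} to pair the points via the involution $\vdeath$, check that the missing edges at scale $r \in [\tb{X},\td{X})$ are exactly the pairs $\{x,\vdeath(x)\}$, and invoke the flag-complex property of both $\vrcomp_r(X)$ and $\crosspoly{k+1}$ to upgrade the 1-skeleton isomorphism to a simplicial isomorphism. Your explicit verification that $\vdeath$ is fixed-point free is a small detail the paper leaves implicit, and it is argued correctly.
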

\begin{proof}
	\indent Let $r \in [\tb{X}, \td{X})$. Lemma \ref{lemma:vd_is_unique} implies that we can partition $X$ into $k+1$ pairs $\{x_i^+, x_i^-\}$ such that $x_i^- = \vdeath(x_i^+)$, so define $f:\{\pm e_1, \dots, \pm e_{k+1}\} \to X$ as $f(\varepsilon \cdot e_i) = x_i^\varepsilon$, for $\varepsilon = \pm 1$. Both cross-polytopes and Vietoris-Rips complexes are flag complexes, so it's enough to verify that $f$ induces an isomorphism of their 1-skeleta. Indeed, for any $i=1,\dots,k+1$, $\varepsilon=\pm 1$, and $x \neq x_i^{-\varepsilon}$, we have $d_X(x_i^\varepsilon,x) \leq \tb{x_i^\varepsilon} \leq \tb{X} \leq r < \td{X} \leq \td{x_i^\varepsilon} = d_X(x_i^+, x_i^-)$. Thus, $\vr_r(X)$ contains the edges $[x_i^\varepsilon, x]$ for $x \neq x_i^{-\varepsilon}$, but not $[x_i^+, x_i^-]$. Since $f(\varepsilon \cdot e_i) = x_i^\varepsilon$, $f$ sends the simplices $[\varepsilon \cdot e_i, v]$ to the simplices $[x_i^\varepsilon, f(v)]$ and the non-simplex $[e_i, -e_i]$ to the non-simplex $[x_i^+, x_i^-]$.
\end{proof}

\indent A consequence of the previous proposition is that $H_k(\vrcomp_{r}(X)) \simeq H_k(\crosspoly{k+1}) = \Fld$ for $r \in [\tb{X}, \td{X})$. It turns out that $n=2k+2$ is the minimum number of points that $X$ needs to have in order to produce persistent homology in dimension $k$, which is what we prove next. The proof is inspired by the use of the Mayer-Vietoris sequence to find $H_k(\Sphere{k})$ by splitting $\Sphere{k}$ into two hemispheres that intersect in  an equator $\Sphere{k-1}$. Since the hemispheres are contractible, the Mayer-Vietoris sequence produces an isomorphism $H_k(\Sphere{k}) \simeq H_{k-1}(\Sphere{k-1})$. We emulate this by splitting $\vrcomp_r(X)$ into two halves which, under the right circumstances, are contractible and find the $k$-th persistent homology of $\vrcomp_*(X)$ in terms of the $(k-1)$-dimensional persistent homology of a subcomplex.\\
\indent Two related results appear in \cite{random-cliques-kahle, extremal-betti-numbers, viral-evolution}. The first two references prove that a flag complex with non-trivial $H_k$ has at least $2k+2$ vertices (Lemma 5.3 in \cite{random-cliques-kahle} and Proposition 5.4 in \cite{extremal-betti-numbers}); case (\ref{case:n<2k+2}) in our Theorem \ref{thm:n=2k+2} is a consequence of this fact. The decomposition $\vr_r(X) = \vr_r(B_0) \cup \vr_r(B_1)$ (see the proof for the definition of $B_0$ and $B_1$) already appears as Proposition 2.2 in the appendix of \cite{viral-evolution}. The novelty in  the next Theorem is the characterization of the persistence diagram $\dgm_k^\vr(X)$ in terms of $\tb{X}$ and $\td{X}$.

\begin{theorem}\label{thm:n=2k+2}
	Let $(X, d_X)$ be a metric space with $n$ points. Then:
	\begin{enumerate}[label=\Alph*.]
		\item\label{case:n<2k+2} For all integers $k > \frac{n}{2}-1$, $\dgm_k^\vr(X)=\emptyset$.
		\item\label{case:n=2k+2} If $n$ is even and $k = \frac{n}{2}-1$, then $\dgm_k^\vr(X)$ consists of a single point $(\tb{X}, \td{X} )$ if and only if $\tb{X} < \td{X}$, and is empty otherwise.

	\end{enumerate}
\end{theorem}

\begin{example}[The conclusion of Theorem 4.4 when $n=4$]
    \label{ex:D41_example}~\\
	\begin{minipage}{0.584\linewidth}
		Let us consider the case $k=1$ and $n=4$. Let $X = \{x_1, x_2, x_3, x_4\}$ as shown in Figure \ref{fig:D41_example}. In order for $\dgm_1^\vr(X)$ to be non-empty, $\vrcomp_r(X)$ has to contain all the ``outer edges'' and none of the ``diagonals''. That is, there exists $r>0$ such that
		\begin{equation*}
			d_{12}, d_{23}, d_{34}, d_{41} \leq r < d_{13}, d_{24}.
		\end{equation*}
		On the other hand, the calculations in Table \ref{table:tb_td_four_points_1} yield $\tb{X}=\max(d_{12}, d_{23}, d_{34}, d_{41})$, $\td{X}=\min(d_{13}, d_{24})$. We also have $\vdeath(x_1)=x_3$, $\vdeath(x_2)=x_4$, and $\vdeath \circ \vdeath = \mathrm{id}$. In either case, $\dgm_1^\vr(X) = \big(\max(d_{12}, d_{23}, d_{34}, d_{41}), \min(d_{13}, d_{24}) \big)$.
	\end{minipage}
	\hfill
	\begin{minipage}{0.38\linewidth}
			\centering
\begin{tikzpicture}[scale=1.55]
	\tikzmath{
		\pt=0.025;
		\px1 = 0;		\py1 = 0;
		\px2 = 1.55;	\py2 = 0.15;
		\px3 = 1.80;	\py3 = -1.2;
		\px4 = 0.2;		\py4 = -1.7;
	}

	\draw[fill] (\px1,\py1) circle [radius=\pt] node[above left ]{$x_1$};
	\draw[fill] (\px2,\py2) circle [radius=\pt] node[above right]{$x_2$};
	\draw[fill] (\px3,\py3) circle [radius=\pt] node[below right]{$x_3$};
	\draw[fill] (\px4,\py4) circle [radius=\pt] node[below left ]{$x_4$};

	\draw (\px1,\py1) -- (\px2,\py2) node[midway, above]{$d_{12}$};
	\draw (\px2,\py2) -- (\px3,\py3) node[midway, right]{$d_{23}$};
	\draw (\px3,\py3) -- (\px4,\py4) node[midway, below]{$d_{34}$};
	\draw (\px4,\py4) -- (\px1,\py1) node[midway, left ]{$d_{41}$};
	
	\draw[dashed] (\px1,\py1) -- (\px3,\py3) node[pos=0.25, above=4.5pt, right=4.5pt]{$d_{13}$};
	\draw[dashed] (\px2,\py2) -- (\px4,\py4) node[pos=0.60, above=3.0pt, left =3.0pt]{$d_{24}$};
\end{tikzpicture} 			
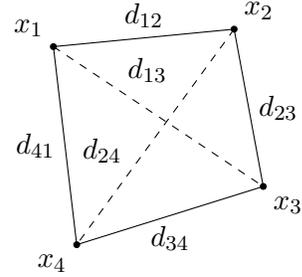
\captionof{figure}{A generic metric space with 4 points. In order for $\PH_1^\vr(X)$ to be non-zero, the two diagonals should be larger than the outer edges.}
			\label{fig:D41_example}
	\end{minipage}
	~\\~\\
	\indent However, if we had $d_{12}, d_{23}, d_{34} < d_{24} < d_{41} < d_{13}$ for example, then the 2-simplex $[x_2, x_3, x_4]$ appears before the would-be generator $[x_1,x_2]+[x_2,x_3]+[x_3,x_4]+[x_4,x_1]$, so $\dgm_1^\vr(X) = \emptyset$. According to Table \ref{table:tb_td_four_points_2}, $\tb{X}=d_{41} > d_{24}=\td{X}$, and $\vdeath(x_2)=x_4$ but $\vdeath(x_4)=x_1 \neq x_2$.\\
	\indent In general, we want to partition $X$ into pairs of ``opposite'' points, that is pairs $x,y$ such that $\vdeath(x)=y$ and $\vdeath(y)=x$. Intuitively, this says that the diagonals are larger than every other edge. If not, as in the second case, then no persistence is produced. As for $k=1$ and $n=4$, we will generally label the points as $x_1,x_2,x_3,x_4$ in such a way that
	\begin{equation*}
		\tb{X} =\max(d_{12}, d_{23}, d_{34}, d_{41}) \text{ and } \td{X} =\min(d_{13}, d_{24}).
	\end{equation*}
	\begin{center}
		\begin{minipage}{0.48\linewidth}
			\centering
			\begin{tabular}{c|cc}
				~ & $t_b$ & $t_d$ \\
				\hline
				$x_1$ & $\max(d_{41}, d_{12})$ & $d_{13}$ \\
				$x_2$ & $\max(d_{12}, d_{23})$ & $d_{24}$ \\
				$x_3$ & $\max(d_{23}, d_{34})$ & $d_{13}$ \\
				$x_4$ & $\max(d_{34}, d_{41})$ & $d_{24}$
			\end{tabular}
			\captionof{table}{$\tb{x_i}$ and $\td{x_i}$ when the sides of the quadrilateral $X$ are smaller than the diagonals.}
			\label{table:tb_td_four_points_1}
		\end{minipage}
		\hfill
		\begin{minipage}{0.48\linewidth}
			\centering
			\begin{tabular}{c|cc}
				~ & $t_b$ & $t_d$ \\
				\hline
				$x_1$ & $d_{41}$ & $d_{13}$ \\
				$x_2$ & $\max(d_{12}, d_{23})$ & $d_{24}$ \\
				$x_3$ & $\max(d_{23}, d_{34})$ & $d_{13}$ \\
				$x_4$ & $d_{24}$ & $d_{41}$
			\end{tabular}
			\captionof{table}{$\tb{x_i}$ and $\td{x_i}$ when the side $d_{41}$ of the quadrilateral $X$ is larger than the diagonal $d_{24}$.}
			\label{table:tb_td_four_points_2}
		\end{minipage}
	\end{center}
\end{example}

\begin{proof}[Proof of Theorem \ref{thm:n=2k+2}]
	The proof is by induction on $n$. Recall that $\PHvr_k(X)$ denotes the reduced homology of the VR-complex $\widetilde{H}_k(\vrcomp_*(X))$. If $n=1$, $\vrcomp_{r}(X)$ is contractible for all $r$, and so $\PHvr_k(X) = 0$ for all $k \geq 0 > \frac{n}{2}-1$. If $n=2$, let $X=\{x_0, x_1\}$. The space $\vrcomp_{r}(X)$ is two discrete points when $r \in [0,\diam(X))$ and an interval when $r \geq \diam(X)$. Then $\PHvr_k(X) = 0$ for all $k \geq 1 > \frac{n}{2}-1$, and $\PHvr_0(X) = \Int{0}{\diam(X)}$. Furthermore, this interval module equals $\Int{\tb{X}}{\td{X}}$ because $d_X(x_0,x_1) > d_X(x_0,x_0) = 0$, so $\tb{x_0}=0$ and $\td{x_0}=d_X(x_0,x_1)$. The same holds for $x_1$, so $\tb{X} = 0$ and $\td{X} = d_X(x_0, x_1) = \diam(X)$.\\
	\indent For the inductive step, assume that the proposition holds for every metric space with less than $n$ points. Fix $X$ with $|X|=n$ and an integer $k \geq \frac{n}{2}-1$. $\vrcomp_{r}(X)$ is contractible when $r \geq \diam(X)$, so let $r < \diam(X)$ and choose any pair $x_0, x_1 \in X$ such that $d_X(x_0,x_1) = \diam(X)$. Let $B_j = X \setminus \{x_j\}$ for $j=0,1$ and $A = X \setminus \{x_0,x_1\}$. Because of the restriction on $r$, $\vrcomp_{r}(X)$ contains no simplex $\sigma \supset [x_0,x_1]$, so $\vrcomp_{r}(X) = \vrcomp_{r}(B_0) \cup \vrcomp_{r}(B_1)$. At the same time, $\vrcomp_{r}(A) = \vrcomp_{r}(B_0) \cap \vrcomp_{r}(B_1)$, so we can use the Mayer-Vietoris sequence:

	\begin{equation}
	\label{eq:mayer_vietoris}
		\begin{tikzcd}
			[
			,row sep = 2ex
			,/tikz/column 1/.append style={anchor=base east}
			,/tikz/column 2/.append style={anchor=base west}
			]
			\widetilde{H}_k(\vrcomp_{r}(B_0)) \oplus \widetilde{H}_k(\vrcomp_{r}(B_1))
			\arrow{r}{}
			& \widetilde{H}_k(\vrcomp_{r}(X))
			\arrow[draw=none]{d}[name=Z, shape=coordinate]{}
			\arrow[parcurarrow=Z]{d}{} \\
			& \widetilde{H}_{k-1}(\vrcomp_{r}(A))
			\arrow{r}{(\iota_0,\iota_1)}
			& \widetilde{H}_{k-1}(\vrcomp_{r}(B_0)) \oplus \widetilde{H}_{k-1}(\vrcomp_{r}(B_1)).
		\end{tikzcd}
	\end{equation}

	~\\
	Since $|B_j| < n$, the induction hypothesis implies that $\PHvr_k(B_j) = 0$, and so $\partial_*$ is injective for any $r$. Now we verify the two claims in the statement.\\
	\noindent \textbf{Item \ref{case:n<2k+2}:} Suppose $k > \frac{n}{2}-1$.\\
	\indent Observe that $k-1 > \frac{n-2}{2}-1$ and $|A|=n-2$, so the induction hypothesis gives $\PHvr_{k-1}(A) = 0$. Then $\widetilde{H}_{k}(\vrcomp_{r}(X)) = 0$ for all $r \in [0,\diam(X)]$ and, since $\vrcomp_{r}(X)$ is contractible when $r \geq \diam(X)$, the homology of $\vr_r(X)$ is still 0 for $r \in [\diam(X), \infty)$.\\
	\noindent \textbf{Item \ref{case:n=2k+2}:} Suppose $k = \frac{n}{2}-1$.\\
	\indent By induction hypothesis, $\PHvr_{k-1}(A)$ is either a single interval $\Int{\tb{A}}{\td{A}}$ or 0 depending on whether $\tb{A} < \td{A}$ or not. Also define
	\begin{equation}\label{eq:recursive_tb}
		b := \max\left[\tb{A}, \max_{a \in A} d_X(x_0,a), \max_{a \in A} d_X(x_1,a)\right].
	\end{equation}
	We claim that $\PHvr_k(X) \cong \Int{b}{\td{A}}$ if and only if $b < \td{A}$.\\
	\noindent \textbf{Case 1:} If $r \in [0, \tb{A})$ or $r \in [\td{A}, \infty)$, then $\widetilde{H}_k(\vrcomp_{r}(X)) \cong 0$.\\
		\indent Since $\PHvr_{k-1}(A) \cong \Int{\tb{A}}{\td{A}}$, we have $\widetilde{H}_{k-1}(\vrcomp_{r}(A)) = 0$ for $r \notin [\tb{A}, \td{A})$. Now $\widetilde{H}_k(\vrcomp_{r}(X))=0$ follows from the Mayer-Vietoris sequence.\\
	\noindent \textbf{Case 2:} If $r \in [\tb{A},b)$, then $\widetilde{H}_k(\vrcomp_{r}(X)) \cong 0$.\\
	\indent Notice that we might have $b \geq \td{A}$. However, the conclusion for $r \in [\td{A}, b)$ follows from Case 1, so we can assume $r \in [\tb{A}, b) \cap [\tb{A}, \td{A})$. Additionally, if $b=\tb{A}$, then the interval $[\tb{A},b)$ is empty and there is nothing to prove. Suppose, then, $b = d_X(x_0,a_0) > \tb{A}$ for some $a_0 \in A$. In that case, $\vrcomp_r(B_1)$ doesn't contain the 1-simplex $[x_0,a_0]$, so $\vrcomp_r(A) \subset \vrcomp_r(B_1) \subset C(\vrcomp_r(A),x_0) \setminus [x_0,a_0]$. Additionally, since $r \in [\tb{A}, b) \cap [\tb{A}, \td{A})$, $\vr_r(A) \simeq \crosspoly{k}$ by Proposition \ref{prop:cross_polytopes}, that is, $\vr_r(A)$ has the homotopy type of $\Sphere{k-1}$. Then $C(\vr_r(A), x_0)$ has the homotopy type of a hemisphere of $\Sphere{k}$ whose equator is $\vr_r(A) \simeq \Sphere{k-1}$. Hence, $C(\vrcomp_r(A),x_0) \setminus [x_0,a_0]$ is homotopy equivalent to a punctured hemisphere of $\Sphere{k}$, which strong deformation retracts onto $\vr_r(A)$. Thus, the composition induced by inclusions
	\begin{equation*}
		\widetilde{H}_{k-1}(\vrcomp_r(A)) \to \widetilde{H}_{k-1}(\vrcomp_r(B_1)) \to \widetilde{H}_{k-1}(C(\vrcomp_r(A)) \setminus [x_0,a_0])
	\end{equation*}
	is an isomorphism. This implies that the first map $\widetilde{H}_{k-1}(\vrcomp_r(A)) \to \widetilde{H}_{k-1}(\vrcomp_r(B_1))$ is injective which, in turn, makes $\widetilde{H}_{k-1}(\vrcomp_{r}(A)) \to \widetilde{H}_{k-1}(\vrcomp_{r}(B_0)) \oplus \widetilde{H}_{k-1}(\vrcomp_{r}(B_1))$ injective. Since $\partial_*$ in (\ref{eq:mayer_vietoris}) is also an injection, $\widetilde{H}_k(\vrcomp_{r}(X))=0$ for $r \in [\tb{A},b)$.\\
	\noindent \textbf{Case 3:} If $r \in [b,\td{A})$, then $\widetilde{H}_k(\vrcomp_{r}(X)) \cong \Fld$.\\
	\indent The definition of $b$ implies that $\tb{A} \leq b$, so $\tb{A} < \td{A}$. Then, the induction hypothesis on $A$ implies that $\PHvr_{k-1}(A) = \Int{\tb{A}}{\td{A}}$ and, in particular, $ \widetilde{H}_{k-1}(\vr_r(A)) = \Fld$ for $r \in [b,\td{A})$. Now, since $\max_{a \in A} d_X(x_1,a) \leq b \leq r$, $\vrcomp_r(B_0)$ contains all simplices $[x_1,a_1,\dots,a_m]$, where $[a_1, \dots, a_m]$ is a simplex of $\vrcomp_{r}(A)$. In other words, $\vrcomp_{r}(B_0) = C(\vrcomp_{r}(A), x_1) \simeq *$. The same holds for $\vrcomp_r(B_1)$, so their homology is 0, and the Mayer-Vietoris sequence gives an isomorphism $\widetilde{H}_k(\vrcomp_{r}(X)) \xrightarrow{\sim} \widetilde{H}_{k-1}(\vrcomp_{r}(A)) \simeq \Fld$.\\
	\noindent \textbf{Case 4:} If $b \geq \td{A}$, then $\widetilde{H}_k(\vrcomp_{r}(X)) \cong 0$ for all $r \geq 0$.\\
	\indent The conclusion follows from Cases 1 and 2 because the interval $[b, \td{A})$ from Case 3 is empty and $[0, \infty) = [0, \tb{A}) \cup [\tb{A}, b) \cup [\td{A}, \infty)$.\\
	~\\
	\indent The last thing left to check is that $\vrcomp_{*}(X)$ produces persistent homology precisely when $\tb{X} < \td{X}$. So far we have $\PHvr_k(X) = \Int{b}{\td{A}}$ if and only if $b < \td{A}$, so now we show that $\tb{X} < \td{X}$ is equivalent to $b < \td{A}$.\\
	\noindent \textbf{Case 1:} $b < \td{A}$ implies $\tb{X} < \td{X}$.\\
	\indent Let $a \in A$. Since $\tb{A} \leq b < \td{A}$, $\vdeathA{a}{A}$ is well-defined by Lemma \ref{lemma:vd_is_unique}. Then for every $a' \neq \vdeathA{a}{A}$, $d_X(a,a') \leq \tbA{a}{A} < \tdA{a}{A}$. Also, for $j=0,1$, we have $d_X(a, x_j) \leq b < \td{A} \leq \tdA{a}{A}$ by definition of $b$. In other words, for every $x \in X \setminus \{\vdeathA{a}{A}\}$, $d_X(a,x) < \tdA{a}{A}$, which means that the point in $X$ furthest away from $a$ is still $\vdeathA{a}{A} \in A$. Thus, $\tdA{a}{X} = \tdA{a}{A}$ and $\tbA{a}{X} = \max\left[ \tbA{a}{A}, d_X(a,x_0), d_X(a,x_1) \right]$. Additionally, $d_X(x_0,x_1) = \diam(X)$ and $d_X(a,x_j) \leq b < \td{A} \leq \diam(X)$, so $\tdA{x_j}{X} = \diam(X)$, $\tbA{x_j}{X} = \max_{a \in A} d_X(x_j,a)$, and $\vdeathA{x_0}{X} = x_1$. Hence,
	\begin{equation*}
		\td{X}
		= \min \left\{ \tdA{x_0}{X}, \tdA{x_1}{X}, \min_{a \in A} \tdA{a}{X} \right\}
		= \min \left\{ \diam(X), \min_{a \in A} \tdA{a}{A} \right\} = \td{A},
	\end{equation*}
	
	and
	\begin{align*}
		b
		&= \max\left[\tb{A}, \max_{a \in A} d_X(x_0,a), \max_{a \in A} d_X(x_1,a)\right]\\
		&= \max\left[\max_{a \in A} \tbA{a}{A}, \max_{a \in A} d_X(x_0,a), \max_{a \in A} d_X(x_1,a)\right]\\
		&= \max\left[\max_{a \in A} \tbA{a}{X}, \tbA{x_0}{X}, \tbA{x_1}{X}\right] = \tb{X}.
	\end{align*}
	In conclusion, $\tb{X} = b < \td{A} = \td{X}$.\\
	\noindent \textbf{Case 2:} $b \geq \td{A}$ implies $\tb{X} \geq \td{X}$.\\
	\indent Let $a_0 \in A$ such that $\td{A} = \tdA{a_0}{A}$. Notice that $\tdA{a_0}{X}$ can differ from $\tdA{a_0}{A}$ if $d_X(a_0,x_j) \geq d_X(a_0,\vdeathA{a_0}{A})$ for some $j=0,1$. However, we have $b \geq d_X(a_0,x_j)$ by definition, so $b$ would still be greater than $\tdA{a_0}{X}$ even if $\tdA{a_0}{X} \neq \tdA{a_0}{A}$. With this in mind, we have two sub-cases.\\
	\noindent \textbf{Case 2.1:} $b=\tb{A}$.\\
	\indent Since $\tbA{a}{X}$ takes the maximum over a larger set than $\tbA{a}{A}$ does, $\tbA{a}{A} \leq \tbA{a}{X}$ for all $a \in A$. Then
	\begin{equation*}
		\tb{X} \geq \tb{A} = b \geq \tdA{a_0}{X} \geq \td{X}.
	\end{equation*}
	\noindent \textbf{Case 2.2:} $b > \tb{A}$.\\
	\indent Write $b = d_X(a_1,x_j)$, where $a_1 \in A$ and $j$ is either $0$ or $1$. Observe that $\tdA{x_j}{X} = \diam(X) = d_X(x_1,x_2) \geq d_X(a_1,x_j)$, so $\tbA{x_j}{X} \geq d_X(a_1,x_j)$. Then
	\begin{equation*}
		\tb{X} \geq \tbA{x_j}{X} \geq d_X(a_1,x_j) = b \geq \tdA{a_0}{X} \geq \td{X}.
	\end{equation*}
	This concludes the proof of Case 2.
\end{proof}

\subsubsection{A geometric algorithm for computing $\dgm_k^\vr(X)$ when $|X|=n$ and $k=\frac{n}{2}-1$.}
\label{sec:bd_times_algorithm}
Thanks to Theorem \ref{thm:n=2k+2}, we can compute $\dgm_k^\vr(X)$ in $O(n^2)$ time if $|X|=n=2k+2$. Indeed, both  $\tb{x}$ and $\td{x}$ can be found in at most $(n-1)+(n-2) = 2n-3$ steps because finding a maximum takes as many steps as the number of entries. We compute both quantities for each of the $n$ points in $X$ and then find $\tb{X} = \max_{x \in X} \tb{x}$ and $\td{X} = \min_{x \in X} \td{x}$ in $n$ steps each. After comparing $\tb{X}$ and $\td{X}$, we are able to determine whether $\dgm_k^\vr(X)$ is $\{({\tb{X}},{\td{X}})\}$ or empty in at most $n(2n-3)+2n+1 = O(n^2)$ steps. This is a significant improvement from the linear bound (in the number of simplices) $O\big(\nTuples{n}{k+2}\big) = O\big(\nTuples{n}{n/2+1}\big)$ discussed Section \ref{sec:computational_cost_Dnk_matrixtime}. We summarize this paragraph as follows:

\begin{prop}
    \label{prop:bd_times_algorithm}
    Let $X$ be a metric space with $n$ points and $k = \frac{n}{2}-1$. The cost of computing $\tb{X}$ and $\td{X}$ as in Definition \ref{def:tb_td} is $O(n^2)$.
\end{prop}
\noindent A \texttt{parfor} based Matlab implementation is provided in our Github repository \cite{github-repo}.

\subsection{The definition of VR-principal persistence sets}
\label{sec:pps}
Theorem \ref{thm:n=2k+2} has two consequences for $\vr$-persistence sets. The first is the following corollary.

\begin{corollary}\label{cor:Dnk=0}
	Let $X$ be any metric space. Given $k \geq 0$ fixed, $\Dvr{n,k}(X)$ is empty for all $n < 2k+2$.
\end{corollary}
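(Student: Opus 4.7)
The plan is to apply Theorem \ref{thm:n=2k+2}, case (\ref{case:n<2k+2}), pointwise over the subsets of $X$. Fix $k\ge 0$ and $n < 2k+2$, and let $X' \subset X$ be arbitrary with $|X'| \leq n$. Set $m := |X'|$. Then $m \leq n < 2k+2$, which rearranges to $k > \tfrac{m}{2}-1$, so the hypothesis of case (\ref{case:n<2k+2}) of Theorem \ref{thm:n=2k+2} holds for $X'$. We conclude $\PHvr_k(X') = 0$, i.e., the $k$-th Vietoris--Rips persistence module of $X'$ is trivial and thus $\dgm_k^\vr(X')$ has no off-diagonal points.

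Since this argument is uniform over every $X' \subset X$ with $|X'| \leq n$, the set
\begin{equation*}
\Dvr{n,k}(X) = \{\dgm_k^\vr(X') : X' \subset X,\ |X'| \leq n\}
\end{equation*}
contains no non-trivial persistence diagram. Under the convention (in force throughout the paper) that $\D$ records only off-diagonal points, every $\dgm_k^\vr(X')$ appearing in this collection is the empty multiset, and hence $\Dvr{n,k}(X)$ is empty as an element of the space of (nonempty) persistence diagrams.

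There is essentially no obstacle: the corollary is a direct consequence of case (\ref{case:n<2k+2}) of Theorem \ref{thm:n=2k+2} combined with Remark \ref{rmk:functorial_persistence_sets} on functoriality. The only thing worth making explicit is that the bound $|X'| \leq n$ in the definition of $\Dnk{n,k}{\FiltFunc}(X)$ allows one to apply the theorem to each $X'$ separately rather than requiring a single global statement about $X$ itself.
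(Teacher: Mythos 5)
Your proof is correct and matches the paper's (implicit) argument exactly: the corollary is stated as an immediate consequence of case (1) of Theorem \ref{thm:n=2k+2}, applied to each subset $X'$ with $|X'|\leq n < 2k+2$. Your remark that the resulting diagrams are all empty (so that $\Dvr{n,k}(X)$ is ``empty'' in the sense of containing no off-diagonal points) is the right reading of the statement.
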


This means that the first interesting choice of $n$ is $n=2k+2$, and in that case, any sample $Y \subset X$ with $|Y|=n$ will produce only one point in its persistence diagram. This case will be focus of the rest of the paper, so we give it a name.

\begin{defn}
    \label{def:PPS}
    $\Dvr{2k+2,k}(X)$ and $\Unk{2k+2,k}{\vr}(X)$ are called, respectively, the Vietoris-Rips \define{principal persistence set} and the \define{principal persistence measure} of $X$ in dimension $k$.
\end{defn}

Let $k \geq 0$ and $n=2k+2$. Notice that the results in Section \ref{sec:computational_cost_Dnk_matrixtime} imply the worst case bound $O(n^{k+2} \cdot N)$ for approximating principal persistence sets with $N$ samples (cf. page \pageref{pg:approx}). We improve this bound via the algorithm from Section \ref{sec:bd_times_algorithm}.

\begin{corollary}\label{coro:comp-cost-pps}
    Let $X$ be a metric space. Fix $k \geq 0$ and $n=2k+2$. The cost of approximating $\Dvr{2k+2,k}(X)$ with $N$ samples is $O(n^2 \cdot N)$.
\end{corollary}

The fact that the diagrams in $\Dvr{2k+2,k}(X)$ have at most one point allows us to visualize principal persistence sets as subsets of points in $\R^2$ (cf. Figure \ref{fig:stack-D-to1}), and also to recast their properties as properties of these subsets of $\R^2$.

\begin{defn}
    \label{def:PPS_aggregate}
    Let $\D_1 := \{D \in \D \mid |D| \leq 1\}$ and $\Dop := \{ (x,y) \in \R^2 \mid 0 < x < y \text{ or } x=y=0\}$. Define $\Phi:\D_1 \to \Dop$ by $\Phi(\emptyset) = (0,0)$ and $\Phi(\{(t_b,t_d)\}) = (t_b,t_d)$.
\end{defn}

The immediate use of $\Phi$ is to visualize principal persistence sets as subsets of $\R^2$ by plotting $\Phi\big(\Dvr{2k+2,k}(X)\big)$ (as we do in Figure \ref{fig:D41_spheres_torus}). Additionally, via the map $\Phi$ we can also import principal persistence measures, metrics, and the stability of principal persistence sets into easier concepts involving $\R^2$. For example, we can visualize the pushforward measure $\Phi_\# \Unk{2k+2,k}{\vr}(X)$ by coloring its support $\Phi\big(\Dvr{2k+2,k}(X)\big)$ according to density. See Figure \ref{fig:D41_spheres_torus}.\\
\indent If we define the metric $\dB$ on $\Dop$ by
\begin{equation}
    \label{eq:bottleneck_R2}
    \textstyle
    \dB\big( (x,y), (x',y') \big) := \min\left\{ \max(|x-x'|, |y-y'|), \frac{1}{2}\max(y-x, y'-x') \right\}
\end{equation}
then the map $\Phi$ is an isometry between $(\D_1,d_{\mathcal D})$ and $(\Dop,d_{\mathcal B})$. It follows that the stability theorems \ref{thm:stability_Dnk} and \ref{thm:stability_Unk} can be reprhased (in way that will be immediately useful in Section \ref{sec:classification}) as follows.

\begin{theorem}
    \label{thm:PPS_stability}
    Let $X,Y \in \M$. For any $k \geq 0$,
    \begin{equation*}
        \dH^{\Dop}(\Phi \circ \Dvr{2k+2,k}(X), \Phi \circ \Dvr{2k+2,k}(Y)) \leq \dH(\Kn_n(X), \Kn_n(Y)) \leq 2 \cdot \widehat{d}_\mathcal{GH}(X,Y),
    \end{equation*}
    and
    \begin{equation*}
        \dW{p}^{\Dop}(\Phi_\# \Unk{2k+2,k}{\vr}(X), \Phi_\# \Unk{2k+2,k}{\vr}(Y)) \leq \dW{p}(\mu_n(X), \mu_n(Y)) \leq 2 \cdot \widehat{d}_{\mathcal{GW},p}(X,Y),
    \end{equation*}
    where $\dH^{\Dop}$ and $\dW{p}^{\Dop}$ denote the Hausdorff and $p$-Wasserstein distances defined on $(\Dop, \dB)$.
\end{theorem}

\begin{remark}
    To reduce notational overload, we will simply write $\Dvr{2k+2,k}(X)$ and $\Unk{2k+2,k}{\vr}(X)$ instead of $\Phi \circ \Dvr{2k+2,k}(X)$ and $\Phi_\# \Unk{2k+2,k}{\vr}(X)$. Additionally, whenever referring to  distances between points in a given $\Dvr{2k+2,k}(X)$ or between two sets $\Dvr{2k+2,k}(\cdot)$ or between measures $\Uvr{2k+2,k}(\cdot)$, we will invoke the metrics $\dB$, $\dH^{\Dop}$, and $\dW{p}^{\Dop}$ described above.
\end{remark}

\begin{example}\label{ex:D41_spheres}
    Figure \ref{fig:D41_spheres_torus} shows computational approximations to the principal persistence measure $\Unk{4,1}{\vr}$ of $\Sphere{1}, \Sphere{2}$, and $\T^2 := \Sphere{1} \times \Sphere{1}$. The spheres are equipped with their usual Riemannian metrics $d_{\Sphere{1}}$ and $d_{\Sphere{2}}$ respectively. As for the torus, we used the $\ell^2$ product metric defined as
    \begin{equation*}
    	d_{\T^2}\left( (\theta_1,\theta_2), (\theta_1',\theta_2') \right) := \sqrt{ \left(d_{\Sphere{1}}(\theta_1,\theta_1') \right)^2 + \left(d_{\Sphere{1}}(\theta_2,\theta_2') \right)^2},
    \end{equation*}
    for all $(\theta_1,\theta_2), (\theta_1',\theta_2') \in \T^2$. The diagrams were computed with the algorithm in Section \ref{sec:bd_times_algorithm} implemented in MATLAB using $10^6$ 4-tuples of points sampled uniformly at random. The calculations took $12.11$ seconds for the circle, $20.08$ sec. for the sphere and $25.96$ sec. for the torus. The fraction of configurations that produced a non-diagonal point were 11.08 \% for the circle, 12.63 \% for the sphere and 14.80 \% for the torus.\\
    \indent In these graphs we observe the functioriality property $\Dvr{n,k}(X) \subset \Dvr{n,k}(Y)$ whenever $X \hookrightarrow Y$ (see Remark \ref{rmk:functorial_persistence_sets}). Notice that $\Sphere{1}$ embeds into $\Sphere{2}$ as the equator, and as slices $\Sphere{1} \times \{x_0\}$ and $\{x_0\} \times \Sphere{1}$ in $\mathbb{T}^2$. The effect on the persistence sets is that a copy of $\Dvr{4,1}(\Sphere{1})$ appears in both $\Dvr{4,1}(\Sphere{2})$ and $\Dvr{4,1}(\mathbb{T}^2)$.
\end{example}

\begin{figure}
\noindent\makebox[\textwidth]{
\begin{tabular}{ccc}
	\begin{overpic}[scale=0.32]{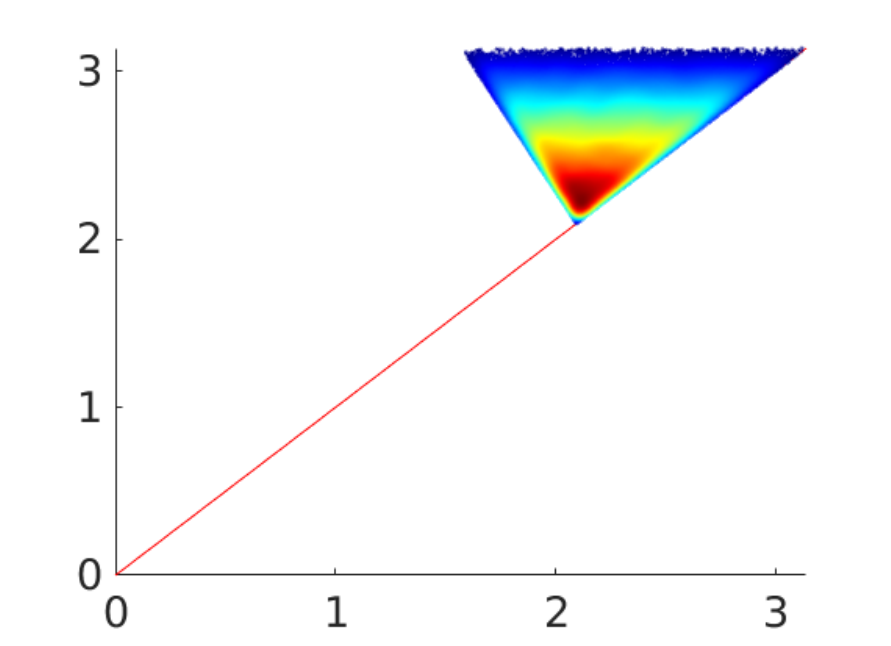}
		\put(50,77){  \makebox(0,0){
				$\Unk{4,1}{\vr}(\Sphere{1})$
			}
		}
	\end{overpic}
	
	&
	\begin{overpic}[scale=0.32]{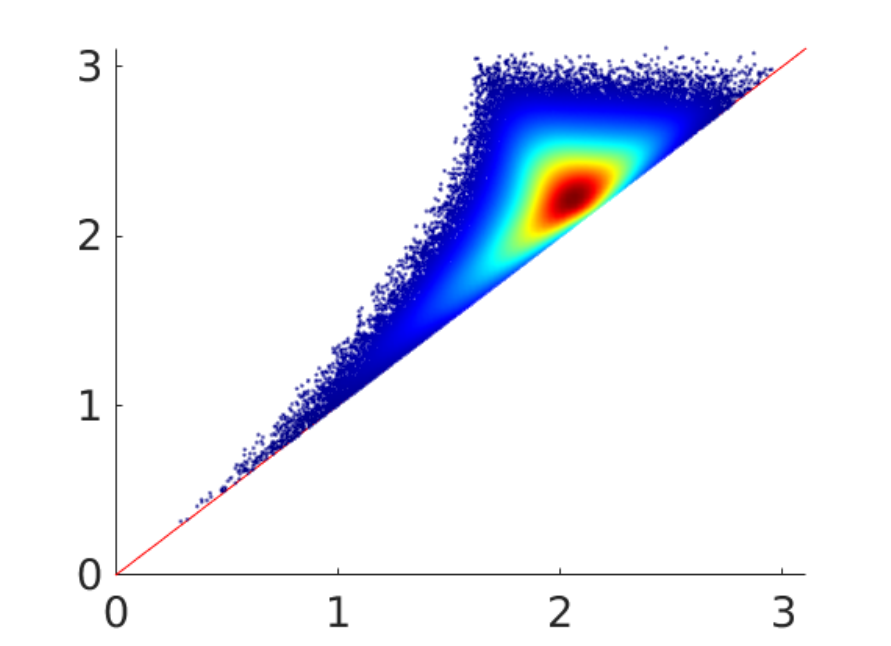}
		\put(50,77){  \makebox(0,0){
				$\Unk{4,1}{\vr}(\Sphere{2})$
			}
		}
	\end{overpic}
	
	&
	\begin{overpic}[scale=0.32]{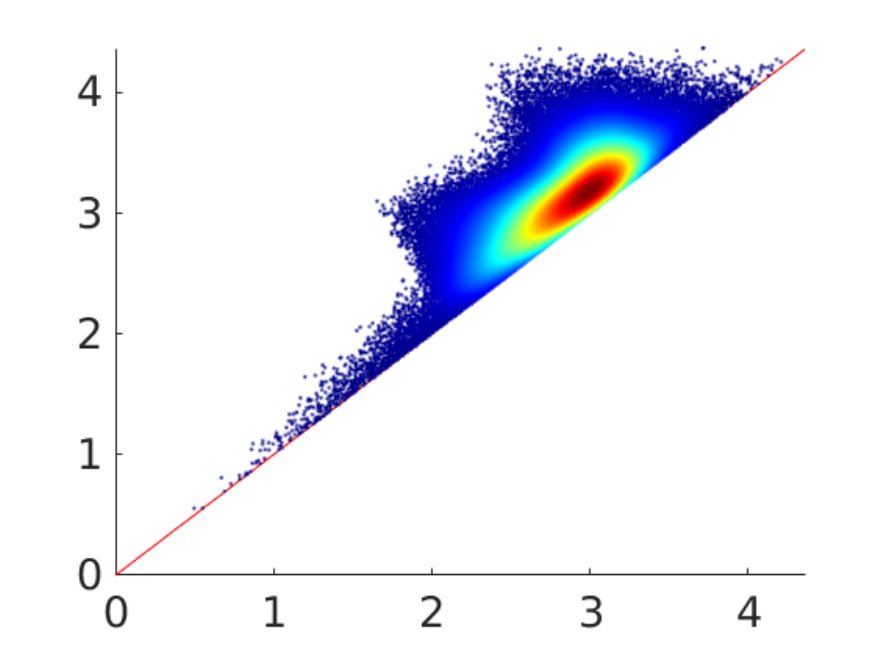}
		\put(50,77){  \makebox(0,0){
				$\Unk{4,1}{\vr}(\mathbb{T}^2)$
			}
		}
	\end{overpic}
\end{tabular}

} 	\caption{
		From left to right: computational approximations to the 1-dimensional persistence measures $\Unk{4,1}{\vr}(\Sphere{1}), \Unk{4,1}{\vr}(\Sphere{2})$, and $\Unk{4,1}{\vr}(\T^2)$. The colors represent the density of points in the diagram. The support of each measure (that is, the colored region) is the persistence set $\Dvr{4,1}$ of the corresponding metric space. Notice how these results agree with the functoriality property (cf. Remark \ref{rmk:functorial_persistence_sets}): namely, that the persistence set of $\Sphere{1}$ is a subset of the respective persistence sets of $\Sphere{2}$ and $\T^2$  (see Example \ref{ex:D41_spheres}).
	}
	\label{fig:D41_spheres_torus}
\end{figure}

\subsection{Discriminating power of VR-principal persistence sets}
\label{sec:pps_experiments}
In this section, we study the discriminating power of principal persistence sets in two synthetic examples and in one practical dataset. In the first example, we see that $\Dvr{4,1}(R)$ correlates with the ``size'' of the hole of a rectangle $R \subset \R^2$. The second example shows that $\Dvr{6,2}$ can tell apart a flat torus from a rectangle. Lastly, we show that various metrics induced by persistence sets (or persistence measures) can classify the 3D shapes from the paper \cite{sumner-paper} with classification error as low as 7.38 \%.
\begin{example}[$\Dvr{6,2}$ can distinguish the torus from a rectangle]
    \label{ex:D62_torus_rectangle}
    \indent Let $R>0$, and define $S_R := \frac{R}{\pi} \cdot \Sphere{1}$ be the circle with geodesic distance rescaled to have perimeter $2R$. Define the rectangle $Q_{R_1, R_2} := [0,2R_1] \times [0,2R_2] \subset \R^2$ and the torus $T_{R_1,R_2} := S_{R_1} \times S_{R_2}$, and equip both spaces with the $\ell^p$ product metric for some $p \geq 1$. Inspired by the observation that $H_2(T_{R_1,R_2}) \cong \Fld$ and $H_2(Q_{R_1,R_2}) \cong 0$, we ask if $\Dvr{6,2}$ can distinguish $T_{R_1, R_2}$ from $Q_{R_1, R_2}$. Table \ref{table:D62_torus_rectangle} shows experimental approximations to $\Dvr{6,2}(T_{R_1, R_2})$ and $\Dvr{6,2}(Q_{R_1, R_2})$ for several values of $R_1$ and $R_2$, and different $\ell^p$ metrics. The diagrams were obtained by uniformly sampling 1,000,000 6-point subsets from each space.\\
	\indent Regardless of the choice of parameters, the approximations of $\Dvr{6,2}(Q_{R_1, R_2})$ have almost no points, while those of $\Dvr{6,2}(T_{R_1, R_2})$ have a significant number of non-diagonal points. It is important to note that the diagrams $\Dvr{6,2}(Q_{R_1, R_2})$ with the $\ell^2$ metric have more points than are shown here. For instance, both $Q_{1,1}$ and $Q_{1,3}$ contain a circle of radius 1, so $\Dvr{6,2}(\Sphere{1}_E) \subset \Dvr{6,2}(Q_{1,1}) \subset \Dvr{6,2}(Q_{1,3})$ (cf. Theorem \ref{thm:critical_tb_even}). However, these examples show that the measures $\Unk{6,2}{\vr}(Q_{R_1,R_2})$ and $\Unk{6,2}{\vr}(T_{R_1,R_2})$ induced by the uniform measures on the respective spaces are different. Lastly, it is interesting to note that these computations require less points (6) than the number of vertices (7) in a minimal simplicial complex homeomorphic to the torus. See, for instance, Theorem 1 of \cite{triangulated-manifolds}.\\
    \indent $\Dvr{4,1}$ can also tell apart the torus and the rectangle. We will see in Proposition \ref{prop:D41_R2_S2} that any $(t_b, t_d) \in \Dvr{4,1}(\R^2)$ satisfies $t_d \leq \sqrt{2} t_b$. This holds, in particular, for any $(t_b, t_d) \in \Dvr{4,1}(Q_{R_1,R_2})$. In contrast, the set $X = \{(0,0), (R_1/2, 0), (R_1, 0), (3R_1/2, 0)\} \subset T_{R_1,R_2}$ satisfies $\tb{X} = R_1/2$ and $\td{X} = R_1$ but $\td{X} = 2\tb{X} > \sqrt{2} \tb{X}$.
    \begin{figure}[h]
    	\centering
    	\includegraphics[width=0.85\linewidth]{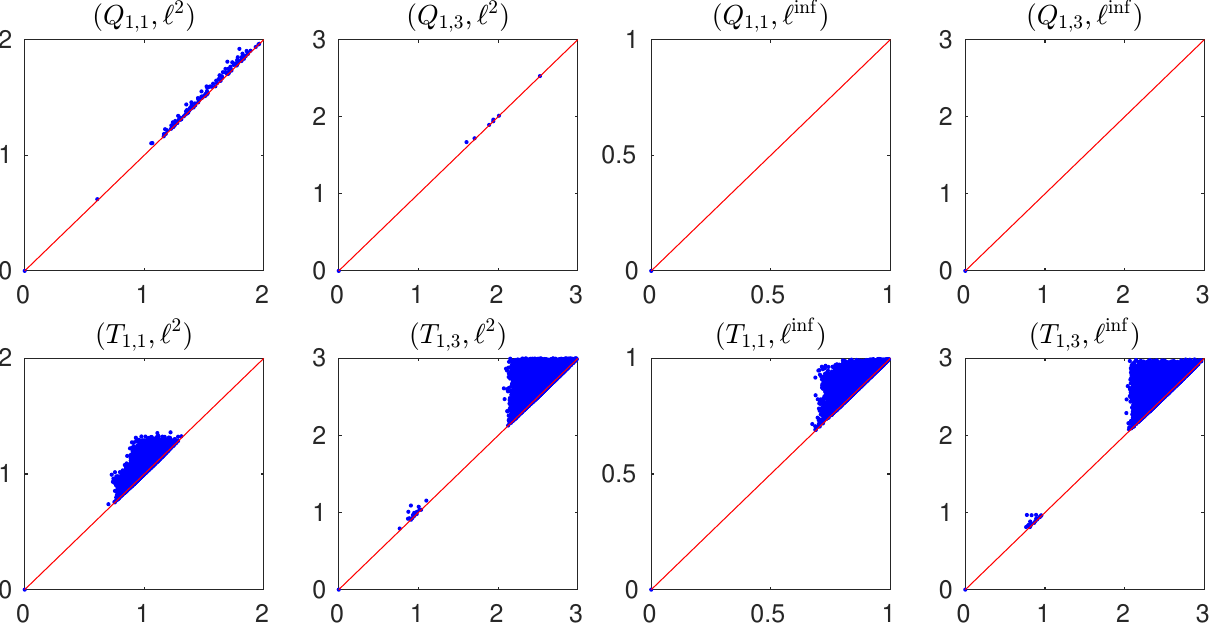}
    	\caption{Diagrams $\Dvr{6,2}$ for the torus $T_{R_1, R_2} = \frac{R_1}{\pi} \cdot \Sphere{1} \times \frac{R_2}{\pi} \cdot \Sphere{1}$ and the rectangle $Q_{R_1, R_2} = [0, 2R_1] \times [0, 2R_2]$ equipped with the $\ell^p$ product metric for $p=2, \infty$ (see Example \ref{ex:D62_torus_rectangle}). Out of the 1,000,000 configurations sampled from $Q_{R_1, R_2}$, the percentage that produced a non-diagonal point are, from left to right, $0.01\ \%$, $0.00\ \%$, $0.00\ \%$, and $0.00\ \%$. For $T_{R_1, R_2}$, the percentages are $2.20\ \%$, $2.00\ \%$, $2.86\ \%$, and $1.99\ \%$.}
    	\label{table:D62_torus_rectangle}
    \end{figure}
\end{example}

\begin{example}[Sampling effects]
	\label{ex:sampling_defects}
	The following two experiments illustrate how sampling  affects persistence sets and persistence diagrams. For both experiments we used $N=10^5$ tuples when estimating persistence sets.\\
	\smallskip\noindent\textbf{The first experiment.} In the first case, we consider the metric glueing $\Sp^1 \vee \Sp^2$, where each sphere is given its own geodesic metric. For a given parameter value $0 \leq p \leq 1$, we sample a set $X \subset \Sp^1 \vee \Sp^2$ with 1000 i.i.d. points as follows: with probability $p$ the point is sampled uniformly at random from $\Sp^1$ and with probability $(1-p)$ the point is sampled uniformly at random from $\Sp^2$. For $p=0,0.1,0.2,0.3,0.4$ and $0.5$ we calculated $\dgm_1^\vr(X)$, $\dgm_2^\vr(X)$, and $\Uvr{4,1}(X)$; the results are shown in Figure \ref{fig:S1vS2_results}. For $p>0$, the persistence diagrams clearly indicate that $X$ has one cycle in each dimension 1 and 2, whereas the measures $\Uvr{4,1}(X)$ are very similar to each other for $0 \leq p \leq 0.20$; compare with the central panel in Figure \ref{fig:D41_spheres_torus}. Note that the measures $\Uvr{4,1}(X)$ exhibit an ability to `detect' the $\Sp^1$ component for $p\geq 0.3$.
	\smallskip\\
	\noindent \textbf{The second experiment.} Let $\Sp^1_E$ and $D^2$ be the unit circle and the unit disk in $\R^2$, both equipped with the Euclidean metric. We sample a subset $X \subset D^2$ consisting of $1000$ points as follows: each point is sampled uniformly from the interior of $D^2$ with probability $p$ and from its boundary $\Sp^1_E$ with probability $1-p$. We endow $X$ with the Euclidean metric and calculate $\dgm_1^\vr(X)$ and $\Uvr{4,1}(X)$ for 16 different values of $p$; see Figure \ref{fig:S1_filled}. This time, $\dgm_1^\vr(X)$ does not have a unique significant cycle as early as $p=0.01$, and all the diagrams between $p=0.20$ and $p=1$ are virtually indistinguishable. In contrast, the measures $\Uvr{4,1}(X)$ assign a lot of weight to $\Dvr{4,1}(\Sp^1_E)$ for $0 \leq p \leq 0.30$ (cf. the third panel of Figure \ref{fig:D41_S1_S2}). This can be interpreted as signaling that these measures permit detecting topological features in the present of outliers/noise.
\end{example}

\begin{figure}[ht]
	\centering
	\includegraphics[width=\linewidth]{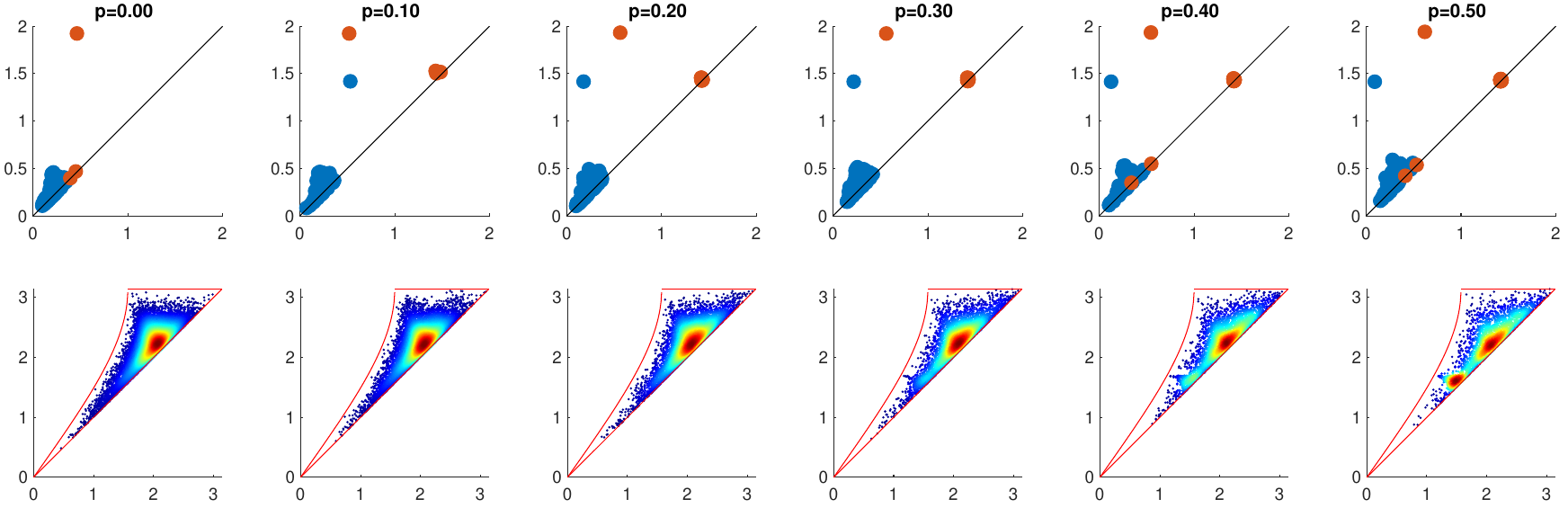}
	\caption{Given $0 \leq p \leq 1$, we sampled $X \subset \Sp^1 \vee \Sp^2$ with 1000 points so that each point is uniformly distributed in $\Sp^1$ with probability $p$ or in $\Sp^2$ with probability $1-p$. \textbf{Top:} $\dgm_1^\vr(X)$ (blue) and $\dgm_2^\vr(X)$ (orange). \textbf{Bottom:} $\Uvr{4,1}(X)$. See Example \ref{ex:sampling_defects}}
	\label{fig:S1vS2_results}
\end{figure}

\begin{example}[$\Dvr{4,1}(R)$ correlates with the size of the rectangle $R$.]
	\label{ex:D41_rectangles}
	Given $0 \leq a \leq b$ such that $a+b=1$, consider the boundary of the rectangle $R_{a,b} \subset \R^2$ with side lengths $a$ and $b$ and constant perimeter 2, and give $R_{a,b}$ the Euclidean metric. Figure \ref{fig:D41_rectangles} shows computational approximations of the persistence measures $\Unk{4,1}{\vr}(R_{a,b})$ for several values of $a$ and $b$. We sampled $10^5$ sets of 4 points uniformly at random from $R_{a,b}$. Observe that as $a$ increases, the minimal Euclidean distance from the origin to the support $\Dvr{4,1}(R_{a,b})$ of $\Unk{4,1}{\vr}(R_{a,b})$ increases. Also, note that the maximal persistence of points in $\Dvr{4,1}(R_{a,b})$ decreases rapidly with $a$. These two observations indicate that $\Dvr{4,1}(R_{a,b})$ is sensitive to the size of the ``hole" determined by the rectangle $R_{a,b}$.
	\begin{figure}[ht]
		\centering
		\includegraphics[width=\linewidth]{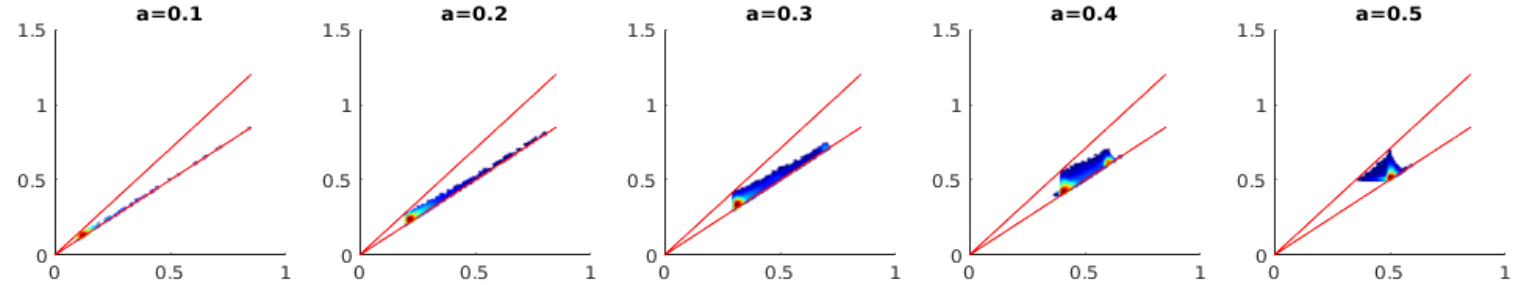}
		\caption{The persistence measures $\Unk{4,1}{\vr}(R_{a,b})$ of rectangles with side lengths $a, b$ such that $a+b=1$. The lines shown in red are the diagonal $t_d=t_b$ and the upper bound $t_d = \sqrt{2} t_b$ given by Proposition \ref{prop:D41_R2_S2}. These graphs were generated by sampling 4 points uniformly at random from each rectangle $10^5$ times. The percentage of samples that produced a non-diagonal point in each graph are, from left to right, $0.17\ \%$, $1.17\ \%$, $3.15\ \%$, $6.01\ \%$, and $9.12\ \%$.}
		\label{fig:D41_rectangles}
	\end{figure}
\end{example}

\begin{landscape}
	\begin{figure}
		\centering
		\includegraphics[width=0.85\linewidth]{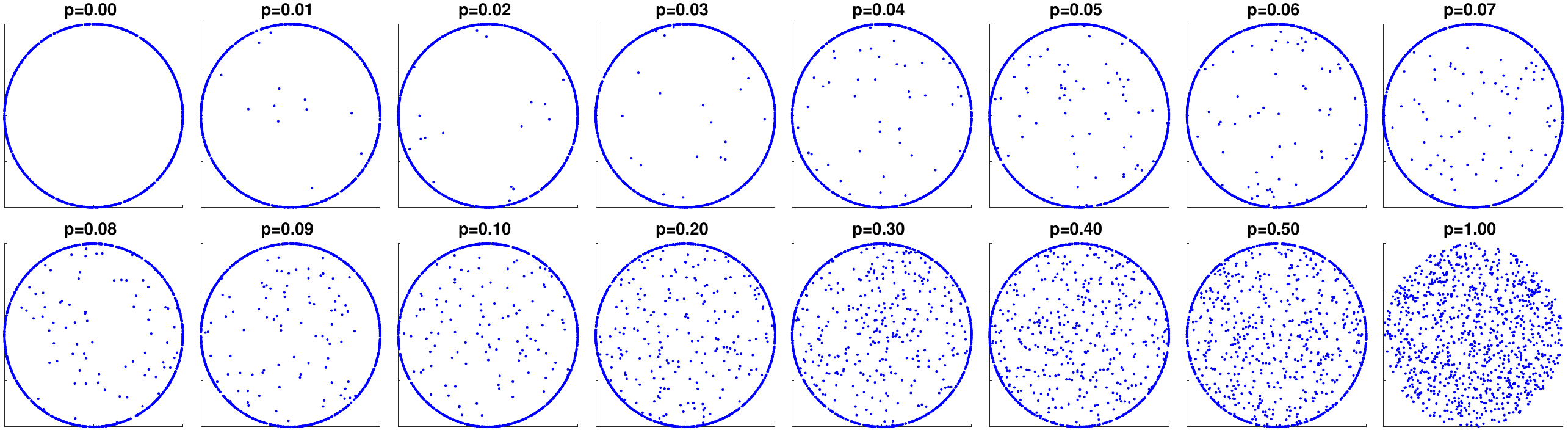}\\
		\includegraphics[width=0.85\linewidth]{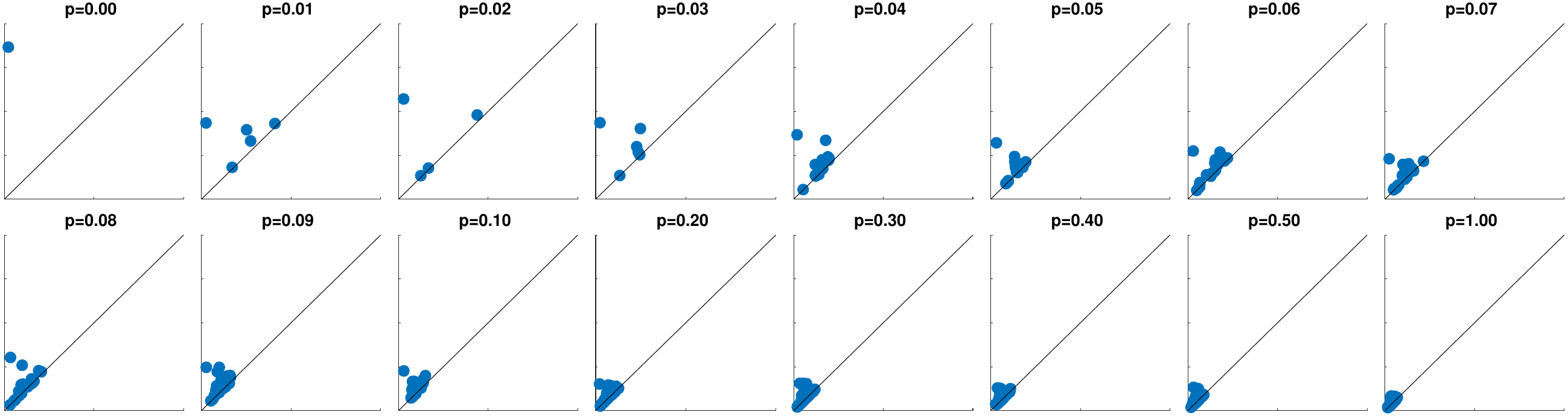}\\
		\includegraphics[width=0.85\linewidth]{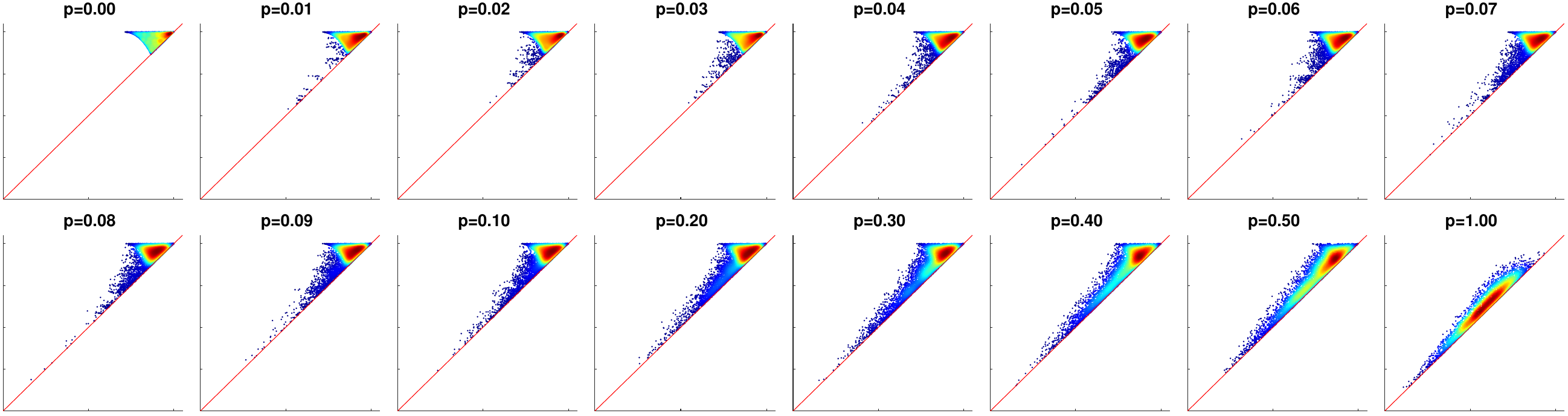}
		\caption{Given $0 \leq p \leq 1$, we sampled $X \subset D^2 \subset \R^2$ with 1000 points so that each point is uniformly distributed in the interior of $D^2$ with probability $p$ or on its boundary $\Sp^1_E$ with probability $1-p$. \textbf{Top:} Examples of $X$ for several values of $p$. \textbf{Middle:} $\dgm_1^\vr(X)$. \textbf{Bottom:} $\Uvr{4,1}(X)$.}
		\label{fig:S1_filled}
	\end{figure}
\end{landscape}

\subsubsection{Performance in a pose-invariant shape classification task}\label{sec:classification}

To test the discriminative power of persistence sets, we performed a classification experiment similar to the one outlined in \cite{ccsg09a}. In this experiment one has a database with multiple classes, each containing several \emph{poses} of the same shape, and the goal is to correctly classify the database in a way such that two different poses in the same class are clustered together whereas any two poses in two different classes are well discriminated; see Figure \ref{fig:collage}.\\
\indent We used a subset of the database from\footnote{As of writing this paper, the database is hosted at \url{https://people.csail.mit.edu/sumner/research/deftransfer/}.} \cite{sumner-paper} consisting of 62 shapes from six different classes: \texttt{camel}, \texttt{cat}, \texttt{elephant}, \texttt{face}, \texttt{head}, and \texttt{horse}. Each class has either 10 or 11 poses of the same shape. A pose is encoded with a mesh $(V_i,T_i)$ ($i=1, \dots, 62$) which consists of a set of vertices $V_i \subset \R^3$ and a set of triangles $T_i \subset V_i^3$. Let $G_i = (V_i, E_i)$ be the 1-skeleton of $(V_i, T_i)$ with an edge $\{p,q\} \in E_i$ weighted by the Euclidean distance $\|p-q\|$. Let $d_{G_i}$ be the shortest path distance on $G_i$. \emph{Note that this implies that any two poses of the same class will be (nearly) isometric}. We first select a subset $X_i \subset G_i$ of 4,000 points using farthest point sampling, that is, we start with a random initial point $p_1 \in V_i$, and at each step, we choose $p_{t+1} \in V_i$ to be any point that maximizes the Euclidean distance to $\{p_1, \dots, p_t\}$. The metric on $X_i \subset G_i$ is the restriction of $d_{G_i}$ and we denote it by $d_i$. We normalized each $X_i$ to have diameter 1 and endowed it with the uniform probability measure $\gamma_i$ to obtain an mm-space $(X_i, d_i, \gamma_i)$ representing the $i$th shape.\\
\indent For each shape, we computed the full $\Dvr{2,0}(X_i)$ (which consists of $\binom{4000}{2}+4000 \approx  10^7$ points counting repetitions), together with approximations to $\Dvr{4,1}(X_i)$ and $\Dvr{6,2}(X_i)$ with, respectively, $N=10^6$ and $N=10^7$ samples chosen uniformly at random (see the description on page \pageref{pg:approx}). These samples also induce an approximation of $\Uvr{2k+2,k}(X_i)$ given by the empirical measure $\gamma_{i,k}$ for $k=1,2$. For $k=0$, we have the exact measure $\Uvr{2,0}(X_i)$.\\

\begin{figure}[h]
\centering
\includegraphics[width=\linewidth]{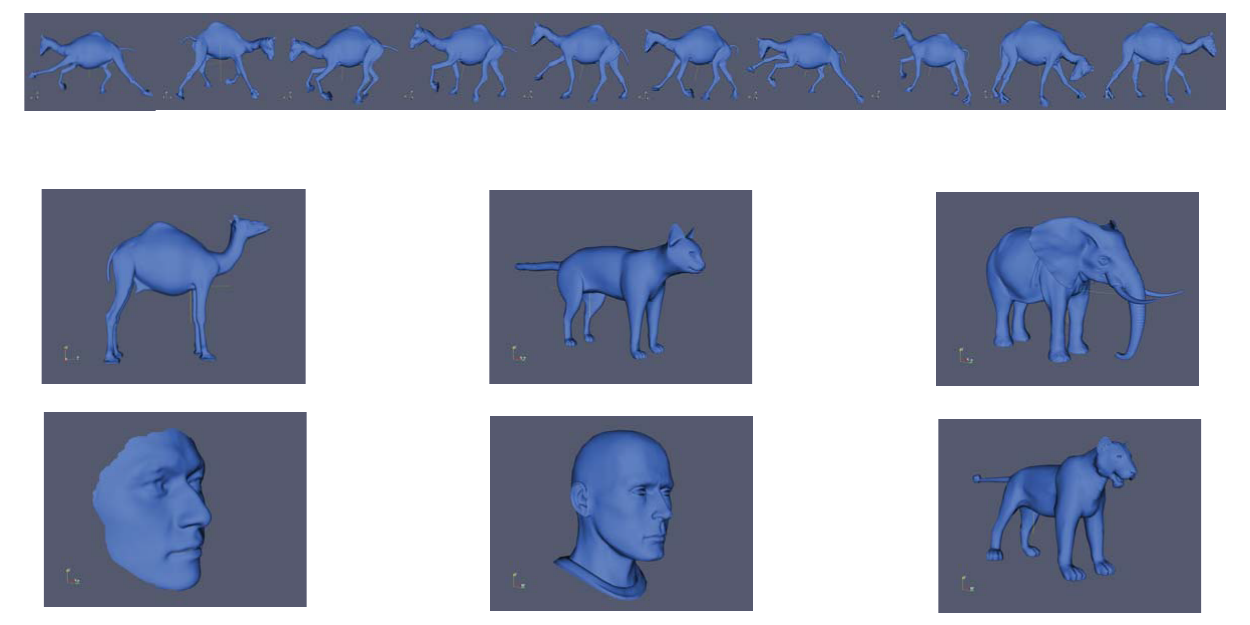}
\caption{Exemplar shapes from the the database of 3D shapes we used in our shape classification task. Note that different poses of the camel shape are nearly isometric when regarded as metric spaces (endowed with their geodesic distances). See Section \ref{sec:classification} for details.}
\label{fig:collage}
\end{figure}

\paragraph{Coarsening of $\Uvr{2k+2,k}(X_i)$.}
Ideally, we would like to compute the Wasserstein distance on $\Dop$ between the different $\gamma_{i,k}$s. However, this calculation would require finding the bottleneck distance between every pair of points in the product of the supports of $\gamma_{i,k}$ and $\gamma_{j,k}$. This cost matrix would be unmanageable (in the sense that its size would be at least $10^7 \times 10^7$ when $k=0$ and $10^6\times 10^6$ when $k=1,2$), so we replace $\gamma_{i,k}$ with a \emph{coarsened} measure $\gamma_{i,k}^c$ defined via a Voronoi partition on $\Dop$ as follows. Choose a set of landmark points $L := \{p_{1}, \dots, p_{\ell}\} \subset \Dop$. For every $t=1, \dots, \ell$, let $\gamma_{i,k}^c(p_t)$ be the sum of $\gamma_{i,k}(p)$ over all points $p \in \Dvr{2k+2,k}(X_i)$ that are closer to $p_t$ than to any other $p' \in L$. For $k=0$, $L$ consists of 850 points spaced uniformly on the line $\{0\} \times [0,1]$. For $k=1$ and $k=2$, we constructed a grid of uniformly spaced points in $[0,1] \times [0,1]$, and retained the origin and the points that were strictly above the diagonal; the final landmark set $L$ had 947 points.

\paragraph{The pairwise distance matrices arising from persistence sets and measures.}
We first  computed 8 distance matrices of size 62-by-62, where the $(i,j)$ entry of each matrix is the (Hausdorff or Wasserstein) distance between a certain invariant of $X_i$ and $X_j$ as we describe next:
\begin{itemize}
\item For each $k=0,1,2$, we computed the Hausdorff distance between the persistence sets $\Dvr{2k+2,k}(X_i)$ for $i=1, \dots, 62$ as subsets of $(\Dop, \dB)$ (recall Definition \ref{def:PPS_aggregate}). We denote these matrices as $\mathcal{H}_{0}$, $\mathcal{H}_1$ and $\mathcal{H}_2$.
\item The next three matrices are given by the 1-Wasserstein distance between the coarsened measures $\gamma_{i,k}^c$. We denote them as $\mathcal{W}_k$. We used the Matlab \texttt{mex} interface from \cite{matlab-emd} for this step.
\item The last two matrices are defined by the entry-wise maxima $$\mathcal{H}_{\max} := \max_{k} \mathcal{H}_k\,\,\mbox{and}\,\, \mathcal{W}_{\max} := \max_{k} \mathcal{W}_k.$$
\end{itemize}
Observe that, since we are operating on $(\Dop, \dB)$, we used equation (\ref{eq:bottleneck_R2}) to directly find $\dB(D_1, D_2)$ for $D_1 \in \Dvr{2k+2,k}(X_i)$ and $D_2 \in \Dvr{2k+2,k}(X_j)$ instead of optimizing between all partial matchings $\varphi:D_1 \to D_2$.

\paragraph{The pairwise distance matrices coming from VR-persistence diagrams.}
We also computed the VR-persistence diagrams of subsets $X_i' \subset X_i$ with  $|X_i'|=500$ obtained from the farthest point sampling induced by the metric of $X_i$.\footnote{The size $500$ was selected to be able to run the persistence calculations for $k=2$ in a reasonable amount of time without exceeding 2GB of memory usage which is the maximum our system could handle as Figure \ref{fig:Benchmark_break} shows.} We equip $X_i'$ with the metric inherited from $X_i$ and normalize it so that it has diameter 1. Then, we computed $\dgm_k^\vr(X_i')$ for $k=0,1,2$ with a modification of C. Trailie's wrapper for Ripser \cite{ripser}. Define the matrices $\mathcal{B}_k$ by setting the $(i,j)$-entry of $\mathcal{B}_k$ to be the bottleneck distance between $\dgm_k^\vr(X_i')$ and $\dgm_k^\vr(X_j')$. As before, we define $\mathcal{B}_{\max} := \max_k \mathcal{B}_k$. We used Hera to compute the bottleneck distances \cite{hera}.

\paragraph{Classification tasks and results.} 
Let $\mathbf{M}:=\{\mathcal{H}_k, \mathcal{W}_k, \mathcal{B}_k \mid k=0,1,2\} \cup \{\mathcal{H}_{\max}, \mathcal{W}_{\max}, \mathcal{B}_{\max}\}$. For each $M \in \mathbf{M}$, we performed a 1-nearest neighbor classification task. Our training set contains one random member $R_j$ from each class ($j=1, \dots, 6$), and we assign each $X_i$ to the class of the $R_j$ that is closest to $X_i$ as given by $M$. We repeated this experiment 2000 times and computed the average classification error $P_e(M)$. The results are shown in Table \ref{tab:sumner_results}, and the heatmaps of the matrices in $\mathbf{M}$ are shown in Figures \ref{fig:sumner_hausdorff}, \ref{fig:sumner_wass}, and \ref{fig:sumner_dB}.

We make the following remarks:
\begin{itemize}
    \item Regarding the $\mathcal{B}_k$ matrices, it is interesting to note that $\mathcal{B}_2$ performed much better than $\mathcal{B}_0$ and $\mathcal{B}_1$. $\mathcal{B}_0$ can apparently separate \texttt{head} from the other classes (see Figure \ref{fig:sumner_dB}).\footnote{We attribute this to the fact that the sampling density of the \texttt{head} shapes is much lower than that of other shapes: in fact we computed the ratio $\mathrm{area}/(\#\mathrm{vertices} \cdot \diam)$ for shapes of all classes to ascertain this. See Table \ref{tab:codensity} in Appendix \ref{sec:weighted-results} for more details.} In the same vein, we believe that $\mathcal{B}_1$ can separate instances of \texttt{face} and \texttt{head} from other classes because of the ``holes'' induced by the eyes and mouth.
    \item All the metrics induced by persistence sets ($\mathcal{H}_k$ and $\mathcal{W}_k$) perform better than $\mathcal{B}_k$ for $k=0 \,\mbox{or}\, 1$, and the best classification errors obtained by each metric are $P_e(\mathcal{H}_{\max}) = 9.17\ \%$, $P_e(\mathcal{W}_1) = 9.06\ \%$, and $P_e(\mathcal{B}_2) = 5.93\ \%$. That is, the performance of the best $\mathcal{H}_k$ and $\mathcal{W}_k$ is comparable to that of $\mathcal{B}_2$. This is promising especially since the computation of the latter is particularly costly.
    \item  An important observation from Table \ref{tab:sumner_results} is that $P_e(\mathcal{W}_{\max}) = 19.28\ \%$ despite the fact that $P_e(\mathcal{W}_1) = 9.06\ \%$. The reason is that $\mathcal{W}_0$ dominates the maximum in $\mathcal{W}_{\max}$, so the discriminating power of $\mathcal{W}_1$ is obfuscated.
\end{itemize}
Appendix \ref{sec:weighted-results} contains additional results regarding this classification task.

\begin{table}
    \centering
    \begin{tabular}{c|cccc}
        $k$ & $0$ & $1$ & $2$ & $\max{}$ \\
        \hline
        $P_e(\mathcal{B}_k)$ & 45.53 \% & 41.43 \% & 5.93 \% & 10.15 \% \\
        $P_e(\mathcal{H}_k)$ & 25.43 \%  & 12.42 \% & 19.66 \% & 9.17 \% \\
        $P_e(\mathcal{W}_k)$ & 19.50 \% & 9.06 \% & 20.27 \% & 19.28 \% 
    \end{tabular}
    \caption{Average classification error $P_e(M)$ over 2000 trials for all possible choices $M\in \mathbf{M}$. See the text for details.}
    \label{tab:sumner_results}
\end{table}

\begin{figure}
    \centering

    \includegraphics[width=\textwidth]{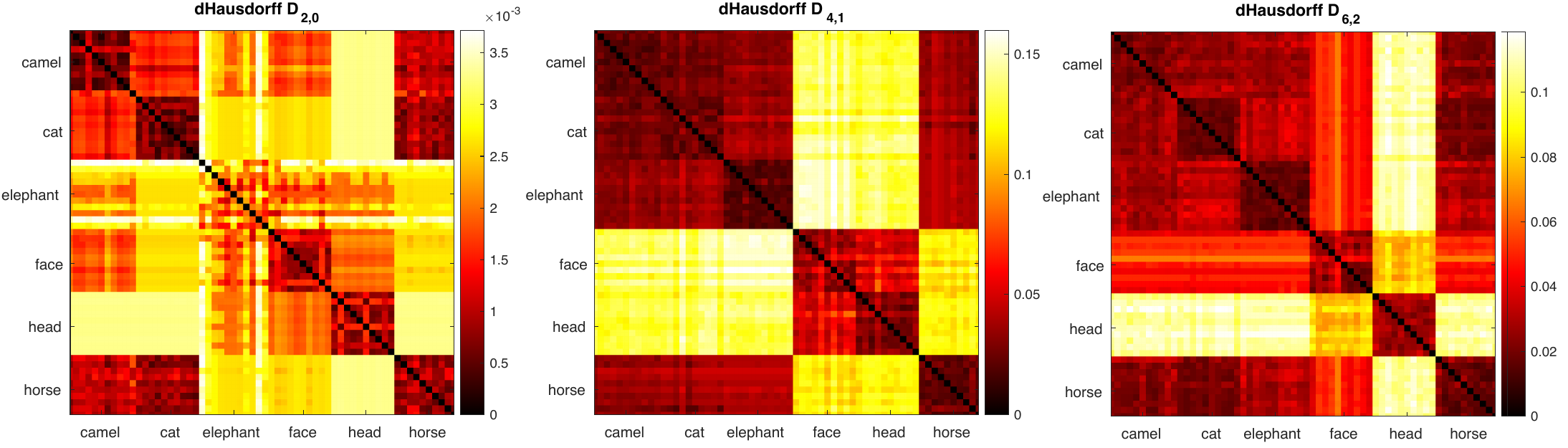}
    \includegraphics[width=\textwidth]{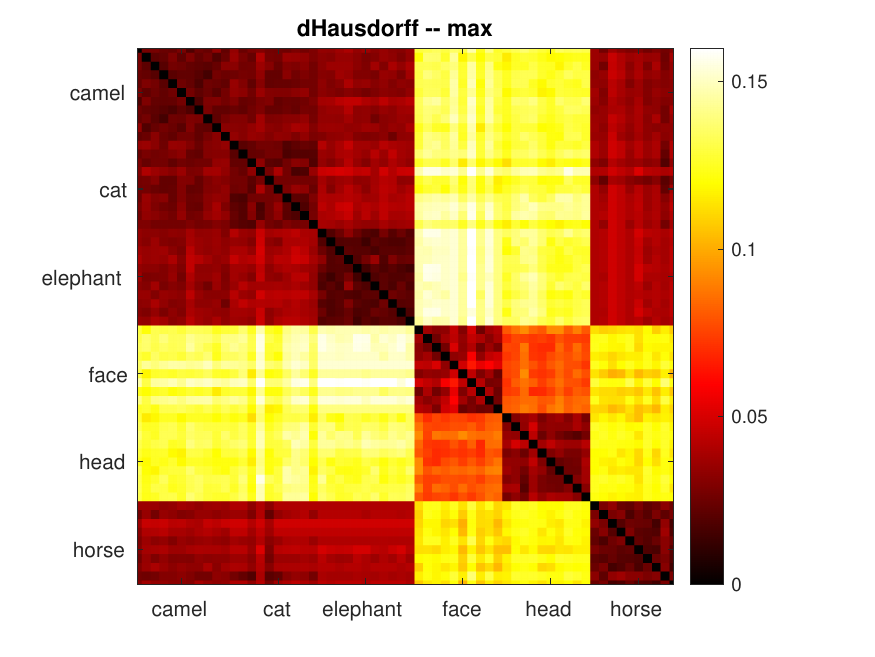}

    \caption{Heatmaps of the matrices $\mathcal{H}_0$, $\mathcal{H}_1$, $\mathcal{H}_2$, $\mathcal{H}_{\max}$. Notice that the scale of each matrix is different. Notice that $\Dvr{4,1}$ can tell apart the classes \texttt{face} and \texttt{head} from all the others. In addition, a head has a 2-dimensional cavity and a face doesn't, which suggests a reason why $\Dvr{6,2}$ can also tell those two classes apart.}
    \label{fig:sumner_hausdorff}
\end{figure}

\begin{figure}
    \centering

    \includegraphics[width=\textwidth]{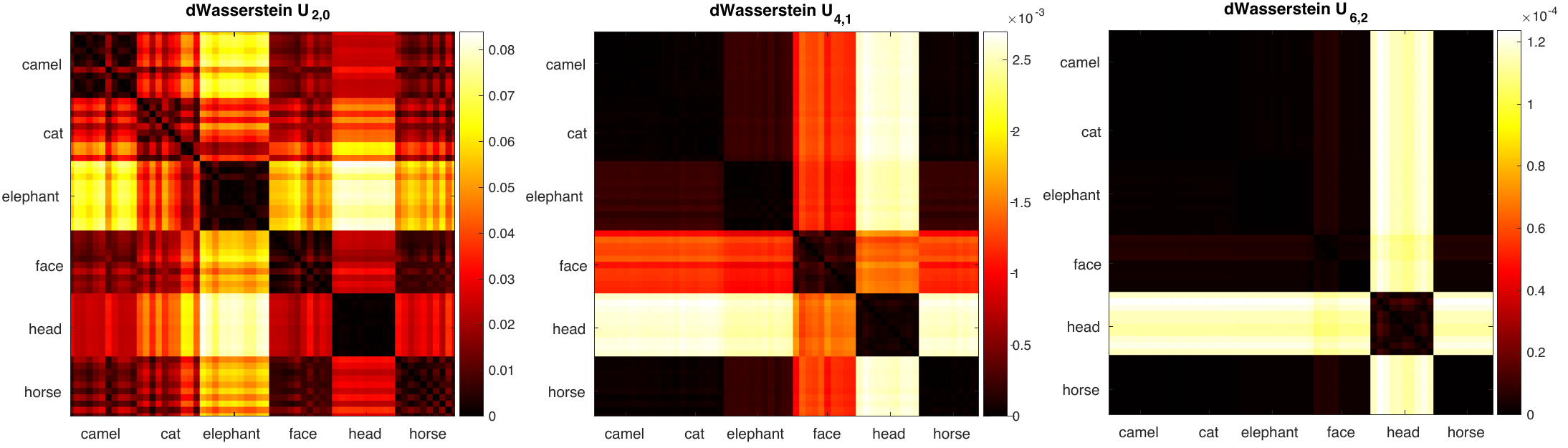}
	\includegraphics[width=\textwidth]{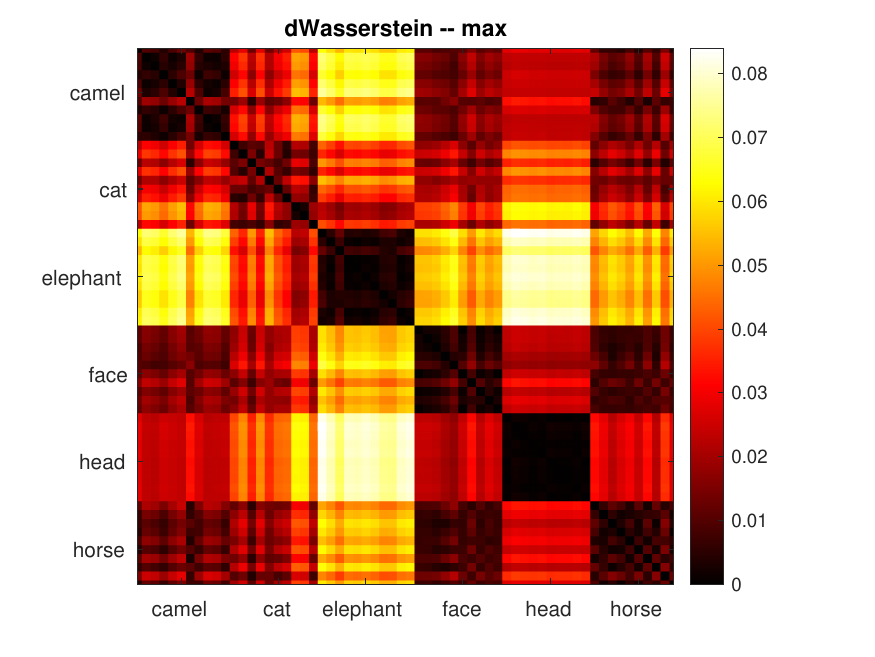}
	
    \caption{Heatmaps of the matrices $\mathcal{W}_0$, $\mathcal{W}_1$, $\mathcal{W}_2$, $\mathcal{W}_{\max}$. Notice that the scale of each matrix is different. Notice that $\Uvr{4,1}$ can tell apart the classes \texttt{face} and \texttt{head} from all the others. In addition, a head has a 2-dimensional cavity and a face doesn't, which suggests why  $\Uvr{6,2}$ can also tell those two classes apart.}
    \label{fig:sumner_wass}
\end{figure}

\begin{figure}
    \centering

    \includegraphics[width=\textwidth]{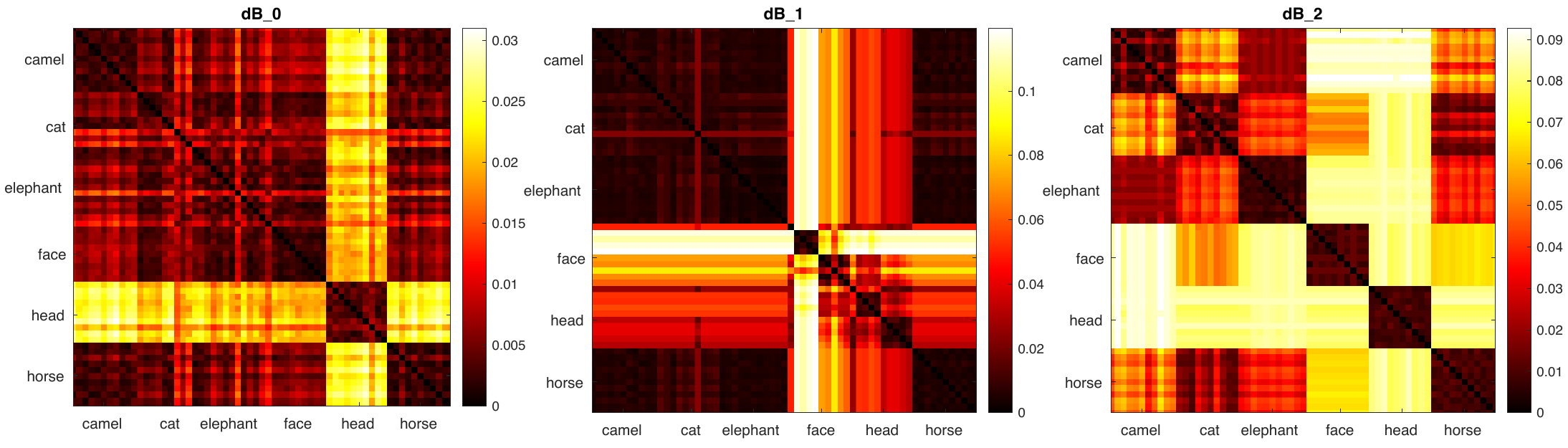}
    \includegraphics[width=0.95\textwidth]{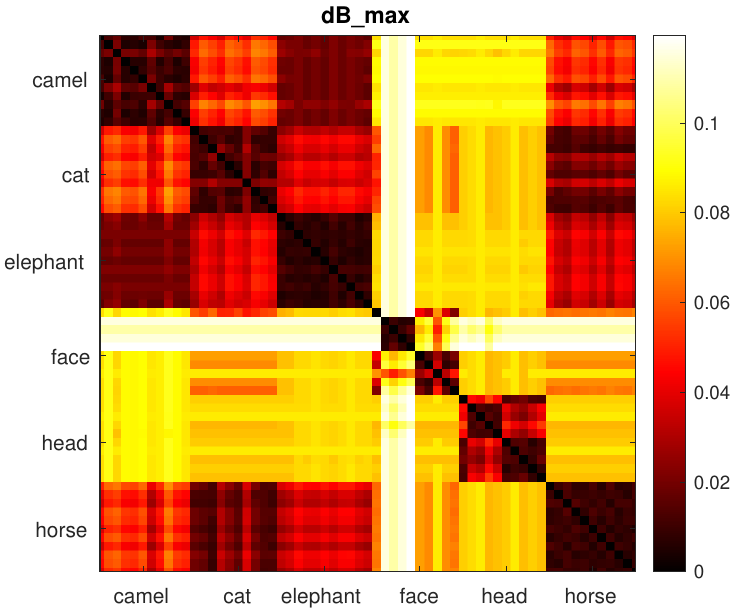}
    \caption{Heatmaps of the matrices $\mathcal{B}_0$, $\mathcal{B}_1$, $\mathcal{B}_2$, $\mathcal{B}_{\max}$. Notice that the scale of each matrix is different.}
    \label{fig:sumner_dB}
\end{figure}

\newpage
\subsection{Comparison of computational performance of VR-persistence sets and VR-persistent homology}
\label{sec:memory_comparison}

\paragraph{Time benchmarks of the geometric algorithm and VR-persistent homology.}
\indent We tested the algorithm from Section \ref{sec:bd_times_algorithm} by calculating $\dgm_k^\vr(X)$ for sets $X$ with $n_X := |X|=2k+2$ and $n_X$ ranging from $4$ to $50$. For each value of $n_X$, we used MATLAB to generate 250 sets $X \subset \R^2$ with a two-dimensional normal random variable. We then attempted to calculate $\dgm_k(X)$ in three ways: with the geometric algorithm (from Theorem \ref{thm:n=2k+2}) coded in MATLAB and in C++, and with Ripser (which is written in C++) using a MATLAB wrapper\footnote{The MATLAB wrapper was adapted from the one found in \url{https://github.com/ctralie/Math412S2017}.} for Ripser \cite{ripser} developed by C. Tralie. The results are given in the boxplot in Figure \ref{fig:timing_all}. Ripser was unable to compute $\dgm_k^\vr(X)$ for $n_X>28$, while neither version of the geometric algorithm had issues. C. Trailie's wrapper calls Ripser inside MATLAB with the \texttt{system} command, and we adopted that approach to run our C++ implementation of the geometric algorithm \cite{github-repo}. The time was measured with the \texttt{tic}, \texttt{toc} functions.\\
\indent It must be noted that both C++ executables required that we write the distance matrices to disk before running the programs. In contrast, MATLAB can run the geometric algorithm with the distance matrix loaded in memory, and this explains why MATLAB outperformed the other two programs. This observation has implications for the implementation of persistence sets. Principal persistence sets (i.e. when $n=2k+2$) can be calculated in any programming language without significant overhead after implementing the geometric algorithm. Similarly, the computation of non-principal persistence sets could be integrated into existing software for persistent homology in order to avoid the costly I/O operations described above. The tests in this section were performed in a Dell Precision 7540 Laptop with an Intel Core i7-9850H CPU and 8GB of RAM, running Fedora 35 and gcc version 11.3.1.
\begin{figure}
	\centering
	\includegraphics[width=\textwidth]{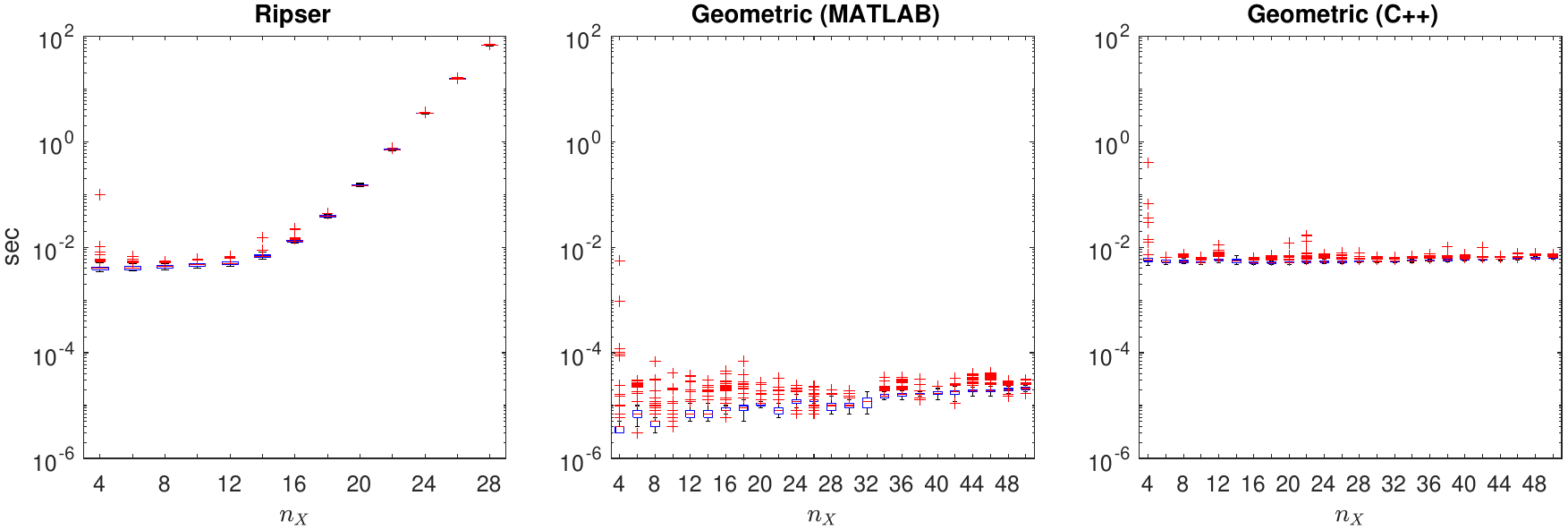}
	\caption{The time required by three algorithms to compute $\dgm_k^\vr(X)$ for a space $X$ with $n_X = 2k+2$ points. We did 250 repetitions for each value of $n_X$. Because of an unmanageable number of simplices to compute, Ripser could not finish the calculations past $n_X=28$.}
	\label{fig:timing_all}
\end{figure}
\paragraph{Benchmarks of VR-persistence sets and VR-persistent homology.}
\indent At this point, it is important to emphasize that we view persistence sets as a family of invariants that complements the standard persistent homology pipeline. Persistence sets are, in many cases, efficiently computable both in terms of their complexity and approximability and, importantly, in terms of memory requirements. These differ from the properties of standard persistence invariants, which require a lot of memory. To illustrate this point, we sampled a collection of sets $X$ uniformly at random from the sphere $\Sphere{3}$ with $n_X=|X|$ ranging from $100$ to $1000$ in increments of $100$. We attempted to calculate persistent homology in dimension $k$ with Ripser and an approximation to $\Dvr{2k+2,k}(X)$ with $N=10^6$. We used a C++ executable that implements the algorithm from Section \ref{sec:bd_times_algorithm} for the latter computation. We used $k=1,2,3$ in both experiments. The computation of $\dgm_k^\vr(X)$ failed to finish beyond $n_X = 500$ when $k=2$ and $n_X = 100$ when $k=3$. See Figure \ref{fig:Benchmark_break}. We measured ellapsed time and consumed memory with the \texttt{/usr/time -v} command. The tests in this section were performed in a Dell Precision 7540 Laptop with an Intel Core i7-9850H CPU and 8GB of RAM, running Fedora 35 and gcc version 11.3.1.
\begin{figure}
	\begin{minipage}{0.48\linewidth}
		\centering
		\includegraphics[scale=0.55]{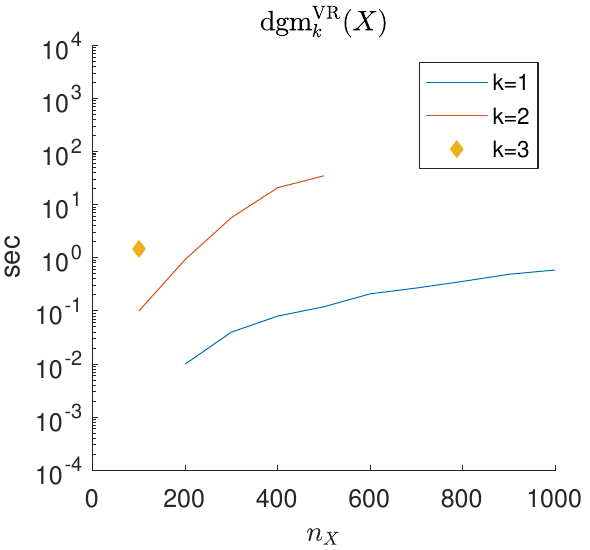}\\
		\includegraphics[scale=0.55]{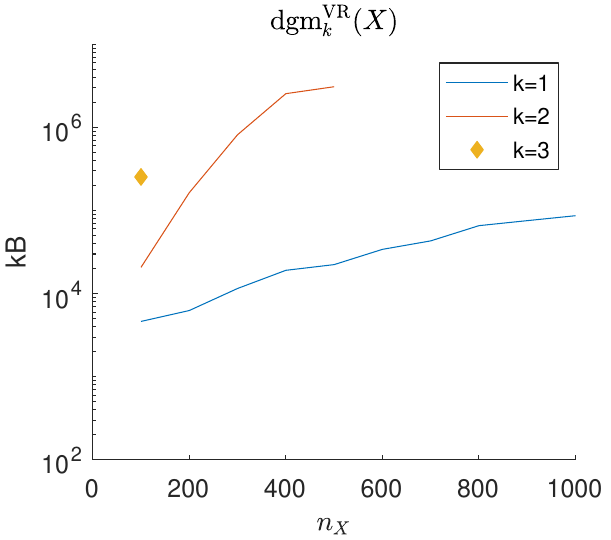}
	\end{minipage}
	\hfill
	\begin{minipage}{0.48\linewidth}
		\centering
		\includegraphics[scale=0.55]{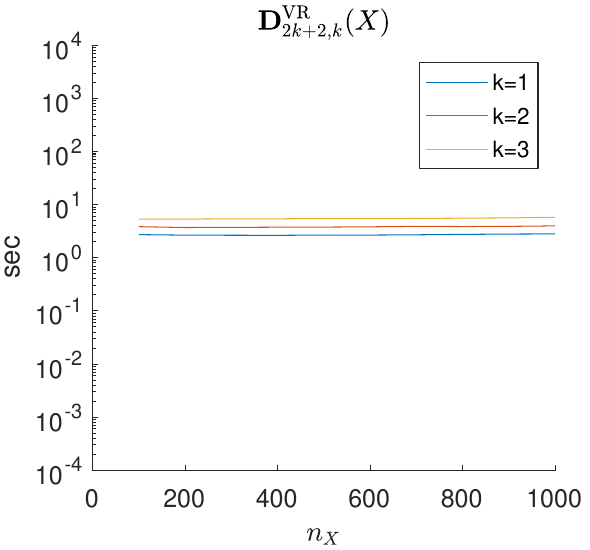} \\
		\includegraphics[scale=0.55]{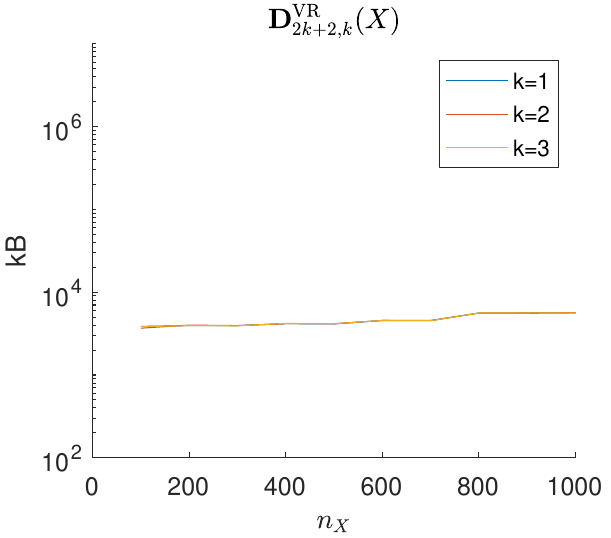}
	\end{minipage}
	\caption{\textbf{Left column:} The time (top) and memory (bottom) required to compute $\dgm_k^\vr(X)$ for a space with $n_X$ points. Ripser ran out of memory and could not complete the calculations for $n_X>500$ and $k=2$, and for $n_X>100$ and $k=3$. This is why the graph for $k=3$ has a single point rather than a line. \textbf{Right column:} The time (top) and memory (bottom) required to approximate $\Dvr{2k+2,k}(X)$ with $10^6$ samples for a space with $n_X$ points.}
	\label{fig:Benchmark_break}
\end{figure}

\paragraph{The impact of parallelization.}
We also compared the running time of persistent homology and principal persistence sets when the computation of the latter is parallelized. We selected 15 random shapes $X_{i_j}$ from the database in Section \ref{sec:classification} and computed approximations to $\Dvr{4,1}(X_{i_j})$ and $\Dvr{6,2}(X_{i_j})$ with $10^6$ and $10^7$ samples, respectively, and $\dgm_2^\vr(X_{i_j}')$ for a subset $X_{i_j}' \subset X_{i_j}$ with 500 points selected by farthest point sampling. The calculations were carried out in MATLAB running in a cluster computer (see below for the specs). We timed the computation of each $\Dvr{4,1}(X_{i_j})$, $\Dvr{6,2}(X_{i_j})$ and $\dgm_2^\vr(X_{i_j}')$ with the \texttt{tic}, \texttt{toc} functions, and we show the average running time in Figure \ref{fig:timing_sumner}. Although the computation of $\Dvr{6,2}(X_{i_j})$ with 1 core takes much longer than $\dgm_2^\vr(X_{i_j}')$, the parallelized computation of the former is in the same ballpark as the latter in terms of running time for the range of number of cores that we utilized. For example, $\Dvr{6,2}(X_{i_j})$ took less time than $\dgm_2^\vr(X_{i_j}')$ as soon as we had 4 cores, and the running time halved with 8 or more cores. In this test, we ran all calculations (both persistence sets and persistence diagrams) within the same node with the following specifications: it has a Broadwell architecture with AVX2 and runs Linux \texttt{3.10.0-1160.81.1.el7.x86\_64}. It has 22 cores available and 128 GB maximum memory. Our program used 12 cores and was allotted 44.50 GB memory, of which we used used 13.51 GB. The experiments ran with an average CPU frequency of 3.35 GHz.

\begin{figure}[h]
	\centering
	\includegraphics[scale=0.55]{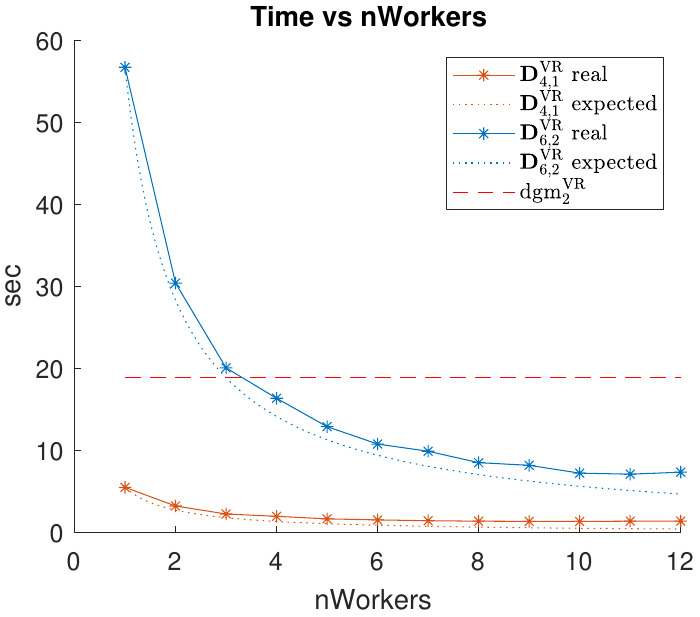}
	\caption{Running time of $\Dvr{2k+2,k}(X_i)$ for $k=1,2$ in parallel with a variable number of workers (nWorkers). The dashed line is the running time of $\dgm_2(X_i')$ sequentially. The dotted lines are the theoretical speedup guaranteed by Amdahl's law.}
	\label{fig:timing_sumner}
\end{figure}
\section{Vietoris-Rips Persistence sets of spheres}
In this section, we will describe the principal persistence sets $\Dvr{2k+2,k}(\Sphere{1})$ for all $k \geq 0$. After that, we will take advantage of functoriality to find some of the persistence sets of the higher dimensional spheres $\Sphere{m}$, $m \geq 2$, and describe the limitations (if any) to obtain higher principal persistence sets. We begin with a general technical lemma.

\begin{lemma}\label{lemma:persistence_bounds}
	Let $k \geq 1$ and $n=2k+2$. Let $(X, d_X)$ be a metric space with $n$ points. Then:
	\begin{enumerate}
	    \item\label{item:td_basic_bound} $\td{X} \leq 2\tb{X}$, and equality holds if and only if $\vdeath$ is well defined and, for every $i=1,\dots,n$, $d_X(x_i,\vdeath(x_i))=\td{X}$ and $d_X(x_i,x)=\tb{X}$ for every $x \neq \vdeath(x_i)$.
	    \item\label{item:separation_bound} $\pers(\dgm_k^\vr(X)) = \td{X}-\tb{X} \leq \sep(X)$.
	    \item\label{item:not_interval} If $X$ can be isometrically embedded on an interval, then $\tb{X} \geq \td{X}$.
	\end{enumerate}
\end{lemma}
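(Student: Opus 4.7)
The plan is to exploit the involution property of $\vdeath$ guaranteed by Lemma \ref{lemma:vd_is_unique} together with careful triangle inequality applications for items (\ref{item:td_basic_bound}) and (\ref{item:separation_bound}), and to argue directly from the linear ordering of points for item (\ref{item:not_interval}). Throughout I will assume $n \geq 4$ (i.e., $k \geq 1$), which is the substantive case.

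For item (\ref{item:td_basic_bound}), when $\tb{X} \geq \td{X}$ the inequality $\td{X} \leq 2\tb{X}$ is immediate. Otherwise $\tb{X} < \td{X}$ and $\vdeath$ is a well-defined involution on $X$. First I would pick $x \in X$ with $\td{x} = \td{X}$, set $x^\ast = \vdeath(x)$ so that $d_X(x, x^\ast) = \td{X}$, and use $n \geq 4$ to select an auxiliary $y \in X \setminus \{x, x^\ast\}$. Since $y \neq \vdeath(x)$ we have $d_X(x, y) \leq \tb{x} \leq \tb{X}$, and since $y \neq x = \vdeath(x^\ast)$ we have $d_X(y, x^\ast) \leq \tb{x^\ast} \leq \tb{X}$; the triangle inequality $\td{X} = d_X(x, x^\ast) \leq d_X(x, y) + d_X(y, x^\ast)$ then yields $\td{X} \leq 2\tb{X}$.

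For item (\ref{item:separation_bound}), the case $\tb{X} \geq \td{X}$ is trivial since Theorem \ref{thm:n=2k+2} gives $\dgm_k^\vr(X) = \emptyset$ so the persistence equals zero. Otherwise $\tb{X} < \td{X}$ and I would argue by contradiction: suppose $\sep(X) < \td{X} - \tb{X}$ and pick $u \neq v$ realizing $d_X(u, v) = \sep(X)$. If $v = \vdeath(u)$ then $d_X(u, v) = \td{u} \geq \td{X} > \sep(X)$, a contradiction, so $v \neq \vdeath(u)$ and symmetrically $u \neq \vdeath(v)$. The triangle inequality gives
\[
d_X(v, \vdeath(u)) \geq d_X(u, \vdeath(u)) - d_X(u, v) = \td{u} - \sep(X) > \td{X} - (\td{X} - \tb{X}) = \tb{X}.
\]
If $\vdeath(u) \neq \vdeath(v)$ then $\vdeath(u) \notin \{v, \vdeath(v)\}$, so $d_X(v, \vdeath(u)) \leq \tb{v} \leq \tb{X}$, contradicting the previous inequality. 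Hence $\vdeath(u) = \vdeath(v)$, and applying the involution yields the contradiction $u = v$.

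For item (\ref{item:not_interval}), I would embed $X$ isometrically as $X = \{x_1 < x_2 < \cdots < x_n\} \subset \mathbb{R}$ and compute directly that $\tb{x_1} = x_{n-1} - x_1$, $\tb{x_n} = x_n - x_2$, and $\td{x_i} = \max(x_i - x_1, x_n - x_i)$ for any interior index $i$. Letting $m = (x_1 + x_n)/2$: if $x_{n-1} \geq m$ then $\td{x_{n-1}} = x_{n-1} - x_1 = \tb{x_1}$, giving $\td{X} \leq \td{x_{n-1}} = \tb{x_1} \leq \tb{X}$; otherwise $x_2 \leq x_{n-1} < m$ forces $\td{x_2} = x_n - x_2 = \tb{x_n}$, again giving $\td{X} \leq \td{x_2} = \tb{x_n} \leq \tb{X}$. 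The main obstacle is item (\ref{item:separation_bound}), whose argument hinges on recognizing that the key estimate $d_X(v, \vdeath(u)) > \tb{X}$ is incompatible with $\vdeath(u) \neq \vdeath(v)$, forcing one to invoke the involution property to derive the final contradiction.
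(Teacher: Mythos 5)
Your proof is correct and follows essentially the same route as the paper's: item (\ref{item:td_basic_bound}) is the same triangle-inequality argument through $\vdeath(x)$ using the involution property from Lemma \ref{lemma:vd_is_unique}, and item (\ref{item:not_interval}) is the same comparison of $\td{x_i}$ at a near-extremal interior point against $\tb{x_1}$ or $\tb{x_n}$ at the endpoints (your explicit case split on the midpoint is subsumed by the paper's single inequality $\tb{X}\geq\max(\tb{x_1},\tb{x_n})\geq\td{x_k}$ for any interior $k$). For item (\ref{item:separation_bound}) your contradiction argument forcing $\vdeath(u)=\vdeath(v)$ is a correct contrapositive repackaging of the paper's direct observation that the estimate from item (\ref{item:td_basic_bound}) already shows every pairwise distance is at least $\td{X}-\tb{X}$, so both proofs rest on the same inequalities.
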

\begin{proof}
	We prove the 3 claims in order.\\
	\indent \ref{item:td_basic_bound}. If $\tb{X} \geq \td{X}$, then $\pers(\dgm_k^\vr(X))=0$ and items \ref{item:td_basic_bound} and \ref{item:separation_bound} are trivially true. Suppose, then, $\tb{X} < \td{X}$. Choose any $x_0,x \in X$ such that $x \neq x_0,\vdeath(x_0)$. By definition of $\vdeath(x_0)$, we have $d_X(x_0, x) \leq \tb{x_0}$ and $d_X(x,\vdeath(x_0)) \leq \tb{\vdeath(x_0)}$. Then
	\begin{equation}
		\label{ineq:persistence_bounds}
		d_X(x_0,x)
		\geq d_X(x_0,\vdeath(x_0)) - d_X(x,\vdeath(x_0))
		\geq \td{x_0} - \tb{\vdeath(x_0)}
		\geq \td{X}-\tb{X}.
	\end{equation}
	Since $d_X(x_0,x) \leq \tb{X}$, we get the bound $\td{X} \leq 2\tb{X}$. If $\td{X} = 2\tb{X}$, then every intermediate inequality holds; in particular, we have $d_X(x_0,x) = \tb{X}$ and $d_X(x_0, \vdeath{x_0}) = \td{X}$.\\
	\indent \ref{item:separation_bound}. The finer bound $\sep(X) \geq \td{X}-\tb{X} = \pers(\dgm_k^\vr(X))$ follows by taking the minimum of $d_X(x_0,x)$ over $x_0$ and $x$ in inequality (\ref{ineq:persistence_bounds}).\\
	\indent \ref{item:not_interval}. Suppose, without loss of generality, that $X \subset \R$ and that $x_1 < x_2 < \cdots < x_n$. Notice that $\td{x_k} = \max(x_k-x_1, x_n-x_k)$ and, in particular, $\td{x_1} = \td{x_n} = x_n-x_1$. If $k \neq 1,n$, then $\tb{x_1} \geq x_k-x_1$ and $\tb{x_n} \geq x_n-x_k$. Then
	\begin{equation*}
		\tb{X} \geq \max(\tb{x_1}, \tb{x_n}) \geq \max(x_k-x_1, x_n-x_k) = \td{x_k} \geq \td{X}.
	\end{equation*}
\end{proof}

\subsection{Characterization of $\tb{X}$ and $\td{X}$ for $X \subset \Sphere{1}$}
Now we focus on subsets of the circle. We refer to a set $X=\{x_1, x_2, \dots, x_n\} \subset \Sphere{1}$ as a \emph{configuration} of $n$ points in $\Sphere{1}$.
\begin{defn}\label{def:circle_and_order}
	Let $\Sphere{1}$ be the quotient  $[0,2\pi]/0 \sim 2\pi$ equipped with the geodesic distance, \textit{i.e.}
	\begin{equation*}
		d_{\Sphere{1}}(x, y) := \min(|x-y|, 2\pi-|x-y|),
	\end{equation*}
	for $x,y \in \Sphere{1}$. Also, we adopt the \define{cyclic order} $\prec$ on $\Sphere{1}$ from \cite{aa17}. We refer to the increasing direction in $[0,2\pi]$ as counter-clockwise, and define $x \prec y \prec z$ to mean that the counter-clockwise path starting at $x$ meets $y$ before reaching $z$. We also use $\preceq$ to allow the points to be equal.
\end{defn}
\indent Throughout this section, $k \geq 1$ and $n=2k+2$ will be fixed. Addition of indices is done modulo $n$. Let $X=\{x_1, x_2, \dots, x_n\} \subset \Sphere{1}$ such that $x_i \prec x_{i+1} \prec x_{i+2}$ for all $i$. Write $d_{ij}=d_{\Sphere{1}}(x_i,x_j)$ for the distances, and assume $\tb{X} < \td{X}$.

\begin{figure}
	\centering
	\includegraphics[scale=0.75]{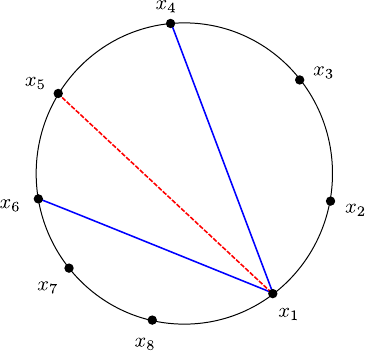}
	\caption{This configuration shows the edges that realize $\tb{x_1} = \max(d_{1,1+3}, d_{1,1-3})$ and $\td{x_1}=d_{1,1+3+1}$ when $k=3$ and $n=8$. The shortest path between $x_1$ and $x_5$ contains $x_8,x_7,x_6$, so when $r>d_{15}$, $\vrcomp_{r}(X)$ will contain a 4-simplex. These ideas were inspired by \cite{katz91}.}
	\label{fig:basic_S1_configuration}
\end{figure}

\begin{lemma}\label{lemma:technicals_S1}
   	Let $X=\{x_1, x_2, \dots, x_n\} \subset \Sphere{1}$ be such that $x_{i-1} \prec x_{i} \prec x_{i+1}$. Then:
    \begin{enumerate}[leftmargin=*]
        \item\label{item:bd_times_circle_points} For every $i$, $\tb{x_i} = \max(d_{i,i+k}, d_{i,i-k})$ and $\td{x_i}=d_{i,i+k+1}$.
        \item\label{item:bd_times_circle}
        $\tb{X} = \max_{i=1,...,n} d_{i,i+k}$ and $\td{X} = \min_{i=1,\dots,n} d_{i,i+k+1}.$
        \item\label{item:tb_splits} For every $i$, $d_{i,i+k}=d_{i,i+1}+d_{i+1,i+2}+\cdots+d_{i+k-1,i+k}.$
        \item\label{item:min_tb} $\tb{X} \geq \frac{k}{k+1} \pi$.
    \end{enumerate}
\end{lemma}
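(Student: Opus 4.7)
The plan is to reduce the entire lemma to a single structural fact: under $\tb{X} < \td{X}$, Lemma~\ref{lemma:vd_is_unique} makes $\vdeath$ a fixed-point-free involution on $X$, and the key claim is that $\vdeath(x_i) = x_{i+k+1}$ for every $i$ (indices taken modulo $n$). All four parts will follow once this cyclic-shift structure is identified.

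To prove the key claim, I would run the following ``furthest-point sweep''. For any $p \in \Sphere{1}$, $d_{\Sphere{1}}(p,x) = \pi - d_{\Sphere{1}}(-p,x)$, so the $X$-point furthest from $p$ is the $X$-point closest to the antipode $-p$. The Voronoi cells of $x_1,\dots,x_n$ on $\Sphere{1}$ are arcs separated by the midpoints $m_j$ of the consecutive CCW pairs $(x_j, x_{j+1})$, and the closest-point map jumps one step CCW each time $-p$ crosses such a midpoint. Writing $\vdeath(x_i) = x_{\sigma(i)}$, uniqueness of $\vdeath$ (from $\tb{X} < \td{X}$) forces no $-x_i$ to lie on any $m_j$, and as $p$ runs through $x_1,\dots,x_n$ in CCW order, $-p$ makes one full CCW loop, crossing each of the $n$ midpoints exactly once. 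No step $x_i \to x_{i+1}$ can produce zero jumps, because $\sigma(i+1) = \sigma(i)$ would force $x_i = x_{i+1}$ by injectivity of $\vdeath$. With $n$ jumps distributed over $n$ steps and each step contributing at least one, each must contribute exactly one, giving $\sigma(i+1) = \sigma(i)+1$. Thus $\sigma$ is a cyclic shift by some constant $c$, and $\sigma^2 = \mathrm{id}$ together with $\sigma(i)\neq i$ forces $2c \equiv 0 \pmod{n}$ with $c \neq 0$, so $c = n/2 = k+1$.

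With the key claim in hand, all four parts fall out. For (1), $\td{x_i} = d_X(x_i, \vdeath(x_i)) = d_{i,i+k+1}$; writing $S_{i,j} := \alpha_i + \cdots + \alpha_{i+j-1}$ for the CCW arc length from $x_i$ to $x_{i+j}$ (with $\alpha_j$ the CCW gap from $x_j$ to $x_{j+1}$), the identity $d_{i,i+j} = \pi - |S_{i,j} - \pi|$ is unimodal in $j$ with peak at $j = k+1$, so the second-largest value occurs at $j = k$ or $j = k+2 \equiv -k \pmod{n}$, yielding $\tb{x_i} = \max(d_{i,i+k}, d_{i,i-k})$. Part (2) is then immediate by taking min/max over $i$ and using $d_{i,i-k} = d_{i-k,i}$. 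For (3), I first note that $S_{i,k} \leq \pi$: otherwise $S_{i,k+1} > \pi$ too, forcing $d_{i,i+k+1} = 2\pi - S_{i,k+1} < 2\pi - S_{i,k} = d_{i,i+k}$, contradicting (1); hence $d_{i,i+k} = S_{i,k}$, and since each $\alpha_j$ is a summand of some such $S_{i,k}$ it satisfies $\alpha_j \leq \pi$, so $\alpha_j = d_{j,j+1}$ and $d_{i,i+k} = d_{i,i+1} + \cdots + d_{i+k-1,i+k}$. Part (4) follows by averaging (3): $\sum_i S_{i,k} = k\sum_j \alpha_j = 2k\pi$, so $\tb{X} = \max_i S_{i,k} \geq \frac{2k\pi}{n} = \frac{k}{k+1}\pi$. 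The main obstacle is the jump-counting step; the hypothesis $\tb{X} < \td{X}$ is exactly what eliminates the tie configurations (antipodes landing on midpoints) that would spoil the ``one jump per step'' count.
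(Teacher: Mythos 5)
Your proof is correct, and parts (2)--(4) are essentially the paper's own argument (taking max/min over $i$, the $S_{i,k}\le\pi$ observation for additivity along the short arc, and the averaging $\sum_i S_{i,k}=2k\pi$). Where you genuinely diverge is part (1). The paper derives $\td{x_i}=d_{i,i+k+1}$ from the simplicial structure: for $r\in[\tb{X},\td{X})$ Proposition \ref{prop:cross_polytopes} makes $\vrcomp_r(X)$ a cross-polytope, which has no $(k+1)$-simplices, and the presence of the edge $[x_i,x_{i+k+1}]$ would create one because every point on the shorter of the two arcs joining $x_i$ to $x_{i+k+1}$ is within $d_{i,i+k+1}$ of both endpoints; the missing edge at each vertex of the cross-polytope is then forced to be $[x_i,x_{i+k+1}]$, and the shortest-path containment gives $\tb{x_i}=\max(d_{i,i+k},d_{i,i-k})$. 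You instead prove the purely metric statement $\vdeath(x_i)=x_{i+k+1}$ by the antipodal/Voronoi sweep: the furthest point from $p$ is the nearest point to $-p$, the nearest-point cells are consecutive arcs cut by the $n$ gap midpoints, injectivity of $\vdeath$ (Lemma \ref{lemma:vd_is_unique}) forces at least one midpoint crossing per step, and the count $n$ crossings over $n$ steps pins the shift to exactly one per step, hence $\sigma(i)=i+c$ with $2c\equiv 0$ and $c=k+1$; unimodality of $j\mapsto \pi-|S_{i,j}-\pi|$ then gives $\tb{x_i}$. Both arguments are sound and both use $\tb{X}<\td{X}$ in the same essential way (to rule out ties). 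The paper's route reuses the cross-polytope machinery that drives the whole section and directly exhibits the forbidden simplex; yours is more elementary and self-contained -- it never touches the Vietoris--Rips complex and isolates a clean combinatorial fact about furthest-point involutions of cyclically ordered subsets of the circle, at the cost of having to verify the circular Voronoi structure and the one-crossing-per-step count.
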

\begin{proof}
	\indent\ref{item:bd_times_circle_points} Let $r \in [\tb{X},\td{X})$. By Proposition \ref{prop:cross_polytopes}, $\vrcomp_{r}(X)$ is a cross-polytope with $n$ points. In particular, $\vrcomp_{r}(X)$ contains no simplices of dimension $k+1$. We claim that this forces $\td{x_i} = d_{i,i+k+1}$ for all $i$. Indeed, the shortest path between $x_i$ and $x_{i+k+1}$ contains either the set $\{x_{i+1},\dots,x_{i+k-1}\}$ or the set $\{x_{i+k+2},\dots,x_{i-1}\}$ (see Figure \ref{fig:basic_S1_configuration}). For any $x_j$ in that shortest path, $d_{i,j} \leq d_{i,i+k+1}$, so if we had $d_{i,i+k+1} \leq r$, $\vrcomp_{r}(X)$ would contain a $k+1$ simplex, either $[x_i,x_{i+1}, \dots, x_{i+k+1}]$ or $[x_{i+k+1},x_{i+k+2}, \dots, x_{i}]$. Thus, $r< d_{i,i+k+1}$ for all $i$.\\
	\indent In particular, $\vrcomp_r(X)$ doesn't contain the edge $[x_i,x_{i+k+1}]$. According to Definition \ref{def:cross_poly}, cross-polytopes contain all edges incident on a fixed point $x_i$ except one, so $[x_i,x_j] \in \vrcomp_{r}(X)$ for all $j \neq i+k+1$. As a consequence, $d_{i,j} \leq r < d_{i,i+k+1}$ for all $j \neq i+k+1$, so $\td{x_i} = d_{i,i+k+1}$ and $\tb{x_i} = \max_{j \neq i+k+1} d_{i,j}$. Additionally, the shortest path between $x_i$ and $x_{i+k}$ contains the set $\{x_{i+1},\dots,x_{i+k-1}\}$ rather than $\{x_{i+k+1}, \dots, x_{i-1}\}$, so $d_{i,i+j} \leq d_{i,i+k}$ for $j=1,\dots,k-1$ (otherwise, $\vrcomp_{r}(X)$ would contain the $k+2$ simplex $[x_{i+k}, x_{i+k+1}, \dots, x_i]$). The analogous statement $d_{i,i-j} \leq d_{i,i-k}$ holds for $j=1,2,\dots,k-1$. Thus, $\tb{x_i} = \max(d_{i,i+k}, d_{i,i-k})$.\\
	\indent \ref{item:bd_times_circle}. These equations follow by taking the maximum (resp. minimum) over all $i$ of the above expression for $\tb{x_i}$ (resp. $\td{x_i}$), as per Definition \ref{def:tb_td}.\\
	\indent \ref{item:tb_splits}. As we saw in the proof of item \ref{item:bd_times_circle_points}, the shortest path from $x_i$ to $x_{i+k}$ contains the set $\{x_{i+1},\dots,x_{i+k-1}\}$. The length of this path is $d_{i,i+k} = d_{i,i+1} + \cdots + d_{i+k-1,i+k}$.\\
	\indent \ref{item:min_tb}. By items \ref{item:bd_times_circle} and \ref{item:tb_splits}, $n \tb{X} \geq \sum_{i=1}^{n} d_{i,i+k} = \sum_{i=1}^{n} \sum_{j=1}^{k} d_{i+j-1,i+j} = \sum_{j=1}^{k} \sum_{i=1}^{n} d_{i+j-1,i+j} = k \cdot 2\pi$. Thus, $\tb{X} \geq \frac{2k}{n}\pi = \frac{k}{k+1}\pi$.\\
\end{proof}

\subsection{Characterization of $\Dvr{2k+2,k}(\Sphere{1})$ for $k$ even}
As a followup to Lemma \ref{lemma:technicals_S1} item \ref{item:min_tb}, we show that for every pair of values $t_b,t_d$ with $\frac{k}{k+1}\pi \leq t_b < t_d \leq \pi$, there exists $X \subset \Sphere{1}$ with $|X|=2k+2$ such that $\tb{X}=t_b$ and $\td{X}=t_d$.
\begin{theorem}\label{thm:critical_tb_even}
	For even $k$, $\Dvr{2k+2,k}(\Sphere{1}) = \left\{ (t_b,t_d): \dfrac{k}{k+1}\pi \leq t_b < t_d \leq \pi \right\}$.
\end{theorem}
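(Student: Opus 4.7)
The containment $\Dvr{2k+2,k}(\Sphere{1}) \subseteq \{(t_b,t_d): \tfrac{k}{k+1}\pi \leq t_b < t_d \leq \pi\}$ is immediate from the preceding results. By Theorem~\ref{thm:n=2k+2} any point in $\Dvr{2k+2,k}(\Sphere{1})$ equals $(\tb{X},\td{X})$ for some $X \subset \Sphere{1}$ of size $2k+2$ with $\tb{X}<\td{X}$; then Lemma~\ref{lemma:technicals_S1}(\ref{item:min_tb}) gives $\tb{X} \geq \tfrac{k}{k+1}\pi$, and Lemma~\ref{lemma:technicals_S1}(\ref{item:bd_times_circle}) combined with the elementary bound $\pi$ on geodesic distances on $\Sphere{1}$ gives $\td{X}\leq \pi$.

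The substantive direction is realizability: for every $(t_b,t_d)$ in the rectangle, exhibit an $X\subset \Sphere{1}$ of size $n=2k+2$ with $\tb{X}=t_b$ and $\td{X}=t_d$. My plan is to split the rectangle along the antidiagonal $t_b+t_d = \tfrac{2k+1}{k+1}\pi$ and use two complementary families of explicit configurations. For $(t_b,t_d)$ below the antidiagonal (\emph{Family~A}) take cyclic gaps alternating $s_i=a$ (odd $i$) and $s_i=b$ (even $i$) with $a+b=\tfrac{2\pi}{k+1}$, then perturb by replacing $s_1$ with $a-\phi$ and $s_n$ with $b+\phi$. For $(t_b,t_d)$ above the antidiagonal (\emph{Family~B}) take two arcs of $k+1$ equispaced points each with common intra-arc gap $\alpha$, separated by asymmetric inter-arc gaps $\gamma=\pi-k\alpha+\eta$ and $\gamma'=\pi-k\alpha-\eta$. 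Because $k$ is even, the map $s_i\mapsto s_{i+k}$ on the cyclic gaps has orbits of length $k+1$ splitting indices into two equal parity classes, so the baseline alternating/cluster pattern contributes uniformly to $A_i=\sum_{j=i}^{i+k-1} s_j$ and only the perturbations break the symmetry. A direct computation using Lemma~\ref{lemma:technicals_S1}(\ref{item:bd_times_circle}) gives $(\tb{X},\td{X}) = (\tfrac{k}{k+1}\pi+\phi,\, \tfrac{k}{k+1}\pi+a-\phi)$ for Family~A and $(\tb{X},\td{X}) = (\pi-\alpha+\eta,\,\pi-\eta)$ for Family~B. Solving yields $\phi=t_b-\tfrac{k}{k+1}\pi$, $a=t_b+t_d-\tfrac{2k}{k+1}\pi$ for Family~A and $\alpha=2\pi-t_b-t_d$, $\eta=\pi-t_d$ for Family~B; the constraints $a,\alpha \in (0,\tfrac{\pi}{k+1}]$ are precisely equivalent to the assignment of $(t_b,t_d)$ to the correct half of the rectangle, and both families agree on the antidiagonal.

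The main obstacle is the bookkeeping for the $B_i=\sum_{j=i}^{i+k}s_j$ in Family~A: the unperturbed values are $\tfrac{k}{k+1}\pi+a$ for odd $i$ and $\tfrac{k}{k+1}\pi+b$ for even $i$, and because $a\leq \tfrac{\pi}{k+1}\leq b$ in this regime roughly half of the $B_i$'s exceed $\pi$, so their geodesic distance becomes $2\pi-B_i$ rather than $B_i$. The identity $a+b=\tfrac{2\pi}{k+1}$ precisely collapses both cases to the same distance $\tfrac{k}{k+1}\pi+a$, so the minimum distance is strictly realized by the perturbed pair $(B_1, B_{k+2})$ and equals $\tfrac{k}{k+1}\pi+a-\phi=t_d$. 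The analogous verification in Family~B is easier because there $B_i\in\{\pi+\eta,\pi-\eta\}$, making the distance $\pi-|\eta|$ constant across $i$; the only delicate step is checking $\eta < \alpha/2$, which is equivalent to the hypothesis $t_b<t_d$.
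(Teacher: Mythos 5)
Your proposal is correct, and in the realizability direction it takes a genuinely different --- and in one respect sounder --- route than the paper. The paper uses a single family: the ``critical configurations'' with alternating gaps $a,b$ (where $a+b=\tfrac{2\pi}{k+1}$), which realize the left edge $t_b=\tfrac{k}{k+1}\pi$, followed by a one-point perturbation that is claimed to raise $t_b$ by $\varepsilon$ while leaving $t_d$ fixed. Your Family~A is that same alternating-gap configuration with the same kind of adjacent-gap perturbation, but you compute (correctly) that the perturbation moves the realized point along the antidiagonal: $(t_b,t_d)=\bigl(\tfrac{k}{k+1}\pi+\phi,\ \tfrac{k}{k+1}\pi+a-\phi\bigr)$. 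The paper instead asserts $d'_{k+1,2k+2}=d_{k+1,2k+2}+\varepsilon$, which has the wrong sign: moving $x_{k+1}$ toward $x_{k+2}$ shortens the geodesic to $x_{2k+2}$, as is already forced by $d\leq\pi$ in the case $t_d=\pi$. Consequently the paper's single family only reaches the triangle $t_b+t_d\leq\tfrac{2k+1}{k+1}\pi$, and your Family~B --- two antipodally placed arcs of $k+1$ equally spaced points with the inter-arc gaps unbalanced by $\pm\eta$ --- is not redundant but exactly what is needed for the upper triangle (e.g.\ the top edge $t_d=\pi$, $t_b>\tfrac{k}{k+1}\pi$ is realized by $k+1$ nearly coincident antipodal pairs). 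Your parameter solve $\alpha=2\pi-t_b-t_d$, $\eta=\pi-t_d$ and the constraint $(k+1)\alpha\leq\pi$ match the complementary region precisely, and the two families overlap on the antidiagonal, so surjectivity is fully covered.

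One loose end, shared with the paper: both arguments identify $\tb{X}$ with $\max_i d_{i,i+k}$ and $\td{X}$ with $\min_i d_{i,i+k+1}$, but Lemma~\ref{lemma:technicals_S1} derives those formulas under the standing assumption $\tb{X}<\td{X}$, which is what the construction is trying to establish. For your configurations this is repaired by a direct check that every window of $j\leq k$ consecutive gaps has length at most $t_b\ (<\pi)$, while every window of $k+1$ consecutive gaps has geodesic length at least $t_d$; this is exactly the bookkeeping you allude to, and it does go through for both families (in Family~B one needs $\gamma\geq\alpha$, i.e.\ $(k+1)\alpha\leq\pi+\eta$, which holds throughout the upper triangle). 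I would spell out that verification rather than citing the lemma.
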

\begin{proof}
	We will first construct what we call the critical configurations, those where $\tb{X} = \frac{k}{k+1}\pi$ and $\td{X}=t_d \in (\tb{X},\pi]$. Consider the points
	\begin{equation*}
		x_{i} =
		\begin{cases}
			\frac{\pi}{k+1} \cdot (i-1),			& i \text{ odd}\\
			\frac{\pi}{k+1} \cdot (i-1)-(\pi-t_d),	& i \text{ even},
		\end{cases}
	\end{equation*}
	for $i=1,\dots,n$. When $i$ is odd, $x_{i-1} < x_i$. If $i$ is even, by Lemma \ref{lemma:technicals_S1} item \ref{item:min_tb}, we have $x_i-x_{i-1} = -\frac{k\pi}{k+1}+t_d > -\frac{k\pi}{k+1}+t_b \geq 0$. Thus, $0 = x_1 < x_2 < \cdots < x_n$. Additionally, since $t_d \leq \diam(\Sphere{1})$, we have $x_{2k+2} = \frac{k\pi}{k+1} +t_d \leq \frac{(2k+1) \pi}{k+1} < 2\pi$, so we have $x_{i} \prec x_{i+1} \prec x_{i+2}$ for all $i$.\\
	\indent Since $k$ is even, $i$ and $i+k$ have the same parity, so if $1 \leq i \leq k+2$,
	\begin{equation}
		\label{eq:critical_tb_even}
		\textstyle
		x_{i+k}-x_i = \frac{\pi}{k+1}[(i+k-1)-(i-1)] = \frac{k}{k+1} \pi.
	\end{equation}
	If $k+3 \leq i \leq 2k+2$, $x_{i+k} = x_{i-k-2}$, and the last equation gives $|x_{i+k}-x_{i}| = x_{i} - x_{i-k-2} = \frac{k+2}{k+1} \pi$. Since $\frac{k}{k+1}\pi + \frac{k+2}{k+1}\pi = 2\pi$, for all $i$ we have $d_{i,i+k} = \min(|x_{i+k}-x_{i}|, 2\pi - |x_{i+k}-x_{i}|) = \min\left( \frac{k}{k+1}\pi, \frac{k+2}{k+1}\pi \right) = \frac{k}{k+1} \pi$. Thus, $\tb{X} = \max_i d_{i,i+k} = \frac{k}{k+1}\pi$. To find $\td{X} = \min_i d_{i,i+k+1}$, we have two cases depending on the parity of $i$. If $i \leq k+1$ is odd (and $i+k+1 \leq 2k+2$ even),
	\begin{equation}
		\label{eq:critical_td_even_1}
		\textstyle
		|x_{i+k+1}-x_{i}| = \frac{\pi}{k+1} [(i+k)-(i-1)] -(\pi-t_d) = t_d,
	\end{equation}
	and if $i \leq k+1$ is even,
	\begin{equation}
		\label{eq:critical_td_even_2}
		\textstyle
		|x_{i+k+1}-x_{i}| = \left|\frac{\pi}{k+1} [(i+k)-(i-1)] + (\pi-t_d)\right| = 2\pi - t_d.
	\end{equation}
	Since $d_{i,i+k+1} = \min(|x_{i+k+1}-x_{i}|, 2\pi - |x_{i+k+1}-x_{i}|)$, the above equations imply $d_{i,i+k+1} = t_d$ irrespective of the parity of $i$. If $i>k+1$, the index $i+k+1$ equals $i-k-1$ modulo $n$, and we have $1 \leq i-k-1 \leq k+1$. Hence, the paragraph above gives $d_{i,i+k+1} = d_{i-k-1,i} = t_d$. All in all, $\td{X} = \min d_{i,i+k+1} = t_d$.\\
	\indent Lastly, we can use these critical configurations to construct $X'$ such that $\tb{X'} = t_b > \frac{k}{k+1}\pi$. Let $\varepsilon:=t_b-\frac{k}{k+1}\pi > 0$. Define $x'_{1} := x_{1} + \varepsilon$, $x'_{k+2} := x_{k+2} + \varepsilon$, and $x_i' := x_i$ for $i \neq 1, k+2$. Write $d_{ij}' = d_{\Sphere{1}}(x_i', x_j')$. In order to use Lemma \ref{lemma:technicals_S1} item \ref{item:bd_times_circle} to find $\tb{X'}$ and $\td{X'}$, we have to check that $x_{i}' \prec x_{i+1}' \prec x_{i+2}'$ for all $1 \leq i \leq 2k+2$. This boils down to checking $x_{2k+2}' \prec 0 \prec x_{1}' \prec x_{2}'$ and $x_{k+1}' \prec x_{k+2}' \prec x_{k+3}'$ because $x_{i}' = x_{i}$ for all $i \neq 1, k+2$. Since the points are listed in counter-clockwise order, the desired cyclic orderings hold as long as $x_{1}' < x_{2}'$ and $x_{k+2}' < x_{k+3}'$. Furthermore, these inequalities are equivalent to $\varepsilon < x_{2}-x_{1}, x_{k+3}-x_{k+2}$. In fact, $\varepsilon = t_b - \frac{k}{k+1} \pi < t_d-\frac{k}{k+1}\pi = x_{2}-x_{1}$ and, since $t_d \leq \pi$, $x_{2} - x_{1} = t_d-\frac{k}{k+1}\pi \leq \frac{k+2}{k+1}\pi - t_d = x_{k+3}-x_{k+2}$. In conclusion, $x_{i}' \prec x_{i+1}' \prec x_{i+2}'$ for all $1 \leq i \leq 2k+2$, and by Lemma \ref{lemma:technicals_S1} item \ref{item:bd_times_circle}, $\tb{X'} = \max_i d_{i,i+k}'$ and $\td{X'} = \min_i d_{i,i+k+1}'$.\\
	\indent The only distances among $d_{i,i+k}'$ and $d_{i,i+k+1}'$ that might differ from the corresponding $d_{ij}$ are those involving $x_{1}'$ and $x_{k+2}'$, namely $d_{1,k+1}$, $d_{1-k,1} = d_{k+3,1}$, $d_{k+2,2k+2}$, $d_{2,k+2}$, and $d_{1,k+2}$. To compute the first pair of distances, the arguments following equation (\ref{eq:critical_tb_even}) give $d_{1,k+1} = x_{k+1} - x_{1}$ and $d_{k+3,1} = 2\pi - (x_{k+3} - x_{1})$. Then
	\begin{align*}
		x_{k+1}'-x_{1}' &=\textstyle x_{k+1}-x_{1}-\varepsilon = d_{1,k+1} - \varepsilon = \frac{k}{k+1}\pi - \varepsilon \text{, and }\\
		2\pi - (x_{k+3}'-x_{1}') &=\textstyle 2\pi - (x_{k+3}-x_{1}) + \varepsilon = d_{k+3,1} + \varepsilon = \frac{k}{k+1}\pi + \varepsilon = t_b.
	\end{align*}
	Both quantities are strictly less than $\pi$, so $d_{1,k+1}' = x_{k+1}'-x_{1}' = \frac{k}{k+1}\pi - \varepsilon$ and $d_{k+3,1}' = 2\pi - (x_{k+3}'-x_{1}') = \frac{k}{k+1}\pi + \varepsilon$. An analogous argument gives $d_{k+2,2k+2}' = \frac{k}{k+1}\pi - \varepsilon$ and $d_{2,k+2}' = \frac{k}{k+1}\pi + \varepsilon$. Lastly, since $x_{k+2}'-x_{1}' = x_{k+2}-x_{1}$, we have $d_{1,k+2}' = d_{1,k+2}$. Thus, $\td{X'} = \min d'_{i,i+k+1} = t_d$ and $\tb{X'} = \max d'_{i,i+k} = \max(\frac{k}{k+1}\pi - \varepsilon, \frac{k}{k+1}\pi, \frac{k}{k+1}\pi + \varepsilon) = \frac{k}{k+1}\pi + \varepsilon = t_b$.
\end{proof}

\begin{figure}[ht]
	\begin{minipage}{0.48\textwidth}
		\centering
\begin{tikzpicture}[scale=1.75]
	\tikzmath{
		\pt=0.025;
		\r = 1;		\pos=2.4;
		\cax=-\pos;	\cay=0;
		\cbx=0;		\cby=0;
		\ccx=+\pos;	\ccy=0;
		\ta1=  0; \pax1=\cax+cos(\ta1); \pay1=\cay+sin(\ta1);
		\ta2= 50; \pax2=\cax+cos(\ta2); \pay2=\cay+sin(\ta2);
		\ta3=120; \pax3=\cax+cos(\ta3); \pay3=\cay+sin(\ta3);
		\ta4=170; \pax4=\cax+cos(\ta4); \pay4=\cay+sin(\ta4);
		\ta5=240; \pax5=\cax+cos(\ta5); \pay5=\cay+sin(\ta5);
		\ta6=290; \pax6=\cax+cos(\ta6); \pay6=\cay+sin(\ta6);
	}

	\draw (\cax, \cay) circle [radius = \r];

	\foreach \p in {(\pax1,\pay1), (\pax2,\pay2), (\pax3,\pay3), (\pax4,\pay4),
		(\pax5,\pay5), (\pax6,\pay6)}
	{
		\draw[fill] \p circle [radius=\pt];
	}

	\node[right]		at (\pax1,\pay1) {$x_1$};
	\node[above right]	at (\pax2,\pay2) {$x_2$};
	\node[above]		at (\pax3,\pay3) {$x_3$};
	\node[left]			at (\pax4,\pay4) {$x_4$};
	\node[below]		at (\pax5,\pay5) {$x_5$};
	\node[below]		at (\pax6,\pay6) {$x_6$};

	\draw[blue] (\pax1,\pay1) -- (\pax3,\pay3);
	\draw[blue] (\pax3,\pay3) -- (\pax5,\pay5);
	\draw[blue] (\pax5,\pay5) -- (\pax1,\pay1);
	
	\draw[blue] (\pax2,\pay2) -- (\pax4,\pay4);
	\draw[blue] (\pax4,\pay4) -- (\pax6,\pay6);
	\draw[blue] (\pax6,\pay6) -- (\pax2,\pay2);

	\draw[red, dashed] (\pax1,\pay1) -- (\pax4,\pay4) node [midway, below] {$\td{x_1}$} ;
\end{tikzpicture} 	\end{minipage}
	\hfill
	\begin{minipage}{0.48\textwidth}
		\centering
\begin{tikzpicture}[scale=1.75]
	\tikzmath{
		\pt=0.025;
		\r = 1;		\pos=2.4;
		\cax=-\pos;	\cay=0;
		\cbx=0;		\cby=0;
		\ccx=+\pos;	\ccy=0;
		\ta1=  0; \pax1=\cax+cos(\ta1); \pay1=\cay+sin(\ta1);
		\ta2= 57; \pax2=\cax+cos(\ta2); \pay2=\cay+sin(\ta2);
		\ta3= 82; \pax3=\cax+cos(\ta3); \pay3=\cay+sin(\ta3);
		\ta4=139; \pax4=\cax+cos(\ta4); \pay4=\cay+sin(\ta4);
		\ta5=196; \pax5=\cax+cos(\ta5); \pay5=\cay+sin(\ta5);
		\ta6=221; \pax6=\cax+cos(\ta6); \pay6=\cay+sin(\ta6);
		\ta7=278; \pax7=\cax+cos(\ta7); \pay7=\cay+sin(\ta7);
		\ta8=303; \pax8=\cax+cos(\ta8); \pay8=\cay+sin(\ta8);
		\mpx1=(\pax1+\pax2)/2;	\mpy1=(\pay1+\pay2)/2;
		\mpx2=(\pax2+\pax3)/2;	\mpy2=(\pay2+\pay3)/2;
		\mpx3=(\pax3+\pax4)/2;	\mpy3=(\pay3+\pay4)/2;
		\mpx4=(\pax4+\pax5)/2;	\mpy4=(\pay4+\pay5)/2;
		\mpx5=(\pax5+\pax6)/2;	\mpy5=(\pay5+\pay6)/2;
		\mpx6=(\pax6+\pax7)/2;	\mpy6=(\pay6+\pay7)/2;
		\mpx7=(\pax7+\pax8)/2;	\mpy7=(\pay7+\pay8)/2;
		\mpx8=(\pax8+\pax1)/2;	\mpy8=(\pay8+\pay1)/2;
	}

	\draw (\cax, \cay) circle [radius = \r];

	\foreach \p in {(\pax1,\pay1), (\pax2,\pay2), (\pax3,\pay3), (\pax4,\pay4),
		(\pax5,\pay5), (\pax6,\pay6), (\pax7,\pay7), (\pax8,\pay8)}
	{
		\draw[fill] \p circle [radius=\pt];
	}

	\node[right]		at (\pax1,\pay1) {\small $x_1$};
	\node[above right]	at (\pax2,\pay2) {\small $x_2$};
	\node[above]		at (\pax3,\pay3) {\small $x_3$};
	\node[above left]	at (\pax4,\pay4) {\small $x_4$};
	\node[left]			at (\pax5,\pay5) {\small $x_5$};
	\node[below left]	at (\pax6,\pay6) {\small $x_6$};
	\node[below]		at (\pax7,\pay7) {\small $x_7$};
	\node[below right]	at (\pax8,\pay8) {\small $x_8$};

	\node[ xshift=10, yshift=10] at (\mpx1, \mpy1) {\large $L$};
	\node[ xshift= 4, yshift= 6] at (\mpx2, \mpy2) {\large $s$};
	\node[ xshift=-7, yshift=12] at (\mpx3, \mpy3) {\large $L$};
	\node[xshift=-12, yshift= 4] at (\mpx4, \mpy4) {\large $L$};
	\node[ xshift=-6, yshift=-4] at (\mpx5, \mpy5) {\large $s$};
	\node[xshift=-8, yshift=-12] at (\mpx6, \mpy6) {\large $L$};
	\node[ xshift= 5, yshift=-8] at (\mpx7, \mpy7) {\large $s$};
	\node[ xshift=10,yshift=-10] at (\mpx8, \mpy8) {\large $L$};

	\draw[blue] (\pax1,\pay1) -- (\pax4,\pay4) node [midway, above] {$\tb{x_1}$} ;
	\draw[blue] (\pax1,\pay1) -- (\pax6,\pay6) node [midway, below] {$\tb{x_1}$} ;
	\draw[red, dashed] (\pax1,\pay1) -- (\pax5,\pay5) node [midway, above] {$\td{x_1}$} ;
\end{tikzpicture} 	\end{minipage}
	\caption{\textbf{Left:} Example of a critical configuration for $k=2$ as in Theorem \ref{thm:critical_tb_even}. The solid blue lines have length $\tb{X} = 2\pi/3$, while the dotted red line has length $\td{X}$. \textbf{Right:} Example of a critical configuration for $k=3$ in Theorem \ref{thm:critical_tb_odd}. Here, $\tb{X}=2L+s$ and $\td{X}=2L+2s$. Both: The sequence $x_1, x_{1+k}, x_{1+2k}, \dots$ forms a regular $(k+1)$-gon in the left image and a $(2k+2)$-gon in the right.}
	\label{fig:Dnk_S1_critical}
\end{figure}
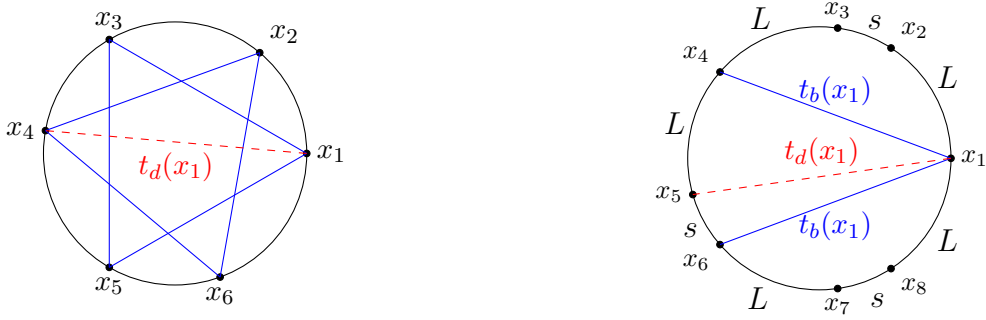

\subsection{Characterization of $\Dvr{2k+2,k}(\Sphere{1})$ for $k$ odd}
\label{sec:Dvr_S1_k_odd}
An important difference between even and odd $k$ is that only for even $k$ can we find configurations that have the minimal possible birth time $\tb{X}=\frac{k}{k+1}\pi$ given any $t_d \in (\tb{X},\pi]$. The difference is that sequences of the form $x_i,x_{i+k},x_{i+2k},\dots$ eventually reach all points when $k$ is odd, but only half of them when $k$ is even (see Figure \ref{fig:Dnk_S1_critical}). This allows us to separate $X \subset \Sphere{1}$ into two regular $(k+1)$-gons with fixed $\tb{X}$ and it still allows control on $\td{X}$, as shown in Proposition \ref{thm:critical_tb_even}. For odd $k$, we will instead use an idea from Proposition 5.4 of \cite{aa17}. We won't need the result in its full generality, so we only use part of its argument to provide a bound for $\tb{X}$ in terms of $\td{X}$.
\begin{theorem}\label{thm:critical_tb_odd}
	Let $k$ be an odd positive integer. Then $\td{X} \geq (k+1)(\pi - \tb{X})$, and this inequality is tight.
\end{theorem}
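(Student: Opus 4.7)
The plan is to parametrize $X$ by arc lengths and combine two ingredients: the complementarity $T_i+T_{i+k+1}=2\pi$ arising from $n=2(k+1)$, and a number-theoretic identity $k^2\equiv 1 \pmod n$ that is specific to odd $k$. Writing $a_i := d_{\Sphere{1}}(x_i,x_{i+1})$ with indices taken mod $n$ (so $\sum_i a_i = 2\pi$), I will set $S_i := a_i+a_{i+1}+\cdots+a_{i+k-1}$ (sum over a $k$-arc window) and $T_i := a_i+a_{i+1}+\cdots+a_{i+k}$ (sum over a $(k+1)$-arc window). Under the standing assumption $\tb{X}<\td{X}$, Lemma \ref{lemma:technicals_S1} identifies $t_b := \tb{X} = \max_i S_i$ and $t_d := \td{X} = \min_i T_i$. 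Since $T_i$ and $T_{i+k+1}$ together cover the circle, $T_i+T_{i+k+1}=2\pi$ for every $i$, which in particular yields the symmetry $\min_i T_i + \max_i T_i = 2\pi$.

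The heart of the plan is to prove the averaging identity
\[
\sum_{j=0}^{k} S_{i+jk} \;=\; T_i + (k-1)\pi
\qquad (\text{indices mod } n).
\]
The key use of oddness is that $k^2\equiv 1\pmod n$: since $k+1$ is even, $n=2(k+1)$ divides $(k+1)^2$, hence $k^2=(k+1)^2-2(k+1)+1\equiv 1\pmod n$. Writing $k^2 = \tfrac{k-1}{2}n+1$, the partial sum $\sum_{j=0}^{k-1}S_{i+jk}$ traverses $k^2$ consecutive arc-slots starting at $a_i$, wrapping around the $n$-cycle exactly $\tfrac{k-1}{2}$ times and then continuing by one arc. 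This gives every $a_\ell$ multiplicity $\tfrac{k-1}{2}$, except $a_i$ which gets multiplicity $\tfrac{k+1}{2}$. Adding the final term $S_{i+k^2}=S_{i+1}$ raises the multiplicities of $a_{i+1},\ldots,a_{i+k}$ up to $\tfrac{k+1}{2}$ as well, so splitting the total sum into ``arcs inside $T_i$'' versus ``arcs outside'' produces $\tfrac{k+1}{2}T_i + \tfrac{k-1}{2}(2\pi-T_i) = T_i + (k-1)\pi$.

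With the identity in hand the theorem follows quickly. Each $S_{i+jk}$ is at most $t_b$, so the identity gives $T_i \le (k+1)t_b - (k-1)\pi$ for every $i$; taking the maximum and combining with $\min_i T_i = 2\pi - \max_i T_i$ produces
\[
t_d \;=\; 2\pi - \max_i T_i \;\geq\; 2\pi - (k+1)t_b + (k-1)\pi \;=\; (k+1)(\pi - t_b),
\]
as required. The main obstacle is the combinatorial bookkeeping behind the averaging identity and checking the multiplicity count when the $k^2$ consecutive arc-slots wrap around. Oddness of $k$ is essential here: for even $k$ we have $\gcd(k,n)>1$, so the orbit of $i\mapsto i+k$ fails to visit every index and no analogous identity is available, consistent with Theorem \ref{thm:critical_tb_even} where the inequality itself fails.
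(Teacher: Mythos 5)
Your proof is correct and follows essentially the same route as the paper's: the key step in both is the observation that $k^2 \equiv 1 \pmod{n}$ for odd $k$, so the orbit $i, i+k, i+2k, \dots$ wraps $\tfrac{k-1}{2}$ times around the circle and lands adjacent to its starting point, after which one sums the chords $d_{\ell,\ell+k} \leq \tb{X}$ along this orbit. The only (cosmetic) difference is at the end: you extend the telescoping sum by one extra term so that it produces $T_i$ directly and then invoke the symmetry $\min_i T_i = 2\pi - \max_i T_i$, whereas the paper stops at $k$ terms, isolates $\max_i d_{i,i+1}$, and bounds it from below via the complementary path of the minimal diagonal --- two equivalent ways of packaging the same estimate.
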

\begin{proof}
	Fix $i \in \{1,\dots,n\}$. Let $r \geq \frac{k}{k+1}\pi$ and $\delta = r - \frac{k-1}{k}\pi$. Notice that $k^2 = \frac{1}{2}(k-1) \cdot n + 1$, so the path that passes through the points $x_i, x_{i+k}, \dots, x_{i+k\cdot k}$ makes $\frac{1}{2}(k-1)$ revolutions around the circle and stops at $x_{i+k^2}=x_{i+1}$. At the same time, $d_{\ell,\ell+k} \leq \tb{X}$. These facts give:
	\begin{equation*}
		\frac{1}{2}(k-1) \cdot 2\pi + d_{i,i+1} = \sum_{j=1}^{k} d_{i+(j-1)k,i+jk} \leq k\tb{X}.
	\end{equation*}
	Thus, $(k-1)\pi + \max_{i=1,\dots,n} d_{i,i+1} \leq k\tb{X}$. However, by Lemma \ref{lemma:technicals_S1}, there exists an $\ell$ for which $d_{\ell,\ell+k+1}=\td{X}$. Let $\gamma$ be the path between $x_\ell$ and $x_{\ell+k+1}$ such that $d_{\ell,\ell+k+1} + |\gamma| = 2\pi$. Assume, without loss of generality, that $\gamma$ contains $x_{\ell+1}$. This means that $|\gamma| = d_{\ell,\ell+1} + d_{\ell+1,\ell+k+1}$, so
	\begin{equation*}
		d_{\ell,\ell+1} = |\gamma|-d_{\ell+1,\ell+k+1} = 2\pi-\td{X}-d_{\ell+1,\ell+k+1} \geq 2\pi - \td{X} - \tb{X}.
	\end{equation*}
	Thus, $k\tb{X} \geq (k-1)\pi+\max_{i=1,\dots,n} d_{i,i+1} \geq (k+1)\pi-\td{X}-\tb{X}$. Solving this inequality for $\td{X}$ gives the result.\\
    \indent In order to prove tightness, we describe the critical configurations in terms of the distances between consecutive points. Let $0 < t_b < t_d \leq \pi$ be such that $t_d = (k+1)(\pi-t_b)$. Replacing $t_d$ with the bounds $t_b$ and $\pi$ in the equation $t_d = (k+1)(\pi-t_b)$ implies $\frac{k}{k+1} \pi \leq t_b < \frac{k+1}{k+2}\pi$. Define $L := kt_b-(k-1)\pi$ and $s := -(k+2)t_b+(k+1)\pi$. Observe that the bounds $\frac{k}{k+1} \pi \leq t_b < \frac{k+1}{k+2}\pi$ imply that $0 < s \leq L$. Additionally, it can be checked that $(k+2)L+ks = 2\pi$. Let
    \begin{equation}
    	\label{eq:critical_tb_odd}
    	x_{i} :=
    	\begin{cases}
    		\left\lfloor \frac{i}{2} \right\rfloor L + \left\lfloor \frac{i-1}{2} \right\rfloor s	& 1 \leq i \leq k+1,\\
    		\left\lfloor \frac{i+1}{2} \right\rfloor L + \left\lfloor \frac{i-2}{2} \right\rfloor s		& k+2 \leq i \leq 2k+2.
    	\end{cases}
    \end{equation}
	\indent By Lemma \ref{lemma:technicals_S1} item \ref{item:bd_times_circle}, $\tb{X} = \min_i d_{i,i+k}$, so we compute the distances $d_{i,i+k} = \min(|x_{i+k}-x_{i}|, 2\pi - |x_{i+k}-x_{i}|)$. For $i=1$, since $k$ is odd, $\frac{k \pm 1}{2}$ is an integer and so
	\begin{equation*}
		\textstyle
		x_{i+k}-x_{i} = x_{k+1}-x_{1} = \left( \left\lfloor \frac{k+1}{2} \right\rfloor L + \left\lfloor \frac{k-1}{2} \right\rfloor s \right) - 0 = \frac{k+1}{2} L + \frac{k-1}{2} s.
	\end{equation*}
	If $2 \leq i \leq k+1$, we have $k+2 \leq i+k \leq 2k+1$. Also, observe that if $x-y \in \Z$, then $\lfloor x \rfloor - \lfloor y \rfloor = x-y$. Hence,
	\begin{align*}
		x_{i+k} - x_{i}
		&=\textstyle \left( \left\lfloor \frac{i+k+1}{2} \right\rfloor L + \left\lfloor \frac{i+k-2}{2} \right\rfloor s \right) - \left( \left\lfloor \frac{i}{2} \right\rfloor L + \left\lfloor \frac{i-1}{2} \right\rfloor s \right) \\
		&=\textstyle \left( \left\lfloor \frac{i+k+1}{2} \right\rfloor - \left\lfloor \frac{i}{2} \right\rfloor \right)L + \left( \left\lfloor \frac{i+k-2}{2} \right\rfloor - \left\lfloor \frac{i-1}{2} \right\rfloor \right) s
		=\textstyle \frac{k+1}{2} L + \frac{k-1}{2} s.
	\end{align*}
	For $i=k+2$,
	\begin{align*}
		x_{i+k} - x_{i} = x_{2k+2} - x_{k+2}
		&=\textstyle \left( \left\lfloor \frac{2k+3}{2} \right\rfloor L + \left\lfloor \frac{2k}{2} \right\rfloor s \right) - \left( \left\lfloor \frac{k+3}{2} \right\rfloor L + \left\lfloor \frac{k}{2} \right\rfloor s \right) \\
		&=\textstyle \left( \left\lfloor \frac{2k+3}{2} \right\rfloor - \left\lfloor \frac{k+3}{2} \right\rfloor \right)L + \left( \left\lfloor \frac{2k}{2} \right\rfloor - \left\lfloor \frac{k}{2} \right\rfloor \right) s
		=\textstyle \left( \frac{2k+2}{2} - \frac{k+3}{2} \right)L + \left( \frac{2k}{2} - \frac{k-1}{2} \right) s\\
		&=\textstyle \frac{k-1}{2} L + \frac{k+1}{2} s.
	\end{align*}
	If $k+3 \leq i \leq 2k+2$, then $i+k$ modulo $n$ is $i-k-2$. Since $|x_a-x_b| + |x_b-x_a| = 2\pi$ for any $a,b$, and $1 \leq i-k-2 \leq k$, the case above gives
	\begin{align*}
		|x_{i+k} - x_{i}|
		&= 2\pi - |x_{i} - x_{i+k}|
		= 2\pi - |x_{i} - x_{i-k-2}|\\
		&=\textstyle 2\pi - \left( \frac{k+1}{2} L + \frac{k-1}{2} s \right)
		= \frac{k+3}{2} L + \frac{k+1}{2} s.
	\end{align*}
	Also, since $(k+2)L + ks = 2\pi$, we have $\left(\frac{k+1}{2} L + \frac{k-1}{2} s \right) + \left( \frac{k+3}{2} L + \frac{k+1}{2} s \right) = 2\pi$. Thus, putting together the above calculations gives, for $i \neq k+2$,
	\begin{align}
		\label{eq:critical_tb_odd_distances}
		d_{i,i+k}
		&=\textstyle \min\left(|x_{i+k}-x_{i}|, 2\pi - |x_{i+k}-x_{i}| \right)
		=
		\begin{cases}
			|x_{i+k}-x_{i}|,		& 1 \leq i \leq k+1,\\
			2\pi - |x_{i+k}-x_{i}|	& k+3 \leq i \leq 2k+2.
		\end{cases}
	\end{align}
	In both cases we obtain $d_{i,i+k} = \frac{k+1}{2} L + \frac{k-1}{2} s$. For $i=k+2$, we have
	\begin{align*}
		d_{i,i+k}
		&=\textstyle \min\left(\frac{k-1}{2} L + \frac{k+1}{2} s, 2\pi - \frac{k-1}{2} L - \frac{k+1}{2} s\right) \\
		&=\textstyle \min\left( \frac{k-1}{2} L + \frac{k+1}{2} s, \frac{k+5}{2} L + \frac{k-1}{2} s \right) 
		= \frac{k-1}{2} L + \frac{k+1}{2} s.
	\end{align*}
	Hence, 
	\begin{equation}
		\label{eq:critical_tb_computed}
		\tb{X} = \max\left( \frac{k+1}{2} L + \frac{k-1}{2} s, \frac{k-1}{2} L + \frac{k+1}{2} s \right) = \frac{k+1}{2} L + \frac{k-1}{2} s = t_b.
	\end{equation}
	
    \indent To find $\td{X}$, we compute the distances $d_{i,i+k+1}$ (cf. Lemma \ref{lemma:technicals_S1} item \ref{item:bd_times_circle}). For $1 \leq i \leq k+1$,
    \begin{align*}
    	x_{i+k+1} - x_{i}
    	&=\textstyle \left( \left\lfloor \frac{i+k+2}{2} \right\rfloor L + \left\lfloor \frac{i+k-1}{2} \right\rfloor s \right) - \left( \left\lfloor \frac{i}{2} \right\rfloor L + \left\lfloor \frac{i-1}{2} \right\rfloor s \right) \\
    	&=\textstyle \left( \left\lfloor \frac{i+k+2}{2} \right\rfloor - \left\lfloor \frac{i}{2} \right\rfloor \right)L + \left( \left\lfloor \frac{i+k-1}{2} \right\rfloor - \left\lfloor \frac{i-1}{2} \right\rfloor \right) s.
    \end{align*}
    When $i$ is odd, the above simplifies to
    \begin{equation*}
    	\textstyle
    	x_{i+k+1} - x_i
		= \left( \frac{i+k+2}{2} - \frac{i-1}{2} \right)L + \left( \frac{i+k-2}{2} - \frac{i-1}{2} \right) s
		= \frac{k+3}{2} L + \frac{k-1}{2} s,
    \end{equation*}
    and when $i$ is even,
    \begin{equation*}
    	\textstyle
    	x_{i+k+1} - x_{i}
    	= \left( \frac{i+k+1}{2} - \frac{i}{2} \right)L + \left( \frac{i+k-1}{2} - \frac{i-2}{2} \right) s
    	= \frac{k+1}{2} L + \frac{k+1}{2} s.
    \end{equation*}
	Notice that $\left(\frac{k+3}{2} L + \frac{k-1}{2} s\right) + \left( \frac{k+1}{2} L + \frac{k+1}{2} s \right) = (k+2)L + ks = 2\pi$. When $k+2 \leq i \leq 2k+2$, we get $x_{i+k+1} = x_{i-k-1}$ and, since $1 \leq i-k-1 \leq k+1$, the above equations give
	\begin{align*}
		\textstyle
		|x_{i+k+1} - x_{i}| &= 2\pi - |x_{i}-x_{i+k+1}| = 2\pi - |x_{i} - x_{i-k-1}|
		=
		\begin{cases}
			\frac{k+1}{2} L + \frac{k+1}{2} s, & i \text{ odd},\\
			\frac{k+3}{2} L + \frac{k-1}{2} s, & i \text{ even}.
		\end{cases}
	\end{align*}
    Hence,
    \begin{align*}
    	d_{i,i+k+1}
    	&= \min\{ |x_{i+k+1} - x_{i}|, 2\pi - |x_{i+k+1} - x_{i}| \}
    	= \textstyle\min\{ \frac{k+1}{2} L + \frac{k+1}{2} s, \frac{k+3}{2} L + \frac{k-1}{2} s \}\\
    	&= \textstyle \frac{k+1}{2} L + \frac{k+1}{2} s
    	= (k+1)(\pi-t_b).
    \end{align*}
    Thus, $\td{X} = \min_{i} d_{i,i+k+1} = (k+1)(\pi-t_b) = (k+1)(\pi-\tb{X})$.
\end{proof}

\begin{theorem}\label{thm:Dnk_S1_k_odd}
	For odd $k$,
	\begin{equation}\label{eq:Dnk_S1_k_odd}
		\textstyle
		\Dvr{2k+2,k}(\Sphere{1}) = \left\{(t_b,t_d): (k+1)(\pi-t_b) \leq t_d \text{ and } \frac{k}{k+1}\pi \leq t_b < t_d \leq \pi \right\}.
	\end{equation}
\end{theorem}
\begin{proof}
    Theorem \ref{thm:critical_tb_odd} and Lemma \ref{lemma:technicals_S1} item \ref{item:min_tb} imply that $\Dvr{2k+2,k}(\Sphere{1})$ is contained in the right-hand side of (\ref{eq:Dnk_S1_k_odd}). To show the other inclusion, choose any pair $(t_b', t_d')$ in the right-hand side of (\ref{eq:Dnk_S1_k_odd}). We now exhibit a set $X' = \{x_1', \dots, x_n'\} \subset \Sphere{1}$ with $\tb{X'}=t_b'$ and $\td{X'}=t_d'$. Let $t_d := t_d'$ and $t_b = \pi-\frac{1}{k+1}t_d$. Notice that $t_d = (k+1)(\pi - t_b)$, so let $X = \{x_1, \dots, x_n\}$ be the set defined in (\ref{eq:critical_tb_odd}). Let $\varepsilon = t_b'-t_b$. Since $(k+1)(\pi-t_b) = t_d = t_d' \geq (k+1)(\pi-t_b')$, we must have $\varepsilon \geq 0$. Now define $x_1' := x_1 + \varepsilon$, $x_{k+2}' := x_{k+2}+\varepsilon$, and $x_i' := x_i$ for $i \neq 1, k+2$. Let $d_{ij} = d_{\Sphere{1}}(x_i,x_j)$ and $d_{ij}' = d_{\Sphere{1}}(x_i',x_j')$. We claim that $\tb{X'} = t_b+\varepsilon = t_b'$ and $\td{X'} = t_d = t_d'$.\\
    \indent Notice that $\varepsilon = t_b'-t_b < t_d'-t_b = t_d-t_b = \td{X}-\tb{X}$, which, by Lemma \ref{lemma:persistence_bounds} item \ref{item:separation_bound}, is bounded above by $\sep(X)$. Because of this, $x_1' = x_1+\varepsilon < x_1+\sep(X) \leq x_{2} = x_{2}'$, so $x_1 \prec x_1' \prec x_2'$. Analogously, $x_{k+2} \prec x_{k+2}' \prec x_{k+3}'$. Since $x_1'$ and $x_{k+2}'$ are the only points for which $x_i' \neq x_i$, the previous two inequalities combined with $x_i \prec x_{i+1} \prec x_{i+2}$ imply that $x_i' \prec x_{i+1}' \prec x_{i+2}'$. Hence, by Lemma \ref{lemma:technicals_S1}, $\tb{X'} = \max_i d_{i,i+k}'$ and $\td{X'} = \min_i d_{i,i+k+1}'$.\\
    \indent Now we find $d_{i,i+k}'$ in terms of $d_{i,i+k}$ and $\varepsilon$. Observe that $d_{i,i+k}' = d_{i,i+k}$ whenever $i \neq 1, 2, k+2, k+3$ because $x_i' \neq x_i$ only when $i=1, k+2$. In fact, $x_1 < x_{1+k} < x_{1-k}$ and $\varepsilon < \sep(X)$, so we can write the distances and absolute values in (\ref{eq:critical_tb_odd_distances}) as
	\begin{align*}
		d_{1,1+k} - \varepsilon &= |x_{1+k}-x_1|-\varepsilon = x_{1+k}-(x_1+\varepsilon) = |x_{1+k}'-x_{1}'|, \text{ and}\\
		d_{1-k,1} + \varepsilon &= 2\pi - |x_{1} - x_{1-k}| + \varepsilon = 2\pi - [x_{1-k} - (x_{1}+\varepsilon)] = 2\pi - |x_{1-k}' - x_{1}'|.
	\end{align*}
	In particular, the two quantities $|x_{1+k}'-x_{1}'|$ and $2\pi - |x_{1-k}' - x_{1}'|$ are bounded above by $t_b+\varepsilon = t_b' < \pi$ because both $d_{1,1+k} - \varepsilon$ and $d_{1,1-k} + \varepsilon$ are. Hence, $d_{1,1 \pm k}' = \min(|x_{1 \pm k}' - x_{1}'|, 2\pi - |x_{1 \pm k}' - x_{1}'|) = d_{1,1 \pm k} \mp \varepsilon$. An analogous argument gives $d_{k+2,(k+2) \pm k}' = d_{k+2,(k+2) \pm k} \mp \varepsilon$.\\
	\indent Now we compute $\tb{X'}$ and $\td{X'}$. Observe that (\ref{eq:critical_tb_computed}) gives $\tb{X} = d_{i,i+k}$ for all $i \neq k+2$ and, in particular, that $d_{1-k,1} \geq d_{i,i+k}$ for all $i$. By the above paragraph, the distances $d_{i,i+k}'$ are either equal to $d_{i,i+k}$ or differ by $\varepsilon$. For this reason, $d_{1-k,1}' = d_{1-k,1}+\varepsilon \geq d_{i,i+k} + \varepsilon \geq d_{i,i+k}'$. Thus, $\tb{X'} = \max_i d_{i,i+k}' = d_{1-k,1}' = d_{1-k,1} + \varepsilon = t_b+\varepsilon = t_b'$. To compute $\td{X'} = \min_i d_{i,i+k+1}'$, observe that the only values of $i$ for which the distance $d_{i,i+k+1}'$ might differ from $d_{i,i+k+1}$ are $i=1, k+2$. However, $x_1' = x_1+\varepsilon$ and $x_{k+2}' = x_{k+2}+\varepsilon$, so $|x_{k+2}'-x_{1}'| = |x_{k+2}-x_{1}|$ and, thus, $d_{1,k+2}' = d_{1,k+2}$. Hence, $\td{X'} = \min_i d_{i,i+k+1}' = \min_i d_{i,i+k+1} = t_d = t_d'$.
\end{proof}

\begin{remark}\label{rmk:D41_S1_scaled}
	The persistence sets of a circle $\frac{\lambda}{\pi} \cdot \Sphere{1}$ with diameter $\lambda$ are obtained by rescaling the results of this section. For example, $\Dvr{4,1}(\frac{\lambda}{\pi} \cdot \Sphere{1})$ is the set bounded by $2(\lambda - t_b) \leq t_d$ and $t_b < t_d \leq \lambda$.
\end{remark}

In general, there are multiple configurations with the same persistence diagram, even among those that minimize the death time. The exception is the configuration that has the minimal birth time, as the following lemma shows.
\begin{prop}\label{prop:persistence_regular_n_gon}
	For any $k \geq 0$, let $n = 2k+2$. If $X \subset \Sphere{1}$ has $n$ points and satisfies $\tb{X} = \frac{k}{k+1}\pi$ and $\td{X} = \pi$, then $X$ is a regular $n$-gon. As a consequence, the configuration $X$ with $n$ points such that $\dgm_k^\vr(X) = \{(\frac{k}{k+1}\pi, \pi)\}$ is unique up to rotations. 
\end{prop}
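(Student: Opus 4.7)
The plan is to translate the two hypotheses into equations on the consecutive arc lengths $a_i := d_{i,i+1}$ (indices mod $n = 2k+2$) and combine them to force every $a_i$ equal. By Lemma \ref{lemma:technicals_S1} the cyclic ordering yields $\tb{X} = \max_i d_{i,i+k}$, $\td{X} = \min_i d_{i,i+k+1}$, and $d_{i,i+k} = a_i + a_{i+1} + \cdots + a_{i+k-1}$; in addition $\sum_{i=1}^n a_i = 2\pi$.

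First I would exploit the birth constraint. Summing over $i$ yields $\sum_i d_{i,i+k} = k \sum_i a_i = 2k\pi = n \cdot \frac{k}{k+1}\pi$, exactly as in item 4 of Lemma \ref{lemma:technicals_S1}. Since each summand is at most $\tb{X} = \frac{k}{k+1}\pi$ and the maximum equals the average, all summands must coincide: $d_{i,i+k} = \frac{k}{k+1}\pi$ for every $i$. Subtracting the identities for $i$ and $i+1$ collapses to $a_{i+k} - a_i = 0$, so $a_{i+k} = a_i$ for all $i$.

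Next I would exploit the death constraint. Each $d_{i,i+k+1}$ is at most $\pi = \diam(\Sphere{1})$, and their minimum is $\pi$, so $d_{i,i+k+1} = \pi$ for every $i$; i.e., $x_i$ and $x_{i+k+1}$ are antipodal. The two directed arcs from $x_i$ to $x_{i+k+1}$ along $\Sphere{1}$ have lengths summing to $2\pi$, and the geodesic distance is their minimum, so equality with $\pi$ forces both arcs to have length $\pi$. In particular the forward arc gives $a_i + a_{i+1} + \cdots + a_{i+k} = \pi$ for every $i$, and subtracting consecutive versions yields $a_{i+k+1} = a_i$ for all $i$.

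Combining $a_{i+k} = a_i$ and $a_{i+k+1} = a_i$ gives $a_{i+k} = a_{i+k+1}$ for every $i$, hence $a_j = a_{j+1}$ for all $j$, so every $a_i$ equals $\frac{2\pi}{n} = \frac{\pi}{k+1}$; thus $X$ is a regular $n$-gon. The consequence on uniqueness is then immediate, since any two regular $n$-gons inscribed on $\Sphere{1}$ with common side length $\frac{\pi}{k+1}$ differ by a rotation. The only subtle point is the direction-of-geodesic observation in the death step (needed to pass from antipodality to an equation on the $a_i$); the rest is linear-algebraic bookkeeping on arc lengths.
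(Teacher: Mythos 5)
Your proof is correct and follows essentially the same route as the paper's: both arguments force $d_{i,i+k}=\frac{k}{k+1}\pi$ for all $i$ via the max-equals-average observation, force $d_{i,i+k+1}=\pi$ with the forward arc realizing it, and then recover $d_{i,i+1}=\pi-\frac{k}{k+1}\pi$ by subtracting a $k$-step sum from a $(k+1)$-step sum. The only difference is organizational: the paper packages everything into one chain of inequalities whose equality case yields both facts at once, while you derive the death-time fact directly from $\td{X}=\pi=\diam(\Sphere{1})$ (plus the antipodality-forces-both-arcs-equal observation), which is a clean and equally valid shortcut.
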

\begin{proof}
	An application of Lemma \ref{lemma:technicals_S1} item \ref{item:tb_splits} and the triangle inequality gives:
	\begin{align*}
		\frac{k}{k+1}\pi &= \tb{X}
		= \max(d_{i,i+k})
		\geq \frac{1}{2k+2} \sum_{i=1}^{2k+2} d_{i,i+k}
		= \frac{1}{2k+2} \sum_{i=1}^{2k+2} \sum_{j=1}^{k} d_{i+j-1,i+j} \\
		&= \frac{1}{2k+2} \sum_{j=1}^{k} \sum_{i=1}^{2k+2}  d_{i+j-1,i+j}
		= \frac{1}{2k+2} \sum_{j=1}^{k} \left[ \sum_{i=1}^{k+1}  d_{i+j-1,i+j} + \sum_{i=k+2}^{2k+2}  d_{i+j-1,i+j}\right] \\
		&\geq \frac{1}{2k+2} \sum_{j=1}^{k} \left[ d_{j,j+k+1} + d_{j+k+1,j} \right]
		\geq \frac{1}{2k+2} \sum_{j=1}^{k} \left[ 2\td{X} \right]
		= \frac{k}{k+1}\pi.
	\end{align*}
	Thus, all intermediate inequalities become equalities, most notably, $d_{i,i+k} = \frac{k}{k+1}\pi$ and $d_{j,j+k+1} = \sum_{i=1}^{k+1}  d_{i+j-1,i+j} = \pi$. Then
	$
		d_{i,i+1} = d_{i-k,i+1} - d_{i-k,i} = \frac{2\pi}{2k+2}.
	$
	That is, $X$ is a regular $n$-gon.
\end{proof}

\subsection{Characterization of $\Unk{4,1}{\vr}(\mathbb{S}^1)$}
In addition to the characterization of $\Dvr{4,1}(\Sphere{1})$ given in Theorem \ref{thm:Dnk_S1_k_odd}, we can also characterize the persistence measure $\Unk{4,1}{\vr}(\Sphere{1})$. To set up the context, consider the diagonal $\Do \subset \R^2$. Since any two points in $\Do$ are at bottleneck distance 0, we can view $\Dvr{4,1}(\Sphere{1})$ as a subset of $\R^2/\Do$. Let $\mathcal{L}$ be the pushforward of the Lebesgue measure under the quotient $\R^2 \to \R^2 / \Do$.
\begin{prop}\label{prop:U41_S1}
	Let $\mu_{\Sphere{1}}$ be the uniform measure on $\Sphere{1}$. With respect to $\mathcal{L}$, the persistence measure $\Unk{4,1}{\vr}(\Sphere{1})$ decomposes into a singular measure supported on $\Do$ and a measure supported on $\Dvr{4,1}(\Sphere{1}) \setminus \Do$ with Radon-Nikodym derivative
	\begin{equation*}
		f(t_b,t_d) = \frac{12}{\pi^3}\left(\pi-t_d \right),
	\end{equation*}
	for $(t_b,t_d) \in \Dvr{4,1}(\Sphere{1}) \setminus \Do$. In particular, the probability that the 1-dimensional persistence diagram of a 4-point subset of $\Sphere{1}$ is in $\Do$ is $\frac{8}{9}$.
\end{prop}
\begin{remark}
	\label{rmk:U41_S1_non_zero}
	Given a set $X = \{x_1,x_2,x_3,x_4\} \subset \Sphere{1}$ chosen uniformly at random, the probability that $\dgm_1(X)$ is a non-diagonal point is $\frac{1}{9} \approx 11 \%$. This is consistent with the 11.08 \% success rate obtained in the simulations; \textit{cf}. Example \ref{ex:D41_spheres}.
\end{remark}

Before proving Proposition \ref{prop:U41_S1}, we give an application where $\Uvr{4,1}$ can distinguish spaces that $\Dvr{4,1}$ cannot.

\begin{example}[Persistence sets of $\Sphere{1}$ without a segment.]
    \label{ex:circle_without_segment}
    Let $L \in (0,2\pi)$. Define $S_L := \Sphere{1} \setminus (2\pi-L, 2\pi)$ to be the circle with an open segment of length $L$ removed, and give $S_L$ the restriction of the geodesic metric induced from $\Sphere{1}$ (which, in particular, will not be geodesic). We will show in Proposition \ref{prop:circle_without_segment} that $\Dvr{2k+2,k}(S_{L})$ equals $\Dvr{2k+2,k}(\Sphere{1})$ for $0 < L \leq \frac{1}{k+1}\pi$ and is strictly contained in $\Dvr{2k+2,k}(\Sphere{1})$ otherwise.\\
    \indent As for the degree 1 persistence diagrams of $S_{L}$ and $\Sphere{1}$, they are different for any value of $L$. Indeed, if $r<L$, the balls of radius $r$ in $S_L$ are isometric to the corresponding balls in $[0, 2\pi-L]$ (with the absolute value metric). Hence, $\vr_r(S_{L}) \simeq \vr_r([0,2\pi-L]) \simeq *$ for $0<r<L$. This implies that, if $(b,d) \in \dgm^\vr_1(S_L)$, then $b\geq L>0$. However, it is known that $\dgm^\vr_1(\Sphere{1})=\{(0,\frac{2\pi}{3})\}$. Thus, $S_L$ and $\Sphere{1}$ are an example of a pair of spaces that can be distinguished by $\dgm_1^\vr$ but not by any $\Dvr{2k+2,k}$ for which $L \leq \frac{1}{k+1}\pi$.\\
    \indent The last invariant we consider is the measure $\Unk{4,1}{\vr}$, which can distinguish $\Sphere{1}$ and $S_{L}$ for every $L \in (0,2\pi)$. For instance, when $L=\frac{\pi}{2}$, there exists a circle worth of squares in $\Sphere{1}$ but, since all sides of a square have length $\frac{\pi}{2}$, only one square fits in $S_{\pi/2}$. Moreover, Proposition \ref{prop:U41_S1} says that the Radon-Nikodym derivative of $\Unk{4,1}{\vr}(\Sphere{1})$ away from the diagonal is independent of $t_b$ but, as Figure \ref{fig:U41_circle_truncated} shows, the derivative of $\Unk{4,1}{\vr}(S_{\pi/2})$ is not (compare with Figure \ref{fig:D41_spheres_torus}).\\
    \indent A related example appears in Section 9 of \cite{persistence-geodesic-spaces} which shows that $\dgm_1^\vr$ can  itself be insensitive to small holes. For a slightly deformed 2-dimensional torus $T$ and a small enough open disk $D \subset T$, the author shows that $\PH_1^\vr(T) \cong \PH_1^\vr(T \setminus D)$. It is interesting that in this case $\dgm^\vr_1$  cannot detect the absence of $D$, in contrast to the case of $\Sphere{1}$ and $S_L$.
    {
    \prop
        \label{prop:circle_without_segment}
        Let $L \in (0,2\pi)$. Define $S_L := \Sphere{1} \setminus (2\pi-L, 2\pi)$ to be the circle with an open segment of length $L$ removed, and equip $S_L$ with the metric induced by the inclusion $S_L \subset \Sp^1$. Then $\Dvr{2k+2,k}(S_L) \neq \Dvr{2k+2,k}(\Sphere{1})$ if and only if $L > \frac{\pi}{k+1}$.
    \endprop
    }
    \begin{proof}
        Suppose that $0 < L \leq \frac{\pi}{k+1}$. Let $X = \{x_1, \dots, x_{2k+2} \} \subset \Sphere{1}$ such that $x_{i-1} \prec x_{i} \prec x_{i+1}$ and  $\tb{X} < \td{X}$. By Lemma \ref{lemma:technicals_S1} items \ref{item:bd_times_circle} and \ref{item:min_tb}, $\tb{X} = d_{i,i+k}$ for some $i$ and $\tb{X} \geq \frac{k}{k+1}\pi$. In particular, by Lemma \ref{lemma:technicals_S1} item \ref{item:tb_splits}, one of the distances $d_{j,j+1}$ is at least $\frac{1}{k+1}\pi$ for some $i \leq j < i+k$. In other words, the gap between $x_j$ and $x_{j+1}$ is larger than or equal to $L$, so if we rotate $X$ anticlockwise by $2\pi-x_{j+1}$, we obtain a set $X' \subset S_L$ isometric to $X$. Hence, $\Dvr{2k+2,k}(\Sphere{1}) \subset \Dvr{2k+2,k}(S_{L})$. Since $S_{L} \hookrightarrow \Sphere{1}$, we also have the other inclusion.\\
        \indent Now suppose that $L > \frac{1}{k+1}\pi$. The point $\left(\frac{k}{k+1}\pi, \pi\right)$ is in $\Dvr{2k+2,k}(\Sphere{1})$ and, by Proposition \ref{prop:persistence_regular_n_gon}, it is generated by a regular $(2k+2)$-gon. The side length of that polygon is $\frac{1}{k+1}\pi < L$, so $S_L$ cannot contain any regular $(2k+2)$-gon and, thus, $\left(\frac{k}{k+1}\pi, \pi\right) \notin \Dvr{2k+2,k}(\Sphere{1})$. See, for example, $\Unk{4,1}{\vr}(S_{3\pi/4})$ in Figure \ref{fig:U41_circle_truncated}.
    \end{proof}
\end{example}

\begin{figure}[h]
	\centering
	\begin{minipage}{0.3\linewidth}
		\centering
\begin{tikzpicture}[scale=1.5]
	\draw (1,0) arc[start angle=0, end angle=270, radius=1];
	\node at (0,1) [above] {$S_{\pi/2} = \Sphere{1} \setminus (\frac{3\pi}{2}, 2\pi)$};
\end{tikzpicture} 	\end{minipage}
	\hfill
	\begin{minipage}{0.67\linewidth}
		\includegraphics[width=\linewidth]{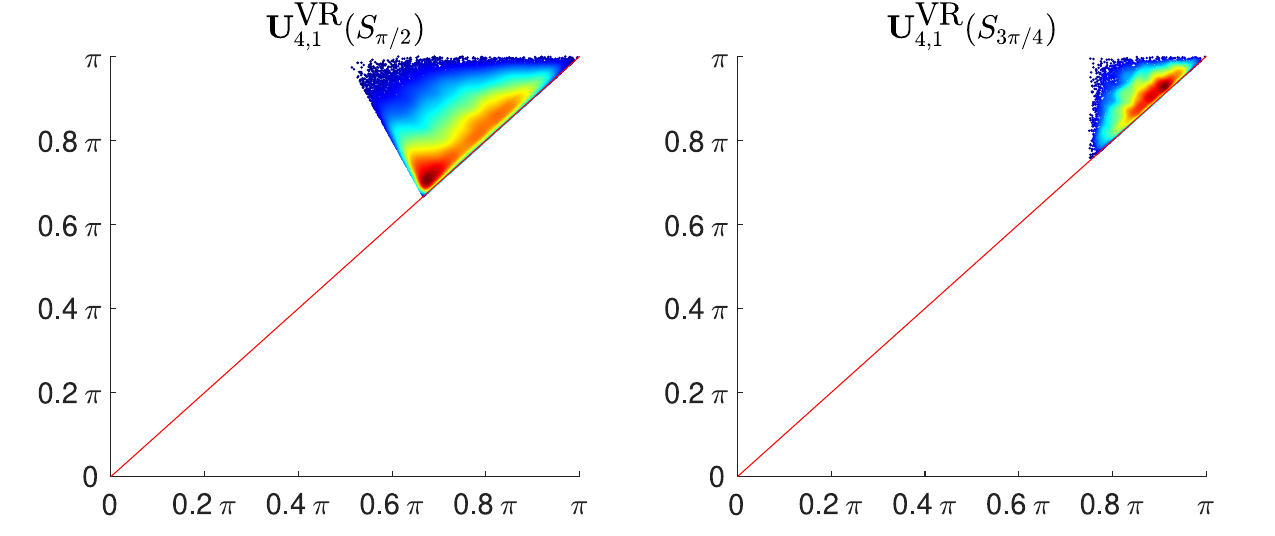}
	\end{minipage}
	\caption{\textbf{Left:} The space $S_{\pi/2}$ formed by removing a segment of length $\pi/2$ from $\Sphere{1}$. \textbf{Middle and right:} The persistence measures $\Unk{4,1}{\vr}(S_L)$ of the truncated circle $S_L$ for $L=\pi/2$ (middle) and $L=3\pi/4$ (right).}
	\label{fig:U41_circle_truncated}
\end{figure}

\begin{proof}[Proof of Proposition \ref{prop:U41_S1}]
	Since $\Unk{4,1}{\vr}(\Sphere{1})$ is a probability measure (and hence finite) and $\mathcal{L}$ is positive and $\sigma$-finite, the Lebesgue-Radon-Nikodym Theorem (\cite[Theorem 3.8]{folland}) says that $\Unk{4,1}{\vr}(\Sphere{1})$ decomposes as a sum of a singular measure and an absolutely continuous measure with respect to $\mathcal{L}$. We will show that $\Unk{4,1}{\vr}(\Sphere{1})$ is absolutely continuous in $\Dvr{4,1}(\Sphere{1}) \setminus \Do$ and that the persistence diagram of a 4-point subset of $\Sphere{1}$ is in $\Do$ with non-zero probability. These facts give the desired decomposition.\\
	\indent Recall that we model $\Sphere{1}$ as the quotient $[0, 2\pi]/0 \sim 2\pi$. Let $X = \{x_1 , x_2 , x_3 , x_4\} \subset [0, 2\pi]$ be 4 points chosen uniformly at random. Since $\tb{X}$ and $\td{X}$ only depend on the distances between the $x_i$, we may assume $x_1=0$. Notice that the tuple $(x_2,x_3,x_4)$ is still distributed uniformly in $[0,2\pi]^3$. Relabel $x_i$ as $x^{(j)} \in [0, 2\pi]$ so that $0 = x^{(1)} < x^{(2)} < x^{(3)} < x^{(4)}$ and set $y_i := x^{(i+1)} - x^{(i)}$ for $i=1, 2, 3$ and $y_4 := 2\pi - x^{(4)}$. Let $D := \{ (x^{(2)}, x^{(3)}, x^{(4)}) \in [0,2\pi]^3 : x^{(2)} < x^{(3)} < x^{(4)} \}$, and $\Delta_3(2\pi) := \{(y_1 , y_2 , y_3) \in [0, 2\pi]^3: y_1 + y_2 + y_3 \leq 2\pi\}$. Since the only difference between $(x_2, x_3, x_4)$ and $(x^{(2)}, x^{(3)}, x^{(4)})$ is the order of the coordinates, the latter is uniformly distributed in $D$. Furthermore, the pushforward of the uniform measure on $D$ onto $\Delta_3(2 \pi)$ under the map $\Psi(x^{(2)}, x^{(3)}, x^{(4)}) = (x^{(2)}, x^{(3)}-x^{(2)}, x^{(4)} - x^{(3)})$ is the uniform measure because the Jacobian of $\Psi$ has determinant 1. Hence, we will model a configuration of four points in $\Sphere{1}$ as the set of distances $y_1, y_2, y_3, y_4$ instead, where $(y_1,y_2,y_3) \in \Delta_3(2\pi)$ is uniformly distributed and $y_4 = 2\pi - (y_1+y_2+y_3)$.\\
	\indent Now we characterize the measure on non-diagonal points of $\Dvr{4,1}(\Sphere{1})$. Fix $(t_b,t_d) \in \Dvr{4,1}(\Sphere{1})$ with $t_b < t_d$. By Lemma \ref{lemma:technicals_S1}, $\tb{X} = \max_i y_i$ and $\td{X} = \min_i(y_i+y_{i+1})$. Since $\Delta_3(2\pi)$ has the uniform measure, the probability that $t_b \leq \tb{X} < \td{X} \leq t_d$ is the volume of the set
	\begin{equation*}
	    R(t_b,t_d) := \{(y_1,y_2,y_3) \in \Delta_3(2\pi): t_b \leq \tb{X} < \td{X} \leq t_d\}
	\end{equation*}
	divided by $\operatorname{Vol}(\Delta_3(2\pi)) = \frac{(2\pi)^3}{3!}$. We will find $\operatorname{Vol}(R(t_b,t_d))$ using an integral with a suitable parametrization of $y_1,y_2,y_3$.\\
	\indent Assume that $\tb{X} = y_1$. There are four choices for $\td{X}$, but to start, let $\td{X}=y_1+y_2$. Since $y_3 \leq y_1$ by definition of $\tb{X}$, we have $y_3+y_2 \leq y_1+y_2$ and, by definition of $\td{X}$, $y_1+y_2 = y_3+y_2$. Thus, the case $\td{X}=y_1+y_2$ is a subset of the case when $\td{X} = y_2+y_3$. Similarly, $\td{X} = y_1+y_4$ implies $\td{X} = y_3+y_4$, so we only have to consider two choices for $\td{X}$.\\
	\indent Let $R'(t_b,t_d)$ be the subset of $R(t_b,t_d)$ where $\tb{X}=y_1$ and $\td{X}=y_2+y_3$. Observe that the inequalities $y_2+y_3 \leq y_3+y_4$ if and only if $y_2 \leq y_4$, so the conditions $\tb{X} = y_1 \geq t_b$ and $\td{X} = y_2+y_3 \leq t_d$ are equivalent to the system of inequalities $t_b \leq y_1 < y_2+y_3 \leq t_d$, $y_2 \leq y_4 \leq y_1$, and $y_3 \leq y_1$. Consider the substitution $s=y_2+y_3$ and rewrite $y_4 = 2\pi-y_1-s$. These changes give, for example, that $y_4 \leq y_1$ is equivalent to $2\pi-2y_1 = y_4-y_1+s \leq s$. In a similar fashion, substituting $s$ and $y_4$ into the rest of the inequalities yields the following characterization of $R'(t_b,t_d)$ in terms of $y_1,y_2$ and $s$:
	\begin{align*}\label{eq:ineq_y3_y_4_final}
		t_b \leq y_1 &< t_d\\
		\max(2\pi-2y_1,y_1) \leq s &\leq t_d\\
		s-y_1 \leq y_2 &\leq 2\pi-s-y_1.
	\end{align*}
	Notice that the Jacobian $\left| \frac{\partial(y_1,y_2,y_3)}{\partial(y_1,y_2,s)} \right|$ of the transformation $(y_1,y_2,y_3) \mapsto (y_1,y_2,s) = (y_1,y_2,y_2+y_3)$ is 1. Also, when defining $R'(t_b,t_d)$, we had four choices for $\tb{X}$ (all four $y_i$) and for each, two choices for $\td{X}$. Then
	\begin{equation*}
		\operatorname{Vol}(R(t_b,t_d)) = 8\operatorname{Vol}(R'(t_b,t_d)) = \int_{t_b}^{t_d}\int_{\max(2\pi-2y_1,y_1)}^{t_d}\int_{s-y_1}^{2\pi-s-y_1} 8\, \mathrm{d}y_2\, \mathrm{d}s\, \mathrm{d}y_1.
	\end{equation*}
	If we define $f(t_b,t_d) := \frac{1}{\operatorname{Vol}(\Delta_3(2\pi))} \int_{t_d-t_b}^{2\pi-t_d-t_b} 8\, \mathrm{d}y_2$ and $F(t_b, t_d) := \mathbb{P}(t_b \leq \tb{X} < \td{X} \leq t_d)$, we obtain
	\begin{equation}\label{eq:VolR}
		F(t_b,t_d) = \frac{\operatorname{Vol}(R(t_b,t_d))}{\operatorname{Vol}(\Delta_3(2\pi))} = \int_{t_b}^{t_d} \int_{\max(2(\pi-\tau_b),\tau_b)}^{t_d} f(\tau_b, \tau_d) \, \mathrm{d}\tau_d \, \mathrm{d}\tau_b.
	\end{equation}
	Notice that the lower bound on $\tau_d$ equals the bound $t_d \geq 2(\pi-t_b)$ given by Theorem \ref{thm:Dnk_S1_k_odd} when $k=1$. In other words, $F(t_b,t_d)$ is the integral of $f(\tau_b,\tau_d)$ over the subset of $\Dvr{4,1}(\Sphere{1}) \setminus \Do$ where $t_b \leq \tau_b < \tau_d \leq t_d$. In particular, $F(t_b, t_d)$ is absolutely continuous with respect to $\mathcal{L}$ and its Radon-Nikodym derivative is
	\begin{equation*}
		f(t_b,t_d) = \frac{1}{\operatorname{Vol}(\Delta_3(2\pi))} \int_{t_d-t_b}^{2\pi-t_d-t_b} 8\, \mathrm{d}y_2 = \dfrac{16(\pi-t_d)}{(2\pi)^3/3!} = \frac{12}{\pi^3}(\pi-t_d).
	\end{equation*}
	Furthermore, the probability that $\tb{X}<\tb{X}$ equals $F(\pi/2, \pi) = \frac{\operatorname{Vol}(R(\pi/2, \pi))}{\operatorname{Vol}(\Delta_3(2\pi))} = \frac{4\pi^3/27}{(2\pi)^3/3!} = \frac{1}{9}$. Hence, the probability that $\dgm_1(X)$ is in $\Do$ is $\frac{8}{9}$.
\end{proof}

\subsection{Persistence sets of Ptolemaic spaces}
Example \ref{ex:D41_example} showed that in a metric space with four points, the birth time of its one-dimensional persistent homology is given by the length of the largest side and the death time, by that of the smaller diagonal. In this section, we use Ptolemy's inequality, which relates the lengths of the diagonals and sides of Euclidean quadrilaterals, to bound the first persistence set $\Dvr{4,1}$ of several spaces and show examples where the bound is attained.
\begin{defn}
	A metric space $(X, d_X)$ is called \define{Ptolemaic} if for any $x_1,x_2,x_3,x_4 \in X$,
	\begin{equation}\label{ineq:ptolemy}
		d_X(x_1,x_3) \cdot d_X(x_2,x_4) \leq d_X(x_1,x_2) \cdot d_X(x_3,x_4) + d_X(x_1,x_4) \cdot d_X(x_2,x_3).
	\end{equation}
\end{defn}

\indent It should be noted that the inequality holds for any permutation of $x_1,x_2,x_3,x_4$ and, in $\R^m$, equality holds if and only if the points $x_1,x_2,x_3,x_4$ lie on a circle or a line. Examples of Ptolemaic metric spaces include the Euclidean spaces $\R^m$ and $\text{CAT}(0)$ spaces; see \cite{ptolemy-and-cat0} for a more complete list of references. The basic result of this section is the following.

\begin{prop}\label{prop:D41_ptolemy}
	Let $(X,d_X)$ be Ptolemaic. Then $t_d \leq \sqrt{2}t_b$ for any $(t_b, t_d) \in \Dvr{4,1}(X)$.
\end{prop}
\begin{proof}
	Let $X' = \{x_1,x_2,x_3,x_4\} \subset X$ be such that $\tb{X'} < \td{X'}$. As per Example \ref{ex:D41_example}, relabel the points so that $\tb{X'} = \max(d_{12}, d_{23}, d_{34}, d_{41})$ and $\td{X'} = \min(d_{13}, d_{24})$. Then, Ptolemy's inequality gives
	\begin{align*}
		\left(\td{X'} \right)^2
		\leq d_{13} d_{24}
		\leq d_{12} d_{34} + d_{23} d_{14}
		\leq 2 \left(\tb{X'} \right)^2.
	\end{align*}
	Taking square root gives the result.
\end{proof}

\begin{remark}\label{rmk:D41-ptolemy-equality}
	If $\td{X'}=\sqrt{2}\tb{X'}$ in the proof of Proposition \ref{prop:D41_ptolemy}, we have $d_{13} \cdot d_{24} = d_{12} \cdot d_{34} + d_{23} \cdot d_{41}$.	In particular, if $X' \subset \R^m$, then $X'$ must lie on a circle. In other words, any point in the boundary $t_d = \sqrt{2}t_b$ of $\Dvr{4,1}(\R^m)$ is the persistence diagram of a concyclic 4-point set $X'$.
\end{remark}

Another way to phrase the above proposition is to say that $\Dvr{4,1}(X)$ is contained in the set
\begin{equation}\label{eq:Ptolemaic_region}
	P_R := \left\{ (t_b,t_d)| 0 \leq t_b < t_d \leq \min(\sqrt{2}t_b, R) \right\},
\end{equation}
with $R=\diam(X)$. A key example where the containment is strict is the following.

\begin{prop}\label{prop:D41_S1_E}
	Let $\Sphere{1}_E$ denote the unit circle in $\R^2$ equipped with the Euclidean metric. Then
	\begin{equation*}
		\Dvr{4,1}(\Sphere{1}_E) = \left\{ (t_b,t_d)\ \big|\ 2t_b \sqrt{1-\dfrac{t_b^2}{4}} \leq t_d, \text{ and } \sqrt{2} \leq t_b < t_d \leq 2 \right\}.
	\end{equation*}
\end{prop}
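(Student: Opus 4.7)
The plan is to reduce this to the already-established characterization of $\Dvr{4,1}(\Sphere{1})$ (Theorem \ref{thm:Dnk_S1_k_odd} with $k=1$) via the chord-to-arc map $\phi(\alpha) := 2\sin(\alpha/2)$, which relates the two metrics on the circle by $d_{\Sphere{1}_E}(x,y) = \phi(d_{\Sphere{1}}(x,y))$. Note that $\phi$ is strictly increasing on $[0,\pi]$ and satisfies $\phi(2\pi-\alpha) = \phi(\alpha)$.

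Given $X = \{x_1,x_2,x_3,x_4\} \subset \Sphere{1}$ in cyclic order, with consecutive arcs $\alpha_1,\alpha_2,\alpha_3,\alpha_4 \geq 0$ summing to $2\pi$, I would first show that non-trivial persistence in either metric forces all $\alpha_i \leq \pi$. Indeed, if say $\alpha_1 > \pi$, then $d_{\Sphere{1}}(x_1,x_2) = 2\pi-\alpha_1 = \alpha_2+\alpha_3+\alpha_4 > \alpha_3+\alpha_4 = d_{\Sphere{1}}(x_1,x_3)$, so a ``side'' is longer than a ``diagonal'', which by Example \ref{ex:D41_example} precludes persistence; the same inequality transports to Euclidean chords via $\phi(2\pi-\alpha) = \phi(\alpha)$. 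Once all $\alpha_i \leq \pi$, the cyclic labeling is precisely the labeling of Example \ref{ex:D41_example}, and one computes $\tb{X} = \max_i \alpha_i$ and $\td{X} = \min(d_{\Sphere{1}}(x_1,x_3), d_{\Sphere{1}}(x_2,x_4))$ in the geodesic metric. By monotonicity and symmetry of $\phi$ (using that $\phi$ maps both $\alpha$ and $2\pi-\alpha$ to the same chord length and is increasing on $[0,\pi]$), the corresponding Euclidean quantities are exactly $\phi(\tb{X})$ and $\phi(\td{X})$. Hence $(t_b,t_d) \mapsto (\phi(t_b),\phi(t_d))$ bijects $\Dvr{4,1}(\Sphere{1})$ onto $\Dvr{4,1}(\Sphere{1}_E)$.

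Applying Theorem \ref{thm:Dnk_S1_k_odd} with $k=1$ yields $\Dvr{4,1}(\Sphere{1}) = \{(t_b,t_d): 2(\pi-t_b) \leq t_d \leq \pi,\ t_b < t_d\}$, which also forces $t_b \geq \pi/2$. Transporting via $\phi$ (writing $t_b^G, t_d^G$ for geodesic coordinates and $t_b^E = \phi(t_b^G)$, $t_d^E = \phi(t_d^G)$): the constraint $t_d^G \leq \pi$ becomes $t_d^E \leq 2$; the constraint $t_b^G \geq \pi/2$ becomes $t_b^E \geq \sqrt{2}$; and, applying $\phi$ (monotone on $[0,\pi]$) to both sides of $t_d^G \geq 2\pi - 2t_b^G$ gives
\begin{equation*}
t_d^E \geq \phi(2\pi - 2t_b^G) = 2\sin(\pi - t_b^G) = 2\sin(t_b^G) = 4\sin(t_b^G/2)\cos(t_b^G/2) = 2t_b^E\sqrt{1 - (t_b^E)^2/4},
\end{equation*}
using $\cos(t_b^G/2) = \sqrt{1 - (t_b^E/2)^2}$. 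This matches the stated region. The main obstacle is the first step (establishing $\alpha_i \leq \pi$ for all $i$), since otherwise the two-to-one nature of $\phi$ on $[0,2\pi]$ would break the clean correspondence between the geodesic and Euclidean persistence diagrams and conflate side/diagonal labels.
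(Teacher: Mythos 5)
Your proposal is correct and follows essentially the same route as the paper: transport everything through $f_E(d)=2\sin(d/2)$, observe that persistence is preserved because $f_E$ is strictly increasing on the range of the geodesic metric, and then push the boundary inequalities of $\Dvr{4,1}(\Sphere{1})$ from Theorem \ref{thm:Dnk_S1_k_odd} forward, exactly as in the paper. Your extra step about arcs exceeding $\pi$ is harmless but unnecessary, since the geodesic distance $d_{\Sphere{1}}(x,y)=\min(|x-y|,2\pi-|x-y|)$ already lies in $[0,\pi]$, where $f_E$ is injective and increasing, so order comparisons among distances (and hence $\tb{X}$, $\td{X}$) transfer directly.
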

\begin{proof}
	\indent Observe that the Euclidean distance $d_E$ between two points in $\Sphere{1}$ is related to their geodesic distance $d$ by $d_E = f_E(d) := 2\sin(d/2)$. Since $f_E$ is increasing on $[-\pi,\pi]$, an interval that contains all possible distances between points in $\Sphere{1}$, a configuration $X = \{x_1,x_2,x_3,x_4\} \subset \Sphere{1}$ produces non-zero persistence if and only if its Euclidean counterpart $X_E \subset \Sphere{1}_E$ does. For this reason, $\Dvr{4,1}(\Sphere{1}_E) = f_E\left(\Dvr{4,1}(\Sphere{1})\right)$.\\
	\indent From Theorem \ref{thm:Dnk_S1_k_odd},
	\begin{equation*}
		\Dvr{4,1}(\Sphere{1}) = \{ (t_b,t_d) \mid 2(\pi-t_b) \leq t_d \text{ and } \pi/2 \leq t_b < t_d \leq \pi\}.
	\end{equation*}
	Applying $f_E$ to the bound $t_d \geq 2(\pi-t_b)$ gives
	\begin{align*}
		t_{d,E} = 2\sin(t_d/2) 
		&\geq 2\sin(\pi-t_b) = 2\sin(t_b)
		= 2\sin(2\arcsin(t_{b,E}/2)) \\
		&= 4 \sin(\arcsin(t_{b,E}/2)) \cos(\arcsin(t_{b,E}/2))
		= 2 t_{b,E} \sqrt{1-t_{b,E}^2/4},
	\end{align*}
	while the image of the bound $\pi/2 \leq t_b < t_d \leq \pi$ under $f_E$ is $\sqrt{2} \leq t_{b,E} < t_{d,E} \leq 2$.
\end{proof}

\indent Even though $\Dvr{4,1}(\Sphere{1}_E)$ doesn't attain equality in the bound given by Proposition \ref{prop:D41_ptolemy}, it can be used to show that other spaces do. Two examples are $\Sphere{2}$ and $\R^2$.

\begin{prop}\label{prop:D41_R2_S2}
	For $n \geq 2$,
	\begin{equation*}
	\Dvr{4,1}(\R^n) = \left\{ (t_b,t_d) \ \big|\ 0 < t_b < t_d \leq \sqrt{2}t_b \right\}
	\ \text{and} \
	\Dvr{4,1}(\Sphere{n}_E) = \left\{ (t_b,t_d) \ \big|\ 0 < t_b < t_d \leq \min(\sqrt{2}t_b, 2) \right\}.
	\end{equation*}
	In particular, both sets are convex.
\end{prop}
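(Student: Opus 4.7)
The plan is to derive both equalities by combining Proposition \ref{prop:D41_ptolemy} with an explicit family of rectangular four-point configurations attaining every admissible pair $(t_b,t_d)$. For the upper bound, I will note that $\R^2$ is Ptolemaic (it is CAT(0)) and that $\Sphere{2}_E$, being isometrically embedded in $\R^3$, inherits Ptolemy's inequality. Applying Proposition \ref{prop:D41_ptolemy} then gives $t_d \leq \sqrt{2}\,t_b$ on both spaces, and for $\Sphere{2}_E$ one also has the trivial bound $t_d \leq \diam(\Sphere{2}_E) = 2$, which supplies the $\min$ in the claimed formula.

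For attainability I would use planar rectangles as a universal gadget. A rectangle with sides $a \leq b$, whose four vertices are labeled cyclically so that consecutive labels correspond to adjacent vertices, realizes the setup of Example \ref{ex:D41_example}: the quantities $d_{12},d_{23},d_{34},d_{41}$ are the four side lengths and $d_{13},d_{24}$ are the two (equal) diagonals, giving $\tb{X}=b$ and $\td{X}=\sqrt{a^2+b^2}$. Given any target pair $(t_b,t_d)$ I would set $b := t_b$ and $a := \sqrt{t_d^2-t_b^2}$; the inequality $t_d \leq \sqrt{2}\,t_b$ forces $a \leq b$, and $t_b < t_d$ forces $a > 0$. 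Placing such a rectangle in $\R^2$ realizes $(t_b,t_d) \in \Dvr{4,1}(\R^2)$ immediately, completing the first equality.

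For $\Sphere{2}_E$ I would inscribe the same $a \times b$ rectangle as a flat quadrilateral in $\R^3$: pick orthonormal $u,v \in \R^3$ and a center $c$ with $c \perp u,v$, and take the four vertices to be $c \pm (a/2)u \pm (b/2)v$. Expanding $|c \pm (a/2)u \pm (b/2)v|^2$ shows that all four land on $\Sphere{2}$ exactly when $|c|^2 + (a^2+b^2)/4 = 1$, which is solvable precisely when $a^2+b^2 \leq 4$. Since the four points are coplanar in $\R^3$, the ambient Euclidean distances between them agree with the distances in the flat rectangle, so once more $\tb{X}=\max(a,b)$ and $\td{X}=\sqrt{a^2+b^2}$. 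With the choice $b = t_b$ and $a = \sqrt{t_d^2-t_b^2}$, the hypotheses $t_d \leq \sqrt{2}\,t_b$ and $t_d \leq 2$ translate respectively into $a \leq b$ and $a^2+b^2 \leq 4$, so the construction works for every admissible pair.

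The only delicate step is bookkeeping: one must check that the cyclic labeling yields the \emph{non-degenerate} case of Example \ref{ex:D41_example}, i.e., that the maximum side strictly dominates all distances except the two diagonals and that the diagonals strictly exceed every side, so that Theorem \ref{thm:n=2k+2} produces the genuine point $(t_b,t_d)$ rather than a point on the diagonal. Both reductions collapse to the condition $a > 0$ (equivalently $t_b < t_d$), so no further work is needed, and this is where I expect the proof to concentrate its remaining attention.
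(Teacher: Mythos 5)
Your proposal is correct. The upper-bound half is identical to the paper's: both invoke Proposition \ref{prop:D41_ptolemy} on the Ptolemaic spaces $\R^2$ and $\Sphere{2}_E \subset \R^3$, plus the trivial diameter bound for the sphere. Where you diverge is the attainability half. The paper does not build configurations from scratch; it takes the already-established characterization of $\Dvr{4,1}(R\cdot\Sphere{1}_E)$ (Proposition \ref{prop:D41_S1_E}, rescaled), observes via functoriality (Remark \ref{rmk:functorial_persistence_sets}) that the top boundary segment $[\sqrt{2}R,2R)\times\{2R\}$ of that set sits inside $\Dvr{4,1}(\R^2)$ for every $R>0$, and sweeps $R=t_d/2$ to fill the region (with $R\leq 1$ for the sphere). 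You instead exhibit explicit $a\times b$ rectangles with $b=t_b$, $a=\sqrt{t_d^2-t_b^2}$, and verify $\tb{X}=\max(a,b)=b$, $\td{X}=\sqrt{a^2+b^2}$ directly from Theorem \ref{thm:n=2k+2}; your flat embedding $c\pm(a/2)u\pm(b/2)v$ into $\Sphere{2}$, with the solvability condition $a^2+b^2\leq 4$ matching $t_d\leq 2$, is a clean replacement for the paper's "circle of radius $R\leq 1$" step. The two routes realize essentially the same configurations (your rectangles are exactly the circle configurations with $t_d=2R$), but yours is self-contained and does not depend on the earlier computation of $\Dvr{4,1}(\Sphere{1}_E)$, while the paper's emphasizes functoriality and reuse of prior results. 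Your closing bookkeeping is also right: with the cyclic labeling of the rectangle, nondegeneracy reduces to $a>0$, i.e.\ $t_b<t_d$, so nothing further is needed.
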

\begin{proof}
	Since both $\R^n$ and $\Sphere{n}_E \subset \R^{n+1}$ are Ptolemaic spaces, Proposition \ref{prop:D41_ptolemy} gives $\Dvr{4,1}(\R^n) \subset P_\infty$ and $\Dvr{4,1}(\Sphere{n}_E) \subset P_2$ (see equation \ref{eq:Ptolemaic_region}). To show the other direction, notice that $\R^n$ contains circles $R \cdot \Sphere{1}_E$ of any radius $R>0$. By functoriality of persistence sets (Remark \ref{rmk:functorial_persistence_sets}), $\Dvr{4,1}(R \cdot \Sphere{1}_E) \subset \Dvr{4,1}(\R^n)$ so, in particular, $\Dvr{4,1}(\R^n)$ contains the line $[\sqrt{2} R, 2R) \times 2R$ that bounds $\Dvr{4,1}(R \cdot \Sphere{1}_E)$ from above (see Proposition \ref{prop:D41_S1_E} and Figure \ref{fig:D41_S1_S2}). The inequality $t_b < t_d \leq \sqrt{2} t_b$ can be rearranged to $\frac{\sqrt{2}}{2} t_d \leq t_b < t_d$, so given any point $(t_b,t_d) \in P_\infty$, taking $R = t_d/2$ gives $(t_b,t_d) \in [\sqrt{2} R, 2R) \times 2R \subset \Dvr{4,1}(\R^2)$. Thus, $P_\infty \subset \Dvr{4,1}(\R^n)$. The same argument with the added restriction of $R \leq 1$ shows that $P_2 \subset \Dvr{4,1}(\Sphere{n}_E)$. Lastly, $\Dvr{4,1}(\R^n)$ (resp. $\Dvr{4,1}(\Sphere{2}_n)$) is convex because it is the intersection of two (resp. three) half-spaces.
\end{proof}

\begin{figure}[ht]
	\centering
	\makebox[\textwidth][c]{\includegraphics[scale=0.57]{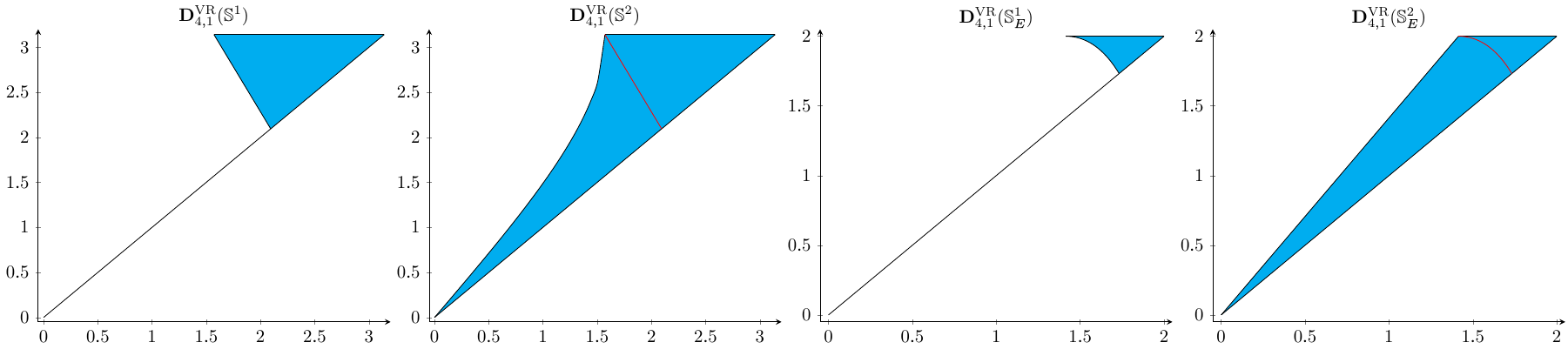}}
	
	\caption{
		\textbf{From left to right:} $\Dvr{4,1}(\Sphere{1})$, $\Dvr{4,1}(\Sphere{2})$, $\Dvr{4,1}(\Sphere{1}_E)$, and $\Dvr{4,1}(\Sphere{2}_E)$. Notice that $\Dvr{4,1}(\Sphere{1}) \subset \Dvr{4,1}(\Sphere{2})$, as indicated by the red line in the second diagram from the left. The analogous statement holds for $\Sphere{1}_E \subset \Sphere{2}_E$. Cf. the two rightmost figures with Proposition \ref{prop:D41_R2_S2}.
	}
	\label{fig:D41_S1_S2}
\end{figure}

\indent Two observations summarize the proof of Proposition \ref{prop:D41_R2_S2}: Ptolemy's inequality gives a region $P_\infty$ that contains $\Dvr{4,1}(\R^2)$, while the circles in $\R^2$ produce enough points to fill $P_\infty$. It turns out that this technique can be generalized to other spaces, provided that we have a suitable analogue of Ptolemy's inequality. This is explored in the next section.

\subsection{Persistence sets of surfaces with constant curvature}
\label{sec:D41_Mk}
Consider the surface $M_\kappa$ with constant sectional curvature $\kappa$. In this section, we will characterize $\Dvr{4,1}(M_\kappa)$. Proposition \ref{prop:D41_R2_S2} already has the case $\kappa=0$, so now we deal with $\kappa \neq 0$. To fix notation, let $x,y \in \R^3$. Define $\langle x,y \rangle := x_1y_1+x_2y_2+x_3y_3$ and $\langle x|y \rangle := -x_1y_1+x_2y_2+x_3y_3$. We model $M_\kappa$ as $\displaystyle M_\kappa := \left\{x \in \R^3 \ | \ \langle x,x \rangle = \frac{1}{\kappa} \right\} \text{ if } \kappa>0$, and $\displaystyle M_\kappa := \left\{x \in \R^3 \ | \ \langle x|x \rangle = \frac{1}{\kappa} \text{ and } x_1>0 \right\} \text{ if } \kappa<0$.
In other words, $M_\kappa$ is the sphere of radius $1/\sqrt{\kappa}$ if $\kappa>0$, or a rescaling of the hyperbolic plane if $\kappa<0$. The geodesic distance in $M_\kappa$ is given by
\begin{equation}\label{eq:hyperbolic_distance}
	d_{M_\kappa}(x,y) :=
	\begin{cases}
		\frac{1}{\sqrt{+\kappa}} \arccos(\kappa \langle x,y \rangle), & \text{if } \kappa>0,\\
		\frac{1}{\sqrt{-\kappa}} \arccosh(\kappa \langle x|y \rangle), & \text{if } \kappa<0.
	\end{cases}
\end{equation}

\indent To use the same technique as in Proposition \ref{prop:D41_R2_S2}, we use a version of Ptolemy's inequality for spaces of non-zero curvature.
\begin{theorem}[Spherical and Hyperbolic Ptolemy's inequality, \cite{ptolemy-hyperbolic, ptolemy-spherical}]~\\
	\label{thm:ptolemy-curvature}
	Let $x_1,x_2,x_3,x_4 \in M_\kappa$, and $d_{ij}=d_{M_\kappa}(x_i,x_j)$.	Then
	\begin{equation}\label{ineq:ptolemy-spherical-basic}
		\textstyle
		s_\kappa\left(d_{13}/2\right) s_\kappa\left(d_{24}/2\right) \leq s_\kappa\left(d_{12}/2\right) s_\kappa\left(d_{34}/2\right) + s_\kappa\left(d_{14}/2\right) s_\kappa\left(d_{23}/2\right),
	\end{equation}
	where $s_\kappa(t)$ is defined as $\sin(\sqrt{\kappa}t)$ if $\kappa>0$, and $\sinh(\sqrt{-\kappa}t)$ if $\kappa<0$.
\end{theorem}

\indent With these tools, we are ready to prove the main theorem of this section.

\DiagramMKappa
\begin{proof}
	The case $\kappa=0$ was already done in Proposition \ref{prop:D41_R2_S2}. For $\kappa>0$, let
	\begin{equation*}
		\textstyle P := \left\{ (t_b,t_d)|\ \sin \left( \frac{\sqrt{ \kappa}}{2} t_d \right) \leq \sqrt{2}\sin \left( \frac{\sqrt{ \kappa}}{2} t_b \right) \text{ and } 0 < t_b < t_d \leq \frac{\pi}{\sqrt{\kappa}} \right\}.
	\end{equation*}
	Let $X = \{x_1,x_2,x_3,x_4\} \subset M_\kappa$ and $d_{ij} = d_{M_\kappa}(x_i,x_j)$. Suppose that $\tb{X} < \td{X}$ and label the $x_i$ so that $\tb{X} = \max(d_{12}, d_{23}, d_{34}, d_{41})$ and $\td{X} = \min(d_{13}, d_{24})$. Let $s_{ij} := \sin\left(\frac{\sqrt{\kappa}}{2} d_{ij} \right)$. By Theorem \ref{thm:ptolemy-curvature}, $s_{13}s_{24} \leq s_{12}s_{34}+s_{14}s_{23}$,
	and, since the function $t \mapsto \sin\left(\frac{\sqrt{\kappa}}{2} t\right)$ is increasing when $\frac{\sqrt{\kappa}}{2} t \in \left[0,\frac{\sqrt{\kappa}}{2} \diam(M_\kappa)\right] = \left[0,\frac{\pi}{2}\right]$, we get
	\begin{align*}
		\textstyle \sin^2\left(\frac{\sqrt{\kappa}}{2} \td{X}\right)
		= (\min(s_{13},s_{24}))^2
		\leq s_{13}s_{24}
		\leq s_{12}s_{34}+s_{14}s_{23}
		\leq \textstyle 2\sin^2\left(\frac{\sqrt{\kappa}}{2} \tb{X}\right).
	\end{align*}
	Thus,
	\begin{equation}
		\label{ineq:D41_S2_boundary}
		\sin \left( \frac{\sqrt{ \kappa}}{2} t_d \right) \leq \sqrt{2}\sin \left( \frac{\sqrt{ \kappa}}{2} t_b \right).
	\end{equation}
	\indent This shows that $\Dvr{4,1}(M_\kappa) \subset P$. For the other direction, let $0 < t \leq 1$ and $s \in (0,\pi/2]$, and consider $X = \{x_1,x_2,x_3,x_4\}$ where
	\begin{multicols}{2}
	\begin{itemize}
		\item $x_1 =\textstyle  \left(\frac{1}{\sqrt{\kappa}} \sqrt{1-t^2}, \frac{t}{\sqrt{\kappa}}, 0\right)$
		\item $x_2 =\textstyle  \left(\frac{1}{\sqrt{\kappa}} \sqrt{1-t^2}, \frac{t}{\sqrt{\kappa}} \sin(s), \frac{t}{\sqrt{\kappa}} \cos(s)\right)$
		\item $x_3 =\textstyle  \left(\frac{1}{\sqrt{\kappa}} \sqrt{1-t^2}, -\frac{t}{\sqrt{\kappa}}, 0\right)$
		\item $x_4 =\textstyle  \left(\frac{1}{\sqrt{\kappa}} \sqrt{1-t^2}, -\frac{t}{\sqrt{\kappa}} \sin(s), -\frac{t}{\sqrt{\kappa}} \cos(s)\right)$.
	\end{itemize}
	\end{multicols}
	\noindent Notice that the set $\left\{(p_1, p_2, p_3) \in M_\kappa: p_1 = \frac{1}{\sqrt{\kappa}} \sqrt{1-t^2} \right\}$ is a circle with radius $t/\sqrt{|\kappa|}$. Inside of this circle, the configuration $\{x_1, x_2, x_3, x_3\}$ is a parallelogram where $x_1$ and $x_2$ are antipodal to $x_3$ and $x_4$, respectively. Indeed, it can be checked that:
	\begin{multicols}{2}
	\begin{itemize}
		\item $x_i \in M_\kappa$,
		\item $\langle x_1, x_3 \rangle = \langle x_2, x_4 \rangle = \frac{1}{\kappa}(1-2t^2)$,
		\item $\langle x_1, x_2 \rangle = \langle x_3, x_4 \rangle = \frac{1}{\kappa}(1-t^2(1-\sin(s)))$,
		\item $\langle x_1, x_4 \rangle = \langle x_2, x_3 \rangle = \frac{1}{\kappa}(1-t^2(1+\sin(s)))$,
	\end{itemize}
	\end{multicols}
	\noindent and (since $s \in (0, \pi/2]$) $\langle x_1,x_3 \rangle < \langle x_1,x_4 \rangle \leq \langle x_1,x_2 \rangle$. Since $\arccos(t)$ is decreasing, we have
	\begin{align*}
		\tb{X} &= \frac{1}{\sqrt{\kappa}} \arccos\left(\kappa \langle x_1, x_4 \rangle\right) = \frac{1}{\sqrt{\kappa}} \arccos(1-t^2(1+\sin(s))), \text{ and }\\
		\td{X} &= \frac{1}{\sqrt{\kappa}} \arccos\left(\kappa \langle x_1, x_3 \rangle\right) = \frac{1}{\sqrt{\kappa}} \arccos(1-2t^2).
	\end{align*}
	Notice that for a fixed $t$, $\tb{X}$ is minimized at $s=0$ and the equality in (\ref{ineq:D41_S2_boundary}) is achieved. Also, $\td{X}$ is maximized at $t=1$, at which point $\td{X} = \frac{\pi}{\sqrt{\kappa}}$. Now, let $(t_b,t_d) \in P$ be arbitrary. If we set $\tb{X}=t_b$ and $\td{X}=t_d$, we can solve the equations above to get
	\begin{equation*}
		t = \sqrt{\frac{1-\cos(\sqrt{\kappa}t_d)}{2}}, \text{ and } \sin(s) = 2 \cdot \frac{1-\cos(\sqrt{\kappa}t_b)}{1-\cos(\sqrt{\kappa}t_d)}-1.
	\end{equation*}
	Such a $t$ exists because $\cos(\sqrt{\kappa}t_d) \leq 1$. As for $s$, the half-angle identity $1-\cos(x) = 2\sin^2(x/2)$ gives the equivalent expression $\sin(s) =  2 \cdot \frac{\sin^2(\sqrt{\kappa}\,t_b/2)}{\sin^2(\sqrt{\kappa}\,t_d/2)}-1$. Since $(t_b,t_d)$ satisfies inequality (\ref{ineq:D41_S2_boundary}), the right side is bounded below by 0 and, since $t_b < t_d \leq \frac{\pi}{\sqrt{\kappa}}$, it is also bounded above by 1. Thus, there exists an $s \in [0,\pi/2]$ that satisfies the equality. This finishes the proof of $P \subset \Dvr{4,1}(M_\kappa)$.\\
	\indent The proof for $\kappa<0$ proceeds in much the same way. The only major change is in the definition of the points $x_i$ when showing $P \subset \Dvr{4,1}(M_\kappa)$:
	\begin{multicols}{2}
	\begin{itemize}
		\item $x_1 =\left(\frac{1}{\sqrt{-\kappa}} \sqrt{1+t^2}, \frac{t}{\sqrt{-\kappa}}, 0\right)$
		\item $x_2 =\left(\frac{1}{\sqrt{-\kappa}} \sqrt{1+t^2}, \frac{t}{\sqrt{-\kappa}} \sin(s), \frac{t}{\sqrt{-\kappa}} \cos(s)\right)$
		\item $x_3 =\left(\frac{1}{\sqrt{-\kappa}} \sqrt{1+t^2}, -\frac{t}{\sqrt{-\kappa}}, 0\right)$
		\item $x_4 =\left(\frac{1}{\sqrt{-\kappa}} \sqrt{1+t^2}, -\frac{t}{\sqrt{-\kappa}} \sin(s), -\frac{t}{\sqrt{-\kappa}} \cos(s)\right)$.
	\end{itemize}
	\end{multicols}
	Other than that, and the fact that $M_\kappa$ is unbounded when $\kappa<0$, the proof is completely analogous.
\end{proof}

\begin{remark}\label{rmk:ph-detects-curvature}
	A related result appears in \cite{ph-detects-curvature}. The authors explore the question of whether persistent homology can detect the curvature of the ambient $M_\kappa$. On the theoretical side, they found a geometric formula to compute the \v{C}ech persistence diagram $\dgm_1^\text{\v{C}ech}(T)$ of a sample $T \subset M_\kappa$ with \emph{three} points, much in the same vein as our Theorem \ref{thm:n=2k+2}. They used it to find the logarithmic persistence $P_a(\kappa) := t_d(T_{\kappa,a})/t_b(T_{\kappa,a})$ for an equilateral triangle $T_{\kappa,a}$ of fixed side length $a>0$, and proved that $P_a$, when viewed as a function of $\kappa$, is invertible. On the experimental side, they sampled 1000 points from a unit disk in $M_\kappa$ and were able to approximate $\kappa$ using, among other things, average persistence landscapes in dimension 1 of 100 such samples. For example, one method consisted in finding a collection of landscapes $L_\kappa$ labeled with a known curvature $\kappa$, and estimating $\kappa_*$ for an unlabeled $L_*$ with the average curvature of the three nearest neighbors of $L_*$. They were also able to approximate $\kappa_*$ without labeled examples by using PCA. See their paper \cite{ph-detects-curvature} for more details. Compare with Figure \ref{fig:D41_Mk}.\\
	\indent Our Theorem \ref{thm:D41_Mk} is in the same spirit. The curvature value $\kappa$ determines the boundary of $\Dvr{4,1}(M_\kappa)$, and instead of triangles, we use squares with a given $t_d$ and minimal $t_b$ to find $\kappa$. Additionally, we can qualitatively detect the sign of the curvature by looking at the boundary of $\Dvr{4,1}(M_\kappa)$: it is concave up when $\kappa>0$, a straight line when $\kappa=0$, and concave down when $\kappa<0$. See Figure \ref{fig:D41_Mk_boundary}.
\end{remark}

\begin{figure}[ht]
	\begin{center}
		\includegraphics[scale=0.75]{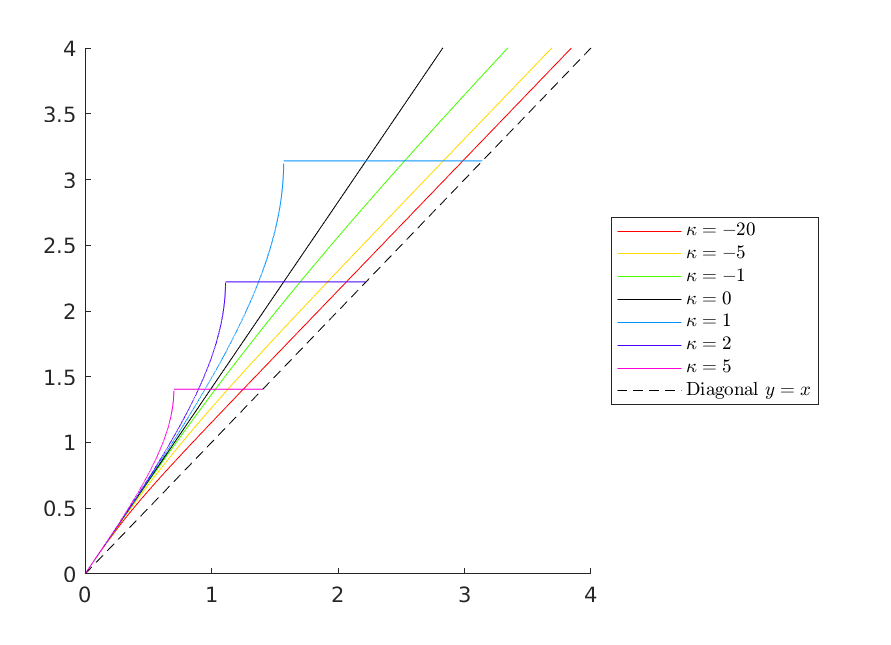}
	\end{center}
	\caption{The boundary of $\Dvr{4,1}(M_\kappa)$ for multiple $\kappa$ (see Theorem \ref{thm:D41_Mk}). Observe this set is bounded only when $\kappa>0$, and that the left boundary of these persistence sets is concave up when $\kappa>0$, a straight line when $\kappa=0$, and concave down when $\kappa>0$.}
	\label{fig:D41_Mk_boundary}
\end{figure}

\begin{figure}
	\begin{center}
		\noindent\makebox[\textwidth]{
			\includegraphics[scale=0.6]{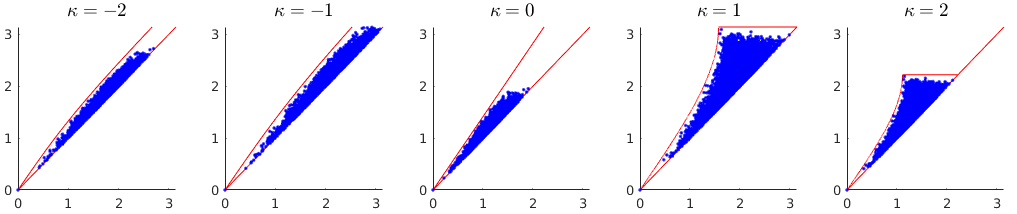}
		}
	\end{center}
	\caption{The diagrams $\Dvr{4,1}(D_\kappa)$ for disks $D_\kappa \subset M_\kappa$ of radius $R=\pi/\sqrt{|\kappa|}$ for various $\kappa \neq 0$ (compare with Theorem \ref{thm:D41_Mk}). Also shown is $\Dvr{4,1}(D_0)$ for $D_0 \subset M_0$, a disk of radius 1.}
	\label{fig:D41_Mk}
\end{figure}

\subsection{Persistence sets of spheres}
After surfaces, the next case we study is higher dimensional Euclidean spheres. Observe that if $n \leq m$, an $n$-point subset of $\Sphere{m}_E$ is contained in a sphere with smaller dimension. Hence, the computation of the persistence sets of spheres can be reduced to a specific dimension that depends on $n$. After proving this result and giving an example, we comment on the first unknown case.

\begin{prop}\label{prop:Dk_Sn_stabilizes}
	For all $m \geq n-1$ and all $k \geq 0$,
	\begin{equation*}
		\Dvr{n,k}(\Sphere{m}_E) = \Dvr{n,k}(\Sphere{n-1}_E) = \bigcup_{\lambda \in [0,1]} \lambda \cdot \Dvr{n,k}(\Sphere{n-2}_E).
	\end{equation*}
\end{prop}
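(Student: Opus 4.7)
The proof splits naturally into two set-theoretic inclusions, and the hypothesis $m\geq n-1$ is what guarantees that every relevant affine cross-section of $\Sphere{m}_E$ by an $(n-1)$-dimensional affine subspace is a proper sub-sphere of radius at most $1$. The key facts to exploit are that (i) any $n$ points have affine hull of dimension at most $n-1$, (ii) the intersection of $\Sphere{m}_E$ with a proper affine subspace is a Euclidean sub-sphere, and (iii) the Vietoris-Rips persistence diagram is an isometry invariant that scales linearly with the metric.

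For the inclusion $\Dvr{n,k}(\Sphere{m}_E)\subseteq\bigcup_{\lambda\in[0,1]}\lambda\cdot\Dvr{n,k}(\Sphere{n-2}_E)$, I would take any $X\subset \Sphere{m}_E$ with $|X|\leq n$ and consider its affine hull $L:=\mathrm{aff}(X)\subset\R^{m+1}$, which has dimension $d\leq n-1\leq m<m+1$. Thus $L$ is a proper affine subspace, so $\Sphere{m}_E\cap L$ is a Euclidean $(d-1)$-sphere of radius $\lambda=\sqrt{1-h^2}\in[0,1]$, where $h=\mathrm{dist}(0,L)\leq 1$. After an ambient rigid motion of $\R^{m+1}$, this cross-section is isometric to $\lambda\cdot\Sphere{d-1}_E$, which in turn embeds isometrically into $\lambda\cdot\Sphere{n-2}_E$ as a great subsphere (since $d-1\leq n-2$). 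Because the Vietoris-Rips diagram depends only on the pairwise Euclidean distances, $\dgm_k^\vr(X)\in \Dvr{n,k}(\lambda\cdot\Sphere{n-2}_E)=\lambda\cdot\Dvr{n,k}(\Sphere{n-2}_E)$, where the last equality follows from the fact that rescaling a metric by $\lambda$ rescales its persistence diagram by $\lambda$.

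For the reverse inclusion, I would start with any $\lambda\in[0,1]$ and any $D=\dgm_k^\vr(X)\in\Dvr{n,k}(\Sphere{n-2}_E)$, with $X\subset\Sphere{n-2}_E\subset\R^{n-1}$ and $|X|\leq n$. Then $\lambda X\subset \lambda\cdot\Sphere{n-2}_E$, and I would define $\iota\colon \R^{n-1}\to\R^{m+1}$ by $\iota(v)=(v,\sqrt{1-\lambda^2},0,\dots,0)$; this map is well-defined precisely because $m\geq n-1$ supplies the required extra coordinates. Since $\|\iota(\lambda x)\|^2=\lambda^2+(1-\lambda^2)=1$, the image $\iota(\lambda X)$ sits in $\Sphere{m}_E$, and because $\iota$ is an affine isometry onto its image, $\dgm_k^\vr(\iota(\lambda X))=\dgm_k^\vr(\lambda X)=\lambda D$. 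Thus $\lambda D\in\Dvr{n,k}(\Sphere{m}_E)$.

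The only real obstacle is tightening the first inclusion to cover degenerate configurations, such as $d<n-1$ (the points lie on an even lower dimensional affine subspace) or $h=1$ (so $L$ is tangent to $\Sphere{m}_E$, forcing $|X|=1$ and $\lambda=0$). In each such case the cross-section still collapses to a Euclidean sphere (or a single point) of some dimension at most $n-2$ and radius at most $1$, which embeds isometrically into $\lambda\cdot\Sphere{n-2}_E$ via a great subsphere; hence the argument goes through uniformly without a separate case analysis.
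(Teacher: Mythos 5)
Your proof is correct and follows essentially the same route as the paper's: the reverse inclusion comes from the isometric copies $\lambda\cdot\Sphere{n-2}_E\hookrightarrow\Sphere{m}_E$ together with functoriality, and the forward inclusion from the observation that the affine hull of at most $n$ points meets $\Sphere{m}_E$ in a sub-sphere of radius $\lambda\leq 1$. You simply spell out the details (the explicit embedding $\iota$, the scaling identity for diagrams, and the degenerate cases) that the paper's two-sentence proof leaves implicit.
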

\begin{proof}
	$\Sphere{m}_E$ contains copies of $\lambda \cdot \Sphere{n-2}_E$ for $\lambda \in [0,1]$, so $\bigcup_{\lambda \in [0,1]} \lambda \cdot \Dvr{n,k}(\Sphere{n-2}_E) \subset \Dvr{n,k}(\Sphere{m}_E)$. For the other direction, notice that a set $X \subset \Sphere{m}_E \subset \R^{m+1}$ with $n$ points generates an $(n-1)$-hyperplane which intersects $\Sphere{m}_E$ on a $(n-2)$-dimensional sphere of radius $\lambda \leq 1$. Thus, $X \subset \lambda \cdot \Sphere{n-2}_E$, so $\Dvr{n,k}(\Sphere{m}_E) \subset \bigcup_{\lambda \in [0,1]} \lambda \cdot \Dvr{n,k}(\Sphere{n-2}_E)$.
\end{proof}

\indent If $n=4$, the above proposition reduces the computation of $\Dvr{4,1}(\Sphere{m}_E)$ to the union of rescalings of $\Dvr{4,1}(\Sphere{2}_E)$. However, as seen in the proof of Proposition \ref{prop:D41_R2_S2}, $\Dvr{4,1}(\Sphere{2})$ is itself $\bigcup_{\lambda \in [0,1]} \lambda \cdot \Dvr{4,1}(\Sphere{1}_E)$. This observation extends the above result to $\Sphere{n-2}_E$ instead of $\Sphere{n-1}_E$.
\begin{corollary}\label{cor:D41_Sm_stabilizes}
	For all $m \geq 2$,
	\begin{align*}
		\Dvr{4,1}(\Sphere{m}_E) & = \Dvr{4,1}(\Sphere{2}_E) 
		= \left\{ (t_b,t_d)| 0 \leq t_b < t_d \leq \min(\sqrt{2}t_b, \pi) \right\}.
	\end{align*}
\end{corollary}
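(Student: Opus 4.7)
The plan is to deduce the corollary directly by combining the two preceding propositions, splitting on whether $m = 2$ or $m \geq 3$. The case $m = 2$ is exactly Proposition \ref{prop:D41_R2_S2}, so nothing needs to be done there. For $m \geq 3$, I would invoke Proposition \ref{prop:Dk_Sn_stabilizes} with $n = 4$ and $k = 1$ (the hypothesis $m \geq n - 1 = 3$ is satisfied), which yields
\[
\Dvr{4,1}(\Sphere{m}_E) \;=\; \bigcup_{\lambda \in [0,1]} \lambda \cdot \Dvr{4,1}(\Sphere{2}_E).
\]

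The next step is to unpack the right-hand side using the $m = 2$ formula. Scaling by $\lambda > 0$ carries $(t_b, t_d) \mapsto (\lambda t_b, \lambda t_d)$; the constraint $t_d \leq \sqrt{2}\, t_b$ is scale-invariant, whereas the absolute cap inherited from the Euclidean diameter of $\Sphere{2}_E$ scales linearly. Hence
\[
\lambda \cdot \Dvr{4,1}(\Sphere{2}_E) \;=\; \bigl\{ (s_b, s_d) : 0 < s_b < s_d \leq \min(\sqrt{2}\, s_b,\, 2\lambda) \bigr\}.
\]
I would then take the union over $\lambda \in [0,1]$: a pair $(s_b, s_d)$ with $s_b < s_d \leq \sqrt{2}\, s_b$ lies in some $\lambda$-slice iff there exists $\lambda \in [0,1]$ with $s_d \leq 2\lambda$, and choosing $\lambda = s_d / 2$ shows this is possible precisely when $s_d \leq 2$. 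The resulting union therefore coincides with the $m = 2$ set, producing the description of $\Dvr{4,1}(\Sphere{m}_E)$ stated in the corollary, with the cap $t_d \leq \min(\sqrt{2}\, t_b,\, \pi)$ being enforced (and in fact tightened to $t_d \leq \min(\sqrt{2}\, t_b,\, 2)$) by the Euclidean diameter of $\Sphere{m}_E$.

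The main obstacle is essentially just the scaled-union bookkeeping sketched above; the real geometric content, namely Ptolemy-type control of $t_d$ by $t_b$ on $\Sphere{2}_E$ and the reduction of $4$-point configurations in $\Sphere{m}_E$ to configurations on a great $2$-subsphere, has already been carried out in Propositions \ref{prop:D41_R2_S2} and \ref{prop:Dk_Sn_stabilizes}.
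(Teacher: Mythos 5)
Your proposal is correct and takes essentially the same route as the paper: it invokes Proposition \ref{prop:Dk_Sn_stabilizes} to reduce to scaled copies of $\Dvr{4,1}(\Sphere{2}_E)$ and then observes that the union over $\lambda \in [0,1]$ collapses to the $\lambda = 1$ copy (the paper phrases this as $\lambda \cdot \Dvr{4,1}(\Sphere{2}_E) \subset \Dvr{4,1}(\Sphere{2}_E)$, while you verify it by explicit bookkeeping with $\lambda = s_d/2$), after which Proposition \ref{prop:D41_R2_S2} finishes the argument. Your remark that the cap is really $t_d \leq \min(\sqrt{2}\,t_b, 2)$ is also consistent with Proposition \ref{prop:D41_R2_S2}, the constant $\pi$ in the displayed statement being the geodesic rather than Euclidean diameter.
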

\begin{proof}
	By Proposition \ref{prop:Dk_Sn_stabilizes}, for every $m \geq 3$, $\Dvr{4,1}(\Sphere{m}_E) = \bigcup_{\lambda \in [0,1]} \lambda \cdot \Dvr{4,1}(\Sphere{2}_E)$, and by the proof of Proposition \ref{prop:D41_R2_S2}, $\Dvr{4,1}(\Sphere{2}_E)$ is convex and equals $\bigcup_{\lambda \in [0,1]} \lambda \cdot \Dvr{4,1}(\Sphere{1}_E)$. Hence, $\bigcup_{\lambda \in [0,1]} \lambda \cdot \Dvr{4,1}(\Sphere{2}_E) = \Dvr{4,1}(\Sphere{2}_E)$.
\end{proof}

The reason why the dimension of the spheres was reduced between Proposition \ref{prop:Dk_Sn_stabilizes} and Corollary \ref{cor:D41_Sm_stabilizes} comes from two facts. First, when $X \subset \R^m$ is a 4-point set, the quotient $\td{X}/\tb{X}$ is maximized when $X$ is concyclic (see Remark \ref{rmk:D41-ptolemy-equality}), and second, $\Sphere{2}_E$ contains all circles of radius $0 \leq \lambda \leq 1$. Thus, to bound $\td{X}/\tb{X}$ for $X \subset \Sphere{m}$, we take a concyclic $X'$ such that $\td{X}/\tb{X} \leq \sqrt{2} = \td{X'}/\tb{X'}$. Since $\Sphere{2}$ contains enough circles, the information in $\Dvr{4,1}(\Sphere{2}_E)$ is sufficient to determine $\Dvr{4,1}(\Sphere{m})$. We are curious to see how far can this technique be pushed. More specifically, we are interested in finding inequalities in higher dimensions that will play the same role as Ptolemy's has, namely, provide bounds for $\Dvr{n,k}(\Sphere{m})$ and whose equality condition happens in a sphere of dimension lower than $n-2$. If such an inequality existed, we could improve the equality $\Dvr{n,k}(\Sphere{m}) = \Dvr{n,k}(\Sphere{n-1})$ in Proposition \ref{prop:Dk_Sn_stabilizes} to a lower dimensional sphere in the same way as we did in Corollary \ref{cor:D41_Sm_stabilizes}.\\
\indent At this point, we have characterized $\Dvr{2k+2,k}(\Sphere{m}_E)$ for $k=1$ and any $m$. For $k=2$, the first case $\Dvr{6,2}(\Sphere{1}_E)$ can be obtained from Theorem \ref{thm:critical_tb_even}. This is the extent of our knowledge of the sets $\Dvr{6,2}(\Sphere{m})$. We now discuss our partial results for $m=2$.

\begin{lemma}\label{lemma:polynomial_rho_0}
	Fix $\sqrt{2/3} \leq r \leq 1$ and define $f(\rho) := 2 - r\rho + 2\sqrt{(1-r^2)(1-\rho^2)} - 3r^2$. The equation $f(\rho)=0$ has a unique solution $\rho_0 := r \cdot \frac{8-9r^2}{4-3r^2}$ that satisfies $-r < \rho_0 \leq r$.
\end{lemma}
\begin{proof}
	Isolating the square root and squaring the resulting equation gives $-4(1-r^2)\rho^2 + 4(1-r^2) = r^2 \rho^2 +2(3r^2-2) r\rho + (3r^2-2)^2$. After reordering terms and simplifying, we get
	\begin{equation*}
		g(\rho) := (4-3r^2) \rho^2 + (6r^3-4r) \rho + (9r^4 - 8r^2) = 0.
	\end{equation*}
	Observe that $g(-r)=0$, and that $g(\rho)/(\rho+r) = (4-3r^2)\rho + (9r^2-8)r$. Hence, $g(\rho)$ has the solutions $\rho=-r$ and $\rho = \rho_0$, where $\rho_0 := r \cdot \frac{8-9r^2}{4-3r^2}$. However, $\rho=-r$ is not a solution of $f(\rho)=0$ because $f(-r) = 4+6r^2 > 0$. Still, $f(r)=4-6r^2 \leq 0$ because $r \geq \sqrt{2/3}$, so since $f$ is continuous, $f(\rho)=0$ must have a solution $-r < \rho \leq r$. This solution must be $\rho=\rho_0$.
\end{proof}
\begin{prop}\label{prop:D62_S2_E}
	Let $P_{6,2}$ be
	\begin{equation*}
	    P_{6,2} := \left\{ (t_b,t_d) \mid 0 < t_b < t_d \leq 2 \text{ and either } t_d \leq \frac{2}{\sqrt{3}} t_b \text{ or } 4t_b^2 \cdot \dfrac{3-t_b^2}{4-t_b^2} \leq t_d^2 \right\}.
	\end{equation*}
	Then $P_{6,2} \subset \Dvr{6,2}(\Sphere{2}_E)$.
\end{prop}
\begin{proof}
	We begin by noting the lines $t_d = \frac{2}{\sqrt{3}}t_b$ and $t_d=2$ intersect at $t_b=\sqrt{3}$. Then, $P_{6,2}$ splits as the union of the sets
	\begin{align*}
		A &:=\textstyle \left\{ (t_b,t_d) \mid 0 < t_b \leq \sqrt{3} \text{ and } t_d \leq \frac{2}{\sqrt{3}} t_b \right\},\\
		B &:= \left\{ (t_b,t_d) \mid \sqrt{3} \leq t_b < t_d \leq 2 \right\} \text{ and }\\
		C &:= \left\{ (t_b,t_d) \mid \sqrt{2} \leq t_b \leq \sqrt{3}, t_b < t_d \leq 2 \text{ and } 4t_b^2 \cdot \dfrac{3-t_b^2}{4-t_b^2} \leq t_d^2 \right\}.
	\end{align*}
	We will show that the configurations $X \subset \Sphere{2}_E$ that generate $A \cup B$ are the sets $X$ inscribed in a circle of radius $r \in (0,1] \subset \Sphere{2}_E$. $C$ will take a bit more work, but we will show that it is generated by equilateral triangles inscribed at parallel circles of controlled radii.\\
	\indent By Theorem \ref{thm:critical_tb_even}, $\Dvr{6,2}(\Sphere{1}) = \{(t_b,t_d) \mid \frac{2\pi}{3} \leq t_b < t_d \leq \pi\}$ and, analogously to Proposition \ref{prop:D41_S1_E} (recall that $f_E(t) = 2\sin(t/2)$),
	\begin{equation*}
		\Dvr{6,2}(\Sphere{1}_E) = f_E\left( \Dvr{6,2}(\Sphere{1}) \right) = \{(t_b,t_d) \mid \sqrt{3} \leq t_b < t_d \leq 2\} = B.
	\end{equation*}
	Since $\Sphere{1}_E \hookrightarrow \Sphere{2}_E$, $B \subset \Dvr{6,2}(\Sphere{2}_E)$. More generally, we have $\bigcup_{0 < \lambda \leq 1} \lambda \cdot \Dvr{6,2}(\Sphere{1}_E) \subset \Dvr{6,2}(\Sphere{2}_E)$ because $\lambda \cdot \Sphere{1}_E \hookrightarrow \Sphere{2}_E$ for $0 < \lambda \leq 1$. Since $\bigcup_{0 < \lambda \leq 1} \lambda \cdot \Dvr{6,2}(\Sphere{1}_E) = A \cup B$, we have $A \cup B \subset \Dvr{6,2}(\Sphere{2}_E)$.\\
	\indent Now we show $C \subset \Dvr{6,2}(\Sphere{2}_E)$. Given $\sqrt{2/3} \leq r \leq 1$, let $\rho_0 := r \cdot \frac{8-9r^2}{4-3r^2}$, and $\max(0,\rho_0) \leq \rho \leq r$. Let $X = \{x_1, ..., x_6\} \subset \Sphere{2}_E$, where each $x_i$ has azimuthal angle $\frac{2\pi}{6} (i-1)$, the points $x_1, x_2, x_3$ are at height $\sqrt{1-r^2}$, and $x_4, x_5, x_6$ are at height $-\sqrt{1-\rho^2}$. More explicitly,
	\begin{multicols}{2}
		\begin{itemize}
			\item $x_1 =\left(r, 0, \sqrt{1-r^2}\right)$,
			\item $x_2 =\left(-\frac{1}{2}r, \phantom{-}\frac{\sqrt{3}}{2} r, \sqrt{1-r^2}\right)$,
			\item $x_3 =\left(-\frac{1}{2}r, -\frac{\sqrt{3}}{2} r, \sqrt{1-r^2}\right)$,
			\item $x_4 =\left(-\rho, 0, -\sqrt{1-\rho^2}\right)$,
			\item $x_5 =\left( \frac{1}{2}\rho, -\frac{\sqrt{3}}{2} \rho, -\sqrt{1-\rho^2}\right)$,
			\item $x_6 =\left( \frac{1}{2}\rho, \phantom{-}\frac{\sqrt{3}}{2} \rho, -\sqrt{1-\rho^2}\right)$.
		\end{itemize}
	\end{multicols}
	\noindent We can verify that
	\begin{equation*}
		d_{ij}^2 =
		\begin{cases}
			3r^2 & \text{ if } i \neq j \in \{1,2,3\}, \\
			3\rho^2 & \text{ if } i \neq j \in \{4,5,6\}, \\
			2 - r\rho + 2\sqrt{(1-r^2)(1-\rho^2)} & \text{ if } i \in \{1,2,3\}, j \in \{4,5,6\}, j \neq i+3, \text{ and } \\
			2 + 2r\rho + 2\sqrt{(1-r^2)(1-\rho^2)} & \text{ if } j=i+3.
		\end{cases}
	\end{equation*}
	Given $(t_b, t_d) \in C$, we claim there exist $\sqrt{2/3} \leq r \leq 1$ and $\rho_0 \leq \rho \leq r$ such that $t_b^2 = 3r^2$, $t_d^2 = 2 + 2r\rho + 2\sqrt{(1-r^2)(1-\rho^2)}$, $\tb{X}=t_b$, and $\td{X}=t_d$.\\
	\indent Finding $r$ is immediate. Since $\sqrt{2} \leq t_b \leq \sqrt{3}$, $r := t_b/\sqrt{3}$ satisfies $\sqrt{2/3} \leq r \leq 1$. To find $\rho$, define $g(\rho) := 2 + 2r\rho + 2\sqrt{(1-r^2)(1-\rho^2)} - t_d^2$. If we show that $g(\rho_0) \leq 0 \leq g(r)$, there will exist $\rho_0 \leq \rho \leq r$ such that $g(\rho)=0$. To wit, since $t_d \leq 2$,
	\begin{equation*}
		g(r) = 2+2r^2+2(1-r^2)-t_d^2 = 4-t_d^2 \geq 0.
	\end{equation*}
	For the other inequality, recall that $t_d^2 \geq 4t_b^2 \cdot \frac{3-t_b^2}{4-t_b^2}$, and that $f(\rho_0)=0$ by Lemma \ref{lemma:polynomial_rho_0}. Then
	\begin{align*}
		g(\rho_0)
		&= 2 + 2r\rho_0 + 2\sqrt{(1-r^2)(1-\rho_0^2)} - t_d^2 
		= f(\rho_0) +3r^2+3r\rho_0 - t_d^2 
		= 3r \left( r+ r \cdot \frac{8-9r^2}{4-3r^2} \right) - t_d^2 \\
		&= 3r^2 \cdot \frac{12-12r^2}{4-3r^2} - t_d^2
		= t_b^2 \cdot \frac{12-4t_b^2}{4-t_b^2} - t_d^2
		= 4t_b^2 \cdot \frac{3-t_b^2}{4-t_b^2} - t_d^2 \leq 0,
	\end{align*}
	as desired.\\
	\indent The remaining facts to verify are $\tb{X}=t_b$ and $\td{X}=t_d$. By definition of $C$ and the previous paragraph, we have $3r^2 = t_b < t_d = 2 +2r\rho + 2\sqrt{(1-r^2)(1-\rho^2)}$. Thus, if we can show
	\begin{equation*}
		\max\left(3\rho^2, 2 - r\rho + 2\sqrt{(1-r^2)(1-\rho^2)} \right) \leq 3r^2,
	\end{equation*}
	we will have $\tb{x_i} = 3r^2$ and $\td{x_i} = 2 +2r\rho + 2\sqrt{(1-r^2)(1-\rho^2)}$ for all $i=1, \dots, 6$. The inequality $3\rho^2 \leq 3r^2$ is immediate from the assumption $\rho \leq r$. For the second inequality, recall that the function $f(\rho)$ from Lemma \ref{lemma:polynomial_rho_0} has a unique zero at $\rho=\rho_0$ and that $f(r) \leq 0$. Hence, since $f$ is continuous, we have $f(\rho) \leq 0$ for any $\rho_0 \leq \rho \leq r$. Thus, $2 - r\rho + 2\sqrt{(1-r^2)(1-\rho^2)} \leq 3r^2$.\\
	\indent In conclusion, for every $(t_b, t_d) \in C$, we found $X \subset \Sphere{2}_E$ with $|X|=6$ such that $\tb{X}=t_b$ and $\td{X}=t_d$. Hence, $C \subset \Dvr{6,2}(\Sphere{2}_E)$. Together with the case of $A \cup B$, we have $P_{6,2} = A \cup B \cup C \subset \Dvr{6,2}(\Sphere{2}_E)$.
\end{proof}
\noindent
\begin{minipage}{0.5\linewidth}
$P_{6,2}$ is shown in blue in Figure \ref{fig:D62_S2}. It is generated by two parallel equilateral triangles inscribed in $\Sphere{2}_E$. We haven't been able to prove that $\Dvr{6,2}(\Sphere{2}_E) = P_{6,2}$, but we have strong experimental evidence. We first sampled 4.5 million configurations uniformly at random from $\Sphere{2}_E$ and retained only the 88,708 configurations that produced non-trivial persistence (1.9713 \% of the total samples). This produces a set $\mathbf{D}_\text{unif}$ of persistence diagrams, which are shown in green in Figure \ref{fig:D62_S2}. The second step was a biased MCMC random walk. The Metropolis-Hasting MCMC starts with a choice of parameter $\sigma^2$, an initial configuration $X_0$, and a set $\mathbf{D}_0=\mathbf{D}_\text{unif}$ \cite{mcmc-conceptual}. We additionally fix a radius $\varepsilon$. At each step $t$, we obtain $X_{t-1}^{\sigma^2}$ by perturbing the previous configuration $X_{t-1}$ with Gaussian noise of variance $\sigma^2$. Let $D_{t-1}$ and $D_{t-1}^{\sigma^2}$ be the persistence diagrams of $X_{t-1}$ and $X_{t-1}^{\sigma^2}$, respectively. We then compute the cardinalities $N_{\text{pre}} = |B_\epsilon(D_{t-1}) \cap \mathbf{D}_{t-1}|$ and $N_{\text{post}} = |B_\epsilon(D_{t-1}^{\sigma^2}) \cap \mathbf{D}_{t-1}|$.
\end{minipage}
\hfill
\begin{minipage}{0.45\linewidth}
	\centering
	\includegraphics[scale=0.48]{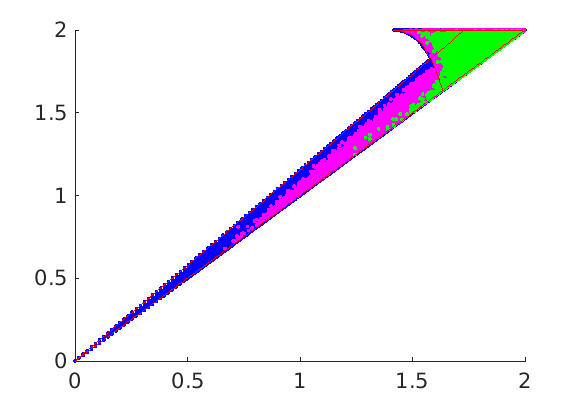}
	\captionof{figure}{The set $P_{6,2} \subset \Dvr{6,2}(\Sphere{2}_E)$ described in Proposition \ref{prop:D62_S2_E}. The green points were generated with a uniform sample of sets with 6 points. Under that, the magenta points were generated with an MCMC random walk. The blue points were generated by the vertices of two parallel equilateral triangles inscribed in $\Sphere{2}_E$.}
	\label{fig:D62_S2}
\end{minipage}

The balls of radius $\varepsilon$ are defined with the bottleneck distance. In the next step is where we diverge from the usual algorithm. Normally, we would accept the new configuration $X_{t-1}^{\sigma^2}$ with probability\footnote{Formally, the probability of acceptance is $Q(D_{t-1}^{\sigma^2})/Q(D_{t-1})$, where $Q$ is a function on $\Dvr{6,2}(\Sphere{2})$ proportional to the measure $\Unk{6,2}{\vr}(\Sphere{2})$. We are taking $Q(D)$ to be the number of diagrams in an $\varepsilon$-ball around $D$.} $\min(1,N_\text{post}/N_\text{pre})$. Eventually, the distribution of persistence diagrams in $\mathbf{D}_t$ would approximate the distribution that we are sampling from, that is, $\Unk{6,2}{\vr}(\Sphere{2}_E)$. However, sampling uniformly from $\Sphere{2}_E$ also produces diagrams with that distribution and this method did not produce points close to the boundary of $\Dvr{6,2}(\Sphere{2})$. Instead, we accept $X_{t-1}^{\sigma^2}$ with probability $\min(1,N_\text{pre}/N_\text{post})$, and set $X_t := X_{t-1}^{\sigma^2}$ and $\mathbf{D}_t := \mathbf{D}_{t-1} \cup \{D_{t-1}^{\sigma^2}\}$. This causes the random walk to diverge from the diagrams that already are in $\mathbf{D}_\text{unif}$ and produces configurations closer to the boundary of $\Dvr{6,2}(\Sphere{2}_E)$. The diagrams produced by the random walk are colored in magenta in Figure \ref{fig:D62_S2}. This figure suggests that there are no points outside of $P_{6,2}$.

\begin{conjecture}\label{conj:D62_S2_E}
	$\Dvr{6,2}(\Sphere{2}_E) = P_{6,2}$.
\end{conjecture}

\subsection{Principal persistence sets can differentiate spheres}
Any non-diagonal point $(t_b, t_d) \in \Dvr{2k+2,k}(X)$ corresponds to a subset $A \subset X$ coinciding with the vertex set of a cross-polytope inscribed in $X$. If, in addition $t_d = 2t_b$, then the cross-polytope must be regular, as Lemma \ref{lemma:technicals_S1} item \ref{item:td_basic_bound} shows. For example, $\Sphere{m}$ admits a particular inscribed regular cross-polytope depending on the dimension $m$. It turns out that principal persistence sets can pick up this difference, and that is enough to tell apart spheres of different dimensions.

\begin{prop}
	\label{prop:cross_polytope_in_sphere}
	$(\pi/2, \pi) \in \Dvr{2k+2,k}(\Sphere{m})$ if and only if $1 \leq k \leq m$.
\end{prop}
\begin{proof}
	Let $k \geq 1$. Suppose that a set $X = \{x_1, \dots, x_{2k+2}\} \subset \Sphere{m}$ satisfies $\tb{X} = \pi/2$ and $\td{X} = \pi$, and label the points so that $\vdeath(x_i) = x_{i+k+1}$. By Lemma \ref{lemma:persistence_bounds} item \ref{item:td_basic_bound}, we must have $d_{\Sphere{m}}(x_i, x_j) = \tb{X} = \pi/2$ for all $j \neq i+k+1$ and $d_{\Sphere{m}}(x_i, x_{i+k+2}) = \td{X} = \pi$ for all $i$. The fact that $d_{\Sphere{m}}(x_i, x_j) = \pi/2 = \arccos \langle x_i, x_j \rangle$ means that $x_1, \dots, x_{k+1}$ are mutually orthogonal and, hence, linearly independent. This forces $k \leq m$. Conversely, for any $1 \leq k \leq m$, we can construct a set of mutually orthogonal vectors $x_1, \dots, x_{k+1} \in \Sphere{m}$ by setting $x_i$ as, for instance, the $i$-th standard basis vector of $\R^{m+1}$. In that case, $X := \{\pm x_1, \dots, \pm x_{k+1}\}$ has $2k+2$ points and satisfies $\tb{X} = \pi/2$ and $\td{X} = \pi$.
\end{proof}

\begin{remark}[Principal persistence sets and fundamental classes of spheres]
	\label{rmk:cross_polytope_in_sphere}
	The point $(\pi/2, \pi) \in \Dvr{2m+2,m}(\Sphere{m})$ is generated by a regular cross-polytope $X \in \Sphere{m}$ with $2m+2$ vertices. It is interesting to note that the $m$-simplices of $\vr_r(X)$, when $\pi/2 \leq r < \pi$, determine an $m$-chain that represents the fundamental class $[\Sphere{m}]$.
\end{remark}

\begin{remark}[Distances between persistence sets can distinguish spheres]
	\label{rmk:dendrogram_sphers}
	For $m=1, \dots, 5$ and $k=1,\dots,5$, we computed an approximation $D_{k}(\Sphere{m})$ of the principal persistence set $\Dvr{2k+2,k}(\Sphere{m})$ by sampling $10^5$ configurations of $2k+2$ points uniformly at random from $\Sphere{m}$. Then, for each $k$, we computed the Hausdorff distance induced by the bottleneck distance for all $1 \leq i,j \leq 5$ which we denote  by $d_{k}(\Sphere{i}, \Sphere{j}) := \dH^{\D}(D_{k}(\Sphere{i}), D_{k}(\Sphere{j}))$. Analogously to Definition \ref{def:modifiedGH}, we set $d(\Sphere{i}, \Sphere{j}) := \max_{k} d_{k}(\Sphere{i}, \Sphere{j})$. Lastly, we computed the single-linkage hierarchical clustering; the resulting dendrogram is shown in Figure \ref{fig:dendrogram_spheres} and it indicates that principal persistence sets can discriminate these 5 spheres.
\end{remark}

\begin{figure}
	\centering
	\includegraphics[scale=0.60]{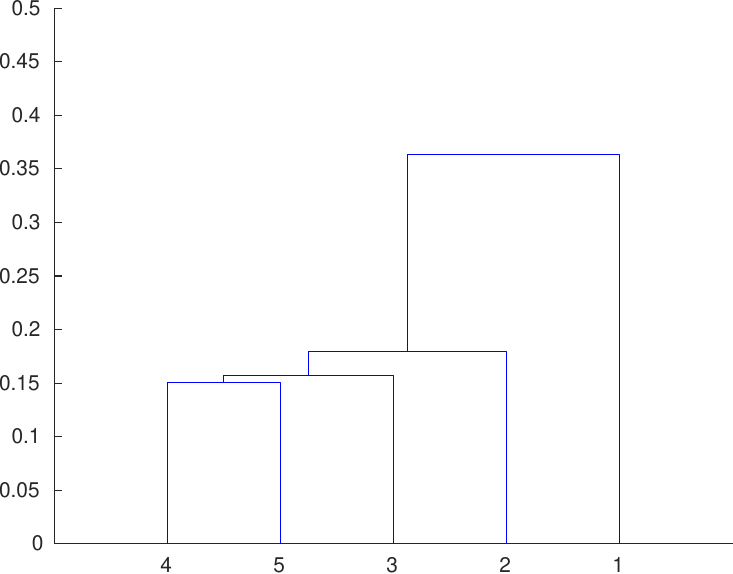}
	\caption{The dendrogram induced by the distances $d(\Sphere{i}, \Sphere{j})$ in Remark \ref{rmk:dendrogram_sphers}, for $1 \leq i,j \leq 5$.}
	\label{fig:dendrogram_spheres}
\end{figure}

\subsubsection{Lower bounds for $\dGH(\Sphere{1},\Sphere{m})$}
\label{sec:dGH_spheres}
In this section, we use information about the persistence sets of the spheres, together with the stability in Theorem \ref{thm:stability_Dnk}, to find lower bounds for the Gromov-Hausdorff distance between the circle and other spheres.
\begin{example}\label{ex:D_GH_S1_S2}
	Since $\Dvr{4,1}(\Sphere{1}) \subset \Dvr{4,1}(\Sphere{2})$,
	\begin{equation*}
		\dH^\D(\Dvr{4,1}(\Sphere{1}), \Dvr{4,1}(\Sphere{2})) = \sup_{D_2 \in \Dvr{4,1}(\Sphere{2})} \inf_{D_1 \in \Dvr{4,1}(\Sphere{1})} \dB(D_1,D_2).
	\end{equation*}
	Fix a diagram $D_2 = (x_2,y_2) \in \Dvr{4,1}(\Sphere{2}) \setminus \Dvr{4,1}(\Sphere{1})$ and take $D_1 = (x_1,y_1) \in \Dvr{4,1}(\Sphere{1})$ arbitrary. The distance $\dB(D_1,D_2)$ can be realized by either the $\ell^\infty$ distance between $D_1$ and $D_2$ or by half the persistence of either diagram,
	so in order to minimize $\dB(D_1,D_2)$, let's start by finding the minimum of $\|D_1-D_2\|_\infty = \max(|x_1-x_2|, |y_1-y_2|)$.\\
	\indent Clearly, this distance is smallest when $D_1$ is on the line $\ell$ with equation $y=2(\pi-x)$ (case $k=1$ in Theorem \ref{thm:critical_tb_odd}). Additionally, the maximum is minimized when $|x_1-x_2| = |y_1-y_2|$. If both conditions can be achieved, we will have minimized the $\ell^\infty$ distance. The only possibility, though, is $x_2 \leq x_1$ and $y_2 \leq y_1$ (if either inequality is reversed, the $\ell^\infty$ distance would be larger because $\ell$ has negative slope). In that case, the solutions to the system of equations $x_1-x_2 = y_1-y_2$ and $y_1=2(\pi-x_1)$ are $x_1 = \frac{1}{3}(2\pi+x_2-y_2)$ and $y_1 = \frac{2}{3}(\pi-x_2+y_2)$. Thus,
	\begin{equation*}
		d_{\ell^\infty}(D_2,\ell) = \frac{1}{3}(2\pi-2x_2-y_2).
	\end{equation*}
	This quantity is positive because $x_2,y_2$ is below $\ell$, that is, $y_2 \leq 2\pi-2x$.\\
	\indent Now fix $D_1$ as the solution described in the previous paragraph and let $D_2$ vary. The distance $\dB(D_1,D_2)$ can be equal to $\frac{1}{2}\pers(D_i)$ if that quantity is larger than $d_{\ell^\infty}(D_2,\ell)$ for either $i=1,2$. Notice, also, that $\pers(D_1) = \pers(D_2)$ because $x_1-x_2 = y_1-y_2$. If we can find $D_2$ such that
	\begin{equation}\label{eq:pers=d_infty}
		\frac{1}{2}\pers(D_2) = d_{\ell^\infty}(D_2,\ell),
	\end{equation}
	then the maximum will have been achieved. Equation (\ref{eq:pers=d_infty}) can be simplified to $y_2 = -\frac{1}{5}x_2 + \frac{4\pi}{5}$. The point $D_2 = (x_2,y_2)$ that realizes the Hausdorff distance will be in the intersection of this line and $\Dvr{4,1}(\Sphere{2})$ and have maximal persistence. That is achieved in the intersection with the left boundary, the curve $x = 2\arcsin\left( \frac{1}{\sqrt{2}} \sin\left(\frac{y}{2} \right) \right)$ (use $\kappa=1$ in Theorem \ref{thm:D41_Mk}). That point is $x_2 \approx 1.3788, y_2 = 2.2375$ (see Figure \ref{fig:dH_D41_S1_S2}) and will give $\dH^\D(\Dvr{4,1}(\Sphere{1}), \Dvr{4,1}(\Sphere{2}) \approx 0.4293$. Thus,
	\begin{equation*}
		\dGH(\Sphere{1}, \Sphere{2}) \geq \frac{1}{2} \dH^\D(\Dvr{4,1}(\Sphere{1}), \Dvr{4,1}(\Sphere{2})) \approx 0.2147 \approx \frac{\pi}{14.6344}.
	\end{equation*}
\end{example}

\begin{figure}
	\centering
	\includegraphics[scale=0.5]{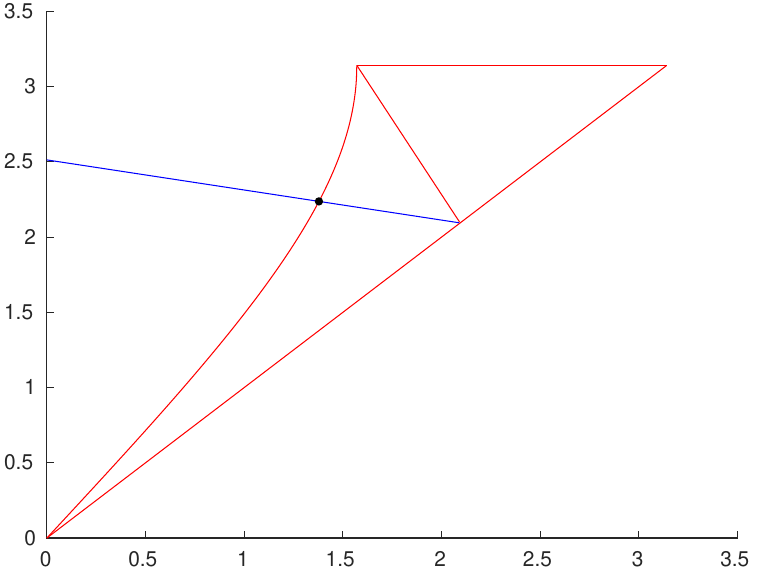}
	
	\caption{
		The point $D_2$ that realizes the Hausdorff distance between $\Dvr{4,1}(\Sphere{1})$ and $\Dvr{4,1}(\Sphere{2})$ with respect to the bottleneck distance (see Example \ref{ex:D_GH_S1_S2}). The shaded region is $\Dvr{4,1}(\Sphere{1})$ and the black lines outline $\Dvr{4,1}(\Sphere{2})$. The blue line is $y_2 = -\frac{1}{5}x_2 + \frac{4\pi}{5}$, the region where $\frac{1}{2}\pers(D_2) = d_{\ell^\infty}(D_2,\ell)$, and $\ell$ is the line $y=2(\pi-x) \subset \partial(\Dvr{4,1}(\Sphere{1}))$.
	}
	\label{fig:dH_D41_S1_S2}
\end{figure}

We can obtain a better bound when $k \geq 3$.
\begin{example}\label{ex:D_GH_S1_Sm}
	Let $n=2k+2$; we a seek lower bound for $\dGH(\Sphere{1}, \Sphere{k})$ for $k \geq 3$. First, similarly to Example \ref{ex:D_GH_S1_S2}, we have
	\begin{equation*}
		\dH^\D(\Dvr{n,k}(\Sphere{1}), \Dvr{n,k}(\Sphere{k})) = \sup_{D_2 \in \Dvr{n,k}(\Sphere{k})} \inf_{D_1 \in \Dvr{n,k}(\Sphere{1})} \dB(D_1,D_2).
	\end{equation*}
	We now exhibit a configuration, more specifically, a cross-polytope $X \subset \Sphere{k}$, in order to fix a specific diagram $D_k$. Let $X = \{\pm e_1, \dots, \pm e_{k+1}\} \subset \R^{k+1}$, where $e_i$ is the $i$-th standard basis vector. Notice that $d_{\Sphere{k}}(\pm e_i, \pm e_j) = \frac{\pi}{2}$ if $j \neq i$, and $d_{\Sphere{k}}(e_i, -e_i) = \pi$. Then $\tb{e_i} = \tb{-e_i} = \frac{\pi}{2}$ and  $\td{e_i} = \td{-e_i} = \pi$, so $\tb{X} = \frac{\pi}{2}$ and $\td{X} = \pi$. Since $X$ has $2k+2$ points, we just proved that $D_k = (\frac{\pi}{2}, \pi) \in \Dvr{n,k}(\Sphere{k})$. Then 
	\begin{equation*}
		\dH^\D(\Dvr{n,k}(\Sphere{1}), \Dvr{n,k}(\Sphere{k})) \geq \inf_{D_1 \in \Dvr{n,k}(\Sphere{1})} \dB(D_1,D_k).
	\end{equation*}
	For concreteness, write $D_1 = \{(x,y)\}$. Let $\varphi:D_1 \to D_k$ be the unique bijection. By Lemma \ref{lemma:technicals_S1}, $x \geq \frac{k}{k+1} \pi$, so
	\begin{equation*}
		J(\varphi) = \left\| \left(\frac{\pi}{2}, \pi\right) - (x,y) \right\|_{\infty} \geq x-\frac{\pi}{2} \geq \frac{k-1}{2(k+1)} \pi.
	\end{equation*}
	On the other hand, since $y \leq \pi$, $\pers(D_1) = y-x \leq \frac{\pi}{k+1}$. Thus, for the empty matching $\emptyset:\emptyset \to \emptyset$, we have
	\begin{equation*}
		J(\emptyset) = \max\left( \frac{1}{2}\pers(D_1), \frac{1}{2} \pers(D_k) \right) = \frac{1}{2} \pers(D_k) = \frac{\pi}{4}.
	\end{equation*}
	Since $\frac{\pi}{4} \leq \frac{k-1}{2(k+1)} \pi$ whenever $k \geq 3$, we have $\dB(D_1,D_k) = \min_{\varphi} J(\varphi) = \frac{\pi}{4}$ for all $D_1 \in \Dvr{n,k}(\Sphere{1})$. Thus, by Theorem \ref{thm:stability_Dnk},
	\begin{equation*}
		\dGH(\Sphere{1}, \Sphere{k}) \geq \frac{1}{2} \dH^\D(\Dvr{n,k}(\Sphere{1}), \Dvr{n,k}(\Sphere{k})) \geq \frac{1}{2} \inf_{D_1 \in \Dvr{n,k}(\Sphere{1})} \dB(D_1,D_k) = \frac{\pi}{8}.
	\end{equation*}
\end{example}
\section{Concentration of persistence measures}
\label{sec:statistical_metric_motifs}
By paring $\Dnk{n,k}{\FiltFunc}(X)$ with the persistence measure $\Unk{n,k}{\FiltFunc}(X)$, we can view persistence sets as an mm-space
\begin{equation*}
	\Dw{n,k}{\FiltFunc}(X) := \left( \Dnk{n,k}{\FiltFunc}(X), \dB, \Unk{n,k}{\FiltFunc}(X) \right) \in \Mw,
\end{equation*}
where $\dB$ is restricted to pairs in $\Dnk{n,k}{\FiltFunc}(X) \times \Dnk{n,k}{\FiltFunc}(X)$.\\
\indent The main result in this section is that $\Dw{n,k}{\FiltFunc}(X)$ \textit{concentrates} to a one-point mm-space $*$ as $n \to \infty$. Since $*$ is generic, we also prove that the expected bottleneck distance between a random diagram $\mathbb{D} \in \Dnk{n,k}{\FiltFunc}(X)$ and $\dgm_k^\FiltFunc(X)$, the degree-$k$ persistence diagram of the space $X$, goes to 0 as $n \to \infty$, effectively showing that $\Dw{n,k}{\FiltFunc}(X)$ concentrates to $\dgm_k^\FiltFunc(X)$ when the latter is viewed as a one-point mm-space equipped with the trivial choices of metric and probability measure.

\begin{example}[The case of a space with two points]
	Fix $\varepsilon>0$ and $\alpha \in (0,1)$. Consider the metric space $X=\{p,q\}$ with two points at distance $\varepsilon$ and mass $\mu_X(p) = \alpha, \mu_X(q)=1-\alpha$. Let us first describe the elements of $K_n(X)$ for a fixed $n \in \N$. Let $x_1=\dots=x_k=p$ and $x_{k+1}=\dots=x_n=q$. The distance matrix of this set of points is
	\begin{equation*}
		M_k := \Psi_n(x_1, \dots, x_n) = 
		\left(\begin{array}{c|c}
			\mathbf{0}_{k \times k}		& \varepsilon \cdot \mathbf{1}_{k \times (n-k)} \\
			\hline
			\varepsilon \cdot \mathbf{1}_{(n-k) \times k}		& \mathbf{0}_{(n-k) \times (n-k)}
		\end{array} \right),
	\end{equation*}
	where $\mathbf{1}_{r \times s}$ is the $r \times s$ matrix with all entries equal to 1. Then, the non-zero matrices in $K_n(X)$ have the form $M_k^\Pi := \Pi^T \cdot M_k(\delta) \cdot \Pi$, where $\Pi \in S_n$ is a permutation matrix and $1 \leq k < n$. Also, let $M_0 := \mathbf{0}_{n \times n}$ be the zero matrix. Let $\mu_n$ be the curvature measure on $K_n(X)$, the measure that we get a particular distance matrix $M \in K_n(X)$ when randomly choosing $n$ points from $p$ and $q$ according to $\mu_X$. Observe that $w_n := \mu_n(M_0) = \alpha^n + (1-\alpha)^n$ since $M_0 = \Psi_n(p,\dots,p)$ and $M_0 = \Psi_n(q,\dots,q)$, while the rest of the mass $1-w_n$ is distributed among the non-zero matrices. Notice that $w_n \to 0$ as $n \to \infty$.\\
	\indent Now we describe the persistence set $\Dw{n,0}{\vr}(X)$. The measure $\Unk{n,0}{\vr}$ is supported on the two point set $\Dnk{n,0}{\vr}(X) = \{\mathbf{0}_\D, (0,\varepsilon)\}$, where $\mathbf{0}_\D$ is the empty persistence diagram. From the computations above, $\Unk{n,0}{\vr}(\mathbf{0}_\D)=w_n$ and $\Unk{n,0}{\vr}\big((0,\varepsilon)\big) = 1-w_n$.
	The fact that $w_n \to 0$ as $n \to \infty$ means that the measure $\Unk{n,0}{\vr}$ concentrates at the point $(0,\varepsilon)$ so, as an mm-space, $\Dw{n,0}{\vr}(X)$ converges to the 1-point mm-space $\{(0,\varepsilon)\} \subset \D$ equipped with the Dirac delta measure $\delta_{(0,\varepsilon)}$. This is the persistence diagram $\dgm_0^\vr(X)$ viewed as a 1-point mm-space.
\end{example}
We now generalize this result.

\subsection{A concentration theorem}
Let $\mm{X}$ be an mm-space. Using terminology from \cite[Section 5.3]{clust-um}, we define the function $f_X:\R^+ \to \R^+$ given by $\varepsilon \mapsto \inf_{x \in X} \mu_X(B_\varepsilon(x))$. Suppose that $f_X(\varepsilon)>0$ for every $\varepsilon>0$. Define also  $$C_X:\N \times \R_+ \to \R_+$$ given by
\begin{equation*}
	(n,\varepsilon) \mapsto \dfrac{e^{-nf_X(\varepsilon/4)}}{f_X(\varepsilon/4)}.
\end{equation*}

The authors used $C_X$ to formalize the intuition that, as $n$ increases, $n$-point subsets of $X$ should be close (in Hausdorff distance) to $X$ with high probability. The following theorem uses $C_X$ to bound the measure of the set $Q_X(n,\varepsilon)$ of samples that fail this condition.
\begin{theorem}[Covering theorem {\cite[Theorem 34]{clust-um}}]
	Let $\mm{X}$ be an mm-space. For a given $n \in \N$ and $\varepsilon > 0$, consider the set
	\begin{equation*}
		Q_X(n,\varepsilon) := \{ (x_1,\dots, x_n) \in X^{n}| \dH^X(\{x_i\}_{i=1}^n, X) > \varepsilon \}.
	\end{equation*}
	Then
	\begin{equation*}
		\mu_X^{\otimes n}(Q_X(n,\varepsilon)) \leq C_X(n,\varepsilon).
	\end{equation*}
\end{theorem}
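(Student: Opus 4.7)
The strategy is to discretize $X$: produce a finite set of ``witnesses'' $P \subset X$ such that a sample $(x_1, \ldots, x_n)$ fails to be $\varepsilon$-dense only when it entirely misses a small ball around some witness. A union bound plus independence of coordinates will then close the argument.

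Specifically, I would take $P = \{p_1, \ldots, p_N\} \subset X$ to be a \emph{maximal} $\tfrac{\varepsilon}{2}$-separated subset, i.e., $d_X(p_i, p_j) \geq \tfrac{\varepsilon}{2}$ for $i \neq j$, with no point of $X$ addable without destroying this property; since $X$ is compact, any $\tfrac{\varepsilon}{2}$-separated subset is finite, so such a $P$ exists and can be obtained greedily. Maximality has two key consequences. First, the open balls $B_{\varepsilon/4}(p_j)$ are pairwise disjoint by the triangle inequality, so summing their measures gives $N \cdot f_X(\varepsilon/4) \leq \sum_{j=1}^N \mu_X(B_{\varepsilon/4}(p_j)) \leq \mu_X(X) = 1$, hence $N \leq 1/f_X(\varepsilon/4)$. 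Second, for every $x \in X$ there exists $p_j \in P$ with $d_X(x, p_j) < \tfrac{\varepsilon}{2}$, for otherwise $P \cup \{x\}$ would still be $\tfrac{\varepsilon}{2}$-separated, contradicting maximality.

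Next I would show $Q_X(n, \varepsilon) \subseteq \bigcup_{j=1}^N E_j$, where $E_j \coloneqq \{(x_1, \ldots, x_n) \in X^n : x_i \notin B_{\varepsilon/4}(p_j) \text{ for all } i\}$. Indeed, if $(x_1, \ldots, x_n) \in Q_X(n, \varepsilon)$, unraveling the Hausdorff distance yields some $x \in X$ with $d_X(x, x_i) > \varepsilon$ for every $i$; picking a witness $p_j$ with $d_X(x, p_j) < \tfrac{\varepsilon}{2}$, the triangle inequality gives $d_X(p_j, x_i) > \tfrac{\varepsilon}{2} > \tfrac{\varepsilon}{4}$ for all $i$, so $(x_1, \ldots, x_n) \in E_j$. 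By independence of coordinates under $\mu_X^{\otimes n}$ and the definition of $f_X$, together with $1 - t \leq e^{-t}$ for $t \geq 0$, one has
$$\mu_X^{\otimes n}(E_j) = \bigl(1 - \mu_X(B_{\varepsilon/4}(p_j))\bigr)^n \leq \bigl(1 - f_X(\varepsilon/4)\bigr)^n \leq e^{-n f_X(\varepsilon/4)}.$$
A union bound over $j$ combined with the estimate $N \leq 1/f_X(\varepsilon/4)$ yields $\mu_X^{\otimes n}(Q_X(n, \varepsilon)) \leq N \cdot e^{-n f_X(\varepsilon/4)} \leq C_X(n, \varepsilon)$.

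The argument is a standard coupon-collector/covering bound and presents no real obstacle; the only subtlety is pinning down the right geometric constants. Using $\tfrac{\varepsilon}{2}$-separation with $\tfrac{\varepsilon}{4}$-packing balls serves a dual purpose: it lets $N$ be controlled by $f_X(\varepsilon/4)$, and simultaneously leaves enough slack in the triangle inequality applied to $x$ and a witness $p_j$ to force the sample to miss $B_{\varepsilon/4}(p_j)$. This double role of $\varepsilon/4$ is exactly what produces the $\varepsilon/4$ appearing in the definition of $C_X$.
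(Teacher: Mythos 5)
Your proof is correct: the maximal $\tfrac{\varepsilon}{2}$-separated set, the disjointness of the $\varepsilon/4$-balls giving $N \leq 1/f_X(\varepsilon/4)$, the containment of $Q_X(n,\varepsilon)$ in the union of the ``miss events'' $E_j$, and the union bound with $(1-t)\le e^{-t}$ together yield exactly $C_X(n,\varepsilon)$. The paper itself gives no proof of this statement (it quotes it from \cite[Theorem 34]{cm10}), but your packing/coupon-collector argument is the standard proof of that covering theorem, so there is nothing to add.
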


We now prove our concentration result. Denote the expected value of a random variable $X$ distributed according to the probability measure $\mu$ with $E_\mu[X]$. Then:

\UnkConcentrates
\begin{proof}
	Fix $\varepsilon > 0$. Let $\mathbb{X}=(x_1,\dots,x_n) \in X^n$ be a random variable distributed according to $\mu_X^{\otimes n}$ and let $\mathbb{D} = \dgm_k^\FiltFunc\left(\Psi_X^{(n)}(\mathbb{X}) \right)$ be its persistence diagram. Since $\Unk{n,k}{\FiltFunc}(X)$ is the pushforward of the product measure $\mu_X^{\otimes n}$ under the map $\dgm_k^\FiltFunc \circ \Psi_X^{(n)}: X^n \to K_n(X) \to \D$, we can make a change of variables to rewrite the expected value of $\dB(\mathbb{D}, \dgm_k^\FiltFunc(X))$ as follows:
	\begin{align*}
		E_{\Unk{n,k}{\FiltFunc}(X)}\left[\dB\left(\mathbb{D}, \dgm_k^\FiltFunc(X) \right) \right]
		&= E_{\mu_X^{\otimes n}}\left[ \dB\left( \dgm_k^\FiltFunc\left[\Psi_X^{(n)}(\mathbb{X})\right], \dgm_k^\FiltFunc(X) \right) \right]\\
		&= \int_{X^n} \dB\left(\dgm_k^\FiltFunc\left[\Psi_X^{(n)}(\mathbb{X}) \right], \dgm_k^\FiltFunc(X) \right) \ \mu_X^{\otimes n}(d\mathbb{X}).
	\end{align*}
	
	By stability of $\FiltFunc$, the last integral is bounded above by
	\begin{align*}
		L(\FiltFunc) \int_{X^n} \dGH(\mathbb{X}, X) \ \mu_X^{\otimes n}(d\mathbb{X}) \leq
		L(\FiltFunc) \int_{X^n} \dH(\mathbb{X}, X) \ \mu_X^{\otimes n}(d\mathbb{X}),
	\end{align*}
	where, by abuse of notation, we see $\mathbb{X}$ as a subspace of $X$. In that case, $\dH(\mathbb{X}, X) = \rad_X(\mathbb{X}) \leq \diam(X)$, so we split the above integral into the sets $Q_X(n,\varepsilon)$ and $X^n \setminus Q_X(n,\varepsilon)$:
	\begin{align*}
		\int_{X^n} \dH(\mathbb{X}, X) \ \mu_X^{\otimes n}(d\mathbb{X})
		&= \int_{X^n} \rad_X(\mathbb{X}) \ \mu_X^{\otimes n}(d\mathbb{X})\\
		&= \int_{Q_X(n,\varepsilon)} \rad_X(\mathbb{X}) \ \mu_X^{\otimes n}(d\mathbb{X}) + \int_{X^n \setminus Q_X(n,\varepsilon)} \rad_X(\mathbb{X}) \ \mu_X^{\otimes n}(d\mathbb{X})\\
		&\leq \int_{Q_X(n,\varepsilon)} \diam(X) \ \mu_X^{\otimes n}(d\mathbb{X}) + \int_{X^n} \varepsilon \ \mu_X^{\otimes n}(d\mathbb{X})\\
		&= \diam(X) \cdot \mu_X^{\otimes n}(Q_X(n,\varepsilon)) + \varepsilon\\
		&< \diam(X) \cdot C_X(n,\varepsilon) + \varepsilon.
	\end{align*}
	This proves the first claim.\\
	\indent To show that $\Dw{n,k}{\FiltFunc}(X)$ concentrates to a point, we will show that $\dGW{1}\left(\Dw{n,k}{\FiltFunc}(X), *\right) \to 0$. For any mm-space $\mm{Z}$,
	\begin{equation*}
		\dGW{1}(Z,*) = \dfrac{1}{2} \iint_{Z \times Z} d_X(z,z')\, \mu_Z(dz) \,\mu_Z(dz').
	\end{equation*}
	Then, using the triangle inequality,
	\begin{align*}
		\dGW{1}\left(\Dw{n,k}{\FiltFunc}(X), *\right)
		&= \frac{1}{2} \iint_{\Dnk{n,k}{\FiltFunc}(X) \times \Dnk{n,k}{\FiltFunc}(X)} \dB(D,D') \ \Unk{n,k}{\FiltFunc}(dD)\ \Unk{n,k}{\FiltFunc}(dD')\\
		&\leq \frac{1}{2} \iint_{\Dnk{n,k}{\FiltFunc}(X) \times \Dnk{n,k}{\FiltFunc}(X)} \left[ \dB(D,\dgm_k^\FiltFunc(X)) + \dB(\dgm_k^\FiltFunc(X),D') \right] \ \Unk{n,k}{\FiltFunc}(dD)\ \Unk{n,k}{\FiltFunc}(dD')\\
		&= \int_{\Dnk{n,k}{\FiltFunc}(X)} \dB(D,\dgm_k^\FiltFunc(X)) \ \Unk{n,k}{\FiltFunc}(dD)\\
		&= E_{\Unk{n,k}{\FiltFunc}(X)}\left[\dB\left(\mathbb{D}, \dgm_k^\FiltFunc(X) \right) \right] \\
		&< \diam(X) \cdot C_X(n,\varepsilon) + \varepsilon.
	\end{align*}
	However, for any fixed $\varepsilon$, $C_X(n,\varepsilon) \to 0$ as $n \to \infty$. Thus, $E_{\Unk{n,k}{\FiltFunc}(X)}\left[\dB\left(\mathbb{D}, \dgm_k^\FiltFunc(X) \right) \right] \to 0$ and, with that, $\dGW{1}\left(\Dw{n,k}{\FiltFunc}(X), * \right) \to 0$.
\end{proof}

\begin{remark}
	We can give an explicit upper bound for $E_{\Unk{n,k}{\FiltFunc}(X)}\left[\dB\left(\mathbb{D}, \dgm_k^\FiltFunc(X) \right) \right]$ in the case that $\mu_X$ is Ahlfors regular (see Definition 3.18, page 252 of \cite{ahlfors-david}). Given $d \geq 0$, $\mu_X$ is Ahlfors $d$-regular if there exists a constant $C \geq 1$ such that
	\begin{equation*}
		\frac{r^d}{C} \leq \mu_X(B_r(x)) \leq Cr^d
	\end{equation*}
	for all $x \in X$.\\
	\indent To obtain the promised upper bound, set $\varepsilon = 4C^{1/d} \left(\frac{\ln n}{n} \right)^{1/d}$. If $\mu_X$ is Ahlfors $d$-regular,
	\begin{equation*}
		f_X(\varepsilon/4) = \inf_{x \in X} \mu_X(B_{\varepsilon/4}(x)) \geq \frac{(\varepsilon/4)^d}{C} = \frac{\ln(n)}{n},
	\end{equation*}
	and
	\begin{equation*}
		C_X(n,\varepsilon) = \dfrac{e^{-nf_X(\varepsilon/4)}}{f_X(\varepsilon/4)} \leq \frac{e^{-\ln(n)}}{\ln(n)/n} = \frac{1}{\ln(n)}.
	\end{equation*}
	Then,
	\begin{align*}
		E_{\Unk{n,k}{\FiltFunc}(X)}\left[\dB\left(\mathbb{D}, \dgm_k^\FiltFunc(X) \right) \right]
		&< \diam(X) \cdot C_X(n,\varepsilon) + \varepsilon\\
		&\leq \frac{\diam(X)}{\ln(n)} + 4C^{1/d} \left(\frac{\ln n}{n} \right)^{1/d},
	\end{align*}
	which goes to 0 as $n \to \infty$.
\end{remark}
\section{Persistence sets of metric graphs}\label{sec:special-class}
Let $G$ be a metric graph, that is, the geometric realization of a finite one-dimensional simplicial complex equipped with the shortest path distance induced by a collection of weights $\ell_e$ on the edges $e \in E(G)$ (see \cite[Section 3.2.2]{bbi01} or \cite{metric_graphs, graph_approx_geodesic_space} for other definitions). The central question in this section is what features of $G$ are detected by $\Dvr{2k+2,k}(G)$. Our first setting is when $G$ is a metric tree.
\begin{defn}\label{def:tree-like-metric}
	We say that a metric space $X$ is \define{tree-like} if there exists a metric tree $T$ such that $X$ is isometrically embedded in $T$. See Figure \ref{fig:tree-metric-space}.
\end{defn}

\begin{lemma}\label{lemma:Dnk_tree}
	Let $k \geq 1$ and $n \geq 1$ be fixed. For any metric tree $T$ and $X \subset T$ with $|X|=n$, $\PH_k(X)=0$ and, thus, $\Dvr{n,k}(T)$ is empty. In particular, if $n=2k+2$, then $\tb{X} \geq \td{X}$.
\end{lemma}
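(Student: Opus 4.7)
The plan is to prove the stronger assertion that $\PHvr_k(X)=0$ for every $k\geq 1$ and every finite $X\subset T$. Given this, the three claims of the lemma follow at once: triviality of $\PHvr_k(X)$ means $\dgm_k^\vr(X)=\emptyset$ for every such $X$, whence $\Dvr{n,k}(T)=\emptyset$; and when $n=2k+2$, case (\ref{case:n=2k+2}) of Theorem \ref{thm:n=2k+2} states that $\PHvr_k(X)$ is either $\Int{\tb{X}}{\td{X}}$ (if $\tb{X}<\td{X}$) or zero, so $\PHvr_k(X)=0$ forces $\tb{X}\geq\td{X}$.

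To establish the key claim, note first that, as a subspace of the metric tree $T$, the finite set $X$ inherits a $0$-hyperbolic metric (the four-point condition). The statement therefore reduces to a known fact about Vietoris--Rips complexes of finite $0$-hyperbolic spaces (see \cite{gromov-hyperbolic} and the corresponding remark in the introduction), but it is easy to prove directly via a nerve argument, which is the approach I would take.

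Fix $r>0$ and consider the closed balls $B_{r/2}(x)\subset T$ for $x\in X$. Three facts about metric trees are crucial: closed balls are convex; any finite intersection of convex subsets of a tree is convex, hence contractible when non-empty; and convex subsets of a tree satisfy the Helly property, namely that any pairwise-intersecting finite family has non-empty total intersection. Since $T$ is geodesic, $d_T(x,y)\leq r$ is equivalent to $B_{r/2}(x)\cap B_{r/2}(y)\neq\emptyset$, and then Helly identifies $\vr_r(X)$ with the nerve of the cover $\mathcal{U}_r=\{B_{r/2}(x)\}_{x\in X}$ of $U_r:=\bigcup_{x\in X}B_{r/2}(x)\subset T$. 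Because $\mathcal{U}_r$ is a good cover, the nerve lemma gives $\vr_r(X)\simeq U_r$. The set $U_r$ is a finite union of convex subtrees of $T$, so each of its connected components is itself a contractible subtree; hence $\tilde H_k(\vr_r(X))\cong \tilde H_k(U_r)=0$ for every $k\geq 1$ and every $r>0$, which yields $\PHvr_k(X)=0$.

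The main technical point to verify is the applicability of the nerve lemma to the \emph{closed} cover $\mathcal{U}_r$; this is routine, either by mildly enlarging the balls to open balls of radius $(r+\varepsilon)/2$ and passing to the limit $\varepsilon\to 0$, or by appealing to a closed-cover version of the nerve theorem valid for finite covers by compact convex sets in sufficiently nice metric spaces.
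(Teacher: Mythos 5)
Your proof is correct, but it takes a different route from the paper. The paper's own proof is a one-line citation: any $X \subset T$ is a tree-like metric space (it satisfies the four-point condition), and Theorem 2.1 in the appendix of \cite{viral-evolution} then gives $\PHvr_k(X)=0$ for $k \geq 1$; the final claim about $\tb{X} \geq \td{X}$ follows from Theorem \ref{thm:n=2k+2} exactly as you argue. Your nerve/Helly argument instead works with the ambient tree: closed balls $B_{r/2}(x)$ are convex subtrees, the median-point argument gives the Helly property, so $\vr_r(X)$ is the nerve of a good cover whose union is a disjoint union of contractible subtrees, and the nerve lemma kills all reduced homology in degrees $\geq 1$. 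This is a valid and essentially self-contained proof (it is in fact the standard way one proves the cited result), at the cost of the technical care you already flag about applying the nerve lemma to a closed cover — the fix of passing to open balls of radius $(r+\varepsilon)/2$ for $\varepsilon$ small enough that no new pairs $x,y$ satisfy $d_T(x,y)<r+\varepsilon$ is legitimate since $X$ is finite. The trade-off is that the paper's citation applies intrinsically to any $0$-hyperbolic finite metric space, whereas your argument uses the embedding $X \hookrightarrow T$; for the lemma as stated both suffice.
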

\begin{proof}
	Observe that $X$ is tree-like, so by Theorem 2.1 of the appendix of \cite{viral-evolution}, the persistence module $\PH_k(X)$ is $0$ for any $k \geq 1$. In particular, if $n=2k+2$, Theorem \ref{thm:n=2k+2} implies that $\tb{X} \geq \td{X}$.
\end{proof}
~\\
\begin{minipage}{0.57\linewidth}
	As a consequence, a metric graph $G$ must have a cycle if $\Dvr{n,k}(G)$ is to be non-empty and, even if it does, not all configurations $X \subset G$ with $|X|=n$ have $\tb{X} < \td{X}$. In fact, $X$ can be tree-like even if there is no metric tree $T$ such that $X \hookrightarrow T \hookrightarrow G$. We will see an example in the proof of Proposition \ref{prop:wedge_of_circles}. Hence, it would be useful to have a notion of a minimal metric graph $\Gamma_X$ containing $X$ so that, if $\Gamma_X$ is a tree, then $\mathrm{PH}_k^\vr(X)=0$. For now, we deal with the case of $n=4$, where split metric decompositions provide one possible construction for $\Gamma_X$.
\end{minipage}
\hfill
\begin{minipage}{0.40\linewidth}
	\centering
	\includegraphics[scale=1.0]{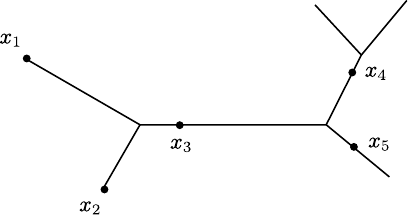}
	\captionof{figure}{A tree-like metric space $X = \{x_1,x_2,x_3,x_4,x_5\}$ and a metric tree $T$ such that $X \hookrightarrow T$.}
	\label{fig:tree-metric-space}
\end{minipage}

\subsection{Split metric decompositions}
We follow the exposition in \cite{metric-decomposition}. Let $(X, d_X)$ be a finite pseudo-metric space. Given a partition $X = A \cup B$, let
\begin{multline*}
	\beta_{\{a,a'\},\{b,b'\}} := \frac{1}{2}\big( \max\left[d_X(a,b)+d_X(a',b'),\ d_X(a,b')+d_X(a',b),\ d_X(a,a')+d_X(b,b')\right]\\
		- d_X(a,a') - d_X(b,b') \big),
\end{multline*}
and define the \define{isolation index} $\alpha_{A,B} := \min\left\{\beta_{\{a,a'\}, \{b,b'\}} \ \big| \ a,a' \in A \text{ and } b,b' \in B \right\}$.\\
\indent Notice that both $\alpha_{A,B}$ and $\beta_{\{a,a'\}, \{b,b'\}}$ are non-negative. Also, if $A=\{a,a'\}$ and $B=\{b,b'\}$, $\alpha_{A,B} = \beta_{A,B}$. If the isolation index $\alpha_{A,B}$ is non-zero, then the unordered partition $A,B$ is called a \define{$d_X$-split}.
The main theorem regarding isolation indices and split metrics is the following.
\begin{theorem}[\cite{metric-decomposition}]\label{thm:metric-decomposition}
	Any (pseudo-)metric $d_X$ on a finite set $X$ can be written uniquely as
	\begin{equation*}
		d_X = d_0 + \sum \alpha_{A,B} \delta_{A,B}, \text{ where }
		\delta_{A,B}(x,y) :=
    	\begin{cases}
    		0, & \text{if } x,y \in A \text{ or } x,y \in B,\\
    		1, & \text{otherwise}.
    	\end{cases}
	\end{equation*}
	where the sum runs over all $d_X$-splits $A,B$. $\delta_{A,B}$ is called a split-metric, and the term $d_0$ is a (pseudo-)metric that has no $d_0$-splits (also called split-prime metric). 
\end{theorem}

\indent  The importance of split metric decompositions is motivated by the following example. If $X = \{x_1,x_2,x_3,x_4\}$, then the metric graph $\Gamma_X$ shown in Figure \ref{fig:tight_span_4pts} contains an isometric copy of $X$ \cite{simoes-pereira-tree-realization, dress-tight-span} and the length of the edges of $\Gamma_X$ is given by isolation indices \cite{metric-decomposition}. Furthermore, any metric on 4 points does not contain a split-prime component \cite{metric-decomposition}. Another related construction is the \emph{tight span} of a metric space, which is an extension of $X$ that is universal in the sense that it is the smallest injective space in which $X$ embeds \cite{dress-tight-span}. In Figure \ref{fig:tight_span_4pts}, for instance, the tight span of $X$ can be obtained from $\Gamma_X$ by filling in the rectangle with a 2-cell equipped with the $L^1$ metric. See \cite{metric-decomposition} for more connections between the tight span and metric decompositions.\\
\indent Regarding persistent homology, the tight span has several properties that make it suitable for studying Vietoris-Rips complexes \cite{osman-memoli}. The key fact is the following. Let $M$ be a metric space and let $T_M \supset M$ be its tight span. Let $B_{r}^{T_M}(m) \subset T_M$ be the open ball of radius $r$ around $m \in M$. Then, there exists a filtered homotopy equivalence $f_r:\vr_{2r}(M) \to \bigcup_{m \in M} B_{r}^{T_M}(m)$. This theorem gives the type of construction that we want: an extension of a metric space $X$ where we can study the Vietoris-Rips complex of $X$. Given this property and the similarity of the tight span and $\Gamma_X$, it is reasonable to expect that split metric decompositions are also a good tool to study the Vietoris-Rips complex of $X$. Split metric decompositions do have an important advantage in our setting. They produce a graph $\Gamma_X$ such that $X \hookrightarrow \Gamma_X$ with edges of lengths that are computable with isolation indices. For these reasons, we now study the persistence diagram of $X \hookrightarrow \Gamma_X$.

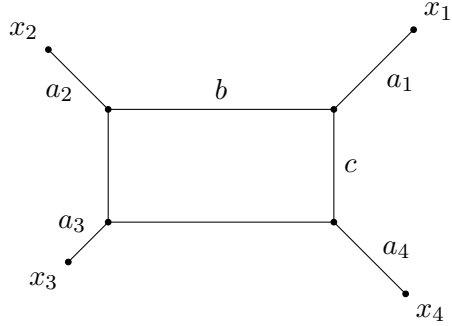
\begin{figure}[ht]
	\centering
\begin{tikzpicture}[scale=1.5]
	\tikzmath{
		\pt=0.025;
		\a = 1;
		\b = 2;
		\d1 = 1;
		\d2 = 0.75;
		\d3 = 0.5;
		\d4 = 0.9;
		\t1 = 45;
		\t2 = 45;
		\t3 = 45;
		\t4 = 45;
		\pax1 =  0;	\pay1 =  0;
		\pax2 =-\b;	\pay2 =  0;
		\pax3 =-\b;	\pay3 =-\a;
		\pax4 =  0;	\pay4 =-\a;
		\pbx1 =  \pax1+cos(\t1)*\d1;		\pby1 =  \pay1+sin(\t1)*\d1;
		\pbx2 =  \pax2-cos(\t2)*\d2;		\pby2 =  \pay2+sin(\t2)*\d2;
		\pbx3 =  \pax3-cos(\t3)*\d3;		\pby3 =  \pay3-sin(\t3)*\d3;
		\pbx4 =  \pax4+cos(\t4)*\d4;		\pby4 =  \pay4-sin(\t4)*\d4;
	}

	\draw[fill] (\pax1,\pay1) circle [radius=\pt];
	\draw[fill] (\pax2,\pay2) circle [radius=\pt];
	\draw[fill] (\pax3,\pay3) circle [radius=\pt];
	\draw[fill] (\pax4,\pay4) circle [radius=\pt];
	
	\draw[fill] (\pbx1,\pby1) circle [radius=\pt] node[above right]{$x_1$};
	\draw[fill] (\pbx2,\pby2) circle [radius=\pt] node[above left ]{$x_2$};
	\draw[fill] (\pbx3,\pby3) circle [radius=\pt] node[below left ]{$x_3$};
	\draw[fill] (\pbx4,\pby4) circle [radius=\pt] node[below right]{$x_4$};

	\draw (\pax1,\pay1) -- (\pax2,\pay2) node[midway, above]{$b$};
	\draw (\pax2,\pay2) -- (\pax3,\pay3);
	\draw (\pax3,\pay3) -- (\pax4,\pay4);
	\draw (\pax4,\pay4) -- (\pax1,\pay1) node[midway, right]{$c$};
	\draw (\pax1,\pay1) -- (\pbx1,\pby1) node[midway, below=5pt, right=1pt]{$a_1$};
	\draw (\pax2,\pay2) -- (\pbx2,\pby2) node[midway, below=5pt, left=-2pt]{$a_2$};
	\draw (\pax3,\pay3) -- (\pbx3,\pby3) node[midway, above=7pt, left=-3pt]{$a_3$};
	\draw (\pax4,\pay4) -- (\pbx4,\pby4) node[midway, above=3pt, right=1pt]{$a_4$};
\end{tikzpicture} 	\caption{The metric graph $\Gamma_X$ resulting from the split-metric decomposition of a metric space $(X,d_X)$ with 4 points (Theorem \ref{thm:metric-decomposition}). In this case, $a_i = \alpha_{\{x_i\}, X \setminus \{x_i\}}$, $b = \alpha_{\{x_2,x_3\}, \{x_1,x_4\}}$, $c = \alpha_{\{x_1,x_2\}, \{x_3,x_4\}}$, and $\alpha_{\{x_1,x_3\}, \{x_2,x_4\}}=0$. Notice that $d_X = \sum_{i=1}^4 a_{i} \cdot \delta_{x_i} + b \cdot \delta_{\{x_2,x_3\}, \{x_1,x_4\}} + c \cdot \delta_{\{x_1,x_2\}, \{x_3,x_4\}}$.}
	\label{fig:tight_span_4pts}
\end{figure}

\begin{prop}\label{prop:persistence_tight_span}
	Let $\Gamma_X$ be the metric graph shown in Figure \ref{fig:tight_span_4pts}, and $X = \{x_1,x_2,x_3,x_4\} \subset \Gamma_X$. Let $a_i = \alpha_{\{x_i\}, X \setminus \{x_i\}}$, $b = \alpha_{\{x_2,x_3\}, \{x_1,x_4\}}$, and $c = \alpha_{\{x_1,x_2\}, \{x_3,x_4\}}$.
	\begin{enumerate}[nosep]
		\item\label{item:diagonals_kill} If $\tb{X} < \td{X}$, then $\tb{X} = \max(d_{12}, d_{23}, d_{34}, d_{41})$ and $\td{X} = \min(d_{13}, d_{24})$.
		\item\label{item:persistence_tight_span} $\tb{X} < \td{X}$ if and only if
		\begin{equation}\label{eq:persistence_tight_span}
			\begin{split}
				|a_{2}-a_{1}|, |a_{4}-a_{3}|<b,\\
				|a_{3}-a_{2}|, |a_{1}-a_{4}|<c.
			\end{split}
		\end{equation}
		\item\label{item:persistence_leq_separation} $\td{X}-\tb{X} \leq \min(b,c)$, regardless of whether $\tb{X} < \td{X}$ or not.
	\end{enumerate}
	
\end{prop}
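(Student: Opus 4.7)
The plan is to reduce everything to arithmetic on the six pairwise distances in $\Gamma_X$, which the split-metric decomposition makes fully explicit: the outer edges are
\[
d_{12}=a_1+a_2+b,\ \ d_{23}=a_2+a_3+c,\ \ d_{34}=a_3+a_4+b,\ \ d_{41}=a_4+a_1+c,
\]
while the diagonals are $d_{13}=a_1+a_3+b+c$ and $d_{24}=a_2+a_4+b+c$. The crucial bookkeeping observation is that each of the eight comparisons ``outer edge $d_{ij}<$ diagonal $d_{kl}$'' is equivalent to exactly one of the four inequalities in (\ref{eq:persistence_tight_span}); for instance $d_{12}<d_{13}\Leftrightarrow a_2-a_3<c$ and $d_{34}<d_{13}\Leftrightarrow a_4-a_1<c$. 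Once this small table is set up, the rest of the proof is nearly mechanical.

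For the forward direction of item (\ref{item:persistence_tight_span}), I would assume (\ref{eq:persistence_tight_span}): all eight comparisons then hold simultaneously, so $\max(d_{12},d_{23},d_{34},d_{41})<\min(d_{13},d_{24})$, and by Example~\ref{ex:D41_example} combined with Theorem~\ref{thm:n=2k+2} these extrema coincide with $\tb{X}$ and $\td{X}$ respectively, yielding both $\tb{X}<\td{X}$ and item (\ref{item:diagonals_kill}) at once. For the converse (and for item (\ref{item:diagonals_kill}) in general), assume $\tb{X}<\td{X}$ and apply Lemma~\ref{lemma:vd_is_unique}: $\vdeath$ is a well-defined involution on $X$, so it realizes one of three perfect matchings of $\{x_1,x_2,x_3,x_4\}$, namely $\{12,34\}$, $\{13,24\}$, or $\{14,23\}$. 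The main obstacle is ruling out the two ``wrong'' matchings, but each collapses into a sign check. If $\vdeath$ paired $(x_1,x_2)$ and $(x_3,x_4)$, then $d_{12}>d_{24}$ would force $a_1>a_4+c$ while $d_{34}>d_{13}$ would force $a_4>a_1+c$, and adding these inequalities yields $0>2c$, contradicting $c=\alpha_{x_1,x_2}\geq 0$. The matching $\{14,23\}$ is excluded by the symmetric argument producing $b<0$. Hence $\vdeath$ pairs $(x_1,x_3)$ with $(x_2,x_4)$, which by Definition~\ref{def:tb_td} is precisely item (\ref{item:diagonals_kill}); translating $\tb{X}<\td{X}$ back through the table then delivers (\ref{eq:persistence_tight_span}).

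Item (\ref{item:persistence_leq_separation}), which I read as $\td{X}-\tb{X}\leq\min(b,c)$ since $a$ is not defined in the statement and this bound is tight when $a_1=\cdots=a_4=0$, is then immediate. It is trivial when $\tb{X}\geq\td{X}$; otherwise item (\ref{item:diagonals_kill}) gives $\td{X}\leq d_{13},d_{24}$ and $\tb{X}\geq d_{23},d_{41}$, so
\[
\td{X}-\tb{X}\leq d_{13}-d_{23}=a_1+b-a_2\quad\text{and}\quad \td{X}-\tb{X}\leq d_{24}-d_{41}=a_2+b-a_1,
\]
whose average is $b$, forcing $\td{X}-\tb{X}\leq b$. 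Replacing $d_{23},d_{41}$ by the outer edges $d_{12},d_{34}$, which contain the summand $b$ rather than $c$, gives the analogous bound $\td{X}-\tb{X}\leq c$.
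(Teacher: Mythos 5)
Your proof is correct and follows essentially the same route as the paper's: writing all six distances explicitly via the split decomposition, ruling out the matchings $\{12,34\}$ and $\{14,23\}$ by deriving $c<0$ resp.\ $b<0$, and observing that the eight comparisons ``outer edge $<$ diagonal'' are exactly the eight one-sided inequalities in (\ref{eq:persistence_tight_span}); your item~(\ref{item:persistence_leq_separation}) argument, averaging $d_{13}-d_{23}$ and $d_{24}-d_{41}$, is a slightly cleaner symmetric version of the paper's WLOG computation. You are also right that the stated bound $\min(a,b)$ is a typo for $\min(b,c)$, which is what both proofs actually establish and what the paper uses in its later applications.
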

\begin{proof}
	\ref{item:diagonals_kill}. If $\tb{X} < \td{X}$, the desired formulas for $\tb{X}$ and $\td{X}$ hold if and only if $\vdeath(x_1)=x_3$ and $\vdeath(x_2)=x_4$. To see why, recall that $\vdeath$ is well defined by Lemma \ref{lemma:vd_is_unique}, and suppose $\vdeath(x_1) = x_4$ and $\vdeath(x_2) = x_3$. In particular, this means that $d_{13} < d_{14}$ and $d_{24} < d_{23}$. Since $X$ is isometrically embedded in $\Gamma_X$, $d_{ij}$ equals the length of the shortest path in $\Gamma_X$ between $x_i$ and $x_j$. Then, the inequalities $d_{13} < d_{14}$ and $d_{24} < d_{23}$ are equivalent to
	\begin{align*}
			a_{1} + (b+c) + a_{3} < a_{1} + c + a_{4} \text{ and }
			a_{2} + (b+c) + a_{4} < a_{2} + c + a_{3}.
	\end{align*}
	After rearranging terms, we get $b < a_{4} - a_{3} < -b$, a contradiction. The case $\vdeath(x_1)=x_4$ and $\vdeath(x_2)=x_3$ follows analogously, so $\vdeath(x_1)=x_3$ and $\vdeath(x_2)=x_4$.\\
	\ref{item:persistence_tight_span}. Notice that the inequalities $d_{23}<d_{13}$ and $d_{14}<d_{24}$ are equivalent to
	\begin{align*}
		a_{2}+c+a_{3} &< a_{1} + (b+c) + a_{3}\\
		a_{1}+c+a_{4} &< a_{2} + (b+c) + a_{4},
	\end{align*}
	which, after rearranging terms, result in $-b < a_{2}-a_{1} < b$. Using similar combinations, we find that $\max(d_{12},d_{23},d_{34},d_{41}) < \min(d_{13},d_{24})$ is equivalent to the system of inequalities in (\ref{eq:persistence_tight_span}).\\
	\indent If (\ref{eq:persistence_tight_span}) holds, then for all $1 \leq i \leq 4$, $d_{i,i+2} \geq \min(d_{13}, d_{24}) > \max(d_{12},d_{23},d_{34},d_{41}) \geq \max(d_{i-1,i}, d_{i,i+1})$. As a consequence, $\td{x_i} = d_{i,i+2}$ and $\tb{x_i} = \max(d_{i-1,i}, d_{i,i+1})$. Hence, $\tb{X}=\max_i \tb{x_i} = \max(d_{12},d_{23},d_{34},d_{41})$ and $\td{X} = \min_i \td{x_i} = \min(d_{13},d_{24})$, and thus, $\tb{X} < \td{X}$. Conversely, if $\tb{X}<\td{X}$, then item \ref{item:diagonals_kill} and the paragraph above imply (\ref{eq:persistence_tight_span}).\\
	\ref{item:persistence_leq_separation}. If $\tb{X} \geq \td{X}$, the bound is trivially satisfied. Suppose then, without loss of generality, that $\tb{X}=d_{12}$. Since $a_{3}+b+a_{4} = d_{34} \leq d_{12} = a_{1} + b + a_{2}$, we have
	\begin{align*}
		\td{X} - \tb{X}
		&= \min(d_{13},d_{24}) - d_{12}
		\leq \frac{1}{2}[d_{13}+d_{24}] - d_{12} \\
		&= \frac{1}{2}[a_{1}+a_{2}+a_{3}+a_{4} + 2(b+c)] - (a_{1}+b+a_{2})\\
		&\leq \frac{1}{2}[a_{1}+a_{2}+(a_{1}+a_{2}) + 2(b+c)] - (a_{1}+a_{2})-b = c.
	\end{align*}
	On the other hand, $d_{14} \leq d_{12}$ and $d_{23} \leq d_{12}$ give $a_{4}+c \leq a_{2}+b$ and $a_{3}+c \leq a_{1}+b$. Then
	\begin{align*}
		\td{X} - \tb{X}
		&\leq \frac{1}{2}[a_{1}+a_{2}+a_{3}+a_{4} + 2(b+c)] - (a_{1}+b+a_{2})\\
		&\leq \frac{1}{2}[a_{1}+a_{2}+(a_{1}+a_{2}) + 4b] - (a_{1}+a_{2})-b = b.
	\end{align*}
	In summary, $\td{X}-\tb{X} \leq \min(b,c)$.
\end{proof}

The following examples illustrate different uses of Proposition \ref{prop:persistence_tight_span}.

\begin{prop}\label{prop:wedge_of_circles}
	Let $\lambda_1,\dots,\lambda_n$ be positive numbers, and consider the wedge $\bigvee_{k=1}^n \frac{\lambda_k}{\pi} \cdot \Sphere{1}$ of $n$ circles at a common point $p_0 = \bigcap_{k=1}^n \frac{\lambda_k}{\pi} \cdot \Sphere{1}$. Then
	\begin{equation*}
		\Dvr{4,1}\left( \bigvee_{k=1}^n \frac{\lambda_k}{\pi} \cdot \Sphere{1} \right) = \bigcup_{k=1}^{n} \frac{\lambda_k}{\pi} \cdot \Dvr{4,1}\left(\Sphere{1} \right).
	\end{equation*}
	\begin{figure}
		\centering
		\makebox[\textwidth][c]{
			\includegraphics[scale=0.8]{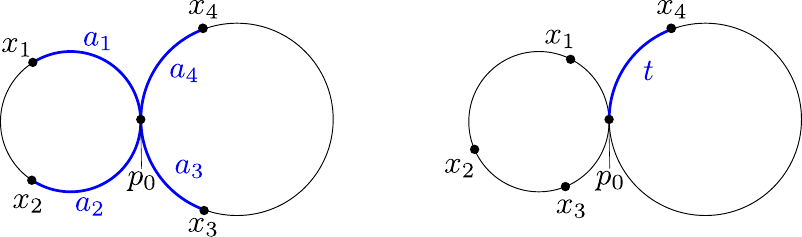}
		}
		\caption{A metric graph formed by the wedge of two circles at 0 as in Proposition \ref{prop:wedge_of_circles}. \textbf{Left:} $\Gamma_X$ is a metric tree. Center: One circle contains three points, while the other only has one. \textbf{Right:} Both circles have two out of four points.}
		\label{fig:wedge_of_circles}
	\end{figure}
\end{prop}
\begin{proof}
	Let $S_k = \frac{\lambda_k}{\pi} \Sphere{1}$ and $G = \bigvee_{k=1}^n \frac{\lambda_k}{\pi} \cdot \Sphere{1}$. Observe that the set $\Dvr{4,1}(S_k)$ is the triangle in $\R^2$ bounded by
	\begin{equation}\label{eq:D41_ineq_Sk}
		2(\lambda_k - t_b) \leq t_d \text{ and } t_b < t_d \leq \lambda_k
		\tag{$\star_k$}
	\end{equation}
	with vertices $(\frac{1}{2}\lambda_k,\lambda_k), (\frac{2}{3}\lambda_k, \frac{2}{3}\lambda_k)$, and $(\lambda_k, \lambda_k)$ (see Remark \ref{rmk:D41_S1_scaled}). By functoriality of persistence sets, $\bigcup_{i=1}^{n} \Dvr{4,1}(S_k) \subset \Dvr{4,1}(G)$. We now show the other inclusion.\\
	\indent Let $X = \{x_1,x_2,x_3,x_4\} \subset G$, and set $d_{ij}=d_G(x_i,x_j)$. Define $X_k = X \cap (S_k \setminus \{p_0\})$. The proof will go case by case depending on the cardinality of the sets $X_k$.\\
	\noindent \textbf{Case 1:} $|X_{a}| = 4$ for some $a$.\\
	\indent Observe that $X$ is contained in $S_{a}$, so $\dgm_1^\vr(X) \in \Dvr{4,1}(S_{a})$.\\
	\noindent \textbf{Case $\mathbf{1'}$:} $|X_{k_0}|=3$ for some $a$ and $|X_j|=0$ for all $j \neq a$\\
	\indent In this case, $X = X_{a} \cup \{p_0\} \subset S_{a}$, so, similarly to Case 1, we have $\dgm_1^\vr(X) \in \Dvr{4,1}(S_{a})$.\\
	\noindent \textbf{Case 2:} $|X_a| = 3$ and $|X_b|=1$ for some $a \neq b$.\\
	\indent For concreteness, write $X_a = \{x_1,x_2,x_3\}$ and $X_b = \{x_4\}$, and assume $x_2$ is in the connected component of $S_a \setminus \{x_1,x_3\}$ that doesn't contain $p_0$; see Figure \ref{fig:wedge_of_circles}. Then $d_{24} > d_{21},d_{23}$, so $v_d(x_2) = x_4$ (see Definition \ref{def:tb_td}). If $\tb{X} \geq \td{X}$, then $\dgm_1^\vr(X)$ is the empty diagram and it belongs to $\Dvr{4,1}(S_k)$ for all $k$. Assume, then, that $\tb{X} < \td{X}$. In particular, we have $v_d(x_1)=x_3$. Let $X' = \{p_0, x_1, x_2,x_3\}$, and $t = d_G(p_0,x_4)$. Let $d_{0i}=d_G(p_0,x_i)$ for $i \neq 4$, and notice that $d_{i4} = d_{i0}+t$. This implies that $\tb{X'} \leq \tb{X}$ and $\td{X'} \leq \td{X}$. Now we have two cases, depending on whether $\tb{X'} < \td{X'}$ or not. If the inequality holds, and since $X' \subset S_a$, then $\tb{X'}$ and $\td{X'}$ satisfy ($\star_a$). This allows us to verify $(\star_a)$ for $\tb{X}$ and $\td{X}$. Indeed, we have
	\begin{equation*}
		2\lambda_a \leq 2\tb{X'}+\td{X'} \leq 2\tb{X}+\td{X}.
	\end{equation*}
	Also, $\td{X} = \min_i \td{x_i} \leq \td{x_1} = d_{13} \leq \lambda_a$, regardless of the position of $x_4$. Thus, $\tb{X}$ and $\td{X}$ satisfy $(\star_a)$, so $(\tb{X}, \td{X}) \in \Dvr{4,1}(S_a)$.\\
	\indent For the second case, it is possible for $\tb{X}$ to be smaller than $\td{X}$ even if $\tb{X'} \geq \td{X'}$, However, several conditions must be met. First, recall that any 4-point metric space has a split metric decomposition as in Figure \ref{fig:tight_span_4pts}. By Proposition \ref{prop:persistence_tight_span} item \ref{item:persistence_leq_separation}, $b,c > 0$. Moreover,
	\begin{equation*}
		\beta_{\{x_4\},\{x_i,x_j\}} = \frac{1}{2}(d_{i4}+d_{j4}-d_{ij}) = \frac{1}{2}(d_{i0}+d_{j0}-d_{ij}) + t = \beta_{\{x_0\},\{x_i,x_j\}}+t.
	\end{equation*}
	Thus, $\alpha_{\{x_4\}, X \setminus \{x_4\}} = \alpha_{\{x_0\}, X' \setminus \{x_0\}}+t$. By Theorem 2 of \cite{metric-decomposition}\footnote{Formally, Theorem 2 of \cite{metric-decomposition} gives the conclusion for two metrics $d$ and $d'$ defined on the same set $X$. However, the result depends only on the values of the metrics, not on the specific underlying sets $X$ and $X'$ as long as there is a bijection $X \to X'$.}, all other isolation indices satisfy $\alpha_{A,B} = \alpha_{A',B'}$, where $X = A \cup B$, $X'=A' \cup B'$ and $A'$ is the set $A$ with $x_4$ replaced by $x_0$. $B'$ is defined analogously. In other words, the only isolation indices that are different between $X$ and $X'$ are $\alpha_{\{x_4\}, X \setminus \{x_4\}}$ and $\alpha_{\{x_0\}, X' \setminus \{x_0\}}$. For this reason, $X'$ has the split metric decomposition shown in Figure \ref{fig:tight_span_4pts} except that $x_4$ is changed to $x_0$ and $a_4$ is changed to $a_4-t \geq 0$. In particular, since $b,c>0$, $X'$ is not tree-like. In other words, $X'$ is not contained in any semicircle of $S_a$, so
	\begin{equation}\label{eq:perimeter_subset}
		2\lambda_a = d_{01}+d_{12}+d_{23}+d_{30}.
	\end{equation}
	\indent The second set of conditions comes from comparing $\tb{X}$ and $\td{X}$ with $\tb{X'}$ and $\td{X'}$. First, observe that $\td{X} = \min(d_{13}, d_{24}) = \min(d_{13}, d_{20}+t)$. Since $\td{X'} = \min(d_{13},d_{20})$ is smaller than $\tb{X'}$ and $\tb{X'} \leq \tb{X} < \td{X}$, we need $d_{20} < d_{13}$. Second, $\tb{X'}$ cannot be $d_{i0}$ for $i=1,3$. Otherwise, $\tb{X} = \max(d_{12},d_{23},d_{30}+t,d_{01}+t)$ would be $d_{i4}=d_{i0}+t$ for either $i=1,3$. This, however, induces a contradiction:
	\begin{equation*}
		\tb{X} = d_{i0} + t = \tb{X'}+t \geq \td{X'}+t \geq \td{X}.
	\end{equation*}
	For concreteness, let $\tb{X'}= \max(d_{12},d_{23}) = d_{12}$. Also, since $d_{02} = \td{X'} \leq \tb{X'} = d_{12}$, we must have $d_{02} = \min(d_{01}+d_{12}, d_{23}+d_{30}) = d_{23}+d_{30} \leq d_{12}$.\\
	\indent Now we are ready to prove that $(\tb{X}, \td{X}) \in \Dvr{4,1}(S_a)$. By Equation (\ref{eq:perimeter_subset}) and the conditions in the preceding paragraph,
	\begin{equation*}
		2\lambda_a = d_{01}+d_{12}+(d_{23}+d_{30}) \leq 3 d_{12} \leq 3\tb{X}.
	\end{equation*}
	Hence, $\tb{X} \geq \frac{2}{3}\lambda_a$. Then
	\begin{equation*}
		2\tb{X}+\td{X} > 3\tb{X} \geq 2\lambda_a.
	\end{equation*}
	Lastly, $\td{X} = \min(d_{13}, d_{24}) \leq d_{13} \leq \lambda_a$. Thus, $\tb{X}$ and $\td{X}$ satisfy $(\star_a)$.\\
	\noindent\textbf{Case $\mathbf{2'}$:} $|X_a| = 2$, $|X_b|=1$ for some $a \neq b$, and $|X_c|=0$ for all $c \neq a,b$.\\
	\indent $X = X_a \cup \{p_0\} \cup X_b$ and the proof in Case 2 is still valid if we replace $X_a$ with $X_a \cup \{p_0\}$.\\
	\noindent\textbf{Case 3:} $|X_a|=2$ and either $|X_b|=2$ or $|X_b|=|X_c|=1$ for $a \neq b \neq c$.\\
	\indent Let $X_a = \{x_1,x_2\}$ and $X_a' = X \setminus X_a$. Let $a_i = d_G(x_i,p_0)$. Notice that $d_{ij}=a_i+a_j$ for $i \in \{1,2\}$ and $j \in \{3,4\}$. Then:
	\begin{equation*}
		d_{13}+d_{24}
		= d_{14}+d_{23}
		= a_1+a_2+a_3+a_4
\,\,\,\,\mbox{and}\,\,\,\,	d_{12}+d_{34}
		\leq a_1+a_2+a_3+a_4.
	\end{equation*}
	As a consequence,
	\begin{equation*}
		\alpha_{\{x_1,x_3\}, \{x_2,x_4\}} = \beta_{\{x_1,x_3\}, \{x_2,x_4\}}
		= \frac{1}{2} [\max(d_{13}+d_{24}, d_{14}+d_{23}, d_{12}+d_{34} ) - d_{13}-d_{24} ]
		= 0.
	\end{equation*}
	Analogously, $\alpha_{\{x_1,x_4\}, \{x_2,x_3\}} = 0 \leq \alpha_{\{x_1,x_2\}, \{x_3,x_4\}}$. Then $b=0$ in Proposition \ref{prop:persistence_tight_span} and item \ref{item:persistence_leq_separation} gives that $\dgm_1^\vr(X)$ is the empty diagram. Note, in particular, that $\Gamma_X$ is a metric tree.\\
	\noindent \textbf{Case 4:} $|X_{a}| \leq 1$ for all $a$.\\
	\indent Observe that $X$ is isometrically embedded in the tree $T \subset G$ formed by the four shortest paths joining each $x_i$ to $p_0$. $\dgm_1^\vr(X)$ is empty by Lemma \ref{lemma:Dnk_tree}.
\end{proof}

\indent The proof of Proposition \ref{prop:wedge_of_circles} shows that a configuration $X \subset G$ produces persistence only if it is close to a cycle in the sense that either $X$ is contained in a circle $\frac{\lambda_i}{\pi} \cdot \Sphere{1}$, or only one point of $X$ is outside of $\frac{\lambda_i}{\pi} \cdot \Sphere{1}$. In both cases, the metric graph $\Gamma_X$ contains a cycle since both $b$ and $c$ in Figure \ref{fig:tight_span_4pts} are non-zero. In any other scenario, $\Gamma_X$ is a metric tree. This might lead to the conjecture that $\Dvr{4,1}(G) = \bigcup_{C \subset G} \Dvr{4,1}(C)$ where the union runs over all cycles $C \subset G$. However, the following examples show that this is false.

\begin{example}\label{ex:circle_with_flares}
	Recall the cyclic order $\prec$ from Definition \ref{def:circle_and_order}. Let $G$ be a metric graph formed by attaching edges of length $L$ to a cycle $C$ at the points $y_1 \prec y_2 \prec y_3 \prec y_4$; see Figure \ref{fig:circle_with_flares}. Let $X = \{x_1,x_2,x_3,x_4\} \subset G$. If $X \subset C$, then no new persistence is produced, so the points in $X$ have to be in the attached edges. Also, if $\tb{X}$ is to be smaller than $\td{X}$, then each $x_i$ must be on a different edge. For example, if $x_1$ and $x_2$ are on the edge attached to $y_1$, and $x_3$ and $x_4$ are on the edges adjacent to $y_3$ and $y_4$, respectively, let $X' = \{x_1,x_2,y_3,y_4\}$. This $X'$ consists of two points inside of a cycle and two points outside, so as we saw in Proposition \ref{prop:wedge_of_circles} when $|X_1|=|X_2|=2$, $X'$ is tree-like, and attaching edges at $y_3$ and $y_4$ doesn't change that. Thus, $X$ is also a tree-like metric space.\\
	\indent Suppose, then, that each $x_i$ is on the edge attached to $y_i$. Let $Y = \{y_1, y_2, y_3, y_4\}$. Since the decomposition in Theorem \ref{thm:metric-decomposition} is unique, the isolation indices of the metrics of $X$ and $Y$ satisfy $\alpha_{\{x_i\}, X \setminus \{x_i\}} = \alpha_{\{y_i\}, Y \setminus \{y_i\}} + d_G(x_i,y_i)$, and $\alpha_{\{x_i,x_j\},\{x_h,x_k\}} = \alpha_{\{y_i,y_j\},\{y_h,y_k\}}$, where $\{i,j,h,k\} = \{1,2,3,4\}$. Suppose that $\alpha_{\{y_1,y_3\}, \{y_2, y_4\}}=0$, and let $m := \min(\alpha_{\{y_1,y_2\}, \{y_3, y_4\}}, \alpha_{\{y_1,y_4\}, \{y_2, y_3\}})$. By Proposition \ref{prop:persistence_tight_span}, $\td{X}-\tb{X} \leq m$, so
	\begin{equation*}
		\Dvr{4,1}(G) \subset \Dvr{4,1}(\Sphere{1}) \cup \{ (t_b,t_d) \ | \ \tb{Y} \leq t_b < t_d \leq t_b+m, \text{ and } t_b \leq \tb{Y}+2L \}.
	\end{equation*}
	\indent Observe that $\Dvr{4,1}(G)$ can contain points outside of $\Dvr{4,1}(C)$. For example, if $\tb{Y} < \td{Y}$, then the point $(\tb{Y}+2L, \td{Y}+2L) \in \Dvr{4,1}(G)$.
\end{example}

\begin{remark}[$\Dvr{4,1}$ captures  information that is invisible to $\dgm_*^\vr$]
\label{rmk:D41_vs_VR1}
	Note that, in the last example, the simplicial complex $\vr_r(G)$ is homotopy equivalent to $\vr_r(C)$ at every scale $r$. The reason is that the VR complex of a wedge sum $X \vee Y$ decomposes as $\vr_r(X \vee Y) \simeq \vr_r(X) \vee \vr_r(Y)$ (see Proposition 3.7 of \cite{vr_metric_gluings} or Theorem 4.1 in \cite{osman-memoli} for a reformulation in terms of persistence modules). Since $G$ is the wedge sum of $C$ with 4 edges $E_i$, Lemma \ref{lemma:Dnk_tree} gives that each $\vr(E_i)$ is contractible and, hence, $\vr_r(G) \simeq \vr_r(C)$ which implies that $\dgm_*(G)=\dgm(C)$. In contrast, $\Dvr{4,1}(G) \neq \Dvr{4,1}(C)$. In other words, $\Dvr{4,1}$ is able to detect features of $G$ which the Vietoris-Rips persistence diagram does not. See Figure \ref{fig:circle_with_flares}.
\end{remark}

Let $F_k$ be a geodesic space formed by attaching $2k+2$ edges of length $L$ to $\Sphere{k}$ at the vertices of the regular cross-polytope. We can generalize Example \ref{ex:circle_with_flares} to the following proposition (cf. Figure \ref{fig:spiky-sphere}).
\begin{prop}
    \label{prop:sphere_with_flares}
    $\Sphere{k}$ and $F_k$ have the same persistence diagrams, but $\Dvr{2k+2,k}(\Sphere{k}) \subsetneq \Dvr{2k+2,k}(F_{k})$.
\end{prop}
\begin{proof}
    By the explanation in the previous remark, the persistence diagrams of $\Sphere{k}$ and $F_k$ are equal. Also, $\Sphere{k} \hookrightarrow F_k$, so $\Dvr{2k+2,k}(\Sphere{k}) \subset \Dvr{2k+2,k}(F_{k})$. To see that the containment is strict, suppose that the $i$-th edge was attached to $y_i \in \Sphere{k}$ for $i=1, \dots, 2k+2$ and choose the labels so that $y_i$ and $y_{i+k+1}$ are antipodal (addition of indices is done modulo $2k+2$). Thus, $d_{\Sphere{k}}(y_i,y_j)$ equals $\pi/2$ if $j \neq i, i+k+1$ and $\pi$ if $j=i+k+1$. If $x_i$ is the point on the $i$-th edge at distance $L$ from $y_i$, then $d_{F_{k}}(x_i,x_j)$ is $\pi/2+2L$ when $j \neq i, i+k+1$ and $\pi+2L$ when $j=i+k+1$. Hence, $\tb{X}=\pi/2+2L$ and $\td{X}=\pi+2L$. Since every point $(t_b, t_d) \in \Dvr{2k+2,k}(\Sphere{k})$ satisfies $t_b < t_d \leq \diam(\Sphere{k}) = \pi$, $(\tb{X}, \td{X}) \in \Dvr{2k+2,k}(F_k) \setminus \Dvr{2k+2,k}(\Sphere{k})$.
\end{proof}
\begin{figure}
	\centering
	\includegraphics[width=0.5\linewidth]{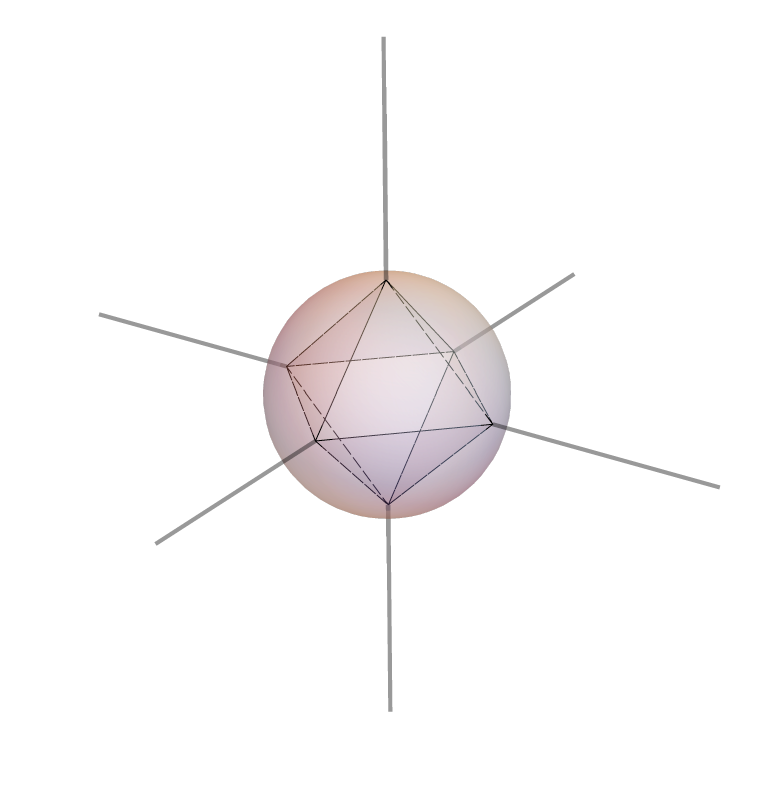}
	\caption{A spiky sphere. See Proposition \ref{prop:sphere_with_flares}.
	\label{fig:spiky-sphere}}
\end{figure}

\begin{example}\label{ex:D41_cube}
	Not all cycles $C \subset G$ with the induced subspace metric produce the persistence sets of a cycle graph. For instance, let $G$ be the metric graph with edges of length 1 shown in Figure \ref{fig:non_isometric_cycle}. Let $C$ be the cycle that passes through the vertices $1, 2, 6, 5, 8, 7, 3, 4$. $C$ has length 8, but there is no point $(2,4)$ in $\Dvr{4,1}(G)$. The reason is that the shortest path between points in $C$ is often not contained in $C$, and so $C$ is not isometric to a circle. For example, the edge connecting 1 and 5 is not contained in $C$ despite it being the shortest path between its endpoints. We will explain this phenomenon in the next section.
	
	\begin{figure}
		\centering
		\includegraphics[width=275pt, height=150pt]{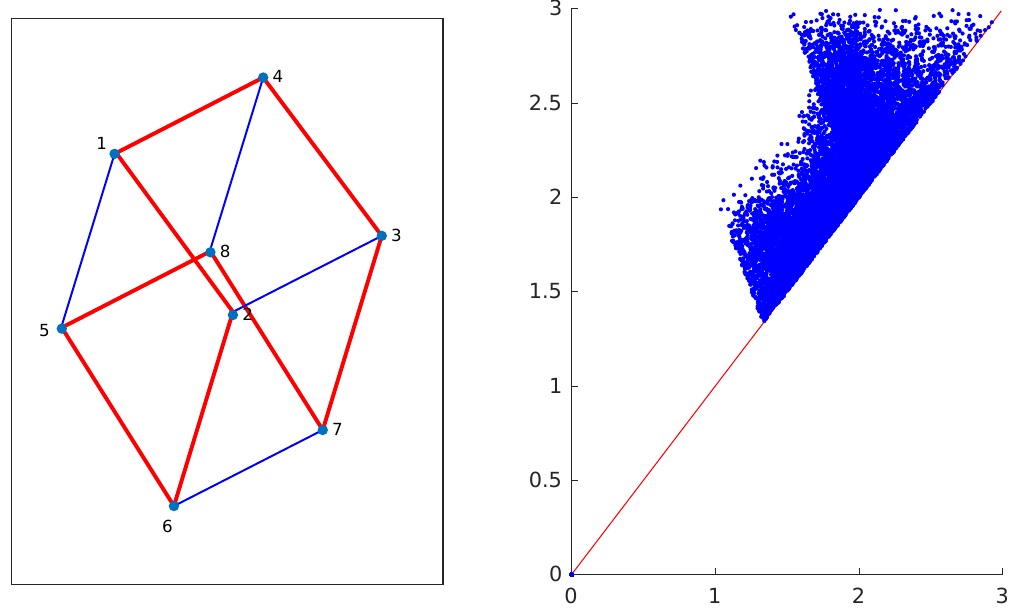}
		\caption{\textbf{Left:} The cube metric graph $G$ contains a cycle that is not isometric to a circle. \textbf{Right:} Its persistence set $\Dvr{4,1}(G)$ does not contain the point $(2,4)$. See Example \ref{ex:D41_cube}. This figure was obtained by sampling 100,000 configurations of 4 points from $G$. About 13 \% of those configurations produced a non-diagonal point.}
		\label{fig:non_isometric_cycle}
	\end{figure}
\end{example}

\subsection{A family of metric graphs whose homotopy type is  characterized via  $\Dvr{4,1}$.}
\indent Recall that the persistence set $\Dvr{4,1}(\frac{\lambda}{\pi} \cdot \Sphere{1})$ is a triangle with vertices $(\lambda/2, \lambda), (\frac{2}{3}\lambda, \frac{2}{3}\lambda), (\lambda, \lambda)$. Observe that the only point in $\Dvr{4,1}(\frac{\lambda}{\pi} \cdot \Sphere{1})$ that satisfies $t_d = 2t_b$ is $(\lambda/2, \lambda)$. A similar observation holds in Examples \ref{ex:circle_with_flares} and \ref{ex:D41_cube}. In both cases, the metric graph in question contains an isometrically embedded cycle, and by functoriality, the persistence set of the metric graph contains a triangle generated by such a cycle. However, not all cycles produce such a triangle as Example \ref{ex:D41_cube} shows. Proposition \ref{prop:good_families} gives conditions under which $\Dvr{4,1}(G)$ is capable of detecting all cycles in $G$, and examples of admissible graphs are shown in Figure \ref{fig:good_families}.

\begin{prop}\label{prop:good_families}
	Let $T_1,\dots,T_m$ be a set of metric trees and, for each $k=1,\dots,n$, let $C_k$ be a cycle. Suppose that all cycles have different length. Let $G$ be a metric graph formed by iteratively attaching either a metric tree $T_i$ or a cycle $C_k$ along a vertex or an edge $e$ that satisfies the following property. For any cycle $C \subset G$ that intersects $e$, their lengths satisfy $|e| < \frac{1}{3} |C|$. Then the first Betti number of $G$ equals the number of points $(\lambda/2, \lambda) \in \Dvr{4,1}(G)$.
\end{prop}

We prove this statement at the end of the section. For now, we begin the road to the proof by recalling Lemma \ref{lemma:persistence_bounds} item \ref{item:td_basic_bound}. For a 4-point set $X$, Lemma \ref{lemma:persistence_bounds} says that if $\td{X} = 2\tb{X}$, then $X$ has to be a square, that is, $d_X(x_i,x_{i+1})=\tb{X}$ and $d_X(x_i,x_{i+2})=\td{X}$ for $i=1,\dots,4$. If $X$ is a subset of a metric graph $G$, it is tempting to suggest that $X$ must be contained in a cycle $C \subset G$ isometric to $\frac{\lambda}{\pi} \cdot \Sphere{1}$. However, as Figure \ref{fig:not_isometric_to_S1} shows, this is not always the case. Still, if $G$ satisfies the hypothesis of Theorem \ref{thm:square_in_graph_gluing}, then at least we can ensure that $X$ lies in a specific metric subgraph. Before that, we need one more preparatory result which was inspired by Theorem 3.15 in \cite{vr_metric_gluings}.
\begin{figure}[h]
\begin{minipage}{0.48\linewidth}
	\centering
	\includegraphics[scale=1]{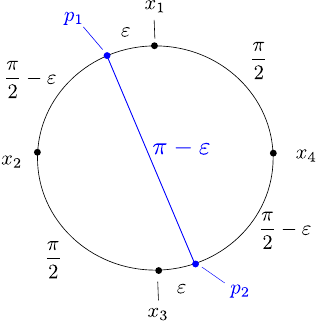}
	\captionof{figure}{A metric graph $G$ and a set $X \subset G$ such that $\tb{X}=\pi/2$ and $\td{X}=\pi$. Notice that the outer black cycle $C$ contains $X$ but is not isometric to a circle. If it were, the shortest path in $G$ between $p_1$ and $p_2$ would be contained in $C$, but that path is the blue edge of length $\pi-\varepsilon$.}
	\label{fig:not_isometric_to_S1}
\end{minipage}
\hfill
\begin{minipage}{0.48\linewidth}
	\centering
	\includegraphics[scale=1]{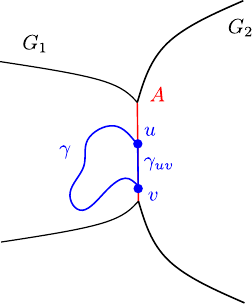}
	\captionof{figure}{In Lemma \ref{lemma:gluing_metric_graphs}, any path in $G_1$ between $u$ and $v$ has length greater than $\alpha$.}
	\label{fig:gluing_metric_graphs}
\end{minipage}
\end{figure}
\begin{lemma}\label{lemma:gluing_metric_graphs}
	Let $G = G_1 \cup_A G_2$ be a metric gluing of the metric graphs $G_1$ and $G_2$ such that $A = G_1 \cap G_2$ is a closed path of length $\alpha$. Let $\ell_j$ be the length of the shortest cycle contained in $G_j$ that intersects $A$, and set $\ell = \min(\ell_1, \ell_2)$. Assume that $\alpha < \frac{\ell}{2}$. Then the shortest path $\gamma_{uv}$ between any two points $u,v \in A$ is contained in $A$. As a consequence, if $\frac{\lambda}{\pi} \cdot \Sphere{1} \hookrightarrow G$ is an isometric embedding, then $\frac{\lambda}{\pi} \cdot \Sphere{1}$ is contained in either $G_1$ or $G_2$.
\end{lemma}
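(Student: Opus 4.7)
The plan is to prove the two assertions in turn: first that shortest paths between points of $A$ stay inside $A$, and then use this, together with the isometry, to bound the length of any detour the circle might make outside one of the $G_j$'s and derive a contradiction.

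\textbf{Step 1 (shortest paths stay in $A$).} Suppose, for contradiction, that a shortest path $\gamma_{uv}$ between $u,v\in A$ has a subarc with interior outside $A$. Pick a maximal such subpath $\gamma'$ whose interior lies in $G_j\setminus A$ for some $j\in\{1,2\}$; its endpoints $u',v'$ then lie in $A$. Concatenating $\gamma'$ with the subpath $\beta$ of $A$ from $u'$ to $v'$ yields a cycle in $G_j$ meeting $A$ (the two arcs sit in the disjoint sets $G_j\setminus A$ and $A$, and share only endpoints). Therefore $|\gamma'|+d_A(u',v')\geq \ell_j\geq \ell$, and using $d_A(u',v')\leq\alpha<\ell/2$ we obtain $|\gamma'|\geq \ell-\alpha>\alpha\geq |\beta|$. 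Replacing $\gamma'$ by $\beta$ inside $\gamma_{uv}$ produces a strictly shorter $u$-$v$ path, contradicting the minimality of $\gamma_{uv}$.

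\textbf{Step 2 (isometric circles lie in one side).} Let $C\subset G$ denote the isometric image of $\tfrac{\lambda}{\pi}\cdot\Sphere{1}$, a cycle of total length $2\lambda$. Assume, toward a contradiction, that $C$ is not contained in either $G_1$ or $G_2$; then $C$ must meet both $G_1\setminus A$ and $G_2\setminus A$, so $C\setminus A$ contains arcs $\gamma_1\subset G_1\setminus A$ and $\gamma_2\subset G_2\setminus A$ with endpoints $u_i,v_i\in A$. For each $i\in\{1,2\}$, the cycle-length argument of Step 1 gives $|\gamma_i|\geq \ell_i-\alpha\geq\ell-\alpha$. On the other hand, the isometry combined with Step 1 yields
\[
    d_C(u_i,v_i)=d_G(u_i,v_i)=d_A(u_i,v_i)\leq\alpha.
\]
Since $d_C(u_i,v_i)=\min(|\gamma_i|,\,2\lambda-|\gamma_i|)$ and $|\gamma_i|>\alpha$, this minimum must be realized by the complementary arc, so $|\gamma_i|\geq 2\lambda-\alpha$.

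\textbf{Step 3 (combining the bounds).} The arcs $\gamma_1,\gamma_2$ have disjoint interiors inside the cycle $C$ of length $2\lambda$, so $|\gamma_1|+|\gamma_2|\leq 2\lambda$. Summing $|\gamma_i|\geq 2\lambda-\alpha$ over $i=1,2$ forces $\lambda\leq\alpha$, while summing $|\gamma_i|\geq\ell-\alpha$ forces $\ell\leq\lambda+\alpha$; chaining these two inequalities yields $\ell\leq 2\alpha$, which contradicts the hypothesis $\alpha<\ell/2$. Hence $C$ must lie entirely in $G_1$ or entirely in $G_2$.

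The main delicacy is that both lower bounds on $|\gamma_i|$ are needed simultaneously: the combinatorial bound $\ell-\alpha$ alone is compatible with $|\gamma_1|+|\gamma_2|\leq 2\lambda$ when $\lambda$ is small, and the metric bound $2\lambda-\alpha$ alone makes no reference to the graph structure encoded in $\ell$. It is only by combining the two, using the fact that the two detour arcs lie on opposite sides of $A$ and are therefore disjoint in $C$, that one forces $\ell\leq 2\alpha$ and finishes the proof.
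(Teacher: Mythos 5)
Your proof is correct, and while Step 1 is essentially the paper's argument, the second half takes a genuinely different route. In Step 1 you run the same cycle-length inequality the paper uses ($|\gamma'|+|\beta|\geq \ell > 2\alpha$ forces any excursion out of $A$ to be longer than the arc of $A$ it shortcuts), though you phrase it as an exchange argument on maximal excursions of an arbitrary competitor path, which is actually a bit more careful than the paper's version (the paper only compares against paths wholly contained in one $G_j$). For the second claim, the paper argues topologically: if $C\cap A$ has two components, pick $u,v$ in different ones, note that both arcs of $C$ joining them must leave $A$ and hence have length $>\alpha\geq d_G(u,v)$, contradicting $d_C=d_G$; it concludes $C\cap A$ is empty or connected and hence $C$ sits in one side. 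You instead run a quantitative length-count on the circle: two excursion arcs $\gamma_1,\gamma_2$ on opposite sides, each bounded below both combinatorially ($\geq\ell-\alpha$) and metrically ($\geq 2\lambda-\alpha$, using the isometry and Step 1 to pin $d_C(u_i,v_i)\leq\alpha$), summed against the total length $2\lambda$ to force $\ell\leq 2\alpha$. Both arguments are valid; the paper's is shorter and makes the structural fact ($C\cap A$ connected) explicit, while yours is self-contained arithmetic and, as you note, genuinely needs both lower bounds simultaneously. The only nitpick is that you silently use that $C$ meeting both $G_1\setminus A$ and $G_2\setminus A$ forces $C\cap A\neq\emptyset$ (so that the excursion arcs have endpoints in $A$); this follows from connectedness of $C$ and is worth one sentence.
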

\begin{proof}
	Let $\gamma_{uv}'$ be the shortest path contained in $A$ that connects $u$ and $v$. We will show that $\gamma_{uv} = \gamma_{uv}'$. Let $\gamma$ be any path that joins $u$ and $v$, and is contained in either $G_1$ or $G_2$ but not in $A$; see Figure \ref{fig:gluing_metric_graphs}. Since $\gamma$ is not contained in $A$, $\gamma \cup \gamma_{uv}'$ contains a non-trivial cycle $C$ that intersects $A$. Since $\gamma_{uv}' \subset A$, its length is smaller than $\alpha$. Then
	\begin{equation*}
		2\alpha < \ell \leq |\gamma|+|\gamma_{uv}'| = |\gamma|+\alpha.
	\end{equation*}
	Thus, $|\gamma| > \alpha \geq |\gamma_{uv}'| = d_G(u,v)$. More generally, any path $\gamma$ between $u$ and $v$ can be split into subpaths $\gamma_1, \dots, \gamma_k$ such that either $\gamma_j \subset A$, or  $\gamma_j \subset G_i$ for some $i=1,2$ and $\gamma_j \cap A = \{u', v'\}$, where $u'$ and $v'$ are the endpoints of $\gamma_j$. Applying the reasoning above to each $\gamma_j$ that is not contained in $A$ shows that $|\gamma| \geq |\gamma_{uv}'|$. In particular, we must have $\gamma_{uv} = \gamma_{uv}'$.\\
	\indent Now, a cycle $C \subset G$ is isometric to $\frac{\lambda}{\pi} \cdot \Sphere{1}$ if there is a shortest path between any $x,x' \in C$ contained in $C$. If $C \cap A$ has several connected components, then $C$ can be decomposed as the union of paths in $A$ and paths contained in $G_1$ or $G_2$. If we pick two points $u$ and $v$ that lie in different connected components of $G \cap A$, then the shortest sub-path of $C$ between them will contain a sub-path that lies either in $G_1$ or $G_2$. By the previous paragraph, the sub-path contained in $G_1$ or $G_2$ has length larger than $\alpha \geq d_G(u,v)$. Thus, the shortest path between $u$ and $v$ lies outside of $C$, so $C$ is not isometric to $\frac{\lambda}{\pi} \cdot \Sphere{1}$. Instead, the only possibility for $C$ to be isometric to $\frac{\lambda}{\pi} \cdot \Sphere{1}$ is that $C \cap A$ is either empty or connected. This implies $C \subset G_1$ or $C \subset G_2$.
\end{proof}

\indent The next theorem is the main result of this section and similar in spirit to Proposition \ref{prop:wedge_of_circles}. The proof of Proposition \ref{prop:wedge_of_circles} relied on the observation that if $X \subset G$ has $\tb{X} < \td{X}$ then either $X$ lies inside a cycle $\frac{\lambda_i}{\pi} \Sphere{1}$ or, at worse, only one point lies outside. In a more general metric gluing $G_1 \cup_A G_2$, however, the condition $\tb{X} < \td{X}$ is not enough to guarantee that most of $X$ lies inside one component. Instead, we give hypotheses on $G_1 \cup_A G_2$ under which the stronger condition $\td{X} = 2\tb{X}$ (as opposed to just $\tb{X} < \td{X}$) implies that $X$ is contained in either $G_1$ or $G_2$.

\begin{theorem}\label{thm:square_in_graph_gluing}
	Let $G = G_1 \cup_A G_2$ be a metric gluing of the metric graphs $G_1$ and $G_2$ such that $A = G_1 \cap G_2$ is a path of length $\alpha$. Let $\ell_j$ be the length of the shortest cycle contained in $G_j$ that intersects $A$, and set $\ell = \min(\ell_1, \ell_2)$. Assume that $\alpha < \frac{\ell}{3}$. If $X = \{x_1,x_2,x_3,x_4\} \subset G$ satisfies $\tb{X} = \lambda/2$ and $\td{X} = \lambda$, then either $X \subset G_1$ or $X \subset G_2$.
\end{theorem}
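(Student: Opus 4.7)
The plan is to invoke Lemma~\ref{lemma:d=2b_induces_regular_polytope} to place $X=\{x_1,x_2,x_3,x_4\}$ into ``square'' form: after relabelling we have $d_G(x_1,x_3)=d_G(x_2,x_4)=\lambda$ (the two diagonals) and $d_G(x_1,x_2)=d_G(x_2,x_3)=d_G(x_3,x_4)=d_G(x_4,x_1)=\lambda/2$ (the four sides), so that $x_1$--$x_2$--$x_3$--$x_4$--$x_1$ is a cycle of sides and $\{x_1,x_3\},\{x_2,x_4\}$ are the diagonals. Since $\alpha<\ell/3<\ell/2$, Lemma~\ref{lemma:gluing_metric_graphs} applies, so every shortest path between two points of $A$ stays inside $A$. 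I assume for contradiction that both $X_1:=X\cap(G_1\setminus A)$ and $X_2:=X\cap(G_2\setminus A)$ are non-empty (write $X_A:=X\cap A$), and the goal is to produce a non-trivial cycle in $G_1$ or $G_2$ meeting $A$ with length at most $3\alpha$, which will contradict the hypothesis $\alpha<\ell/3$.

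For every pair $(x_i,x_j)$ whose shortest path $\gamma_{ij}$ must cross from $G_1\setminus A$ to $G_2\setminus A$, fix a crossing point $a_{ij}\in A$ on $\gamma_{ij}$ and decompose $d_G(x_i,x_j)=p_{ij}+q_{ij}$ with $p_{ij}:=d_G(x_i,a_{ij})$ and $q_{ij}:=d_G(a_{ij},x_j)$; if $x_i\in A$ itself I take $x_i$ as the crossing. The key computation --- I call it the \emph{workhorse} --- is the following. Suppose some $x_k\in G_2\setminus A$ has its two square-neighbours $y,y'$ (the vertices at distance $\lambda/2$ from $x_k$, which then satisfy $d_G(y,y')=\lambda$) both lying in $G_1\cup A$. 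Let $a,a'\in A$ be crossings of $\gamma_{yx_k},\gamma_{y'x_k}$, with associated pieces $p,q,p',q'$. Routing $y$ to $y'$ through $a$ and $a'$ inside $A$ gives
\begin{equation*}
\lambda=d_G(y,y')\leq p+d_A(a,a')+p'=\bigl(\tfrac{\lambda}{2}-q\bigr)+d_A(a,a')+\bigl(\tfrac{\lambda}{2}-q'\bigr),
\end{equation*}
so $q+q'\leq d_A(a,a')\leq\alpha$. The closed loop $x_k\to a\to a'\to x_k$ in $G_2$ then has length $q+q'+d_A(a,a')\leq 2\alpha$. Ruling out the degenerate coincidence $a=a'$ (which would force $q=q'=0$, hence $x_k=a\in A$, contradicting $x_k\in G_2\setminus A$) makes this loop non-trivial, so $G_2$ contains a cycle meeting $A$ of length at most $2\alpha<3\alpha<\ell_2$, a contradiction.

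A direct case analysis on $(|X_1|,|X_A|,|X_2|)$ shows that the workhorse (possibly with the roles of $G_1$ and $G_2$ swapped) applies in every configuration except $(|X_1|,|X_A|,|X_2|)=(2,0,2)$ with both pairs \emph{adjacent} in the square, say $X_1=\{x_1,x_2\}$ and $X_2=\{x_3,x_4\}$. Indeed, in every other case some $x_k$ on one side has its two square-neighbours on the opposite side (possibly in $A$), so the workhorse immediately yields the sought cycle of length at most $2\alpha$.

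The remaining $(2,0,2)$-adjacent case is where the main difficulty lies, and is precisely where the constant $3$ in the hypothesis is needed. Using the two sides $\gamma_{14},\gamma_{23}$ with crossings $a_{14},a_{23}\in A$, the triangle inequality applied to each diagonal through both $a_{14}$ and $a_{23}$ yields $\lambda\leq p_{14}+d_A(a_{14},a_{23})+q_{23}$ and $\lambda\leq p_{23}+d_A(a_{14},a_{23})+q_{14}$; summing and using $p_{14}+q_{14}=p_{23}+q_{23}=\lambda/2$ gives $\lambda\leq 2d_A(a_{14},a_{23})\leq 2\alpha$. Cutting the closed loop $x_1\to x_4\to x_3\to x_2\to x_1$ (of total length $2\lambda$) at $a_{14}$ and $a_{23}$ decomposes it into a $G_1$-loop $a_{14}\to x_1\to x_2\to a_{23}\to a_{14}$ of length $p_{14}+\lambda/2+p_{23}+d_A(a_{14},a_{23})$ and a $G_2$-loop $a_{14}\to x_4\to x_3\to a_{23}\to a_{14}$ of length $q_{14}+\lambda/2+q_{23}+d_A(a_{14},a_{23})$. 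Their lengths add to $2\lambda+2d_A(a_{14},a_{23})$, so at least one is at most $\lambda+d_A(a_{14},a_{23})\leq\lambda+\alpha\leq 3\alpha$. Since $a_{14}=a_{23}$ would force $\lambda\leq 0$ and make the two diagonal inequalities inconsistent, both loops are non-trivial cycles in the corresponding $G_j$ meeting $A$, so $\min(\ell_1,\ell_2)\leq 3\alpha$, the desired contradiction.
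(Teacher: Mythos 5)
Your proposal follows the same overall strategy as the paper's proof: reduce to the ``square'' form via Lemma \ref{lemma:d=2b_induces_regular_polytope}, split into cases according to how $X$ distributes over $G_1$, $A$, and $G_2$, and push triangle inequalities through crossing points on $A$. Your workhorse cleanly unifies the paper's Cases 1 and 2 (together with the configurations having points on $A$, which the paper absorbs into the bookkeeping $X_j = X\cap G_j$), and your treatment of the adjacent $(2,0,2)$ case replaces the paper's parametrized linear system (\ref{eq:system_complete}) by a symmetric averaging of the two loop lengths; both arguments consume the hypothesis $3\alpha<\ell$ in the same place. So the route is essentially the paper's, organized more economically.

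There is, however, one step you assume tacitly that the paper treats as a separate sub-case (its Case 3.1). In the final configuration you declare $a_{14}\to x_1\to x_2\to a_{23}\to a_{14}$ to be a ``$G_1$-loop'' and its counterpart a ``$G_2$-loop''; this requires $\gamma_{12}\subset G_1$ and $\gamma_{34}\subset G_2$, and nothing forces a shortest path between two points of $G_1\setminus A$ to stay inside $G_1$ a priori. The gap is fillable with the tools you already invoke: if $\gamma_{12}$ made an excursion into $G_2\setminus A$, a maximal such excursion would be a sub-path of a geodesic joining two points $w_1,w_2\in A$, hence of length exactly $d_G(w_1,w_2)\le\alpha$, while the proof of Lemma \ref{lemma:gluing_metric_graphs} shows that any path in $G_2$ between points of $A$ not contained in $A$ has length strictly greater than $\alpha$ --- a contradiction. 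You should state this (and the analogous choice of crossing points making the segments $a_{14}\to x_4$, $a_{23}\to x_3$ lie in $G_2$) explicitly. A smaller point, which the paper shares: in both the workhorse and the final case you pass from ``a closed walk of length at most $3\alpha$ meeting $A$'' to ``a cycle of length at most $3\alpha$ meeting $A$,'' but a closed walk can be null-homotopic. In the workhorse this is avoidable altogether: the path $a\to x_k\to a'$ is itself a path in $G_2$ between two points of $A$, not contained in $A$, of length $q+q'\le\alpha$, which already contradicts the proof of Lemma \ref{lemma:gluing_metric_graphs} without any appeal to cycles.
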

\begin{proof}
	Let $\gamma_{ij}$ be a shortest path in $G$ from $x_i$ to $x_j$. Since $\td{X} = 2\tb{X}$, Lemma \ref{lemma:persistence_bounds} item \ref{item:td_basic_bound} gives that $d_G(x_i, \vdeath(x_i)) = \lambda$ and $d_G(x_i, x) = \lambda/2$ for every $x \neq \vdeath(x_i)$. For this reason, we relabel the points $x_i$ so that $\lambda = |\gamma_{13}| = |\gamma_{24}|$ and $\lambda/2 = |\gamma_{12}| = |\gamma_{23}| = |\gamma_{34}| = |\gamma_{41}|$.\\
	\indent During this proof, if a path $\gamma$ has one endpoint in $G_1$ and one in $G_2$, we decompose it as $\gamma^{(1)} \cup \gamma^{(A)} \cup \gamma^{(2)}$, where $\gamma^{(i)} \subset G_i$, $\gamma^{(A)} \subset A$ and each intersection $\gamma^{(i)} \cap \gamma^{(A)}$ is a single point. Let $X_1 := X \cap G_1$ and $X_2 := X \cap G_2$. We will break down the proof depending on the size of $X_1$ and $X_2$.
	\begin{figure}
		\centering
		\begin{minipage}{0.32\linewidth}
			\centering
			\includegraphics[scale=0.90]{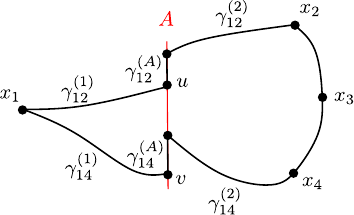}
		\end{minipage}
		\hfill
		\begin{minipage}{0.32\linewidth}
			\centering
			\includegraphics[scale=0.90]{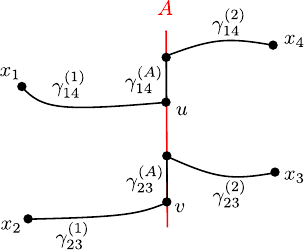}
		\end{minipage}
		\hfill
		\begin{minipage}{0.32\linewidth}
			\centering
			\includegraphics[scale=0.90]{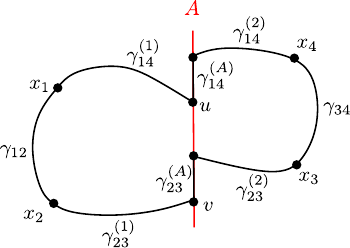}
		\end{minipage}
	\caption{Possible arrangements of paths between 4 points in Theorem \ref{thm:square_in_graph_gluing}. \textbf{Left:} $x_1 \in G_1$ and $x_2,x_3,x_4 \in G_2$ (Case 1). Middle: $X_1 = \{x_1,x_2\}$ and $X_2=\{x_3,x_4\}$ (Case 3). \textbf{Right:} The paths between points of $X$ form a cycle in $G$ (Case 3.2).}
	\label{fig:square_in_graph_gluing}
	\end{figure}
	~\\
	\noindent \textbf{Case 0:} If either $X_1$ or $X_2$ is empty, the theorem holds immediately.\\
	\noindent \textbf{Case 1:} $X_1$ or $X_2$ is a singleton.\\
	\indent Suppose that $X_1 = \{x_1\}$ (see Figure \ref{fig:square_in_graph_gluing}). Let $u := \gamma_{21}^{(1)} \cap A$ and $v := \gamma_{14}^{(1)} \cap A$. By Lemma \ref{lemma:gluing_metric_graphs}, $d_G(u,v) < |\gamma_{21}^{(1)}| + |\gamma_{14}^{(1)}|$. However, if $\gamma_{uv}$ is a shortest path between $u$ and $v$, then $\gamma_{24}' := \gamma_{21}^{(2)} \cup \gamma_{21}^{(A)} \cup \gamma_{uv} \cup \gamma_{14}^{(A)} \cup \gamma_{14}^{(2)}$ is a path between $x_2$ and $x_4$ such that
	\begin{align*}
		|\gamma_{24}'| &\leq |\gamma_{21}^{(2)}| + |\gamma_{21}^{(A)}| + |\gamma_{uv}| + |\gamma_{14}^{(A)}| + |\gamma_{14}^{(2)}|\\
		&< |\gamma_{21}^{(2)}|+ |\gamma_{21}^{(A)}| + |\gamma_{21}^{(1)}| + |\gamma_{14}^{(1)}| + |\gamma_{14}^{(A)}| + |\gamma_{14}^{(2)}|\\
		&= |\gamma_{21}|+|\gamma_{14}| = \lambda/2 + \lambda/2 = \lambda.
	\end{align*}
	This contradicts the assumption that $d_G(x_2,x_4) = \lambda$.\\
	\noindent \textbf{Case 2:} $|X_1|=|X_2|=2$ and $\diam(X_1)=\diam(X_2)=\lambda$.\\
	\indent Without loss of generality, write $X_1 = \{x_1, x_3\}$ and $X_2 = \{x_2,x_4\}$. The path $\gamma_{12} \cup \gamma_{23} \cup \gamma_{31}$ is a cycle in $G$ that intersects both $G_1$ and $G_2$. Let $u = \gamma_{12}^{(1)} \cap A$ and $v = \gamma_{23}^{(1)} \cap A$, and let $\gamma_{uv} \subset A$ be a path between them. By Lemma \ref{lemma:gluing_metric_graphs}, $d_G(u,v) < |\gamma_{12}^{(2)}|+|\gamma_{12}^{(A)}|+|\gamma_{23}^{(2)}|+|\gamma_{23}^{(A)}|$, so following the reasoning of Case 1, $\gamma_{12}^{(1)} \cup \gamma_{uv} \cup \gamma_{23}^{(1)}$ is a path between $x_1$ and $x_3$ with length less than $|\gamma_{12}|+|\gamma_{23}|=\lambda$. This is again a contradiction.\\
	\noindent \textbf{Case 3:} $|X_1|=|X_2|=2$ and $\diam(X_1)=\diam(X_2)=\lambda/2$.\\
	\indent Now we can assume $X_1 = \{x_1,x_2\}$ and $X_2=\{x_3,x_4\}$ (See Figure \ref{fig:square_in_graph_gluing}). Let $u = \gamma_{14}^{(1)} \cap A$, and $v = \gamma_{23}^{(1)} \cap A$. By the triangle inequality,
	\begin{equation}\label{ineq:d1_d4}
		\lambda = d_G(x_1,x_3) \leq d_G(x_1,u)+d_G(u,v)+d_G(v,x_3).
	\end{equation}
	Analogously,
	\begin{equation}\label{ineq:d2_d3}
		\lambda \leq d_G(x_2,v)+d_G(v,u)+d_G(u,x_4).
	\end{equation}
	On the other hand, since $\gamma_{23}$ is the shortest path between $x_2$ and $x_3$ and it passes through $v$, $\lambda/2 = d_G(x_2,x_3) = d_G(x_2,v)+d_G(v,x_3)$. If there existed a path between $v$ and $x_3$ of length smaller than $d_G(v,x_3)$, then the concatenation of that path and $\gamma_{23}^{(1)}$ would give a path between $x_2$ and $x_3$ shorter than $\gamma_{23}$. The same reasoning applies to $x_2$ and $v$, so the above equality holds. By a similar argument, we get $\lambda/2 = d_G(x_1,u)+d_G(u,x_4)$. Adding these two equations gives
	\begin{equation*}
		d_G(x_1,u) + d_G(x_2,v) + d_X(v,x_3) + d_G(u,x_4) = \lambda,
	\end{equation*}
	and combining this last equation with inequalities (\ref{ineq:d1_d4}) and (\ref{ineq:d2_d3}) produces, respectively,
	\begin{align}
		d_G(x_2,v) + d_G(u,x_4) &\leq d_G(u,v) \label{ineq:d2_d4_version2}\\
		d_G(x_1,u) + d_G(v,x_3) &\leq d_G(v,u) \label{ineq:d1_d3_version2}.
	\end{align}
	Then, using inequalities \ref{ineq:d1_d3_version2} and \ref{ineq:d1_d4}, we obtain $\lambda \leq 2 d_G(u,v)$. Furthermore, since $u,v \in A$, we get $\lambda/2 \leq d_G(u,v) \leq \alpha$. Now we break down case 3 depending on whether $\gamma_{12}$ and $\gamma_{34}$ intersect $A$ or not.\\
	\noindent \textbf{Case 3.1:} Suppose that $\gamma_{12}$ intersects $A$.\\
	\indent Write $\gamma_{12} = \gamma_{12}^{(1)} \cup \gamma_{12}^{(A)} \cup \gamma_{12}^{(2)}$, and let $w_i = \gamma_{12}^{(i)} \cap \gamma_{12}^{(A)}$. Let $\gamma_{w_1}$ be a shortest path between $u$ and $w_1$. By the triangle inequality,
	\begin{equation*}
		|\gamma_{w_1}| = d_G(u,w_1) \leq d_G(u,x_1) + d_G(x_1,w_1) \leq d_G(x_1,x_4)+d_G(x_1,x_2) = \lambda.
	\end{equation*}
	If $u \neq w_1$, then $\gamma_{14}^{(1)} \cup \gamma_{w_1} \cup \gamma_{12}^{(1)}$ is a cycle that intersects $A$ of length at most $2\lambda \leq 2\alpha$. Then, $\ell$ is smaller than $2\alpha$ by definition. However, this is a contradiction because $3\alpha < \ell$ by hypothesis. Thus, $w_1=u$, and an analogous argument shows that $w_2=v$. Since $\gamma_{12}$ is a shortest path between $x_1$ and $x_2$,
	\begin{align*}
		\lambda/2 = d_G(x_1,x_2)
		&= d_G(x_1,w_1)+d_G(w_1,w_2)+d_G(w_2,x_2)\\
		&= d_G(x_1,u) + d_G(u,v) + d_G(v,x_2)
		\geq d_G(u,v) \geq \lambda/2.
	\end{align*}
	Thus, $x_1=u$ and $x_2=v$. In other words, $X_1 \subset A \subset G_2$, so $X = X_1 \cup X_2 \subset G_2$. Naturally, if $\gamma_{34}$ intersected $A$ instead of $\gamma_{12}$, then an analogous argument would give $X \subset G_1$.\\
	\noindent \textbf{Case 3.2:} Neither $\gamma_{34}$ nor $\gamma_{12}$ intersect $A$ (see Figure \ref{fig:square_in_graph_gluing}).\\
	\indent Once more, let $u = \gamma_{14}^{(1)} \cap A$, $v = \gamma_{23}^{(1)} \cap A$, and $\nu = d_G(u,v)$. Define the cycles $C = \gamma_{12} \cup \gamma_{23} \cup \gamma_{34} \cup \gamma_{41}$, $C_1 = \gamma_{12} \cup \gamma_{23}^{(1)} \cup \gamma_{uv} \cup \gamma_{41}^{(1)}$ and $C_2 = \gamma_{34} \cup \gamma_{41}^{(2)} \cup \gamma_{41}^{(A)} \cup \gamma_{uv} \cup \gamma_{23}^{(A)} \cup \gamma_{23}^{(2)}$. Set $L=|C|$ and $L_j = |C_j|$ for $j=1,2$. Clearly, $L=2\lambda$ and $L_1+L_2-2\nu = L = 2\lambda$. For this reason, write $\lambda = \frac{L_1+L_2}{2}-\nu$.\\
	\indent For brevity, let $\delta_1 = d_G(x_1,u), \delta_2 = d_G(x_2,v), \delta_3 = d_G(x_3,v)$, and $\delta_4 = d_G(x_4,u)$. By definition of $u$ and $v$, we have
	\begin{equation}\label{eq:system_1}
		\lambda/2 = d_G(x_1,x_4) = d_G(x_1,u)+d_G(u,x_4) = \delta_1+\delta_4,
	\end{equation}
	and
	\begin{equation}\label{eq:system_2}
		\lambda/2 = d_G(x_2,x_3) = \delta_2+\delta_3.
	\end{equation}
	Additionally,
	\begin{align}\label{eq:system_3}
		L_1 &= |\gamma_{12}| + |\gamma_{23}^{(1)}| + |\gamma_{uv}| + |\gamma_{14}^{(1)}| \nonumber\\
		&= d_G(x_1,x_2) + d_G(x_2,v) + d_G(u,v) + d_G(u,x_1) \nonumber\\
		&= \lambda/2+\delta_2+\nu+\delta_1,
	\end{align}
	and
	\begin{align}\label{eq:system_4}
		L_2 &= |\gamma_{34}| + |\gamma_{41}^{(2)} \cup \gamma_{41}^{(A)}| + |\gamma_{uv}| + |\gamma_{23}^{(A)} \cup \gamma_{23}^{(2)}| \nonumber\\
		&= d_G(x_3,x_4) + d_G(x_4,u) + d_G(u,v) + d_G(v,x_3) \nonumber\\
		&= \lambda/2+\delta_4+\nu+\delta_3.
	\end{align}
	\indent If we interpret the $\delta_i$ as variables and $L_1,L_2,\nu$, and $\lambda$ as constants, equations (\ref{eq:system_1}) - (\ref{eq:system_4}) form a system of 4 equations with 4 variables. It can be seen that the matrix of coefficients has rank 3, so the solution has one parameter. Thus, choosing $\delta_4 = t$ gives the general solution
	\begin{align}\label{eq:system_complete}
		\delta_1 = \lambda/2-t,\,\,
		\delta_2 = L_1-\lambda-\nu+t,\,\, 
		\delta_3 = L_2-\lambda/2-\nu-t,\,\,
		\delta_4 = t. 
	\end{align}
	This means that there exists a particular number $0 \leq t \leq \lambda/2$ such that the distances between points of $X$ and $u$ and $v$ are given by the equations above. With this tool at hand, we now claim that at least one of the paths $\gamma_1 := \gamma_{14}^{(1)} \cup \gamma_{uv} \cup \gamma_{23}^{(A)} \cup \gamma_{23}^{(2)}$ or $\gamma_2 := \gamma_{14}^{(2)} \cup \gamma_{14}^{(A)} \cup \gamma_{uv} \cup \gamma_{23}^{(1)}$ has length less than $\lambda$. This would imply that either $d_G(x_1,x_3)$ or $d_G(x_2,x_4)$ is less than $\lambda$, violating the assumption that $\td{X} = \lambda$.\\
	\indent An equivalent formulation of the claim is
	\begin{equation}\label{eq:optimization_objective}
		\max_{t}\left( \min(|\gamma_1|,|\gamma_2|) \right) < \lambda.
	\end{equation}
	\indent If this inequality holds, then either $|\gamma_1|$ or $|\gamma_2|$ is smaller than $\lambda$, regardless of the value of $t$. Notice, though, that $|\gamma_1| = \delta_1+\nu+\delta_3$ and $|\gamma_2|=\delta_4+\nu+\delta_2$. Using the equations in (\ref{eq:system_complete}), we see that $|\gamma_1|+|\gamma_2| = L_1+L_2-\lambda$ is a quantity independent of $t$. Thus, the maximum in equation (\ref{eq:optimization_objective}) is achieved when $|\gamma_1|=|\gamma_2|$. This happens when $t=\frac{1}{4}(L_2-L_1+\lambda)$, and gives
	$
		|\gamma_1| = \frac{L_1+L_2}{2}-\nu-\frac{\lambda}{2} = \frac{L_1+L_2}{4}+\frac{\nu}{2}.
	$
	The claim is that this quantity is less than $\lambda = \frac{L_1+L_2}{2}-\nu$. Solving for $\nu$ gives the equivalent $\nu < \frac{L_1+L_2}{6}$. Recall that $\gamma_{uv} \subset A$, that $A$ is a path of length $\alpha < \frac{\ell}{3}$, and that $\ell$ is the length of the smallest cycle contained in either $G_1$ or $G_2$ that intersects $A$. Since $C_i \subset G_i$, we have $\nu \leq \alpha < \frac{\ell}{3} \leq \frac{L_1+L_2}{6}$, as desired. This forces $d_G(x_1,x_3) \leq |\gamma_1| < \lambda$, violating the assumption that $\td{X} = \lambda$. This concludes the proof of Case 3.2, and gives the Theorem.
\end{proof}

\indent To close up this section, we explore a consequence of Theorem \ref{thm:square_in_graph_gluing}. Once more, this application is inspired by \cite{vr_metric_gluings}, specifically Proposition 4.1.

\begin{theorem}\label{thm:num_cycles_equals_corners_D41}
	Let $T_1,\dots,T_m$ be a set of metric trees. For each $k=1,\dots,n$, let $\lambda_k>0$ and let $C_k$ be a cycle of length $L_k = 2\lambda_k$. Suppose that all $\lambda_k$ are distinct. Let $G$ be a metric graph formed by iteratively attaching either a metric tree $T_i$ or a cycle $C_k$ along a vertex or an edge $e$ that satisfies the following property. For any cycle $C \subset G$ that intersects $e$, their lengths satisfy $|e| < \frac{1}{3} |C|$. Then, the number of points $(\lambda/2, \lambda) \in \Dvr{4,1}(G)$ is equal to the number of cycles $C_k$ that were attached. Furthermore, if $X \subset G$ is a set of 4 points such that $\tb{X} = \lambda/2$ and $\td{X} = \lambda$, then $X$ is contained in a cycle $C_k$ and $L_k = 2\lambda$.
\end{theorem}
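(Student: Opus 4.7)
The plan is to induct on the total number $N = m+n$ of pieces used to build $G$, with Theorem \ref{thm:square_in_graph_gluing} as the key engine for localizing any $4$-point configuration that realizes a corner point $(\lambda/2,\lambda)$. The overall goal is twofold: (a) show that every such configuration $X \subset G$ must lie inside a single cycle $C_k$ with $\lambda = \lambda_k$, and (b) verify that each cycle $C_k$ genuinely contributes its corner $(\lambda_k/2,\lambda_k)$ to $\Dvr{4,1}(G)$, yielding exactly $n$ distinct corners.

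For the base case $N=1$, if $G = T_i$ is a single tree then Lemma \ref{lemma:Dnk_tree} gives $\Dvr{4,1}(G) = \emptyset$, matching the zero cycles attached. If $G = C_k$ is a single cycle of length $2\lambda_k$, then Theorem \ref{thm:Dnk_S1_k_odd} (specialized to $k=1$ and rescaled via Remark \ref{rmk:D41_S1_scaled}) shows that $\Dvr{4,1}(C_k)$ meets the diagonal line $t_d = 2t_b$ only at $(\lambda_k/2, \lambda_k)$; moreover Lemma \ref{lemma:d=2b_induces_regular_polytope} combined with Proposition \ref{prop:persistence_regular_n_gon} forces any realizing configuration $X$ to be a regular square inscribed in $C_k$.

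For the inductive step, write $G = G' \cup_A H$ where $G'$ is the result of the first $N-1$ gluings and $H$ is the last piece attached along the vertex or edge $A$. Given $X \subset G$ with $\tb{X}=\lambda/2$, $\td{X}=\lambda$, Theorem \ref{thm:square_in_graph_gluing} applied to this decomposition forces $X \subset G'$ or $X \subset H$. By the induction hypothesis applied to the piece containing $X$ (treating $H$ by the base case, since $H$ is either one tree or one cycle), $X$ lies in some $C_k$ with $\lambda = \lambda_k$. Conversely, every $C_k$ sits isometrically inside $G$ by construction, so functoriality of persistence sets (Remark \ref{rmk:functorial_persistence_sets}) yields $(\lambda_k/2, \lambda_k) \in \Dvr{4,1}(G)$. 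Distinctness of the $\lambda_k$'s then guarantees that the $n$ corners contributed by the cycles are pairwise distinct, establishing the bijection between cycles and corner points.

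The main obstacle is verifying the hypothesis $\alpha < \ell/3$ of Theorem \ref{thm:square_in_graph_gluing} at every gluing step, where $\alpha$ is the length of $A$ and $\ell$ is the length of the shortest cycle intersecting $A$ in either $G'$ or $H$. Vertex gluings give $\alpha = 0$ and are automatic. Edge gluings (say with unit edges) give $\alpha = 1$, and require every cycle intersecting $A$ in either piece to have length strictly greater than $3$. Since no gluing step creates cycles other than the attached $C_k$'s (attaching a tree creates no cycle, and attaching a cycle along a vertex or a single edge adds exactly $C_k$), the cycles of $G$ are precisely the $C_k$, so the required lower bound is a mild assumption $L_k > 3$ on the cycle lengths (equivalently, each attached cycle has at least four edges). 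Under this standing assumption the induction proceeds cleanly; without it, one would need a separate ad-hoc argument for configurations $X$ that sit near a triangular attached cycle, which is the only delicate case in the whole argument.
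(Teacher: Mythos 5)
Your proof is correct and follows essentially the same route as the paper's: Theorem \ref{thm:square_in_graph_gluing} localizes any corner-realizing configuration to a single attached piece, Lemma \ref{lemma:Dnk_tree} rules out the trees, and the circle characterization handles a single cycle — the paper merely phrases your induction as an iterative ``peeling'' of the last-attached piece. Your observation about verifying $\alpha < \ell/3$ is well taken: with unit edges an edge-gluing forces every cycle meeting the gluing edge to have length greater than $3$, an assumption the paper uses implicitly but never states, so your standing assumption $L_k>3$ (at least four edges per attached cycle) is exactly what is needed for the argument — yours and the paper's — to go through.
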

\begin{proof}
	First, label the metric trees and the cycles as $G_1, G_2, \dots, G_N$ depending on the order that they were attached. Consider a cycle $C_k$ and denote it as $G_m$. Suppose that there is a path $\gamma$ between $x,x' \in C_k$ that intersects $C_k$ only at $x$ and $x'$. We claim that the edge $[x,x']$ is in $C_k$. Otherwise, since we are only attaching metric graphs at an edge or a vertex, there are two different metric graphs attached to $C_k$, one at $x$ and one at $x'$. However, if we follow $\gamma$, we will find a metric graph that was attached to the previous metric graphs at two disconnected segments. This contradicts the construction of $G$, so $[x,x']$ is an edge of $C_k$. Thus, $d_G(x,x') < |\gamma|$. Moreover, the only paths between non-adjacent points $x,x' \in C_k$ lie in $C_k$. Thus, $C_k$ is isometric to a circle which, as a metric space, has $\diam_G(C_k)=\lambda_k$. Then $(\lambda_k/2, \lambda_k) \in \Dvr{4,1}(C_k) \subset \Dvr{4,1}(G)$.\\
	\indent Now, suppose that there is a point $(\lambda/2, \lambda) \in \Dvr{4,1}(G)$ generated by a set $X = \{x_1, x_2, x_3, x_4 \}\subset G$, with the labels chosen so that $\td{X} = \min \{d_G(x_{1}, x_{3}), d_G(x_{2}, x_{4}) \}$. By Lemma \ref{lemma:persistence_bounds} item \ref{item:td_basic_bound}, $\tb{X} = \lambda/2 = d_G(x_{i}, x_{i+1})$ and $\td{X} = \lambda = d_G(x_{i}, x_{i+2})$ for all $1 \leq i \leq 4$. Find the largest $m$ such that $X \cap G_m \neq \emptyset$. By Theorem \ref{thm:square_in_graph_gluing}, either $X \subset G_1 \cup \cdots \cup G_{m-1}$, or $X \subset G_m$. If $X$ is not contained in $G_m$, we can keep using Theorem \ref{thm:square_in_graph_gluing} to remove metric graphs until we find one that contains $X$. Notice that $X$ cannot be contained in a metric tree $T_i$ because of Lemma \ref{lemma:Dnk_tree}, so $X \subset C_k$ for some $k$. Let $\gamma_{i}$ be the shortest path between $x_i$ and $x_{i+1}$. Then the sum $d_G(x_1,x_2)+d_G(x_2,x_3)+d_G(x_3,x_4)+d_G(x_4,x_1) = 2\lambda$ equals $L_k$ because the path $\gamma_1 \cup \gamma_2 \cup \gamma_3 \cup \gamma_4$ is a cycle contained in $C_k$. Since $L_k = 2\lambda_k$, $\lambda = \lambda_k$.
\end{proof}

\indent Now we prove Proposition \ref{prop:good_families}, which was stated at the start of the section. Since the metric graphs in Theorem \ref{thm:num_cycles_equals_corners_D41} are pasted along a contractible space, we can detect the homotopy type of the metric graph.

\begin{figure}[h]
    \centering
    \includegraphics[width=275pt, height=250pt]{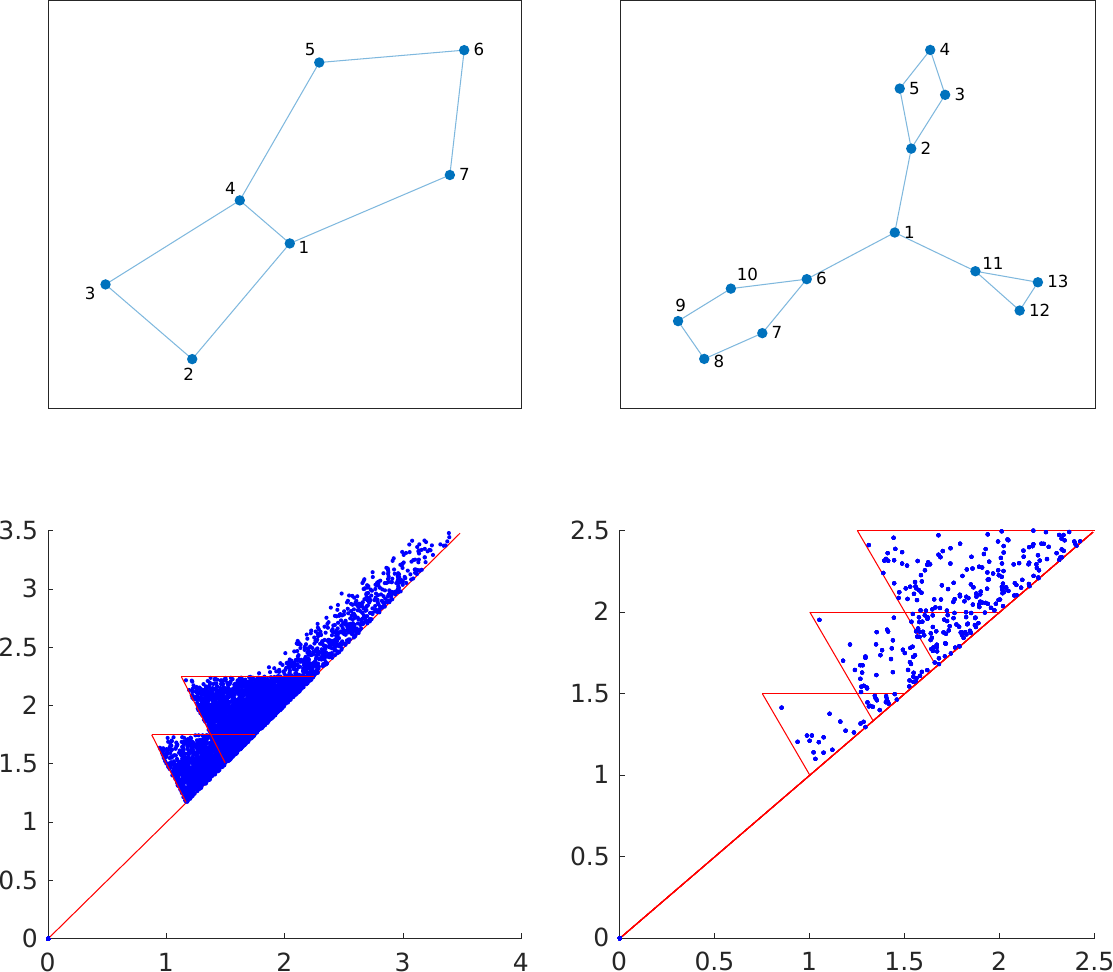}
    \caption{Two examples of admissible metric graphs $G$ as in Proposition \ref{prop:good_families} and their persistence set $\Dvr{4,1}(G)$. The red triangles are the boundaries of the sets $\Dvr{4,1}(C)$ for every cycle $C \subset G$. \textbf{Left:} Two cycles of lengths $\ell_1=3.5$ and $\ell_2=4.5$ pasted over an edge of length $\alpha = 0.5 < \frac{1}{3} \min(\ell_1, \ell_2)$. \textbf{Right:} A tree of cycles. Each persistence set was found by sampling 100,000 uniform configurations from $G$.}
    \label{fig:good_families}
\end{figure}

\begin{proof}[Proof of Proposition \ref{prop:good_families}]
	Attaching a metric tree to a metric graph doesn't change its homotopy type, while attaching a cycle $C_k$ to $G_1 \cup \dots \cup G_m$ along a contractible subspace induces a homotopy equivalence $(G_1 \cup \cdots \cup G_m) \cup C_k \simeq (G_1 \cup \cdots \cup G_m) \vee C_k$. Thus, by induction, $G \simeq C_1 \vee \cdots \vee C_n$, and $\beta_1(G)=n$.
\end{proof}
\section{Discussion and Questions}
 Here we mention other results that can be obtained: 

\begin{itemize}
\item As an application of the stability theorem and of our characterization results, one can show that the Gromov-Hausdorff distance between $\Sphere{1}$ and $\Sphere{m}$ is bounded below by $\frac{\pi}{14.6344}$ when $m=2$ and by $\frac{\pi}{8}$ when $m \geq 3$. See \cite{curvature-sets-pds} for details.

\item As the objects $\Unk{n,k}{\vr}$ can be considerably complex, a system of coordinates $\{\zeta_\alpha:\D \to \R\}_{\alpha \in A}$ that exhausts the information contained in the persistence measures is desirable. See the preprint version \cite{curvature-sets-pds} for results in this direction. 

\item Another consequence of the stability of persistence measures is the concentration of $\Unk{n,k}{\FiltFunc}(X)$ as $n \to \infty$, which can also be found in \cite{curvature-sets-pds}.
\end{itemize}

Now we outline some open questions and conjectures.
\begin{itemize}
	\item \textbf{Are there rich classes of compact metric spaces that can be distinguished with persistence sets?}\\
	This question is a generalization of Theorem \ref{thm:num_cycles_equals_corners_D41} and Proposition \ref{prop:good_families}. The persistence set $\Dvr{4,1}(G)$ captures the number and length of cycles in a metric graph $G$ that was constructed according to the instructions in Theorem \ref{thm:num_cycles_equals_corners_D41}. Are there other families of compact metric spaces where higher order diagrams $\Dvr{n,k}(G)$ can detect relevant features? In other words, are there families $\mathcal{C}$ of compact metric spaces such that
	\begin{equation*}
		\sup_{n,k} \dH^\D(\Dvr{n,k}(X), \Dvr{n,k}(Y))
	\end{equation*}
    is a metric when $X, Y \in \mathcal{C}$?

	\item \textbf{Description $\Dvr{2k+2,k}(\Sphere{m}_E)$ for all $k$ and $m$}:  Propositions \ref{prop:D41_R2_S2} and \ref{prop:Dk_Sn_stabilizes} are a step in that direction. In fact, the latter implies that we only need to find $\Dvr{2k+2,k}(\Sphere{2k}_E)$ to determine $\Dvr{2k+2,k}(\Sphere{m}_E)$ for all spheres with $m \geq 2k+1$. In particular for $\Dvr{6,2}(\Sphere{2}_E)$, does Conjecture \ref{conj:D62_S2_E} hold?
	
	\item \textbf{Description of $\Dvr{2k+2,k}(\Sphere{n}_E):$} When $k=1$, Corollary \ref{cor:D41_Sm_stabilizes} shows that $\Dvr{4,1}(\Sphere{m})$ stabilizes at $m=2$ instead of $m=3$, as given by Proposition \ref{prop:Dk_Sn_stabilizes}. The key to the reduction was the use of Ptolemy's inequality as in Theorem \ref{thm:D41_Mk}. A natural follow up question, even if it is subsumed by the previous one, is when does $\Dvr{2k+2,k}(\Sphere{m}_E)$ really stabilize for general $k$.
\end{itemize}

\newcommand{\etalchar}[1]{$^{#1}$}

\appendix
\section{Relegated proofs}
\label{sec:appendix}

\StabilitydGWHat*
\begin{proof}
	Let $p \in [1,\infty)$. We will first construct a bound for $\dW{p}(\mu_n(X), \mu_n(Y))$. Taking the limit as $p \to \infty$ and the supremum over $n$ will give the desired bound for $\widehat{d}_{\mathcal{GW},\infty}(X,Y)$. Let $\eta > 0$ such that $\frac{\eta}{2} > \dGW{p}(X,Y)$ and let $\mu$ be a coupling of $\mu_X$ and $\mu_Y$ such that
	\begin{align*}
		\big(2\dGW{p}(X,Y) \big)^p
		&\leq \big(\dis_p(\mu) \big)^p
		= \iint_{(X \times Y)^2} |d_X(x,x')-d_Y(y,y')|^p \mu(dx \times dy) \mu(dx' \times dy') < \eta^p.
	\end{align*}
	Recall that the curvature sets $K_n(X)$ are equipped with the curvature measure $\mu_n(X) = \left(\Psi_X^{(n)}\right)_\# \mu_X^{\otimes n}$. Observe that the product $\mu^{\otimes n}$ is a coupling of the product measures $\mu_X^{\otimes n}$ and $\mu_Y^{\otimes n}$, so the pushforward $\nu = (\Psi_X^{(n)} \times \Psi_Y^{(n)})_\# \mu^{\otimes n}$ is a coupling between the curvature measures $\mu_n(X)$ and $\mu_n(Y)$. Let $\mathbb{X} = (x_1,\dots,x_n) \in X^n$ and $\mathbb{Y} = (y_1,\dots,y_n) \in Y^n$. Then, by definition of Wasserstein distance, and by a change of variables, we obtain
	\begin{align*}
		\big(\dW{p}(\mu_n(X), \mu_n(Y)) \big)^p
		&\leq \iint_{K_n(X) \times K_n(Y)} \|M_X-M_Y\|_\infty^p \nu(dM_X \times dM_Y)\\
		&= \int_{X^n \times Y^n} \|\Psi_X^{(n)}(x_1,\dots,x_n) - \Psi_Y^{(n)}(y_1,\dots,y_n)\|_\infty^p \ \mu^{\otimes n}(d\mathbb{X} \times d\mathbb{Y}).
	\end{align*}
	Denote the previous integral by $I$. Define
	$\Delta_{ij}(\mathbb{X}, \mathbb{Y}) := |d_X(x_i,x_j)-d_Y(y_i,y_j)|,
	$ for $1 \leq i,j \leq n$. Observe that $\|\Psi_X^{(n)}(x_1,\dots,x_n) - \Psi_Y^{(n)}(y_1,\dots,y_n)\|_\infty = \max_{1 \leq i, j \leq n} \Delta_{ij}(\mathbb{X}, \mathbb{Y})$, so bounding the maximum with a sum gives
	\begin{align*}
		I &= \int_{X^n \times Y^n} \max_{1 \leq i, j \leq n} \Delta_{ij}^p(\mathbb{X}, \mathbb{Y}) \ \mu^{\otimes n}(d\mathbb{X} \times d\mathbb{Y})\\
		&\leq \sum_{1 \leq i,j \leq n} \int_{(X \times Y)^n} \Delta_{ij}^p(\mathbb{X}, \mathbb{Y}) \ \mu^{\otimes n}(d\mathbb{X} \times d\mathbb{Y})\\
		&= \binom{n}{2} \int_{(X \times Y)^n} \Delta_{12}^p(\mathbb{X}, \mathbb{Y}) \ \mu(dx_1 \times dy_1) \mu(dx_2 \times dy_2) \cdots \mu(dx_n \times dy_n)\\
		&= \binom{n}{2} \int_{(X \times Y)^2} |d_X(x_1,x_2)-d_Y(y_1,y_2)|^p \ \mu(dx_1 \times dy_1) \mu(dx_2 \times dy_2)\\
		&< \binom{n}{2} \eta^p.
	\end{align*}
	Then, taking the $p$-th root and letting $\eta \searrow 2\dGW{p}(X,Y)$ gives
	$
	\dW{p}(\mu_n(X), \mu_n(Y)) \leq 2\binom{n}{2}^{\frac{1}{p}} \dGW{p}(X,Y).
	$
	Lastly, $\binom{n}{2}^{\frac{1}{p}}$ approaches 1 as $p \to \infty$, so the limit of the above inequality is
	\begin{equation*}
		\frac{1}{2} \dW{\infty}(\mu_n(X), \mu_n(Y)) \leq \dGW{\infty}(X,Y).
	\end{equation*}
	Taking the supremum over $n$ in the left side gives the desired bound for $\widehat{d}_{\mathcal{GW},\infty}(X,Y)$.
\end{proof}

\StabilityUnk*
\begin{proof}
	This proof follows roughly the same outline as that of Theorem \ref{thm:stability_Dnk}. Let $\eta > \dW{p}(\mu_n(X), \mu_n(Y))$. Choose a coupling $\mu \in \coup(\mu_n(X), \mu_n(Y))$ such that
	\begin{equation*}
		\Big(\dW{p}(\mu_n(X), \mu_n(Y)) \Big)^p \leq \iint_{\Kn_n(X) \times \Kn_n(Y)} \|M-M' \|^p_{\infty} \ \mu(dM \times dM') < \eta^p,
	\end{equation*}
	where $\| \cdot \|_\infty$ denotes the $\ell^\infty$ norm on $\R^{n \times n}$. Notice that the support of $\mu$ is contained in $K_n(X) \times K_n(Y)$. The pushforward $\nu=(\dgm_k^\FiltFunc \times \dgm_k^\FiltFunc)_\# \mu$ of the coupling $\mu$ is a coupling of the pushforwards $(\dgm_k^\FiltFunc)_\# \mu_n(X) = \Unk{n,k}{\FiltFunc}(X)$ and $(\dgm_k^\FiltFunc)_\# \mu_n(Y) = \Unk{n,k}{\FiltFunc}(Y)$. Thus, a change of variables gives
	\begin{align*}
		\left[\dW{p}^\D(\Unk{n,k}{\FiltFunc}(X), \Unk{n,k}{\FiltFunc}(Y)) \right]^p
		&\leq \iint_{\Dnk{n,k}{\FiltFunc}(X) \times \Dnk{n,k}{\FiltFunc}(Y)} [\dB(D,D')]^p \: \nu(dD \times dD') \\
		&= \iint_{\Kn_n(X) \times \Kn_n(Y)} \left[\dB(\dgm_{k}^\FiltFunc(M), \dgm_{k}^\FiltFunc(M')) \right]^p \mu(dM \times dM').
	\end{align*}
	
	Recall from the proof of Theorem \ref{thm:stability_Dnk} that $\dB(\dgm_{k}^\FiltFunc(M), \dgm_{k}^\FiltFunc(M')) \leq \frac{L(\FiltFunc)}{2} \|M-M'\|_\infty$. Thus, the previous integral is bounded above by
	\begin{multline*}
		\iint_{\Kn_n(X) \times \Kn_n(Y)} \left[\frac{L(\FiltFunc)}{2} \|M-M'\|_\infty \right]^p \mu(dM \times dM')\\
		\begin{aligned}
			&= \left( \dfrac{L(\FiltFunc)}{2} \right)^p \iint_{\Kn_n(X) \times \Kn_n(Y)}  \|M-M'\|_{\infty}^p \ \mu(dM \times dM')
			&< \left( \dfrac{L(\FiltFunc)}{2} \right)^p \eta^p.
		\end{aligned}
	\end{multline*}
	Taking the $p$-th root and letting $\eta \searrow \dW{p}(\mu_n(X), \mu_n(Y))$ gives 
	\begin{equation*}
		\dW{p}^\D(\Unk{n,k}{\FiltFunc}(X), \Unk{n,k}{\FiltFunc}(Y)) \leq \frac{L(\FiltFunc)}{2} \cdot \dW{p}(\mu_n(X), \mu_n(Y)) \leq L(\FiltFunc) \cdot \widehat{d}_{\mathcal{GW},p}(X,Y).
	\end{equation*}
\end{proof}

\subsection{Probabilistic approximation}
\label{sec:appendix_probability}
Assuming that $f_X(\epsilon) > 0$ for all $\epsilon > 0$ (a condition that is satisfied by compact Riemannian manifolds, for instance), let $C_X:\N \times \R_+ \to \R_+$ by $$\displaystyle C_X(n,\epsilon) := \frac{ \exp(-n f_X(\epsilon/4))}{f_X(\epsilon/4)}.$$
\begin{lemma}[Coverage of $X^n$]
    \label{lemma:coverage_product}
    For the space $X^n$ equipped with the $\ell^\infty$ product metric, $f_{X^n}(\epsilon) = f_X^n(\epsilon)$ and, hence, $C_{X^n}(N,\epsilon) = \exp(-N \cdot f_X^n(\epsilon/4))/ f_X^n(\epsilon/4)$.
\end{lemma}
\begin{proof}
    Denote the elements of $X^n$ as $\mathbf{x} = (x_1, \dots, x_n)$. Since $X^n$ is equipped with the $\ell^\infty$ product metric, $B_\epsilon(\mathbf{x}) = B_\epsilon(x_1) \times \cdots \times B_\epsilon(x_n)$. Then $\mu_{X}^{\otimes n} \left(B_\epsilon(\mathbf{x}) \right) = \mu_X\left(B_\epsilon(x_1)\right) \times \cdots \times \mu_X\left(B_\epsilon(x_n)\right)$ and, since each $x_i$ is independent, 
    \begin{align*}
        f_{X^n}(\epsilon)
        &= \inf_{\mathbf{x} \in X^n} \mu_{X^n}(B_\epsilon(\mathbf{x}))
        = \inf_{x_1, \dots, x_n \in X} \mu_X\left(B_\epsilon(x_1)\right) \times \cdots \times \mu_X\left(B_\epsilon(x_n)\right)\\
        &= \prod_{i=1}^n \inf_{x_i \in X} \mu_X\left( B_\epsilon(x_i) \right)
        = f_X^n(\epsilon).
    \end{align*}
     The formula for $C_{X^n}(N,\epsilon)$ follows immediately.
\end{proof}

\CoverageKn*
\begin{proof}
    Let $\pi_j:X^n \to X$ be the $j$-th coordinate projection, and let $\mathbf{x} = (x_1, \dots, x_n)$ and $\mathbf{y} = (y_1, \dots, y_n)$ be elements of $X^n$. By stability of persistent homology, $\dB(\dgm_k(\mathbf{x}), \dgm_k(\mathbf{y})) \leq \|\Psi_X^{(n)}(\mathbf{x}) - \Psi_X^{(n)}(\mathbf{y})\|_\infty$. We claim that both terms are bounded above by $\displaystyle 2 \cdot d_{X^n}( \mathbf{x}, \mathbf{y}) = \max_{j=1,\dots,n}( x_j, y_j)$. Indeed, the triangle inequality gives $d_X(x_i, x_j) \leq d_X(x_i,y_i) + d_X(y_i,y_j) + d_X(y_j,x_j)$, so
    \begin{equation*}
        d_X(x_i, x_j) - d_X(y_i,y_j) \leq d_X(x_i,y_i) + d_X(x_j,y_j).
    \end{equation*}
    The symmetric argument yields $|d_X(x_i, x_j) - d_X(y_i,y_j)| \leq d_X(x_i,y_i) + d_X(x_j,y_j)$. Then
    \begin{align*}
        \|\Psi_X^{(n)}(\mathbf{x}) - \Psi_X^{(n)}(\mathbf{y})\|_\infty
        &= \max_{i,j=1,\dots,n} |d_X(x_i, x_j) - d_X(y_i,y_j)| \\
        &\leq \max_{i,j=1,\dots,n} d_X(x_i,y_i) + d_X(x_j,y_j) \\
        &\leq 2 \max_{j=1,\dots,n} d_X(x_j,y_j) = 2 d_{X^n}(\mathbf{x}, \mathbf{y}).
    \end{align*}
    This is what we wanted.\\
    \indent These relations extend to the Hausdorff distance, that is,
    \begin{align*}
        \dH^\D \left(\{ \dgm_k^\vr(\mathbf{x}_i) \}_{i=1}^N, \Dvr{n,k}(X) \right)
        &\leq \dH^{K_n(X)} \left(\{ \Psi_X^{(n)}(\mathbf{x}_i) \}_{i=1}^N, \Kn_n(X) \right) \\
        &\leq 2\dH^{X^n} \left(\{ \mathbf{x}_i \}_{i=1}^N, X^n\} \right).
    \end{align*}
    Thus, if $\dH^{X^n} \left(\{ \mathbf{x}_i \}_{i=1}^N, X^n\} \right)$ is smaller than $2\epsilon$, then both $\dH^\D \left(\{ \dgm_k^\vr(\mathbf{x}_i) \}_{i=1}^N, \Dvr{n,k}(X) \right)$ and $\dH^{K_n(X)} \left(\{ \Psi_X^{(n)}(\mathbf{x}_i) \}_{i=1}^N, \Kn_n(X) \right)$ are smaller than $\epsilon$. By \cite[Theorem 34]{clust-um} and Lemma \ref{lemma:coverage_product}, this happens at least with probability
    \begin{equation*}
        1-C_{X^n}(N,2\epsilon) = 1-\frac{\exp(-N \cdot f_X^n(\epsilon/2))}{f_X^n(\epsilon/2)}.
    \end{equation*}
    Since the last expression is decreasing in $N$, the probability is bounded below by $1-C_{X^n}(N_0,2\epsilon) \geq p$. This gives the first claim.\\
    \indent Conversely, $\dH^{\Kn_n(X)} \left(\{ \Psi_X^{(n)}(\mathbf{x}_i) \}_{i=1}^N, \Kn_n(X) \right) > \epsilon$ implies $\dH^{X^n} \left(\{ \mathbf{x}_i \}_{i=1}^N, X^n\} \right) > 2\epsilon$. By \cite[Theorem 34]{clust-um}, the probability of the latter is bounded above by $C_{X^n}(N,2\epsilon)$. The formula in Lemma \ref{lemma:coverage_product} implies that $\sum_{N=1}^\infty C_{X^n}(N,2\epsilon) < \infty$, so $\dH^{K_n(X)} \left(\{ \Psi_X^{(n)}(\mathbf{x}_i) \}_{i=1}^N, \Kn_n(X) \right)$ converges to 0 almost surely by the Borel-Cantelli Lemma. The same argument gives $\dH^\D \left(\{ \dgm_k^\vr(\mathbf{x}_i) \}_{i=1}^N, \Dvr{n,k}(X) \right) \to 0$ almost surely.
\end{proof}

\section{Additional results related to the classification task} \label{sec:weighted-results}
In this section we continue the discussion from Section \ref{sec:classification}.

\paragraph{Results via re-weighting distance matrices.}
To circumvent the issue of a matrix $\mathcal{W}_k$ with high $P_e(\mathcal{W}_k)$ dominating $\mathcal{W}_{\max}$, we define the function $\omega{\text -}\max$ for a given $\omega = (\omega_0, \omega_1, \omega_2)$ by $\omega{\text -}\max_k(\mathcal{W}_k) := \max_k(\omega_k \cdot \mathcal{W}_k)$. We then look for an $\omega$ that minimizes the 1-nearest neighbor classification error $P_e(\omega{\text -}\max \mathcal{W}_k)$. We used the MATLAB function \texttt{fminsearch}, which requires an initial guess $\hat\omega$ to find the optimal value. Our first guess was $\hat\omega = \left(\diam\inv(\mathcal{W}_0), \diam\inv(\mathcal{W}_1), \diam\inv(\mathcal{W}_2) \right)$, and the result is in the first row of Table 4. Even though \texttt{fminsearch} did not find an $\omega$ with better classification error than $\hat\omega$, the 12.72 \% error of the latter is an improvement over the 19.28 \% obtained by $\mathcal{W}_{\max}$. If we wish to ignore $\mathcal{W}_0$ altogether, we can replace the first entry of $\hat\omega$ with 0, and the error is reduced to 9.14 \%. Lastly, the best classification error is 7.38 \%, which was obtained by replacing the first entry of $\hat\omega$ with 1. Table \ref{tab:sumner_weighted} contains the best classification error and the optimal $\omega$ found by \texttt{fminsearch} across several choices of $\hat\omega$, and the corresponding heatmaps are shown in Figure \ref{fig:sumner_weighted}.

\paragraph{Related results.}
Two other papers perform classification experiments on databases from \cite{sumner-paper}. \cite{ccsg09a} used the same database as us and tried to classify the shapes using the persistence diagrams of a certain (more sophisticated) variant of the VR-filtration. The error rate reported therein was $4\%$.\\
The dataset used in \cite{memoli-dghlp-long} contains an extra class (\texttt{lion}). The author defined an mm-space $(\mathbb{X}_i, d_i, \nu_i)$ as a farthest point subsampling of $G_i$ with 50 points and endowed $\mathbb{X}_i$ with a Voronoi probability $\nu_i$. This measure is defined by setting $\nu_i(x)$ to be the the proportion of points in $G_i$ that are closer to $x \in \mathbb{X}_i$ than to any other $x' \in \mathbb{X}_i$. The metrics used therein are (in the notation of Definition \ref{def:curvature_measures}) $\dW{1}(\mu_2(\mathbb{X}_i), \mu_2(\mathbb{X}_j))$ and a function called $\mathbf{FLB}_1(\mathbb{X}_i, \mathbb{X}_j)$. The average classification error of the Wasserstein distance between the curvature measures $\mu_2(\mathbb{X}_i)$ and $\mu_2(\mathbb{X}_j)$ over 10,000 choices of the training set was 2.5 \%. The error of $\mathbf{FLB}_1$ over the same number of trials was $14.1\ \%$.

\begin{table}
    \centering
    \begin{tabular}{c|c|c}
        $\hat\omega$ & Optimal $\omega$ & $P_e(\omega {\text -} \max(\mathcal{W}_k))$ \\
        \hline
        (11.9, 370.8, 8078.8) & (11.9, 370.8, 8078.8) & 12.72 \% \\
        (1, 370.8, 8078.8) & (1.1, 366.2, 8108.6) & 7.38 \% \\
        (0, 370.8, 8078.8) & (0, 370.8, 8078.8) & 9.14 \% \\
        (11.9, 1, 8078.8) & (11.9, 1, 8482.7) & 19.89 \% \\
        (11.9, 0, 8078.8) & (11.9, 0, 8482.7) & 19.89 \% \\
        (11.9, 370.8, 1) & (11.9, 370.8, 1) & 12.72 \% \\
        (11.9, 370.8, 0) & (11.9, 370.8, 0) & 12.72 \% 
    \end{tabular}
    \caption{Average classification error $P_e$ of $\omega {\text -} \max_k(\mathcal{W}_k) = \max_k (\omega_k \cdot \mathcal{W}_k)$ over 2000 trials. See the text for details.}
    \label{tab:sumner_weighted}
\end{table}

\begin{figure}
    \centering
    \includegraphics[width=0.95\linewidth]{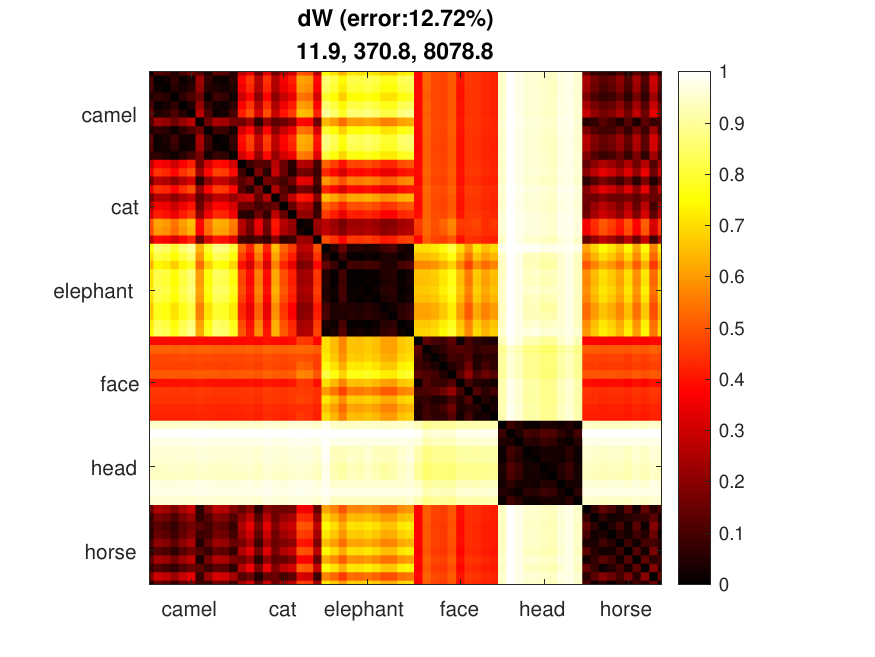}
    \includegraphics[width=\textwidth]{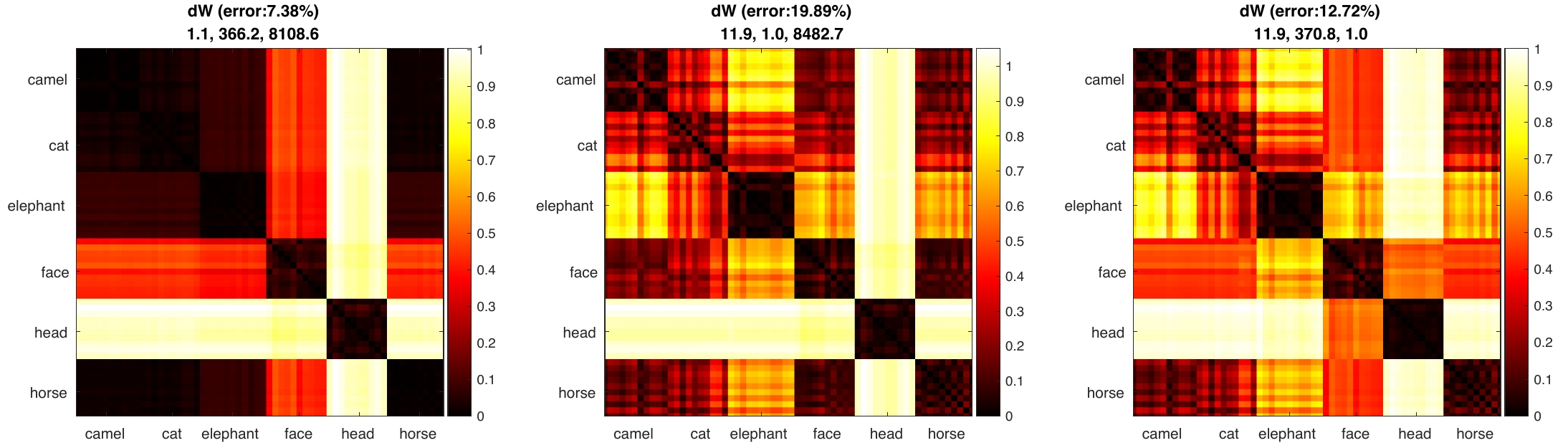}
    \includegraphics[width=\linewidth]{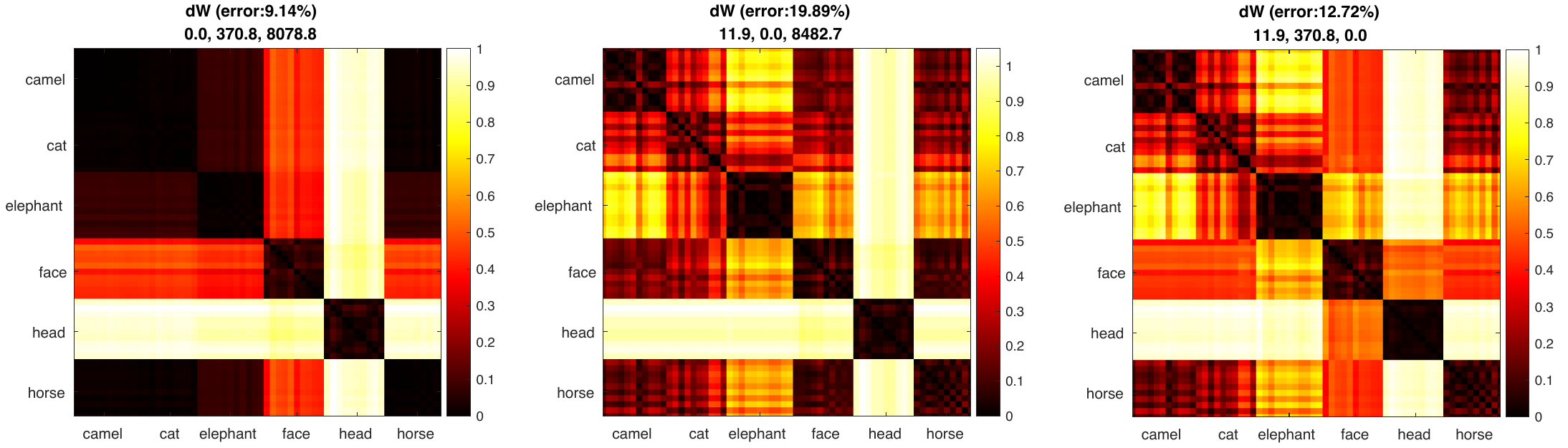}
    
    \caption{The heatmaps of the matrices $\omega{\text -}\max_k(\mathcal{W}_k)$ in Table \ref{tab:sumner_weighted}. The triple of numbers above each heatmap are the optimal $\omega$ found by \texttt{fminsearch}.}
    \label{fig:sumner_weighted}
\end{figure}

\begin{table}
    \centering
    \begin{tabular}{c|c}
        Shape & Codensity \\
        \hline
        \texttt{camel} & $0.0314 \cdot 10^{-3}$ \\
        \texttt{cat} &  $0.0737 \cdot 10^{-3}$ \\
        \texttt{elephant} & $0.0332 \cdot 10^{-3}$ \\
        \texttt{face} & $0.0340 \cdot 10^{-3}$ \\
        \texttt{head} & $0.1162 \cdot 10^{-3}$ \\
        \texttt{horse} & $0.0823 \cdot 10^{-3}$
    \end{tabular}
    \caption{Sampling codensity ($\mathrm{area}/(\#\mathrm{vertices} \cdot \diam)$) of each class in the database. Notice that \texttt{head} has the largest codensity (i.e. lowest density), which might explain why $\mathcal{B}_2$ could separate it from the other classes.}
    \label{tab:codensity}
\end{table}

 \end{document}